\theoremstyle{plain}
  \newtheorem{theorem}[subsection]{Theorem}
  \newtheorem{proposition}[subsection]{Proposition}
  \newtheorem{lemma}[subsection]{Lemma}
  \newtheorem{corollary}[subsection]{Corollary}
\def\bphi{{\boldsymbol{\phi} }}
\def\bpsi{{\boldsymbol{\psi} }}
\def\F{{\mathcal F}}
\def\R{{\mathbb{R}}}
\def\A{{\mathcal A}}
\def\B{{\mathcal B}}
\def\Q{{\mathcal Q}}
\def\H{{\mathcal H}}
\def\N{{\mathcal N}}
\def\E{{\mathcal E}}
\def\ch{\mbox{ch} (k)}
\def\chbb{\mbox{chb} (k)}
\def\trigh{\mbox{trigh} (k)}
\def\sh{\mbox{sh} (k)}
\def\shh{\mbox{sh}}
\def\chh{\mbox{ch}}
\def\shhb{\overline {\mbox{sh}}}
\def\th{\mbox{th} (k)}
\def\chhb{\overline {\mbox{  ch}}}
\def\shbb{\mbox{shb} (k)}
\def\shb{\overline{\mbox{sh}(k)}}
\def\z{\zeta}
\def\zb{\overline{\zeta}}
\def\FF{{\mathbb{F}}}
\def\L{\Lambda}
\theoremstyle{remark}
  \newtheorem{remark}[subsection]{Remark}
\theoremstyle{definition}
\newcommand{\tr}{\operatorname{tr}}
\newcommand{\I}{\text{i}}
\begin{document}

\def\vac{{\big\vert 0\big>}}
\def\fpsi{{\big\vert\psi\big>}}
\def\bphi{{\boldsymbol{\phi} }}
\def\bpsi{{\boldsymbol{\psi} }}
\def\F{{\mathcal F}}
\def\R{{\mathbb{R}}}
\def\I{{\mathcal I}}
\def\Q{{\mathcal Q}}
\def\ch{\mbox{\rm ch} (k)}
\def\cht{\mbox{\rm ch} (2k)}
\def\chbb{\mbox{\rm chb} (k)}
\def\trigh{\mbox{\rm trigh} (k)}
\def\p{\mbox{p} (k)}
\def\sh{\mbox{\rm sh} (k)}
\def\sht{\mbox{\rm sh} (2k)}
\def\shh{\mbox{\rm sh}}
\def\chh{\mbox{ \rm ch}}
\def\th{\mbox{\rm th} (k)}
\def\thb{\overline{\mbox{\rm th} (k)}}
\def\shhb{\overline {\mbox{\rm sh}}}
\def\chhb{\overline {\mbox{\rm ch}}}
\def\shbb{\mbox{\rm shb} (k)}
\def\chbt{\overline{\mbox{\rm ch}  (2k)}}
\def\shb{\overline{\mbox{\rm sh}(k)}}
\def\shbt{\overline{\mbox{\rm sh}(2k)}}
\def\z{\zeta}
\def\zb{\overline{\zeta}}
\def\FF{{\mathbb{F}}}
\def\S{{\bf S}}
\def\Sr{{\bf S}_{red}}
\def\W{{\bf W}}
\def\WW{{\mathcal W}}
\def\L{{\mathcal L}}
\def\E{{\mathcal E}}
\def\X{{\tilde X}}
\def\Nor{{\rm Nor}}

\def \kb{{\bf k}}
\def\x{{\bf x}}
\def\fhat{{\widehat{f}}}
\def \ghat{{\widehat{g}}}
\def \fhatb{{\widehat{f}^{\ast}}}
\def \ghatb{{\widehat{g}^{\ast}}}
\def \Sb{\mathbb S}
\def\Zb {\mathbb Z}
\def\Lm{\Lambda}
\def \Lmb{\overline{\Lambda}}
\def \Gam{\Gamma}
\def\Gamb{\overline{\Gamma}}
\def\Lmh{\widehat{\Lambda}}
\def\Fhat{\widehat{F}}
\def\Fhatb{\overline{\widehat{F}}}
\def\E{{\mathcal E}}
\def\Sop{{\bf S}^{t}_{x_{1},x_{2}}}
\def\Wop{{\bf S}^{\pm,t}_{x_{1},x_{2}}}
\def\Sp{{\bf S}^{t}_{x_{1}}}
\def\nb{\nabla}
\def\Dl{\Delta}
\def\Acal{{\mathcal A}}
\def\ub{\overline{u}}
\def\tr{{\rm tr}}
\def\dig{{\rm diag}}

\def\Hbb{{\mathbb H}}
\def\Ubb{{\mathbb U}}
\def\Nbb{{\mathbb N}}

\def\zvec{\vec{z}}
\def\xvec{\vec{x}}
\def\zvecp{\vec{z}^{\prime}}


\title[Global estimates for HFB ]{Global  uniform in $N$  estimates
for solutions of a system of Hartree-Fock-Bogoliubov type in the case $\beta<1$}

\author{J. Chong}
 \address{University of Texas at Austin }
\email{jwchong@math.utexas.edu }

\author{X. Dong}
\address{University of Maryland, College Park}
\email{away@umd.edu}

\author{M. Grillakis}
\address{University of Maryland, College Park}
\email{mng@math.umd.edu}

\author{M. Machedon}
\address{University of Maryland, College Park}
\email{mxm@math.umd.edu}

\author{Z. Zhao}
\address{Department of Mathematics and Statistics, Beijing Institute of Technology, Beijing, China}
\address{MIIT Key Laboratory of Mathematical Theory and Computation in Information Security, Beijing, China}
\email{zzh@bit.edu.cn}

\subjclass{35Q55, 81}
\keywords{Hartree-Fock-Bogoliubov}
\date{\today}
\dedicatory{}
\commby{}
\commby{}

\maketitle

\begin{abstract} We extend the results of the 2019 paper by the third and fourth author globally in time. More precisely,
we prove  uniform in $N$  estimates for the solutions $\phi$, $\Lambda$ and $\Gamma$  of a coupled system of Hartree–Fock–Bogoliubov type with interaction potential
$V_N(x-y)=N^{3 \beta}v(N^{\beta}(x-y))$ with $\beta<1$.  The potential satisfies some technical conditions, but is not small.
The initial conditions have finite energy and the  ``pair correlation" part satisfies a smallness condition, but are otherwise general functions in suitable Sobolev spaces, and the expected correlations in $\Lambda$ develop dynamically in time. The estimates are expected to improve the Fock space bounds from the 2021 paper of the first and fifth author. This will be addressed in a different paper.

\end{abstract}

\section{Introduction}
The general motivation for this paper is the evolution of $N$ Bosons under a mean-field Hamiltonian
\begin{align*}
-\sum_{k=1}^N \Delta_{x_i} + \frac{1}{2N} \sum_{k \neq l} V_N(x_k-x_l)
\end{align*}
where $x_i \in \mathbb R^3$, $N$ is large and
\begin{align*}
V_N(x)=N^{3 \beta}v(N^{\beta}x)
\end{align*}
and the potential $v$ is discussed below. (The notation $v_M(x)$ will also be used in sections \ref{Statement}-\ref{endlin} ,
 with a different meaning.)
The initial conditions are (exactly or approximately) a tensor product $\phi \otimes \cdots \otimes \phi$.

The exact evolution of the system is approximated by a construction involving just two functions: the condensate $\phi(t, x)$ and a pair excitation function
$k(t, x, y)$, and it is
\begin{equation}\label{approx}
\psi_{approx}:=e^{-\sqrt{N}\A(\phi(t)}e^{-\B(k(t))}\Omega
\end{equation}
where
\begin{equation}
\A(\phi):=\int dx\left\{\bar{\phi}(x)a_{x}-\phi(x)a^{\ast}_{x}\right\} \label{meanfield-2}
\end{equation}
 and $e^{-\sqrt{N}\A(\phi)}$ is a unitary operator on Fock space,  the Weyl operator.
 and
 \begin{align}
&\B(k):=
\frac{1}{2}\int dxdy\left\{\bar{k}(t,x,y)a_{x}a_{y}
-k(t,x,y)a^{\ast}_{x}a^{\ast}_{y}\right\}\ . \label{pairexcit-2}
\end{align}
The unitary operator $e^{\B(k)}$ is the representation of an (infinite dimensional) real symplectic matrix.
Also, $\Omega$ is the vacuum.
See for instance \cite{GM2} for background on this construction.

In order for $\psi_{approx}$ to be an approximation to the exact evolution, $\phi$ and $k$ must satisfy certain PDEs.
In the math literature, they were introduced in \cite{GM2} and independently and in a different context in \cite{BBCFS}. They were studied in \cite{G-M2017}, \cite{G-M2019}, \cite{CGMZ}, as well as \cite{BSS}.

To write down the equations it is convenient to consider a self-adjoint kernel
\begin{align*}
&\Gamma(t, x, y)= \bar \phi(t, x) \phi(t, y)+\frac{1}{N}\left(\shb \circ \sh\right)(t, x, y):=\Gamma_c+\Gamma_p\\
&\mbox{and a symmetric kernel}\\
 &\Lambda(t, x, y)= \phi(t, x) \phi(t, y)+\frac{1}{2N}\sht (t, x, y):=\Lambda_c+\Lambda_p
 \end{align*}
  where
 \begin{align*}
\sh &:=k + \frac{1}{3!} k \circ \overline k \circ k + \ldots~, \\
\ch &:=\delta(x-y) + \frac{1}{2!}\overline k  \circ k + \ldots~
\end{align*}
The functions $\Lambda$ and $\Gamma$ have the conceptual meaning of reduced density matrices.
Here, $(u \circ v)(x, y)=\int u(x, z)v(z, y) dz$).
There are several equivalent ways of expressing the equations. In this section we give a compact, matrix formulation.

For the current paper we separate the condensate part from the pair interaction part: define $\Gamma_c = \bar \phi \otimes \phi$,
$\Lambda_c =  \phi \otimes \phi$,
$\Gamma_p=\frac{1}{N} \shb\circ \sh$ and
 $\Lambda_p=\frac{1}{2N} \sht$.
Also, denote $\rho(t, x)=\Gamma(t, x, x)$.

To write  the Hartree-Fock-Bogoliubov equations in matrix notation, define

\begin{align*}
\Omega=\left(
\begin{matrix}
- \Gamma &-\bar{\Lambda}\\
\Lambda & \bar{\Gamma}
\end{matrix}
\right):=\Psi+\Phi
\end{align*}
where
\begin{align*}
\Psi=\left(
\begin{matrix}
- \Gamma_p &-\bar{\Lambda}_p\\
\Lambda_p & \bar{\Gamma}_p
\end{matrix}
\right)
\end{align*}
\begin{align*}
\Phi=\left(
\begin{matrix}
- \Gamma_c &-\bar{\Lambda}_c\\
\Lambda_c & \bar{\Gamma}_c
\end{matrix}
\right)
\end{align*}

Finally, let
\begin{align*}
S_3=\left(
\begin{matrix}
- I &0\\
0 & I
\end{matrix}
\right)
\end{align*}
where $I$ is the identity operator.

The evolution equations for $\Omega$ and $\Psi$ (for $t>0$, with initial conditions at $t=0$) are
\begin{align}
&\frac{1}{i} \partial_t \Phi - [\Delta_x \delta(x-y) S_3, \Phi]\notag\\
&=-[( V_N*\rho(t, x))\delta(x-y) S_3, \Phi]
 - [V_N \Psi^*, \Phi]\label{rhs1}\\
&\frac{1}{i} \partial_t \Psi
- [\Delta_x \delta(x-y) S_3, \Psi]\label{rhs2}\\
&=-[( V_N*\rho)\delta(x-y) S_3, \Psi]
-\frac{1}{2N}[S_3, V_N \Psi]-
[V_N \Omega^*, \Psi]-\frac{1}{2N}[S_3, V_N \Phi]\notag
\end{align}

In addition, the condensate $\phi$ satisfies
\begin{align*}
&  \left\{\frac{1}{i}\partial_{t}-\Delta_{x_1}\right\}\phi(x_{1})  \notag\\
&=-\int dy\left\{
v_N(x_{1}-y)\Gamma(y,y)\right\}\phi(x_{1})
\\
&-\int dy\big\{v_N(x_{1}-y)\Gamma_p(y,x_{1})\phi(y)
\\
&+ \int dy\big\{v_N(x_{1}-y)\Lambda_p(x_{1},y)\big\}\overline{\phi}(y )
\end{align*}

Here $A^*(x, y)=\bar A(y, x)$, $[A, B]= A \circ B -B \circ A$ and $V_N$ acts by pointwise multiplication by $V_N(x-y)$.
We will write down these equations in scalar form later, see \eqref{rel1}-\eqref{rel4}). Also, we will write down a simplified model at the end of the introduction.

The arguments of this paper will involve non-local fractional time derivatives, so the values of the solutions at negative times also matter. It is convenient to replace \eqref{rhs1}, \eqref{rhs2} by
\begin{align}
&\frac{1}{i} \partial_t \Phi - [\Delta_x \delta(x-y) S_3, \Phi]=h(t)\times RHS \eqref{rhs1}\label{rhs1cut}\\
&\frac{1}{i} \partial_t \Psi
- [\Delta_x \delta(x-y) S_3, \Psi]=h(t)\times RHS\eqref{rhs2} \label{rhs2cut}
\end{align}
where $h(t)$ is the characteristic function of $[0, \infty)$ and $RHS\eqref{rhs1} $ stands for the right hand side of equation \eqref{rhs1}. This is the usual solution one gets from Duhamel's formula, and solutions to \eqref{rhs1}, \eqref{rhs2} agree with solutions to \eqref{rhs1cut}, \eqref{rhs2cut} fot $t>0$, provided they have the same initial conditions.

Next, we review the conserved quantities of these equations. See \cite{GM2} for details.
The first conserved quantity is the total number of particles (normalized by division by $N$):
\begin{align}
{\rm tr}\left\{\Gamma(t)\right\}= \|\phi(t, \cdot)\|^2_{L^2(dx)} +\frac{1}{N} \|\sh (t, \cdot, \cdot)\|^2_{L^2(dxdy)}=1\ . \label{b5-nbr0}
\end{align}

The second conserved quantity is the energy per  particle
\begin{align}
E(t):=&\ {\rm tr}\left\{\nabla_{x_{1}}\cdot\nabla_{x_{2}}\Gamma(t)\right\}
+\frac{1}{2}
\int dx_{1}dx_{2}\left\{V_{N}(x_{1}-x_{2})\big\vert\Lm(t,x_{1},x_{2})\big\vert^{2}
\right\}
\label{energy}
\\
&+\frac{1}{2}
\int dx_{1}dx_{2}\left\{
V_{N}(x_{1}-x_{2})\left(\big\vert\Gamma(t,x_{1},x_{2})\big\vert^{2}
+\Gamma(t,x_{1},x_{1})\Gamma(t,x_{2},x_{2})\right)
\right\}
\nonumber
\\
&-\int dx_{1}dx_{2}\left\{V_{N}(x_{1}-x_{2})
\vert\phi(t,x_{1})\vert^{2}\vert\phi(t,x_{2}\vert^{2}
\right\}\ .
\notag
\end{align}
The above holds for any Schwartz potential $v$. In addition, in order to use the estimates of
\cite{CGMZ},
 we assume
\begin{align}
&\mbox{$v$ is spherically symmetric and }\label{Vhyp} \\
&v \ge 0, \, v \in C_0^{\infty}, \,  \frac{\partial v}{\partial r}( r ) \le 0. \notag
\end{align}
For the initial conditions, we assume there exist a constant $C$ (independent of $N$) and $\alpha>\frac{1}{2}$ such that
\begin{align}
&{\rm tr}\left\{\Gamma(0)\right\} \le C \notag \\
&E(0) \le C \, \, \label{datahyp}\\
&\|\big<\nabla_x\big>^{\alpha}\big<\nabla_y\big>^{\alpha}\Gamma(0, x, y)\|_{L^2} \le C
\, \mbox{(this follows from the previous condition,}\notag\\
&\mbox{ and will be preserved by the time evolution)}
\notag\\
&\|\big<\nabla_x\big>^{\alpha}\big<\nabla_y\big>^{\alpha}\Lambda(0, x, y)\|_{L^2} \le C
\, \, \mbox{(this will also be preserved by the time evolution } \notag\\
&\mbox{for all $\frac{1}{2} \le \alpha \le \alpha_0$ for some $\alpha_0>\frac{1}{2}$, as was shown in \cite{CGMZ})}
\notag\\
&\||\nabla_x||\nabla_y| \Lambda(0, x, y)\|_{L^2} \le C N. \notag
\end{align}
The data is also assumed to be in a high $H^s$ space (but not uniformly in $N$).

In addition, there will be a smallness assumption on the initial conditions for the ``pair" components of $\Gamma$ and $\Lambda$.
Under the above assumptions,  the arguments of \cite{CGMZ} imply that for all $\alpha > \frac{1}{2}$, sufficiently close to
$\alpha > \frac{1}{2}$,
there exists $\epsilon_3>0$ such that
\begin{align}
&\int
\big\vert|\nb_x|^{\alpha}|\nb_y|^{\alpha}\Lm_p(t, x, y)\big\vert^{2}dx dy \label{lambdaestimp}
\le \frac{C}{N^{\epsilon_3}}
\end{align}
uniformly in $t$ and $N$.
This follows by interpolating between (14) in \cite{CGMZ}  and Theorem 1.2 in that paper.

 In addition, we assume
\begin{align}
 \big\|\big<\nabla_x\big>^{\alpha}\big<\nabla_y\big>^{\alpha}\Gamma_p(0, \cdot, \cdot) \big\|_{L^2} \le \frac{1}{N^{\epsilon_3}}\label{smallhyp}
 \end{align}
 $\alpha$ is a number slightly bigger that $\frac{1}{2}$, to be chosen later.

Also, from Proposition 3.4 in \cite{CGMZ} we have a Morawetz type estimate
 \begin{align}
||\Gamma(t,x,x)||_{L^2_{t,x}}  \lesssim 1 \label{morawetzGamma}
\end{align}
while from conservation of energy and the trace theorem,
 \begin{align}
||\Gamma(t,x,x)||_{L^{\infty}(dt)L^2(dx)}  \lesssim 1. \label{morawetzGammaenergy}
\end{align}

 In order to state the main result of this paper in the simplest possible form, we define the following partial Strichartz norms:

\begin{align}
&\|\Lambda\|_{\mathcal{S}_{x, y}}\label{restr}\\
&=  \sup_{  p, q \, \, admissible, }\|\Lambda\|_{L^{p}(dt) L^{q}(dx) L^2(dy)}\notag\\
&\quad +\sup_{  p, q \, \notag \, admissible, \, }\|\Lambda\|_{L^{p}(dt) L^{q}(dy) L^2(dx)}.
\end{align}

Recall $p, q$ are admissible in $3+1$ dimensions if $\frac{2}{p}+ \frac{3}{q} = \frac{3}{2}$, $2 \le p \le \infty$. Thus
\begin{align*}
\|\Lambda\|_{\mathcal{S}_{x, y}} \sim
\|\Lambda\|_{L^2(dt)L^6(dx)L^2(dy)}
+\|\Lambda\|_{L^2(dt)L^6(dy)L^2(dx)}+\|\Lambda\|_{L^{\infty}(dt)L^2(dxdy)}.
\end{align*}

The main result of this paper is
\begin{theorem}\label{intro1} Let $\Lambda=\Lambda_{p }+\Lambda_{ c}$, $\Gamma=\Gamma_{p  }+\Gamma_c$ be solutions of \eqref{rhs1cut}, \eqref{rhs2cut}
(or, equivalently, \eqref{rel1}-\eqref{rel4}), where  the potential satisfies \eqref{Vhyp}, and the initial conditions satisfy \eqref{datahyp} and \eqref{smallhyp}.
Then  we have the a priori estimates
\begin{align}
&\|\Gamma\|_{L^8(dt)L^{\infty}(d(x-y))L^{\frac{4}{3}}(d(x+y))} \le C \label{apriori1} \\
&\|\nabla_{x+y}\Gamma\|_{L^8(dt)L^{\infty}(d(x-y))L^{\frac{4}{3}}(d(x+y))} \label{apriori2} \le C\\
&\mbox{ and thus also} \notag \\
&\|\big<\nabla_{x+y}\big>^{\alpha}\Gamma\|_{L^8(dt)L^{\infty}(d(x-y))L^{\frac{4}{3}}(d(x+y))} \label{apriori3} \le C.
\end{align}
 (In what follows, $\alpha> \frac{1}{2}$, close to $\frac{1}{2}$, will be fixed.)

Also, there exists $N_0$, and  $ \alpha > \frac{1}{2}$  and $C$ independent of $N$ such that
\begin{align}
&\|\big<\nabla_x\big>^{\alpha}\big<\nabla_y\big>^{\alpha}\Lambda\|_{\mathcal{S}_{x, y}}+\|\big<\nabla_x\big>^{\alpha}\big<\nabla_y\big>^{\alpha}\Gamma\|_{\mathcal{S}_{x, y}}\label{est0}\\
&+ \|\big<\nabla_{x+y}\big>^{\alpha}\Lambda\|_{L^2(dt)L^{\infty}(d(x-y))L^{2}(d(x+y))\label{est1}}\\
&+ \||\partial_t|^{\frac{1}{4}}\Lambda\|_{L^2(dt)L^{\infty}(d(x-y))L^{2}(d(x+y))}\label{est2}\\
&+\sup_{x-y} \||\nabla_{x+y}|^{\alpha}\Gamma\|_{L^2(dt d(x+y))} \le C
\end{align}
for all $N \ge N_0$.
In addition, if
\begin{align*}
\big\|\big<\nabla_x\big>^{\alpha}\big<\nabla_y\big>^{\alpha}|\nabla_{x+y}|^j \Lambda(0, \cdot)\big\|_{L^2}+
\big\|\big<\nabla_x\big>^{\alpha}\big<\nabla_y\big>^{\alpha}|\nabla_{x+y}|^j \Gamma(0, \cdot)\big\|_{L^2} \le C
\end{align*}
for all $j=1, \cdots, j_0$, then also
\begin{align*}
&\|\big<\nabla_x\big>^{\alpha}\big<\nabla_y\big>^{\alpha}|\nabla_{x+y}|^j\Lambda\|_{\mathcal{S}_{x, y}}+\|\big<\nabla_x\big>^{\alpha}\big<\nabla_y\big>^{\alpha}|\nabla_{x+y}|^j\Gamma\|_{\mathcal{S}_{x, y}}\\
&+ \|\big<\nabla_{x+y}\big>^{\alpha+j}\Lambda\|_{L^2(dt)L^{\infty}(d(x-y))L^{2}(d(x+y))}\\
&+ \||\partial_t|^{\frac{1}{4}}|\nabla_{x+y}|^{j}\Lambda\|_{L^2(dt)L^{\infty}(d(x-y))L^{2}(d(x+y))}\\
&+\sup_{x-y} \||\nabla_{x+y}|^{\alpha+j}\Gamma\|_{L^2(dt d(x+y))} \le C.
\end{align*}
The above hold for both the condensate and pair functions.
\end{theorem}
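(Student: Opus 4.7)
The natural strategy is a continuity / bootstrap argument on a time interval $[0,T]$. Let $\mathcal{X}(T)$ denote the sum of every quantity appearing on the left-hand side of \eqref{apriori1}--\eqref{est2} together with $\sup_{x-y}\||\nabla_{x+y}|^{\alpha}\Gamma\|_{L^2(dtd(x+y))}$. The plan is to assume $\mathcal{X}(T)\le 2C_0$ for a constant $C_0$ depending only on the data and to show $\mathcal{X}(T)\le C_0 + o_N(1)\,\mathcal{X}(T)^{2}$; since each norm in the list is continuous in $T$ and is controlled at $T=0$ by the initial data assumptions \eqref{datahyp}--\eqref{smallhyp}, standard continuity then upgrades the bound to all $t\ge0$. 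The higher-regularity bounds in the last display of the theorem are obtained by commuting $|\nabla_{x+y}|^{j}$ through \eqref{rhs1cut}--\eqref{rhs2cut} and iterating the same argument, which is legitimate because $\nabla_{x+y}=\nabla_x+\nabla_y$ commutes with the linear generators $[\Delta_x\delta(x-y)S_3,\,\cdot\,]$.

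The backbone is to pass to the coordinates $u=x+y$, $w=x-y$, in which the linear operator acting on $\Lambda$ becomes $i\partial_t + 2\Delta_u + 2\Delta_w$ (a genuine Schr\"odinger generator in both variables), while on $\Gamma$ it becomes $i\partial_t + 4\nabla_u\cdot\nabla_w$ (a mixed-derivative operator). This dichotomy forces two different kinds of estimates. For $\Lambda$ I would apply partial Strichartz in $u$ with $w$ as a parameter; the $3+1$-dimensional admissible pairs recover exactly the norms making up $\mathcal{S}_{x,y}$ and \eqref{est1}, and the bound \eqref{est2} on $|\partial_t|^{1/4}\Lambda$ follows from the $1/4$-smoothing inherent to the Schr\"odinger flow combined with Duhamel. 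For $\Gamma$ the absence of a positive elliptic generator forces the split $\Gamma=\Gamma_c+\Gamma_p$: the condensate piece $\Gamma_c=\bar\phi\otimes\phi$ reduces the bounds \eqref{apriori1}--\eqref{apriori3} to standard Strichartz for the Hartree-type equation satisfied by $\phi$ (giving $\phi\in L^{8}_tL^{24/5}_x$ and then H\"older in $u$), while for the pair piece $\Gamma_p$ the smallness \eqref{smallhyp}, combined with the propagated $H^{\alpha}\otimes H^{\alpha}$ regularity, gives the same bounds with an $N^{-\epsilon_3}$ factor to spare. The $\sup_{x-y}$ estimate on $|\nabla_{x+y}|^{\alpha}\Gamma$ in $L^2_{t,u}$ is a Morawetz-type statement in the spirit of Proposition 3.4 of \cite{CGMZ}, obtained by testing the $\Gamma$ equation against a suitable multiplier and integrating by parts in $u$.

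With this linear framework in place, the bulk of the work is to estimate the right-hand sides of \eqref{rhs1cut}--\eqref{rhs2cut} in the dual Strichartz norms. The available inputs are: the Morawetz bound \eqref{morawetzGamma} controlling the Hartree term $V_N*\rho$ in $L^2_{t,x}$; the smallness \eqref{lambdaestimp}, \eqref{smallhyp}, which makes every contribution in which at least one pair factor appears sub-critical and absorbable into $o_N(1)\mathcal{X}(T)^{2}$; and the $1/N$ prefactors in $\frac{1}{2N}[S_3,V_N\Psi]$ and $\frac{1}{2N}[S_3,V_N\Phi]$, which beat the at most $N^{3\beta}$ concentration of $V_N$ precisely because $\beta<1$.

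The main obstacle is the genuinely nonlinear interaction $[V_N\Omega^{*},\Psi]$ in \eqref{rhs2cut}: here $V_N$ concentrates in $w$ and couples the condensate component $\Phi$ of $\Omega$, which carries no smallness, to $\Psi$ with no $1/N$ relief. The strategy is to split $\Omega=\Phi+\Psi$: the pure-pair contribution $V_N\Psi^{*}\circ\Psi$ inherits smallness from \eqref{lambdaestimp}, and for the cross piece $V_N\Phi^{*}\circ\Psi$ one exploits the tensor product form of $\Phi$ to decouple the $u$ and $w$ integrations, uses H\"older in $w$ against $V_N$ with its bounded $L^1$-norm rather than its diverging $L^{\infty}$-norm, and distributes the $\langle\nabla_x\rangle^{\alpha}\langle\nabla_y\rangle^{\alpha}$ weights via a fractional Leibniz rule at regularity slightly above $1/2$. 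Closing the estimate relies on the endpoint Sobolev embedding $H^{\alpha}(\mathbb{R}^{3})\hookrightarrow L^{6/(3-2\alpha)}$ being just barely better than $L^{3}$, which is what the choice $\alpha>1/2$ provides, and the final constants come out with a net power $N^{-\delta(\alpha,\beta)}$ that is positive exactly in the range $\beta<1$ for $\alpha$ sufficiently close to but greater than $1/2$. This is the step where the hypothesis $\beta<1$ is used quantitatively, and where the bootstrap either closes or fails.
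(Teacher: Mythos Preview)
Your bootstrap inequality $\mathcal{X}(T)\le C_0 + o_N(1)\,\mathcal{X}(T)^{2}$ cannot close as written, and this is the central gap. Not every quadratic interaction carries an $o_N(1)$ coefficient. The Hartree term $(V_N*\rho)\,\Lambda_p$ and the cross term $(V_N\bar\Gamma_c)\circ\Lambda_p$ involve $\Gamma$ (or $\rho=\Gamma(t,x,x)$) with no smallness in $N$ and no $1/N$ prefactor; the Morawetz bound \eqref{morawetzGamma} gives a uniform constant, not a small one. The paper's mechanism for making these terms small is not $N$-smallness but a Bourgain-type time-interval decomposition: one first proves \eqref{apriori1}--\eqref{apriori3} as genuine \emph{a priori} bounds (Lemma~\ref{firstsmallness}), then uses them to split $[0,\infty)$ into finitely many intervals on which $\sup_z\|\langle\nabla_x\rangle^{\alpha}\Gamma(t,x+z,x)\|_{L^8_tL^{4/3}_x}\le\epsilon_2$, and closes the estimate interval by interval with $\epsilon_2$ playing the role of the small parameter (see Theorems~\ref{terms1}--\ref{terms3} and Corollary~\ref{XYcor}). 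Your scheme misses this entirely.

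Relatedly, your derivation of \eqref{apriori1}--\eqref{apriori3} is circular. You propose to obtain them for $\Gamma_c=\bar\phi\otimes\phi$ from Strichartz estimates for $\phi$, but in the paper the $\phi$ estimates are the \emph{last} thing proved (Corollary~\ref{corphi}), as a consequence of the $\Gamma,\Lambda$ bounds, not an input to them. The paper's proof of \eqref{apriori1}--\eqref{apriori2} (Lemma~\ref{firstsmallness}) uses instead the pointwise operator-positivity inequality
\[
|\Gamma(t,x+z,x-z)|\le |\Gamma(t,x+z,x+z)|^{1/2}\,|\Gamma(t,x-z,x-z)|^{1/2}
\]
together with the conserved kinetic energy and the Morawetz bound $\|\Gamma(t,x,x)\|_{L^4_tL^2_x}\lesssim 1$, which are available from the outset. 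Finally, your description of the linear theory (``partial Strichartz in $u$ with $w$ as a parameter'' and ``$1/4$-smoothing'') is far too coarse: the forcing term $\frac{V_N}{N}\Lambda_c$ in the $\Lambda_p$ equation, after applying $\langle\nabla_x\rangle^{\alpha}\langle\nabla_y\rangle^{\alpha}$, approaches $\delta(x-y)\Lambda_c$ and cannot be placed in any dual Strichartz space; the paper devotes Sections~\ref{Statement}--\ref{endlin} to a frequency-localized argument (Theorems~\ref{mainthmnew}, \ref{mainthmnewlow}, Propositions~\ref{Suufixed}, \ref{Suusharp}) that handles this via the collapsing norms and does not fit your outline.
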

\begin{remark}  We don't know if $\sup_{x-y} \||\nabla_{x+y}|^{\alpha}\Gamma\|_{L^2(dt d(x+y))}$ can be replaced by the stronger norm
$\||\nabla_{x+y}|^{\alpha}\Gamma\|_{L^2(dt)L^{\infty}(d(x-y))L^{2}(d(x+y))}$.\end{remark}
For a proof of \eqref{apriori1} and \eqref{apriori2} see  Lemma \ref{firstsmallness}. These estimates depend on initial conditions of trace class (or in a Schatten space) and could not be true, even for a linear equation, with just $\dot H^s$ initial conditions.  See \cite{Frank-Sabin}, \cite{Du-M}

 We also have a theorem for $\sht$ (without dividing it by $N$):
\begin{theorem}\label{intro2} Let $\Lambda$, $\Gamma$, $\phi$ be solutions of  \eqref{rhs1cut}, \eqref{rhs2cut}, where the potential satisfies \eqref{Vhyp} and the initial conditions satisfy \eqref{datahyp} and \eqref{smallhyp}.
Assume also that
\begin{align*}
\|\sht(0, \cdot, \cdot)\|_{L^2}
+\|(\shb \circ \sh)(0, \cdot, \cdot))\|_{L^2}\le C.
\end{align*}
Then, for all $N \ge N_0$ (as in Theorem \ref{intro1})
\begin{align*}
\|\sht\|_{\mathcal{S}_{x, y}}+ \|\shb \circ \sh\|_{\mathcal{S}_{x, y}}
 \le C \log N.
\end{align*}
Also, assume that for all  $j=1, \cdots, j_0$ we have
\begin{align*}
\||\nabla_{x+y}|^j\sht(0, \cdot, \cdot)\|_{L^2}+\||\nabla_{x+y}|^j(\shb \circ \sh)(0, \cdot, \cdot))\|_{L^2}
\le C.
\end{align*}
Then also
\begin{align*}
\|||\nabla_{x+y}|^j\sht\|_{\mathcal{S}_{x, y}}+ \||\nabla_{x+y}|^j(\shb \circ \sh)\|_{\mathcal{S}_{x, y}} \le C \log N.
\end{align*}
\end{theorem}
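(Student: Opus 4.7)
The plan is to derive the PDE satisfied by $\sht = 2N\Lambda_p$ directly from \eqref{rhs2cut}, apply the partial Strichartz estimates built into the norm $\mathcal{S}_{x,y}$, and use Theorem \ref{intro1} to control all nonlinear contributions uniformly in $N$. The same scheme will then handle $\shb\circ\sh = N\Gamma_p$ starting from \eqref{rhs1cut}.

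Extracting the off-diagonal block of \eqref{rhs2cut} for $\Lambda_p$ and multiplying through by $2N$, the equation for $\sht$ takes the schematic form
\begin{equation*}
\left(\frac{1}{i}\partial_t - \Delta_x - \Delta_y\right)\sht = -h(t)\, V_N(x-y)\bigl(\sht(x,y) + 2\phi(x)\phi(y)\bigr) + h(t)\, R[\phi,\Gamma,\Lambda](x,y),
\end{equation*}
where the source $-V_N(x-y)\phi(x)\phi(y)$ appears with an $O(1)$ prefactor because the $2N$ rescaling cancels the $\frac{1}{2N}$ in front of $[S_3, V_N\Phi]$ in \eqref{rhs2cut}, and $R$ collects the rescalings of the remaining right-hand side of \eqref{rhs2}: commutators of $(V_N*\rho)S_3$ with $\Psi$, pieces of $[V_N\Omega^*,\Psi]$, and the inner $V_N\Psi$ term whose $\frac{1}{2N}$ prefactor absorbs the $2N$ scaling back to $O(\frac{1}{N})$.

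Next I would apply the $\mathcal{S}_{x,y}$ Strichartz estimates to this rescaled equation. The initial data contribution is bounded by hypothesis. The source $V_N(x-y)\phi(x)\phi(y)$ is handled uniformly in $N$ using $\|V_N\|_{L^1}=\|v\|_{L^1}$ together with dispersive bounds on the condensate $\phi$ in a suitable $L^p_t L^q_x$ norm. Every bilinear term inside $R$ is estimated via Theorem \ref{intro1}: the bounds \eqref{est0}--\eqref{est2}, the Morawetz-type bounds \eqref{morawetzGamma}--\eqref{morawetzGammaenergy}, and the smallness bounds \eqref{lambdaestimp}, \eqref{smallhyp} together ensure that each product of the form $V_N\cdot(\Gamma\text{ or }\Lambda\text{ factor})\cdot(\Psi\text{ factor})$ yields a bound depending only on constants from Theorem \ref{intro1} and not on $N$.

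The main obstacle, and the source of the $\log N$, is the linear-in-$\sht$ contribution $V_N(x-y)\sht(x,y)$, which carries an $O(1)$ (rather than $O(1/N)$) coefficient because it descends from the diagonal pieces of $[V_N\Omega^*,\Psi]$ after rescaling. Closing the Duhamel inequality for $\|\sht\|_{\mathcal{S}_{x,y}}$ then requires a Gr\"onwall-type argument in which, using the Morawetz bound \eqref{morawetzGamma} to control $V_N*\rho$-type weights on dyadic time slabs of length $\sim N^{-2\beta}$, each slab contributes an $O(1)$ increment, and the number of slabs needed to reach a compact time interval is $O(\log N)$; summing yields the stated $C\log N$ bound. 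Equivalently, the same $\log N$ appears as the logarithmic endpoint loss in the Strichartz estimate for a Schr\"odinger operator with the potential $V_N$, which for $\beta<1$ is critically close to scale invariance.

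Finally, the identical scheme applied to \eqref{rhs1cut} (multiplied by $N$) controls $\shb\circ\sh = N\Gamma_p$: the $\Gamma_p$ equation has the same structure, with a $V_N(x-y)|\phi|^2$-type source and a linear-in-$\shb\circ\sh$ term with $O(1)$ coefficient, and all remaining pieces close via Theorem \ref{intro1} as before. The higher-derivative bounds on $|\nabla_{x+y}|^j\sht$ and $|\nabla_{x+y}|^j(\shb\circ\sh)$ follow by commuting $|\nabla_{x+y}|^j$ through the equations: this operator commutes with $\Delta_x+\Delta_y$ and acts trivially on $V_N(x-y)$, so the same Strichartz argument applies using the higher-regularity bounds of Theorem \ref{intro1} and the assumed data bounds, producing the same $\log N$ loss.
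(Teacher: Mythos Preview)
Your proposal misidentifies the source of the $\log N$ loss, and the mechanism you propose does not work. The forcing term $V_N(x-y)\phi(x)\phi(y)$ is \emph{not} bounded uniformly in $N$ in any dual Strichartz norm via $\|V_N\|_{L^1}$: since $V_N\to c\,\delta$, the product $V_N(x-y)\Lambda_c(x,y)$ concentrates on the diagonal, and for instance its $L^{q'}(dx)L^2(dy)$ norm grows like $N^{3\beta/2}$. In the paper this forcing term (more precisely $-\tfrac{V_N}{2}\Lambda$) is handled by Proposition~\ref{Suusharp}: one decomposes dyadically in the frequency $|\xi-\eta|$, applies the frequency-localized estimate of Proposition~\ref{Suufixed} on each shell (bounded in terms of the collapsing norms $\|\langle\nabla_{x+y}\rangle^{1/2}\Lambda\|_{collapsing}$ and $\||\partial_t|^{1/4}\Lambda\|_{collapsing}$, which are finite by Theorem~\ref{intro1}), and sums over the $\sim\log N$ shells between $1$ and $N$. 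This is the only place $\log N$ enters.

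Second, there is no pointwise term $V_N(x-y)\sht(x,y)$ with $O(1)$ coefficient in the rescaled equation; after multiplying \eqref{rel1} by $2N$ that term carries a factor $\tfrac{1}{N}$. What is linear in $\sht$ with $O(1)$-size weights are the convolution-type terms $\{V_N*\rho,\sht\}$ and $(V_N\Gamma)\circ\sht$, and likewise $(V_N\Lambda)\circ p_2$. These are not closed by a Gr\"onwall over $N$-dependent time slabs, but by splitting $[0,\infty)$ into a \emph{finite number of intervals, independent of $N$}, on which $\sup_z\|\Gamma(t,x+z,x)\|_{L^8_tL^{12/7}_x}$ and $\sup_z\|\Lambda(t,x+z,x)\|_{L^2_tL^3_x}$ are small---this is possible precisely because of the a~priori bounds \eqref{apriori1}--\eqref{apriori3} and \eqref{est1} from Theorem~\ref{intro1}. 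On each such interval the linear-in-$\sht$ contributions have small coefficients and are absorbed into the left-hand side, yielding an $O(1)$ multiplicative factor per interval; since the number of intervals is bounded independently of $N$, no $\log N$ arises here. Your slab argument, by contrast, would at best give a multiplicative constant per slab, producing polynomial growth in $N$ rather than a logarithm; and if the slabs have length $\sim N^{-2\beta}$, covering a fixed interval requires $\sim N^{2\beta}$ of them, not $\log N$.
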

\begin{remark} The above estimates also imply some estimates for $\sh$. In particular,
\begin{align}
\|\sh\|_{L^p(dx)L^2(dy)} \le C \|\sht\|_{L^p(dx)L^2(dy)}.
\end{align}
This is because $\sh = \frac{1}{2} \sht \circ \ch^{-1}$ and $\ch^{-1}$ has bounded operator norm.
\end{remark}

Finally, we also have estimates for $\phi$.

Define the standard Strichartz spaces
\begin{align*}
\|\phi\|_{\mathcal S} =  \sup_{  p, q \, \, admissible
}\|\phi\|_{L^{p}(dt) L^{q}(dx)}.
\end{align*}

\begin{corollary}\label{corphi}
 Under the assumptions of Theorem \ref{intro1},
and the additional assumptions $\|\big<\nabla\big>^{\alpha+j}\phi(t=0)\|_{L^2} \le C$ for all $j=0, 1, \cdots, j_0$,
 we have
\begin{align*}
\|\big<\nabla\big>^{\alpha+j}\phi\|_{\mathcal S} \le C
\end{align*}

\end{corollary}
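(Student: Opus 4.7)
The plan is to treat the condensate equation as an inhomogeneous Schr\"odinger equation whose source terms have coefficients already controlled by Theorem \ref{intro1}, and apply standard Strichartz estimates via Duhamel's formula. Writing the equation as $(i\partial_t + \Delta)\phi = F_1 + F_2 + F_3$ with
\begin{align*}
F_1 &= -(V_N*\rho)\,\phi,\\
F_2 &= -\int V_N(x{-}y)\,\Gamma_p(y,x)\,\phi(y)\,dy,\\
F_3 &= +\int V_N(x{-}y)\,\Lambda_p(x,y)\,\overline{\phi}(y)\,dy,
\end{align*}
the Strichartz inequality reduces matters to controlling each $F_j$ in a dual admissible norm $L^{p'}(dt)L^{q'}(dx)$ and then closing a bootstrap in $\|\phi\|_{\mathcal{S}}$.

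For the base case $j=0$, I would split $\rho = |\phi|^2 + \Gamma_p(x,x)$ in $F_1$. The $|\phi|^2$ piece is the standard defocussing Hartree self-interaction; since $\|V_N\|_{L^1_x} = \|v\|_{L^1}$ is uniform in $N$, Young's convolution inequality and H\"older distribute $(V_N*|\phi|^2)\phi$ to Strichartz norms of $\phi$, and a short-time contraction iterated using the conservation of mass yields global control. The $\Gamma_p(x,x)$ piece of $F_1$ is handled by the Morawetz and trace bounds \eqref{morawetzGamma}--\eqref{morawetzGammaenergy}, which place $V_N*\rho$ in $L^2_{t,x} \cap L^\infty_t L^2_x$. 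For $F_2$ and $F_3$, I view $V_N(x{-}y)$ as an $L^1$ kernel in $x-y$ uniform in $N$, and use the two-particle Strichartz norms $\|\Gamma_p\|_{\mathcal{S}_{x,y}}$ and $\|\Lambda_p\|_{\mathcal{S}_{x,y}}$ from \eqref{est0} to control the convolution in $y$; the smallness \eqref{smallhyp}--\eqref{lambdaestimp} then provides extra room in closing the bootstrap.

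For the higher derivatives, commute $\langle\nabla\rangle^{\alpha+j}$ through the equation and apply a fractional Leibniz (Kato--Ponce) rule. Contributions in which the derivative lands on $\phi$ close inductively on $j$ using the $j=0$ estimate together with the induction hypothesis at lower orders, while contributions in which the derivative falls on $\Gamma_p$ or $\Lambda_p$ are directly controlled by the higher-regularity block at the end of Theorem \ref{intro1}. The main obstacle I anticipate is uniformity in $N$ for the $F_1$ bootstrap: one must verify that the fractional derivative $\langle\nabla\rangle^{\alpha}$ distributes productively through $V_N*|\phi|^2$ without losing a factor of $N$, which relies on the uniform $L^1$ bound for $V_N$ together with the defocussing sign of the Hartree self-interaction. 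These verifications are standard once the inputs from Theorem \ref{intro1} are invoked, but they are the main technical point; the exchange and pairing terms, by contrast, are essentially linear in $\phi$ and close readily once the $\mathcal{S}_{x,y}$ bounds are in hand.
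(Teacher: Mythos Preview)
Your overall architecture (Duhamel + Strichartz + time-localized bootstrap) matches the paper's, but two of your inputs are wrong, and the paper's fix for both is the same observation: treat the condensate equation as genuinely \emph{linear} in $\phi$, with coefficients $\Gamma$, $\Gamma_p$, $\Lambda_p$ that are already fully controlled by Theorem~\ref{intro1}.

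First, your handling of the $|\phi|^2$ part of $F_1$ (``short-time contraction iterated using conservation of mass'') does not give a global-in-time, uniform-in-$N$ Strichartz bound: the cubic interaction is mass-supercritical in 3D, so mass alone fixes no lower bound on the local existence time. There is no need for a nonlinear bootstrap here. The paper does not split $\rho$; it uses $\rho(t,x)=\Gamma(t,x,x)$ directly and invokes the a priori bound \eqref{apriori3}, $\|\langle\nabla_{x+y}\rangle^{\alpha}\Gamma\|_{L^8(dt)L^{\infty}(d(x-y))L^{4/3}(d(x+y))}\le C$, together with \eqref{est1} for $\Lambda$, to partition $[0,\infty)$ into finitely many intervals on which these coefficients have norm $\le\epsilon$. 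On each interval the fractional Leibniz rule gives $\|\langle\nabla\rangle^{\alpha}F_j\|_{L^{p'}L^{q'}}\le C\epsilon\,\|\langle\nabla\rangle^{\alpha}\phi\|_{L^2L^6}$ for all three $j$, which is absorbed.

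Second, for $F_2$ and $F_3$ the Strichartz norms $\|\Gamma_p\|_{\mathcal{S}_{x,y}}$ and $\|\Lambda_p\|_{\mathcal{S}_{x,y}}$ (which are $L^p_tL^q_xL^2_y$) do \emph{not} control the near-diagonal restrictions $\Gamma_p(x,x{+}z)$, $\Lambda_p(x,x{+}z)$ that arise after writing $F_2,F_3$ as averages in $z=x-y$ against $V_N(z)\in L^1$. What you actually need are the collapsing-type quantities: $\sup_{z}\|\langle\nabla_x\rangle^{\alpha}\Gamma(\cdot,\cdot{+}z)\|_{L^8_tL^{4/3}_x}$ (from \eqref{apriori3}) and $\|\langle\nabla_{x+y}\rangle^{\alpha}\Lambda\|_{L^2_tL^{\infty}_{x-y}L^2_{x+y}}$ (from \eqref{est1}). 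For the higher derivatives the paper similarly uses $\sup_{x-y}\||\nabla_{x+y}|^{\alpha+j}\Gamma\|_{L^2(dt\,d(x+y))}$ when $\nabla_{x+y}$ lands on $\Gamma$. Once you replace your $\mathcal{S}_{x,y}$ inputs by these collapsing/rotated-coordinate norms, the rest of your outline goes through exactly as in the paper.
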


We expect the above theorems to have immediate applications to proving a global improved Fock space estimate. This will be addressed in a different paper. We expect to be able to prove

\begin{align*}
&\|\psi_{exact}(t)-\psi_{approx}(t)\|_{\F}:=\|e^{i t \H}e^{-\sqrt{N}
\A(\phi_{0})}e^{-\B(k(0))}\Omega-
e^{i \chi(t)}e^{-\sqrt{N}\A(\phi(t))}e^{-\B(k(t))}\Omega\|_{\F}\\
&\le \frac{C P(t)}{N^{\frac{1-\beta}{2} }}\notag
\end{align*}
for a polynomial $P(t)$, and $0<\beta <1$. Currently, the best bounds for growth in time for the above construction are
of the form $\frac{e^{C t}}{N^{\frac{1-\beta}{2} }}$. See \cite{C-Z} for the proof and background material.

Finally, we mention the difficulties surrounding equations \eqref{rhs1}, \eqref{rhs2}.

Denote
\begin{align*}
&\S= \frac{1}{i}\frac{\partial}{\partial t} - \Delta_x -\Delta_y\\
&\S_{\pm}= \frac{1}{i}\frac{\partial}{\partial t} - \Delta_x +\Delta_y.
\end{align*}

Schematically, treating $V_N$ as $\delta$ and ignoring constants, the equations become
\begin{align*}
&\S \Lambda_c =\Gamma(t, x, x) \Lambda_c(t, x, y)\}+ \Lambda_p(t, x, x)  \Gamma_c (t, x, y)\\
&\S_{\pm} \Gamma_c=\Gamma(t, x, x) \Gamma_c(t, x, y)\}+\bar\Lambda_p(t, x, x) \Lambda_c(t, x, y)
\\
&\S \Lambda_p
+
\frac{V_N}{N} \Lambda_p
=\Gamma(t, x, x) \Lambda_p(t, x, y)\}+
\Lambda_p(t, x, x)  \Gamma_p (t, x, y)-\frac{V_N}{N} \Lambda_c\\
&+\Lambda_c(t, x, x)  \Gamma_p (t, x, y) \\
&\S_{\pm} \Gamma_p = \Gamma(t, x, x) \Gamma_p(t, x, y)+ \Lambda_p(t, x, x) \Lambda_p (t, x, y)+\bar \Lambda_c(t, x, x)  \Lambda_p (t, x, y).
\end{align*}
Our method for treating the nonlinear terms requires (roughly) Strichartz estimates for $|\nabla_x|^{\frac{1}{2}}|\nabla_y|^{\frac{1}{2}}\Lambda_{p \,\,  or \, c}$, $|\nabla_x|^{\frac{1}{2}}|\nabla_y|^{\frac{1}{2}}\Gamma_{p \, \,  or \, c}$.
But if we apply $|\nabla_x|^{\frac{1}{2}}|\nabla_y|^{\frac{1}{2}}$ to the forcing term $\frac{V_N}{N} \Lambda_c$ in the equation for $\Lambda_p$, we get a singularity which approached $\delta(x-y)\Lambda_c$ which cannot be treated by standard $X^{-\frac{1}{2}}$ type techniques. 

\subsection{Acknowledgment}
The second and third authors thank Daniel Tataru for the suggestion that the resulting singularities mentioned above are sufficiently special that they can be treated by other methods \cite{Tataru}. Also, we thank Xiaoqi Huang for suggesting several improvements to this paper. He is currently working on extending our current results to the case $\beta=1$.

J. Chong was supported by the NSF through the RTG grant DMS-RTG 1840314. Z. Zhao was partially supported by the NSF grant of China (No. 12101046) and the Beijing Institute of Technology Research Fund Program for Young Scholars.

\section{Statement of the main linear estimates }\label{Statement}
 Let $x, y \in \mathbb R^3$, recall $\S=\frac{1}{i}\frac{\partial}{\partial t} - \Delta_x - \Delta_y$ and $h(t)$ denotes the Heaviside function.
  Consider the equation

  \begin{align}
&\S \Lambda(t, x, y) = h(t) \bigg(N^{3 \beta -1} v(N^{\beta}(x-y) \Lambda(t, x, y) +G(t, x, y)\notag\\
&+
 N^{3 \beta -1} v(N^{\beta}(x-y) H(t, x, y)
\label{maineqold}  \bigg)\\
&\Lambda(0, \cdot)=\Lambda_0\notag
\end{align}
for $0 < \beta <1$ with $v$ is Schwartz.

Recall the definition of $\mathcal S_{x, y}$ from \eqref{restr}.
Also define the full Strichartz norm (including
$L^{p}(dt) L^{q}(d(x-y)) L^2(d(x+y))$ )
\begin{align}
&\|\Lambda\|_{\mathcal{S}}\label{restrx-y}\\
&=  \sup_{  p, q \, \, admissible
}\|\Lambda\|_{L^{p}(dt) L^{q}(dx) L^2(dy)}\notag\\
&\quad +\sup_{  p, q \, \notag \, admissible}\|\Lambda\|_{L^{p}(dt) L^{q}(dy) L^2(dx)}\\
&\quad +\sup_{  p, q \, \notag \, admissible}\|\Lambda\|_{L^{p}(dt) L^{q}(d(x-y)) L^2(d(x+y))}
\end{align}
and the restricted dual Strichartz norm , excluding the end-points $p'=2$, $p'=1$: let $p_1$ large and
$p_0> 2$ but close to $2$ as above, and define
\begin{align*}
\|G\|_{\mathcal S'_r}= \inf_{  p, q \, \, admissible, p_1 \ge p \ge p_0>2} \{\|G\|_{L^{p'}(dt) L^{q'}(dx) L^2(dy)},
\|G\|_{L^{p'}(dt) L^{q'}(dy) L^2(dx)}\}.
\end{align*}
The reason for excluding $p'=2$ is that we don't know if we can flip $x$ and $y$ in the double end-point
 case in Theorem \ref{mainStrich}.
The reason $p'=1$ is excluded is the failure of sharp Sobolev estimates in $L^1$, see for instance the proof of Lemma \ref{timederlemma}.

Finally, define the ``collapsing norms"
\begin{align*}
\|\Lambda\|_{collapsing} =  \big\|\Lambda\big\|_{L^{\infty}(d(x-y))L^2(dt) L^2(d(x+y))}.
\end{align*}
We will also use the stronger norms $\big\|\Lambda\big\|_{L^2(dt)L^{\infty}(d(x-y)) L^2(d(x+y))}$.
For the reason we don't work only with this stronger norm see the comments regarding
\eqref{troubleterm1}. For the reason we don't work only with the collapsing norms, see Remark
\eqref{localization}.

The simplest form of our theorem  is

\begin{theorem}\label{mainthm}
Let $\Lambda$ satisfy \eqref{maineqold}, assume $v$  is Schwartz.
Let $0<\beta<1$ and $\alpha >\frac{1}{2}$ is sufficiently close to $\frac{1}{2}$ (so that \eqref{vestim}-\eqref{vestim1} hold).
Then there exists $\epsilon>0$ (depending on $\beta<1$) such that, for $N$ sufficiently large,
\begin{align*}
&\big\|\big<\nabla_x\big>^{\alpha}\big<\nabla_y\big>^{\alpha}\Lambda \big\|_{\mathcal{S}_{x, y}}
+\big\|\big<\nabla_{x+y}\big>^{\alpha}\Lambda\big\|_{L^2(dt)L^{\infty}(d(x-y)) L^2(d(x+y))}\\
&
+\big\|\big|\partial_t\big|^{\frac{1}{4}}\Lambda\big\|_{L^2(dt)L^{\infty}(d(x-y)) L^2(d(x+y))}\\
&\lesssim
\big\|\big<\nabla_x\big>^{\alpha}\big<\nabla_y\big>^{\alpha}G \big\|_{\mathcal{S}_r'}
+N^{-\epsilon}\big\|\big<\nabla_x\big>^{\alpha}\big<\nabla_y\big>^{\alpha}H \big\|_{L^2(dt)L^{6}(d(x-y))L^2(d(x+y))}\\
&+N^{-\epsilon}\big\|\big|\partial_t\big|^{\frac{1}{4}}H\big\|_{collapsing}
+N^{-\epsilon}\big\|\big<\nabla_{x+y}\big>^{\alpha}H\big\|_{collapsing}\\
&+ \big\|\big<\nabla_x\big>^{\alpha}\big<\nabla_y\big>^{\alpha}\Lambda_0 \big\|_{L^2}.
\end{align*}

\end{theorem}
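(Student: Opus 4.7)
} The plan is to view equation \eqref{maineqold} as an inhomogeneous Schrödinger equation for the operator $\mathcal{S} = (1/i)\partial_t - \Delta_x - \Delta_y$ on $\mathbb R^{3+3}$, and to close the estimate via Duhamel's formula combined with a bootstrap on the self-referencing term $(V_N/N)\Lambda$. I would begin by assembling the linear ingredients: (i) the partial Strichartz bound for the free Schrödinger group controlling $\mathcal S_{x,y}$; (ii) a bilinear or \emph{collapsing} estimate controlling $\|e^{it(\Delta_x+\Delta_y)}\Lambda_0\|_{L^2(dt)L^\infty(d(x-y))L^2(d(x+y))}$ by half a derivative of $\Lambda_0$ in $L^2$ (working in center-of-mass coordinates $u=x+y,\ v=x-y$, where $\Delta_x+\Delta_y=2\Delta_u+2\Delta_v$, this is a trace/dispersive estimate in $v$ after Strichartz in $u$); and (iii) the upgrade of both to inhomogeneous bounds whose forcing sides are the dual Strichartz norms $\mathcal S_r'$ and suitable $L^1$-in-$v$ collapsing norms. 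Since $\mathcal S$ commutes with $\langle\nabla_x\rangle^\alpha\langle\nabla_y\rangle^\alpha$, with $\nabla_{x+y}=\nabla_u$, and (up to the forcing) with $\partial_t$, these multipliers pass through. The $|\partial_t|^{1/4}$ norm on the left is then handled by trading a quarter time derivative for a half spatial derivative in the $u$ variable via the equation, reducing it to the $\langle\nabla_{x+y}\rangle^\alpha$ collapsing norm plus a fractional forcing term (whose end-point $p'=1$ is excluded precisely so Sobolev embeddings work, as remarked after \eqref{restrx-y}).

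Next I would split the right-hand side of \eqref{maineqold} into three pieces: the pure forcing $G$, the potential-times-auxiliary $(V_N/N)H$, and the self-feedback $(V_N/N)\Lambda$. The $G$ piece is absorbed immediately into the dual Strichartz norm $\mathcal S_r'$ after commuting $\langle\nabla_x\rangle^\alpha\langle\nabla_y\rangle^\alpha$ through. For the $H$ piece, I would use that $\|V_N/N\|_{L^1(\mathbb R^3)}=N^{-1}\|v\|_{L^1}$, while $\|V_N/N\|_{L^r}=N^{3\beta(1-1/r)-1}\|v\|_{L^r}$ is only a mildly bad $L^r$ norm for $r$ near $1$. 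Writing $V_N(v)H(t,u,v)/N$ in a dual Strichartz or $L^1(dv)\cdot L^2(du\,dt)$ space, one pairs $V_N/N$ in $L^s(dv)$ with $s$ close to $1$ against $H$ in $L^{s'}(dv)$ close to $L^\infty(dv)$, i.e.\ the collapsing norm of $H$ built into the statement. A fractional Leibniz rule for $\langle\nabla_x\rangle^\alpha\langle\nabla_y\rangle^\alpha$ acting on the product splits into two types of terms: derivatives that fall on $V_N$ (which has frequency scale $N^\beta$, so they cost $N^{2\alpha\beta}$) and derivatives that fall on $H$ (which are bounded by the norms on the right of the theorem). The net power is $N^{-1+2\alpha\beta+o(1)}$, which is $N^{-\epsilon}$ for some $\epsilon(\beta,\alpha)>0$ whenever $\beta<1$ and $\alpha$ is close enough to $\tfrac12$. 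The same analysis applied to the $(V_N/N)\Lambda$ self-feedback produces a contribution bounded by $N^{-\epsilon}$ times the norm one is trying to estimate, which can be absorbed into the left-hand side for $N\ge N_0$ large.

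The main obstacle, flagged explicitly at the end of the introduction, is this $(V_N/N)\Lambda$ term: as $N\to\infty$, $V_N/N$ converges weakly to $c\,\delta(x-y)$, and $\langle\nabla_x\rangle^{1/2}\langle\nabla_y\rangle^{1/2}\delta(x-y)$ is a non-integrable distribution that cannot be tamed by standard $X^{s,b}$ machinery. The key structural observation that makes the argument go through is that, in the $(u,v)$ decoupling, the collapsing norm $L^\infty(dv)L^2(du\,dt)$ treats $v$ as a parameter, so that on the forcing side the $v$-integration against $V_N(v)/N$ is a genuinely three-dimensional operation of small operator norm $N^{-1}$; the frequency loss from the fractional derivatives is quantitative and controlled by the exponents $\alpha,\beta$. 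I would make sure the dual Strichartz pairing stays away from $p'=2$ and $p'=1$ (as built into the definition of $\mathcal S_r'$) to avoid both the sharp Sobolev failure in $L^1$ and the asymmetric double end-point that would prevent flipping $x\leftrightarrow y$. With those technical points in place, iterating Duhamel and absorbing the $N^{-\epsilon}$ self-feedback term yields all four norms on the left simultaneously, and the constants are uniform in $N$ once $N\ge N_0$.
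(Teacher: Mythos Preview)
Your overall Duhamel-plus-bootstrap outline is the right shape, but the crucial step --- absorbing the self-feedback term $(V_N/N)\Lambda$ with an $N^{-\epsilon}$ gain --- does not close as you describe, and this is exactly where the paper's proof diverges from yours.

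Your power count $N^{-1+2\alpha\beta}$ implicitly puts $V_N/N$ in $L^1(d(x-y))$. But $L^1(d(x-y))L^2(dt\,d(x+y))$ is \emph{not} a dual Strichartz norm for $\S$: the endpoint forcing space in rotated coordinates is $L^2(dt)L^{6/5}(d(x-y))L^2(d(x+y))$ (Theorem~\ref{mainStrich}). If you run the fractional Leibniz rule there, the worst term costs $M^{2\alpha}\|v_M\|_{L^{6/5}}\sim N^{3\beta/2-1}$, which is \emph{large} for $2/3<\beta<1$; this is precisely the ``sub-optimal estimate'' \eqref{badtwoderiv} in the paper. So your bootstrap as written only closes for $\beta<2/3$, not the full range $\beta<1$.

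The paper's actual mechanism is a high/low frequency split in $\xi$ and $\eta$ at level $M^{1+\delta_0}$. At high frequencies (Theorem~\ref{mainthmnew}), since $\widehat{v^1_M}$ is supported in $|\xi|\lesssim M$, the $\langle\nabla_x\rangle^\alpha$ in the Leibniz rule can only land on $\Lambda$, so the cost drops to $M^\alpha\|v_M\|_{L^{6/5+}}\lesssim M^{-\epsilon_0}$; this is iterated $\sim\log M$ times to push the frequency threshold down. At low frequencies (Theorem~\ref{mainthmnewlow}), the paper develops a genuinely new sharp forcing estimate (Propositions~\ref{Suufixed}--\ref{Suusharp}) that bounds $\|u\|_{\mathcal S_{x,y}}$ by $L^1(d(x-y))L^2$-type norms of the forcing carrying $|\partial_t|^{1/4}$ or $|\nabla_{x+y}|^{1/2}$; this is what makes the $L^1$ pairing legitimate. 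Then Bernstein at low frequency (Theorem~\ref{finish}) converts $\langle\nabla_x\rangle^\alpha\langle\nabla_y\rangle^\alpha$ into a harmless power of $M$. The collapsing norms close separately via Proposition~\ref{l11}. Your sketch does not contain any of these ingredients, and without them the argument is incomplete for $\beta\ge 2/3$.
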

\begin{remark} Notice that the LHS involves the stronger norm \\ $L^2(dt)L^{\infty}(d(x-y)) L^2(d(x+y))$ , while the RHS has the weaker
``collapsing" norm
$L^{\infty}(d(x-y))L^2(dt) L^2(d(x+y))$.
\end{remark}

We first reduce the proof to $\hat v$ compactly supported. Let $0<\epsilon << 1-\beta$ to be chosen below.
Start with $v \in \mathcal S $
 and $\hat \psi \in C_0^{\infty}$, supported in a ball of radius $\frac{1}{10}$, and $\hat \psi=1$
 on a neighborhood of $0$ and define $v_{main}$ and $v_{tail}$ (depending on $N$)
 by $\hat v_{main}= \hat v(\xi) \hat \psi\left(\frac{\xi}{N^{\epsilon}}\right)$ and
$\hat v_{tail}= \hat v(\xi)\left(1- \hat \psi\left(\frac{\xi}{N^{\epsilon}}\right)\right)$. Since $\hat v$ is Schwartz,
for any $p$, $|\xi^{\alpha}D_{\xi}^{\beta}\hat v_{tail}(\xi)| \le C_{p, \alpha, \beta} N^{-p}$.
Thus we also have $|x^{\alpha}D_{x}^{\beta}v_{tail}(x)| \le C_{p, \alpha, \beta} N^{-p}$ (with a different $C_{p, \alpha, \beta} $, of course).

 In all calculations that follow, $N^{3 \beta -1} v_{tail}(N^{\beta}(x-y))$ and its derivatives can be treated as  error terms.

It is also simpler to change the notation to $M=N^{\beta+\epsilon}$. Then the Fourier transform of $v_{main}(N^{\beta} x)$
is $\hat v(\frac{\xi}{N^{\beta}}) \hat \psi\left(\frac{\xi}{N^{\beta+\epsilon}}\right)$ and is supported in $|\xi| < \frac{M}{10}$.
Also, define
\begin{align*}
&v_M(x)=N^{3 \beta -1}v(N^{\beta} x)\\
&v^1_M(x)=N^{3 \beta -1}v_{main}(N^{\beta} x)\\
&v^2_M(x)=N^{3 \beta -1}v_{tail}(N^{\beta} x)
\end{align*}
This definition simplifies the notation during the proof of the main linear theorem (up to section \ref{endlin}).
When we deal with the nonlinear equations (starting with section \ref{scalar}) we will use the notation
$V_N(x)=N^{3 \beta }v(N^{\beta} x)$.

Thus $v^1_M(x)$ is slightly less singular than $M^2 v(Mx)$ as $M \to \infty$, and its Fourier transform
is supported in $|\xi| < \frac{M}{10}$, while
\begin{align}
 \|\big<\nabla\big>^{n}v^2_M\|_{L^p} \le C_{n, p} N^{-10} \label{vestim2}
\end{align}
for any $1 \le p \le \infty$, $n \ge 0$. The reader willing to assume $\hat v$ is compactly supported in a small neighborhood of $0$ can take $M=N$.

At this stage we also choose $\alpha > \frac{1}{2}$,  a number $1+$ (slightly bigger than 1),
a number $\frac{6}{5}+$ (slightly bigger than $\frac{6}{5}+$)
and also $\epsilon_0 >0$ so that
\begin{align}
&\| v^1_M\|_{L^{\frac{3}{2}}} \lesssim M^{-\epsilon_0} \label{vestim}\\
&\|<\nabla>^{\alpha+ \delta_0} v^1_M\|_{L^{\frac{6}{5}+}} \lesssim M^{\alpha+ \delta_0} \|v^1_M\|_{L^{\frac{6}{5}+}} \lesssim M^{-\epsilon_0}\notag\\
&
\|\sqrt N \big<\nabla\big>^{\alpha}v^1_M\|_{L^{1+}}+
\| \big<\nabla\big>^{2 \alpha}v^1_M\|_{L^{1+}}\\
& \lesssim
 \sqrt N M^{ \alpha (1+ \delta_0)}\|v^1_M\|_{L^{1+}}+
M^{2 \alpha (1+ \delta_0)}\|v^1_M\|_{L^{1+}}
 \lesssim M^{-\epsilon_0}.
\label{vestim1}
\end{align}
The above are also true for $v^2_M$, with a bound of $ M^{-n}$ on the right hand side, for any $n$.

All the implicit constants in $\lesssim$ depend on $\beta<1$, (which determines the numbers $\alpha $, $\delta_0$, $\epsilon_0$, $1+$,
$\frac{6}{5}+$ described above), and the exponents $p_1, p_0$ defining  and $\mathcal S_r'$, but are independent of $N$ (for $N$ large).

\section{Estimates in rotated coordinates}
In order to prove Theorem \ref{mainthm}
we will need to adapt standard Sobolev, Bernstein, square function and maximal function estimates to rotated coordinates.

The argument is based on the following lemma:
\begin{lemma}\label{rotationlemma}
Let
\begin{align}
R=\frac{1}{\sqrt 2} \left(
\begin{matrix} \label{Rdef}
1 & 1\\
-1 & 1
\end{matrix}
\right)
\end{align}
(where $1$ stands for the $3 \times 3$ identity matrix) so that $\|f\circ R\|_{L^p(dx)L^q(dy)}=\|f\|_{L^p(d(x-y))L^q(d(x+y))}$.
Let $K$ be a distribution (possibly $l^2$ valued\footnote{In this case, $L^{p_1}(dx)L^q(dy)$ is replaced by $L^{p_1}(dx)L^q(dy)l^2$.}) acting in the $x$ variable, and denote $K \delta=K(x)\delta(y)$ and $\delta K=\delta(x)K(y)$ (tensor products).
Assume the following estimate holds, for some $1\le p_1, p_2, q \le \infty$:
\begin{align*}
\|(K\delta) * f \|_{L^{p_1}(dx)L^q(dy)} \lesssim \|f\|_{L^{p_2}(dx)L^q(dy)}
\end{align*}
Then
\begin{align*}
\|(K\delta) * f \|_{L^{p_1}(d(x-y))L^q(d(x+y))} \lesssim \|f\|_{L^{p_2}(d(x-y))L^q(d(x+y))}
\end{align*}
or, equivalently
\begin{align}
\|\left((K\delta) *(f\circ R^{-1}) \right) \circ R\|_{L^{p_1}(dx)L^q(dy)} \lesssim \|f\|_{L^{p_2}(dx)L^q(dy)}\label{rotatedkernel}
\end{align}
Also,
\begin{align*}
\|(\delta K) * f \|_{L^{p_1}(d(x-y))L^q(d(x+y))} \lesssim \|f\|_{L^{p_2}(d(x-y))L^q(d(x+y))}
\end{align*}
or, equivalently
\begin{align}
\|\left((\delta K) *(f\circ R^{-1}) \right) \circ R\|_{L^{p_1}(dx)L^q(dy)} \lesssim \|f\|_{L^{p_2}(dx)L^q(dy)}\label{rotatedkernel1}.
\end{align}
\end{lemma}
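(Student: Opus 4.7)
My plan is to unfold the convolution in the tensor-product kernel and then perform a shear change of variables that converts the diagonal convolution one sees in rotated coordinates back into an ordinary one-variable convolution, so that the hypothesized mixed-norm estimate applies directly. The $(\delta K)$ case will be handled by the opposite shear.

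First I would unfold: $(K\delta)\ast f(x,y)=\int K(w)\,f(x-w,y)\,dw$, i.e.\ convolution in $x$ only with $y$ passive. Introducing rotated coordinates $u=x-y$, $v=x+y$ and setting $\tilde f(u,v)=f(x,y)$, the translation $(x,y)\mapsto(x-w,y)$ acts on $(u,v)$ by $(u,v)\mapsto(u-w,v-w)$, so in rotated coordinates the operator becomes
\begin{align*}
\widetilde{(K\delta)\ast f}(u,v)=\int K(w)\,\tilde f(u-w,v-w)\,dw,
\end{align*}
a convolution along the diagonal direction $(1,1)$. This is the object we need to estimate in $L^{p_1}_u L^q_v$ in terms of $\tilde f$ in $L^{p_2}_u L^q_v$.

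The key step is the shear $s=v-u$. Define $g(u,s):=\tilde f(u,u+s)$. Translation invariance of Lebesgue measure in the $v$-slot (for each fixed $u$) gives $\|g(u,\cdot)\|_{L^q(ds)}=\|\tilde f(u,\cdot)\|_{L^q(dv)}$, and hence $\|g\|_{L^{p}(du)L^{q}(ds)}=\|\tilde f\|_{L^{p}(du)L^{q}(dv)}$ for every $p,q$. Evaluating the rotated operator at $v=u+s$ gives
\begin{align*}
\widetilde{(K\delta)\ast f}(u,u+s)=\int K(w)\,g(u-w,s)\,dw=(K\ast_u g)(u,s),
\end{align*}
which is now an ordinary convolution in the first variable only. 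Applying the hypothesis to $g$, with $u$ playing the role of the first variable and $s$ the second, yields $\|K\ast_u g\|_{L^{p_1}(du)L^{q}(ds)}\lesssim\|g\|_{L^{p_2}(du)L^{q}(ds)}$, and combining with the norm identities above gives exactly \eqref{rotatedkernel} (up to a harmless multiplicative constant absorbed from the $\tfrac{1}{\sqrt2}$ normalization of $R$).

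For $(\delta K)\ast$ the argument is symmetric. Here translations $(x,y)\mapsto(x,y-w)$ correspond to $(u,v)\mapsto(u+w,v-w)$, so the diagonal direction is $(1,-1)$. One uses the opposite shear $t=u+v$, $h(u,t):=\tilde f(u,t-u)$, which is preserved by this motion; a translation-invariance argument identical to the one above shows $\|h\|_{L^{p}(du)L^{q}(dt)}=\|\tilde f\|_{L^{p}(du)L^{q}(dv)}$ and turns the rotated operator into $(\check K\ast_u h)(u,t)$ with $\check K(w)=K(-w)$, so the hypothesis again applies and gives \eqref{rotatedkernel1}. The $\ell^2$-valued case of either identity is the same argument applied slotwise, since Minkowski's inequality and Fubini commute with taking an $\ell^2$ norm. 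I do not expect any substantive obstacle: the sole idea is the shear substitution that straightens the diagonal convolution, and the rest is bookkeeping.
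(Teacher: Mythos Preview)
Your proof is correct and is essentially the same argument as the paper's. The paper packages the shear as multiplication by a lower triangular matrix $L_1$ chosen so that $(RL_1)^{-1}$ is upper triangular, and invokes the invariance of $L^{p}(dx)L^{q}(dy)$ under lower triangular maps together with the identity $\bigl((K\delta)\ast(f\circ(RL_1)^{-1})\bigr)\circ RL_1=(K\delta)\ast f$; your explicit substitution $s=v-u$ is precisely this lower triangular change of variables written out in rotated coordinates, and your norm identity $\|g\|_{L^{p}(du)L^{q}(ds)}=\|\tilde f\|_{L^{p}(du)L^{q}(dv)}$ is exactly that invariance.
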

\begin{proof}
In order to prove \eqref{rotatedkernel} we use a nonsingular lower triangular matrix $L_1$
such that
\begin{align}
(RL_1)^{-1}=\left(
\begin{matrix}
1 & a\\
0 & b.
\end{matrix}
\right)
\end{align}
Using the invariance of $L^{p_1}(dx)L^q(dy)$ under transformations given by lower triangular matrices,
\eqref{rotatedkernel}
is equivalent to
\begin{align*}
&\|\left((K \delta) * (f\circ ( RL_1)^{-1})\right)(RL_1(x, y))\|_{L^{p_1}(dx)L^q(dy)}\lesssim \|f\|_{L^{p_2}(dx)L^q(dy)}
\end{align*}
but, by direct calculation (see Lemma \ref{Kf} in the appendix),
\begin{align*}
&\left((K \delta) * (f\circ ( RL_1)^{-1})\right)(RL_1(x, y))=((K \delta)*f)(x, y).\\
\end{align*}
In order to prove \eqref{rotatedkernel1} we use the same argument, based on a nonsingular lower triangular matrix $L_2$
such that
\begin{align}
(RL_2)^{-1}=\left(
\begin{matrix}
c & 1\\
d & 0
\end{matrix}
\right)
\end{align}
and the calculation
\begin{align*}
\left((\delta K ) * (f\circ ( RL_2)^{-1})\right)(RL_2(x, y))=((K \delta)*f)(x, y).
\end{align*}
\end{proof}

A first consequence is the ``Sobolev at an angle" estimate
\begin{lemma} \label{sobangle}
Let $\alpha > 0$, $1 \le p, q, \le \infty$ and assume the Sobolev estimate $\|u\|_{L^p(dx)}\lesssim \|\big<\nabla_{x}\big>^{\alpha}\|_{L^q(dx)}$ holds. Then
\begin{align}
&\|\Lambda\|_{L^{p}(d(x-y))L^2(d(x+y))}\notag\\
& \lesssim \min \{ \|\big<\nabla_{x}\big>^{\alpha}\Lambda\|_{L^{q}(d(x-y))L^2(d(x+y))}
\|\big<\nabla_{y}\big>^{\alpha}\Lambda\|_{L^{q}(d(x-y))L^2(d(x+y))}\}
\label{sobangle1}
\end{align}
and also
\begin{align*}
&\|\Lambda\|_{L^{p}(dx)L^2(dy)}
 \lesssim  \|\big<\nabla_{x+y}\big>^{\alpha}\Lambda\|_{L^{q}(dx))L^2(dy)}.
\end{align*}

\end{lemma}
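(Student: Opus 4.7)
The plan is to deduce both estimates from Lemma \ref{rotationlemma} by writing Sobolev embedding as a convolution with the Bessel kernel. Recall that the scalar inequality $\|u\|_{L^p(\R^3)} \lesssim \|\langle\nabla\rangle^\alpha u\|_{L^q(\R^3)}$ says exactly that convolution with the kernel $G_\alpha$ (with Fourier symbol $\langle\xi\rangle^{-\alpha}$) is bounded from $L^q$ to $L^p$. This single 3D fact will drive everything; the role of Lemma \ref{rotationlemma} is only to transport the estimate between standard and rotated coordinates.

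For the first estimate, I would first apply Minkowski's inequality to the tensor-product kernel $K\delta := G_\alpha(x)\delta(y)$: since the $y$-variable is inert and sits inside an $L^2$ norm which can be pulled across the convolution in $x$, the scalar Sobolev inequality gives
\begin{align*}
\|(K\delta)*F\|_{L^p(dx)L^2(dy)} \lesssim \|F\|_{L^q(dx)L^2(dy)}.
\end{align*}
By Lemma \ref{rotationlemma}, the same estimate holds with $(d(x-y), d(x+y))$ replacing $(dx, dy)$. Taking $F = \langle\nabla_x\rangle^\alpha \Lambda$ gives $(K\delta)*F = \Lambda$ and yields the first half of \eqref{sobangle1}. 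The $\nabla_y$-version is identical with $K\delta$ replaced by $\delta K := \delta(x) G_\alpha(y)$ and the second transport identity \eqref{rotatedkernel1} of the rotation lemma.

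For the second estimate, the derivative $\langle\nabla_{x+y}\rangle^\alpha$ is a derivative in the first rotated coordinate $u:=x+y$, so in the rotated variables $(u,v) = (x+y, x-y)$ the operator $\langle\nabla_{x+y}\rangle^{-\alpha}$ is exactly convolution with $G_\alpha(u)\delta(v)$, again in tensor-product form. The scalar Sobolev inequality plus Minkowski in the rotated variables gives the bound
\begin{align*}
\|(K\delta)*F\|_{L^p(d(x+y))L^2(d(x-y))} \lesssim \|F\|_{L^q(d(x+y))L^2(d(x-y))}.
\end{align*}
I would now invoke Lemma \ref{rotationlemma} in the reverse direction — which is a symmetric statement since the argument in the Lemma's proof only uses that one can conjugate $R$ by a lower triangular matrix to obtain an upper triangular matrix, and the same trick works for $R^{-1}$. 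This transfers the above to the standard coordinates $(dx, dy)$, and setting $F = \langle\nabla_{x+y}\rangle^\alpha \Lambda$ yields the second estimate.

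The only real subtlety is the symmetry of Lemma \ref{rotationlemma}: as stated it transports from standard to rotated coordinates, but the proof is based on writing $R = (\textrm{upper triangular})\cdot(\textrm{lower triangular})^{-1}$, and an identical factorization of $R^{-1}$ reverses the direction. I would spell this out briefly (or state it as a short auxiliary remark) before applying it in the second part. Beyond that, the argument is routine: everything reduces to 1D Sobolev plus a change of variables by a lower-triangular linear map, which preserves $L^p(dx)L^2(dy)$ up to a harmless constant.
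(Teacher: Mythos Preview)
Your proposal is correct and follows essentially the same approach as the paper, which simply says ``This follows by using $K$ the kernel of $\langle\nabla_x\rangle^{-\alpha}$.'' You have supplied the details the paper omits, in particular the observation that the second estimate requires running Lemma \ref{rotationlemma} in the reverse direction (from rotated to standard coordinates), and your justification for that reversal via the analogous triangular factorization of $R^{-1}$ is sound.
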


\begin{proof}
This follows by using $K$ the kernel of $\big<\nabla_{x}\big>^{-\alpha}$.

\end{proof}

Another consequence is Bernstein's inequality in rotated coordinates.

Recall the standard Littlewood-Paley decomposition.
Let $\phi(x)$ such that $\hat \phi \in C_0^{\infty}$ and $\hat \phi(\xi)=1$ in $|\xi|<1$, $\hat \phi(\xi)=0$ in $|\xi|>2$.
Define $\phi_k$ for $k \ge 0$ by $\hat \phi_k(\xi)= \hat\phi(\frac{\xi}{2^k})$ and denote
\begin{align*}
 P_{|\xi| \le 2^k}f = f * \phi_k
\end{align*}
so that the inverse Fourier transform of  $\hat\phi(\frac{\xi}{2^k})\hat f$ is $P_{|\xi| \le 2^k}f$.

Next, let $\psi_0=\phi$ and define $\psi_k$ for $k \ge 1$ by $\hat \psi_k(\xi)= \hat\phi(\frac{\xi}{2^k})-\hat\phi(\frac{\xi}{2^{k-1}})$
We also denote
\begin{align*}
 P_{|\xi| \sim 2^k}f = f * \psi_k
\end{align*}
and note that $\sum_{k=0}^{l} \hat\psi_k(\xi)= \hat\phi(\frac{\xi}{2^l})$
and
\begin{align}
\sum_{k=0}^{l} f* \psi_k \to f
  \label{LPformula}
\end{align}
 in all $L^p$ spaces ($1\le p < \infty$).

More generally, sometimes we will denote by
$\hat \psi_k(\xi)= \hat\psi(\frac{\xi}{2^k})$ for $k \ge 1$ and any $\hat \psi \in C_0^{\infty}(\mathbb R^3)$, vanishing on a neighborhood of $0$. $\phi=\psi_0$ will only be required to have $C_0^{\infty}$ Fourier transform.
In that case \eqref{LPformula} will not be true, but the Bernstein and square function estimates listed below still hold.

 The classical Bernstein inequalities are
\begin{align*}
&\|<\nabla>^{\alpha} (\phi_k *f)\|_{L^p(dx)} \lesssim 2^{\alpha k} \|\phi_k*f\|_{L^p(dx)}\\
&\|<\nabla>^{\alpha} (\psi_k *f)\|_{L^p(dx)} \sim 2^{\alpha k} \|\psi_k*f\|_{L^p(dx)} \, \, (\mbox{ if} \, k \ge 1)
\end{align*}
($\alpha \ge 0$, $1 \le p \le \infty$). See, for instance, \cite{Taobook}. The (elementary) proof immediately implies (for
$1 \le p, q \le \infty$)
\begin{align*}
&\|<\nabla_x>^{\alpha} ((\phi_k \delta) *f)\|_{L^p(dx)L^q(dy)} \lesssim 2^{\alpha k} \|(\phi_k \delta) *f\|_{L^p(dx)L^q(dy)}\\
&\|<\nabla_x>^{\alpha} ((\psi_k \delta) *f)\|_{L^p(dx)L^q(dy)} \sim 2^{\alpha k} \|(\psi_k \delta) *f\|_{L^p(dx)L^q(dy)}.
\end{align*}
Using Lemma \ref{rotationlemma} we get

\begin{lemma} \label{bernstein}
The following estimates hold
\begin{align*}
&\|<\nabla_x>^{\alpha} ((\phi_k\delta) *f)\|_{L^p(d(x-y))L^q(d(x+y)} \lesssim 2^{\alpha k} \|( \phi_k\delta )*f\|_{L^p(d(x-y))L^q(d(x+y))}\\
&\|<\nabla_y>^{\alpha} ((\delta\phi_k) *f)\|_{L^p(d(x-y))L^q(d(x+y)} \lesssim 2^{\alpha k} \|(\delta \phi_k)*f\|_{L^p(d(x-y))L^q(d(x+y))}\\
&\|<\nabla_x>^{\alpha} ((\psi_k\delta) *f)\|_{L^p(d(x-y))L^q(d(x+y)} \sim 2^{\alpha k} \|( \psi_k\delta )*f\|_{L^p(d(x-y))L^q(d(x+y))}\\
&\|<\nabla_y>^{\alpha} ((\delta\psi_k) *f)\|_{L^p(d(x-y))L^q(d(x+y)} \sim 2^{\alpha k} \|(\delta \psi_k)*f\|_{L^p(d(x-y))L^q(d(x+y))}
\end{align*}
\end{lemma}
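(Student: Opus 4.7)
The plan is to obtain each of the four estimates by rewriting $\langle\nabla_x\rangle^\alpha$ (respectively $\langle\nabla_y\rangle^\alpha$) as convolution, in a single variable, with a Schwartz kernel whose $L^1$ norm captures exactly the scaling factor $2^{\alpha k}$, and then transferring the resulting convolution estimate from standard to rotated coordinates via Lemma \ref{rotationlemma}.

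First I would construct the auxiliary kernel. Fix a bump $\tilde\phi\in C_0^\infty(\mathbb R^3)$ that equals $1$ on the support of $\hat\phi$, set $\tilde\phi_k(\xi):=\tilde\phi(\xi/2^k)$ so that $\tilde\phi_k\equiv 1$ on the support of $\hat\phi_k$, and let $K_k$ be the inverse Fourier transform of $m_k(\xi):=\langle\xi\rangle^\alpha \tilde\phi_k(\xi)$. Since $m_k\in C_0^\infty$, $K_k$ is Schwartz; a standard rescaling ($\xi=2^k\eta$) gives $\|K_k\|_{L^1(\mathbb R^3)}\lesssim 2^{\alpha k}$. Because $\tilde\phi_k\equiv 1$ on the support of $\hat\phi_k$ and the partial Fourier transform of $(\phi_k\delta)*f$ in the first variable is $\hat\phi_k(\xi)\hat f(\xi,\eta)$, the identity
\[
\langle\nabla_x\rangle^\alpha\bigl((\phi_k\delta)*f\bigr)=(K_k\delta)*\bigl((\phi_k\delta)*f\bigr)
\]
holds exactly.

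Next, Young's inequality in the $x$-variable (the convolution with $K_k\delta$ acts only in $x$) yields, for any $1\le p,q\le\infty$,
\[
\|(K_k\delta)*g\|_{L^p(dx)L^q(dy)}\le \|K_k\|_{L^1}\,\|g\|_{L^p(dx)L^q(dy)}\lesssim 2^{\alpha k}\|g\|_{L^p(dx)L^q(dy)}.
\]
Applying Lemma \ref{rotationlemma} to the kernel $K_k$ (with $p_1=p_2=p$) transfers the estimate to
\[
\|(K_k\delta)*g\|_{L^p(d(x-y))L^q(d(x+y))}\lesssim 2^{\alpha k}\|g\|_{L^p(d(x-y))L^q(d(x+y))}.
\]
Specializing $g=(\phi_k\delta)*f$ gives the first estimate of the lemma. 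The second estimate, involving $\nabla_y$, follows by the same construction applied in the $y$-variable and invoking the $\delta K$ half of Lemma \ref{rotationlemma}.

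For the estimates involving $\psi_k$, the same argument gives the direction $\lesssim$. To obtain the reverse inequality required for $\sim$, I would rerun the argument with $m_k$ replaced by $\langle\xi\rangle^{-\alpha}\tilde\psi_k(\xi)$, where $\tilde\psi_k$ equals $1$ on the (annular) support of $\hat\psi_k$ and lives on an annulus of scale $2^k$. The inverse Fourier transform $\widetilde K_k$ then satisfies $\|\widetilde K_k\|_{L^1}\lesssim 2^{-\alpha k}$, and one has the identity
\[
(\psi_k\delta)*f=(\widetilde K_k\delta)*\bigl(\langle\nabla_x\rangle^\alpha((\psi_k\delta)*f)\bigr).
\]
A final application of Lemma \ref{rotationlemma} then produces the matching lower bound, and the $\nabla_y$ version is analogous. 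I expect no serious obstacle: the argument is a routine transfer of standard Littlewood--Paley calculus through Lemma \ref{rotationlemma}, the only genuine input being the $L^1$ bound on the auxiliary kernels $K_k, \widetilde K_k$, which is immediate from rescaling.
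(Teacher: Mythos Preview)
Your proof is correct and follows essentially the same route as the paper. The paper simply notes that the elementary proof of the classical Bernstein inequalities immediately extends to the mixed norms $L^p(dx)L^q(dy)$, and then invokes Lemma \ref{rotationlemma}; your construction of the auxiliary kernels $K_k$, $\widetilde K_k$ is precisely that elementary proof made explicit, and your transfer step via Lemma \ref{rotationlemma} is exactly what the paper has in mind.
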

Finally, we state two square function estimates.
For a function depending only on $x$, the classical estimate is (for $1<p<\infty$)
\begin{align*}
\|\left(\sum_{k=0}^{\infty} |f*\psi_{k} |^2\right)^{\frac{1}{2}}\|_{L^p(dx)}\sim \|f\|_{L^p(dx)}.
\end{align*}
The proof can be modified to apply to $L^2$ valued functions and we have
\begin{lemma} \label{square2}
Let $1<p<\infty$ .
Define $\F \left(P_{|\xi-\eta|\sim 2^k}f \right)= \hat f(\xi, \eta)\hat \psi\left(\frac{\xi-\eta}{2^k}\right)$ for $k \ge 1$
and $\F \left(P_{|\xi-\eta|\sim 2^0}f \right)= \hat f(\xi, \eta)\hat \phi\left(\xi-\eta\right)$.
Then the following estimate holds (for functions which also depend on $t$)
\begin{align*}
\|\left(\sum_{k=0}^{\infty} |P_{|\xi-\eta|\sim 2^k}f  )|^2\right)^{\frac{1}{2}}\|_{L^p(d(x-y))L^2(d(x+y)dt)}\sim \|f\|_{L^p(d(x-y))L^2(d(x+y)dt)}.
\end{align*}
\end{lemma}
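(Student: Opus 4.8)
The plan is to recognize the left-hand side as a \emph{vector-valued} Littlewood--Paley square function, with the ``vector'' taken in the Hilbert space $\mathcal H := L^2\big(d(x+y)\,dt\big)$, and then to invoke the standard Hilbert-space-valued Littlewood--Paley / Calder\'on--Zygmund theory.

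First I would pass to the rotated coordinates $u=\tfrac{1}{\sqrt2}(x-y)$, $w=\tfrac{1}{\sqrt2}(x+y)$ as in Lemma \ref{rotationlemma}. A direct computation of the Fourier transform shows that $\xi-\eta$ is (a multiple of) the frequency variable dual to $u$, while $\xi+\eta$ is dual to $w$; hence the multiplier $\hat\psi\big(\tfrac{\xi-\eta}{2^k}\big)$ depends only on the $u$-frequency, and $P_{|\xi-\eta|\sim 2^k}$ acts as $f\mapsto \psi_k *_u f$, convolution \emph{only} in the $u$ variable (with $\psi_k$ the usual Littlewood--Paley bump at scale $2^k$, up to the irrelevant $\sqrt2$ rescaling). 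Viewing $f=f(u;w,t)$ as a function of $u\in\mathbb R^3$ with values in $\mathcal H$, the operator $\psi_k*_u$ is a \emph{scalar} convolution in $u$, so it commutes with the $\mathcal H$-structure, and Fubini identifies the left-hand side with
\[
\Big\|\big(\textstyle\sum_{k\ge0}\|\psi_k*_u f\|_{\mathcal H}^2\big)^{1/2}\Big\|_{L^p(du)}=\|Sf\|_{L^p(\mathbb R^3;\mathcal H)},
\]
while the right-hand side is simply $\|f\|_{L^p(\mathbb R^3;\mathcal H)}$. Thus the assertion is exactly the $\mathcal H$-valued Littlewood--Paley equivalence $\|Sf\|_{L^p(\mathbb R^3;\mathcal H)}\sim\|f\|_{L^p(\mathbb R^3;\mathcal H)}$ for $1<p<\infty$.

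For the forward bound $\|Sf\|_{L^p(\mathbb R^3;\mathcal H)}\lesssim\|f\|_{L^p(\mathbb R^3;\mathcal H)}$ I would view $T:f\mapsto(\psi_k*_u f)_{k\ge0}$ as a singular integral operator with kernel $K(u)=(\psi_k(u))_{k\ge0}\in\mathcal L(\mathcal H,\ell^2(\mathcal H))$. Plancherel in $u$ together with the finite overlap of the supports of the $\hat\psi_k$ gives the $L^2(\mathbb R^3;\mathcal H)\to L^2(\mathbb R^3;\ell^2(\mathcal H))$ bound, and the classical estimates $\|K(u)\|_{\ell^2}\lesssim|u|^{-3}$, $\|\nabla_u K(u)\|_{\ell^2}\lesssim|u|^{-4}$ — unchanged by the presence of $\mathcal H$ since the $\psi_k$ are scalar — verify the H\"ormander condition; vector-valued Calder\'on--Zygmund theory (Benedek--Calder\'on--Panzone) then yields boundedness on $L^p$ for all $1<p<\infty$. (Equivalently one may randomize with Rademacher functions, apply Khintchine's inequality, and use the scalar Mikhlin multiplier theorem for the uniformly bounded multipliers $\sum_k r_k(\omega)\hat\psi_k$.) For the reverse bound $\|f\|_p\lesssim\|Sf\|_p$, which uses $\sum_{k\ge0}\hat\psi_k\equiv1$, I would choose a fattened family $\tilde\psi_k$ with $\hat{\tilde\psi}_k\equiv1$ on $\supp\hat\psi_k$, write $\langle f,g\rangle=\sum_k\langle\psi_k*_u f,\ \tilde\psi_k*_u g\rangle$, apply Cauchy--Schwarz in $k$ and H\"older in $u$, invoke the already-proven forward bound for the dual exponent $p'$, and take the supremum over $\|g\|_{L^{p'}(\mathbb R^3;\mathcal H)}=1$.

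I do not expect a genuine obstacle: the mathematical content is the reduction to the classical vector-valued Littlewood--Paley inequality. The two points needing care are (i) the bookkeeping with the rotation, i.e.\ confirming that $P_{|\xi-\eta|\sim 2^k}$ really is a pure convolution in the single variable $u=x-y$ and therefore commutes with the $L^2(d(x+y)\,dt)$ values, and (ii) checking that the $\ell^2$-valued kernel and the corresponding multipliers obey the usual size and smoothness bounds uniformly — which is immediate because the $\psi_k$ are scalar. Finally, for the more general choice $\hat\psi_k(\xi)=\hat\psi(\xi/2^k)$ with $\hat\psi\in C_0^\infty$ vanishing near the origin (so that $\sum_k\hat\psi_k\not\equiv1$) the same argument delivers the forward inequality $\|Sf\|_p\lesssim\|f\|_p$, which is what is used in the applications; only the reverse inequality requires the honest partition of unity.
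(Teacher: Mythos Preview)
Your proposal is correct and is exactly the approach the paper has in mind: after the rotation the operator becomes a standard Littlewood--Paley projection acting in the single variable $u=x-y$, and the assertion reduces to the classical (Hilbert-space-valued) square function estimate; the paper itself simply says ``The proof is the same as the standard square function estimate.'' Your remark that the reverse inequality requires an honest partition of unity while the forward bound holds for general $\hat\psi\in C_0^\infty$ vanishing near the origin is also accurate and matches the paper's conventions.
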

The proof is the same as the standard square function estimate.

Also, we have a result for a ``double square function" in rotated coordinates:

\begin{lemma} \label{square1}
Let $1<p<\infty$ . Then the following estimate holds
\begin{align*}
\|\left(\sum_{k', k''=0}^{\infty} |f*(\psi_{k'} \delta)*(\delta \psi_{k''} )|^2\right)^{\frac{1}{2}}\|_{L^p(d(x-y))L^2(d(x+y))}\sim \|f\|_{L^p(d(x-y))L^2(d(x+y))}.
\end{align*}
\end{lemma}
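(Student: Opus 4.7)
\textbf{Proof plan for Lemma \ref{square1}.}

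The plan is to obtain the double square function estimate by iterating a vector-valued version of the one-variable Littlewood-Paley square function, applied once in the $x$ variable and once in the $y$ variable, and then transporting the resulting inequalities into rotated coordinates via Lemma \ref{rotationlemma}. The starting point is the classical vector-valued Littlewood-Paley theorem: the operator $T_x: f \mapsto \{f *_x \psi_{k'}\}_{k' \geq 0}$ is bounded from $L^p(dx)$ to $L^p(dx;\ell^2)$ with comparable norms, and (since $\ell^2$ is a Hilbert space, so the scalar LP proof transfers verbatim) this remains true for functions valued in a separable Hilbert space. By Fubini it then gives
\begin{align*}
\Bigl\| \Bigl(\sum_{k'} |f *(\psi_{k'}\delta)|^2\Bigr)^{1/2} \Bigr\|_{L^p(dx)L^2(dy)} \sim \|f\|_{L^p(dx)L^2(dy)},
\end{align*}
and the same for the analogous operator $T_y : g \mapsto \{g *_y \psi_{k''}\}_{k'' \geq 0}$.

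Next I would transfer these two estimates to rotated coordinates. The kernel $K_{k'} = \psi_{k'}$ is an $\ell^2_{k'}$-valued distribution acting in $x$, so the $\ell^2$-valued version of Lemma \ref{rotationlemma} (as stated in its footnote) applies with $K\delta = \{\psi_{k'}\delta\}_{k'}$ and yields
\begin{align*}
\Bigl\|\Bigl(\sum_{k'} |f *(\psi_{k'}\delta)|^2\Bigr)^{1/2}\Bigr\|_{L^p(d(x-y))L^2(d(x+y))} \sim \|f\|_{L^p(d(x-y))L^2(d(x+y))}.
\end{align*}
Similarly, applying the $\delta K$ form of Lemma \ref{rotationlemma} to the $\ell^2_{k''}$-valued kernel $\{\delta\psi_{k''}\}_{k''}$ gives, for every $\ell^2$-valued input $g$,
\begin{align*}
\Bigl\|\Bigl(\sum_{k''} |g *(\delta\psi_{k''})|^2\Bigr)^{1/2}\Bigr\|_{L^p(d(x-y))L^2(d(x+y))} \sim \|g\|_{L^p(d(x-y))L^2(d(x+y))}.
\end{align*}

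To conclude, set $g_{k'} = f *(\psi_{k'}\delta)$ and view $g = \{g_{k'}\}_{k'}$ as an $\ell^2$-valued function. The second estimate, applied to $g$ and using the natural identification $\ell^2_{k''}(\ell^2_{k'}) = \ell^2_{(k',k'')}$, produces
\begin{align*}
\Bigl\|\Bigl(\sum_{k',k''} |f *(\psi_{k'}\delta) *(\delta\psi_{k''})|^2\Bigr)^{1/2}\Bigr\|_{L^p(d(x-y))L^2(d(x+y))}
\sim \Bigl\|\Bigl(\sum_{k'} |g_{k'}|^2\Bigr)^{1/2}\Bigr\|_{L^p(d(x-y))L^2(d(x+y))},
\end{align*}
and composing with the first estimate yields the desired equivalence with $\|f\|_{L^p(d(x-y))L^2(d(x+y))}$.

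The only genuinely delicate point is ensuring that the vector-valued Littlewood-Paley theorem is available in the $\ell^2$-valued setting and that Lemma \ref{rotationlemma} can be invoked with $\ell^2$-valued kernels on both sides of the iteration; both are standard (the scalar LP proof via vector-valued Calder\'on-Zygmund or Khintchine's inequality extends to Hilbert-space values, and the footnote of Lemma \ref{rotationlemma} explicitly allows the $\ell^2$-valued case). I do not expect any further obstacle beyond this bookkeeping, since no sharpness of $p$ or geometry of $v_M$ is used here.
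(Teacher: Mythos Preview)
Your proposal is correct and follows essentially the same route as the paper. The paper's Appendix (Lemmas \ref{doublesquarerotated} and \ref{squarerotated}) carries out exactly the iteration you describe: it packages $(\psi_k)_k$ as an $\ell^2$-valued Calder\'on--Zygmund kernel $\overrightarrow K$, uses the triangular-matrix computation behind Lemma \ref{rotationlemma} (written out as Lemma \ref{Kf}) to reduce the rotated-coordinate estimate to the unrotated one, and then iterates with the tensor kernel $\overrightarrow{\overrightarrow K}$ acting on $\ell^2$-valued inputs to obtain the double square function. The only cosmetic difference is that the paper first proves the $\lesssim$ direction this way and then invokes a standard duality argument for $\gtrsim$, whereas you obtain both directions at once by starting from the two-sided Littlewood--Paley equivalence; either presentation is fine.
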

\begin{proof} For $\lesssim$ see Lemma \ref{squarerotated} in the Appendix. The opposite inequality is a standard duality argument.
\end{proof}

\section{Preliminary estimates for solutions to the linear Schr\"odinger equation  }

We will use the following Strichartz estimate
(proved in Theorem 2.4, 2.5 of \cite{CGMZ}     ).
In $6+1$ dimensions,
 \begin{theorem} \label{mainStrich}
 Let $\S u= f+g$, $u(0, \cdot)=u_0$. Then
\begin{align*}
&\|u\|_{\mathcal S} \lesssim \|f\|_{L^2(dt) L^{\frac{6}{5}}(d(x-y)) L^2(d(x+y))}
+\|g\|_{\mathcal{S}_r'} + \|u_0\|_{L^2}.
\end{align*}
\end{theorem}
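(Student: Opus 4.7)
The plan is to reduce each of the three families of mixed norms in $\mathcal S$ to a $3{+}1$-dimensional Strichartz estimate for $\frac{1}{i}\partial_t - \Delta_x$ acting on Hilbert-space-valued functions. Two structural observations drive the argument. First, the propagator factors as $e^{it(\Delta_x+\Delta_y)} = e^{it\Delta_x}\, e^{it\Delta_y}$, and $e^{it\Delta_y}$ is unitary on $L^2(dy)$ for each fixed $(t,x)$; so whenever the innermost slot of an output norm is $L^2(dy)$, the factor $e^{it\Delta_y}$ drops out. Second, since the rotation $R$ of Lemma \ref{rotationlemma} is orthogonal, $\S$ transforms covariantly into $\frac{1}{i}\partial_t - \Delta_{\tilde x} - \Delta_{\tilde y}$ with $\tilde x = (x-y)/\sqrt 2$, $\tilde y = (x+y)/\sqrt 2$, so the rotated norm $L^p_t L^q_{x-y} L^2_{x+y}$ is handled by relabeling after applying $R$.

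For the homogeneous piece, I would treat the initial datum as an $L^2(dy)$-valued function on $\mathbb R^3_x$ and invoke the Hilbert-space-valued Keel--Tao inequality
\[
  \|e^{it\Delta_x}\phi\|_{L^p(dt)\, L^q(dx;\, H)} \lesssim \|\phi\|_{L^2(dx;\, H)}
\]
for every $3{+}1$-admissible $(p,q)$ (including the endpoint $(2,6)$) with $H = L^2(dy)$. This extension of Keel--Tao is automatic, since their proof uses only the dispersive bound and $L^2$-unitarity, both of which tensor with any Hilbert space $H$. Combined with the unitarity observation above, this yields $\|u\|_{L^p_t L^q_x L^2_y} \lesssim \|u_0\|_{L^2}$; swapping the roles of $x$ and $y$ gives $\|u\|_{L^p_t L^q_y L^2_x} \lesssim \|u_0\|_{L^2}$, and first applying $R$ and then running the same argument with $H = L^2_{x+y}$ gives $\|u\|_{L^p_t L^q_{x-y} L^2_{x+y}} \lesssim \|u_0\|_{L^2}$.

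For the inhomogeneous term $g \in \mathcal{S}_r'$ (non-endpoint, unrotated), I would run the standard $TT^*$ argument: dualize the homogeneous bound to estimate $T^*g$ in $L^2$, and then invoke Christ--Kiselev to convert the symmetric integral $TT^*$ into the retarded Duhamel integral. The restriction $p_0 \le p \le p_1$ in $\mathcal{S}_r'$ is exactly what Christ--Kiselev requires (it fails at $p' = 2$) and also avoids the $L^1$-Sobolev failure flagged in the text. For the endpoint input $f$ living in the rotated dual $L^2_t L^{6/5}(d(x-y)) L^2(d(x+y))$, I would apply the endpoint inhomogeneous Keel--Tao estimate directly in the rotated frame with Hilbert space $H = L^2_{x+y}$ to obtain
\[
  \bigg\|\int_0^t e^{i(t-s)(\Delta_x+\Delta_y)}f(s)\, ds\bigg\|_{L^2(dt)\, L^6(d(x-y))\, L^2(d(x+y))} \lesssim \|f\|_{L^2(dt)\, L^{6/5}(d(x-y))\, L^2(d(x+y))},
\]
and then interpolate with the coordinate-invariant energy bound $L^\infty_t L^2_{x,y}$ to recover all rotated admissible pairs.

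The main obstacle is that the endpoint input $f$ must control all three families of output norms in $\mathcal S$, not only the rotated one where it lives, and symmetrically the non-endpoint unrotated input $g$ must in turn control the rotated output. The Hilbert-space-valued Keel--Tao delivers each ``diagonal'' transfer (endpoint rotated $\to$ rotated, non-endpoint frozen-$y$ $\to$ frozen-$y$, etc.) cleanly, but the off-diagonal transfers at the strict endpoint are precisely where naive interpolation fails; this is exactly why the authors have split the forcing into an endpoint piece in rotated coordinates ($f$) and a non-endpoint piece in unrotated coordinates ($g$), and why $\mathcal{S}_r'$ excludes $p'=2$ and $p'=1$. I would handle the off-diagonal transfers by interpolating between the strict-endpoint rotated estimate just proved and the coordinate-free energy identity $\|u\|_{L^\infty_t L^2_{x,y}} \lesssim \|u_0\|_{L^2} + \|f\|_{L^1_t L^2} + \|g\|_{L^1_t L^2}$, together with the Hilbert-space-valued Christ--Kiselev argument for $g$, ensuring that the only genuinely endpoint contribution sits on the single rotated slot where it can be controlled.
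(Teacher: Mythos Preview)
The paper does not prove this theorem here; it simply cites Theorems 2.4 and 2.5 of \cite{CGMZ}. So your proposal must stand on its own, and its overall architecture (factor the propagator, use vector-valued Keel--Tao for the diagonal pieces, Christ--Kiselev for the non-endpoint off-diagonal transfers) is correct and is essentially how such results are proved.

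There is, however, a genuine gap at the point you yourself flag as the main obstacle: controlling the unrotated endpoint output $L^2_t L^6_x L^2_y$ by the rotated endpoint input $\|f\|_{L^2_t L^{6/5}_{x-y} L^2_{x+y}}$. Your proposed fix --- interpolate between the rotated endpoint estimate and the energy bound $L^\infty_t L^2_{x,y}$ --- does not work. Complex interpolation between $L^2_t L^6_{x-y} L^2_{x+y}$ and $L^\infty_t L^2_{x,y}$ produces the intermediate spaces $L^p_t L^q_{x-y} L^2_{x+y}$, which all lie in the rotated family; you never leave it, so you never reach $L^2_t L^6_x L^2_y$. And since both time exponents are $2$, Christ--Kiselev is unavailable here.

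What actually closes this case is an \emph{off-diagonal dispersive estimate}: for the full propagator one has
\[
\|e^{it(\Delta_x+\Delta_y)}\phi\|_{L^\infty_x L^2_y}\ \lesssim\ |t|^{-3/2}\,\|\phi\|_{L^1_{x-y}L^2_{x+y}}.
\]
This is checked by writing $e^{it\Delta_{x-y}}$ via its explicit kernel (which contributes the $|t|^{-3/2}$) and using that $e^{it\Delta_{x+y}}$ is unitary on $L^2_{x+y}$; fixing $x$ and applying Minkowski in the $x{-}y$ integration gives the bound. With this decay bound paired against the unrotated dispersive bound $\|e^{it\Delta}\|_{L^1_x L^2_y\to L^\infty_x L^2_y}\lesssim|t|^{-3/2}$ and the common energy estimate on $L^2(\mathbb R^6)$, the Keel--Tao bilinear machinery for the endpoint retarded estimate goes through with input space $L^{6/5}_{x-y}L^2_{x+y}$ and output space $L^6_x L^2_y$. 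By the $x\leftrightarrow y$ symmetry of the rotated input this also gives $L^2_t L^6_y L^2_x$. Note, by contrast, that the analogous decay $L^1_x L^2_y \to L^\infty_y L^2_x$ does \emph{not} follow by this argument --- which is exactly why the paper excludes $p'=2$ from $\mathcal S_r'$ and routes the double endpoint only through the rotated slot.
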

In the applications that follow, $u$ will be $\Lambda$ (or $\Lambda_p$ or $\Lambda_c$, or suitable fractional derivatives of $\Lambda$),  $f$ will be
$v_M(x-y)\Lambda(x, y)$ (or suitable derivatives) and $g$ will be $G$ (or suitable derivatives).

After our paper \cite{CGMZ}  was published, we learned about \cite{Hong} which contains closely related results (proved with different methods).

\begin{remark} Another way of obtaining Strichartz estimates on the LHS will be given in Propositions \ref{Suufixed}, \ref{Suusharp} below.
\end{remark}

Using Theorem \ref{mainStrich} and  Lemma \ref{sobangle}, we can get a (non-sharp) collapsing estimate.
\begin{lemma} \label{collapsinglemma}
If $\S u= f +g$, $u(0, \cdot)=u_0$, and let $\alpha> \frac{1}{2}$. Then
\begin{align*}
&  \|u\|_{L^2(dt)L^{\infty}(d(x-y))L^2(d(x+y))}\\
&\lesssim \min \{\|\big<\nabla_x\big>^{\alpha}f\|_{L^2(dt)L^{\frac{6}{5}}(d(x-y))L^2(d(x+y))}+
\|\big<\nabla_x\big>^{\alpha}g\|_{\mathcal{S}_r'}
+\|\big<\nabla_x\big>^{\alpha}u_0\|_{L^2}
, \\
&\|\big<\nabla_y\big>^{\alpha}f\|_{L^2(dt)L^{\frac{6}{5}}(d(x-y))L^2(d(x+y))}
+\|\big<\nabla_y\big>^{\alpha}g\|_{\mathcal{S}_r'}+\|\big<\nabla_y\big>^{\alpha}u_0\|_{L^2}.
\}
\end{align*}
\end{lemma}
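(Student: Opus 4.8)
The plan is to recover the collapsing norm on the left by combining the mixed-norm Strichartz bound of Theorem \ref{mainStrich} with the ``Sobolev at an angle'' estimate of Lemma \ref{sobangle}, after first differentiating the equation in the spatial variables.

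First I would differentiate. Since $\langle\nabla_x\rangle^{\alpha}$ is a Fourier multiplier in $x$, it commutes with $\S=\frac1i\partial_t-\Delta_x-\Delta_y$, so $\langle\nabla_x\rangle^{\alpha}u$ solves $\S(\langle\nabla_x\rangle^{\alpha}u)=\langle\nabla_x\rangle^{\alpha}f+\langle\nabla_x\rangle^{\alpha}g$ with data $\langle\nabla_x\rangle^{\alpha}u_0$. Theorem \ref{mainStrich} then yields
\begin{align*}
\|\langle\nabla_x\rangle^{\alpha}u\|_{\mathcal S}\lesssim
\|\langle\nabla_x\rangle^{\alpha}f\|_{L^2(dt)L^{6/5}(d(x-y))L^2(d(x+y))}
+\|\langle\nabla_x\rangle^{\alpha}g\|_{\mathcal{S}_r'}
+\|\langle\nabla_x\rangle^{\alpha}u_0\|_{L^2},
\end{align*}
whose right side is exactly the first quantity in the minimum. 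Taking the admissible pair $(p,q)=(2,6)$ in the last term of the definition \eqref{restrx-y} of $\|\cdot\|_{\mathcal S}$, this controls in particular $\|\langle\nabla_x\rangle^{\alpha}u\|_{L^2(dt)L^{6}(d(x-y))L^2(d(x+y))}$.

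Next I would apply Lemma \ref{sobangle} with the Sobolev embedding $\|w\|_{L^\infty(\mathbb R^3)}\lesssim\|\langle\nabla\rangle^{\alpha}w\|_{L^6(\mathbb R^3)}$, which holds precisely because $\alpha>\tfrac12=\tfrac36$; this is the single place the hypothesis $\alpha>\tfrac12$ enters. Using the first branch of \eqref{sobangle1} pointwise in $t$ for $\Lambda=u(t,\cdot,\cdot)$ and then integrating in $t$ gives
\begin{align*}
\|u\|_{L^2(dt)L^\infty(d(x-y))L^2(d(x+y))}\lesssim
\|\langle\nabla_x\rangle^{\alpha}u\|_{L^2(dt)L^{6}(d(x-y))L^2(d(x+y))}.
\end{align*}
Chaining this with the previous step bounds the left side by the first element of the minimum. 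Running the argument verbatim with $\langle\nabla_y\rangle^{\alpha}$ in place of $\langle\nabla_x\rangle^{\alpha}$ — the norm $\mathcal S$ and Theorem \ref{mainStrich} are symmetric under $x\leftrightarrow y$, and the second branch of \eqref{sobangle1} supplies the matching Sobolev step — bounds the left side by the second element, and taking the minimum of the two finishes the proof.

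I do not expect a substantial obstacle here: the argument is purely an assembly of two results already in hand. The one point needing care is the numerology $\alpha>\tfrac12$, which is the sharp threshold for upgrading the $L^6$-endpoint of the Strichartz family (the case $p=2$, which is the only one needed) to an $L^\infty$ bound in the $x-y$ variable at the cost of $\alpha$ spatial derivatives landing on either $x$ or $y$; for $\alpha\le\tfrac12$ a collapsing estimate of this shape would fail, which is also why the estimate is non-sharp (one spends $\alpha$ derivatives where one would like to spend exactly $\tfrac12$).
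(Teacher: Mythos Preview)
Your proof is correct and essentially identical to the paper's: both combine the Sobolev-at-an-angle estimate (Lemma \ref{sobangle}) with Theorem \ref{mainStrich} applied to $\langle\nabla_x\rangle^{\alpha}u$ (respectively $\langle\nabla_y\rangle^{\alpha}u$), differing only in the order in which the two steps are written.
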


\begin{proof}

\begin{align*}
& \|u\|_{L^2(dt)L^{\infty}(d(x-y))L^2(d(x+y))} \lesssim \|\big<\nabla_x\big>^{\alpha}u\|_{L^2(dt)L^6(d(x-y))L^2(d(x+y))}\\
&\lesssim \|\big<\nabla_x\big>^{\alpha}f\|_{L^2(dt)L^{\frac{6}{5}}(d(x-y))L^2(d(x+y))}\\
& +
\|\big<\nabla_x\big>^{\alpha}g\|_{\mathcal{S}_r'}
+\|\big<\nabla_x\big>^{\alpha}u_0\|_{L^2},
\end{align*}
and, of course, $\big<\nabla_x\big>^{\alpha}$ can be replaced by $\big<\nabla_y\big>^{\alpha}$.

\end{proof}

We  record  that the above  implies
\begin{lemma}\label{3collapsing}
If
 $\S u= f+g$, $u(0, \cdot)=u_0$. Then
\begin{align*}
& \|\big<\nabla_{x}\big>^{\alpha}u\|_{L^2(dt)L^{\infty}(d(x-y))L^2(d(x+y))}
+\|\big<\nabla_{y}\big>^{\alpha}u\|_{L^2(dt)L^{\infty}(d(x-y))L^2(d(x+y))}\\
& \lesssim
 \|\big<\nabla_x\big>^{\alpha}\big<\nabla_y\big>^{\alpha}f\|_{L^2(dt)L^{\frac{6}{5}}(d(x-y))L^2(d(x+y))}+
 \|\big<\nabla_x\big>^{\alpha}\big<\nabla_y\big>^{\alpha}g\|_{\mathcal{S}_r'}\\
 &+\|\big<\nabla_x\big>^{\alpha}\big<\nabla_y\big>^{\alpha}u_0\|_{L^2}.
\end{align*}
\end{lemma}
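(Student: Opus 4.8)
The plan is to deduce Lemma \ref{3collapsing} directly from Lemma \ref{collapsinglemma} by commuting constant-coefficient Fourier multipliers through the operator $\S$. Since $\big<\nabla_x\big>^{\alpha}$ is a Fourier multiplier acting only in the $x$ variable, it commutes with $\partial_t$, $\Delta_x$ and $\Delta_y$, hence with $\S$. Thus if $\S u = f+g$ with $u(0,\cdot)=u_0$, then $w:=\big<\nabla_x\big>^{\alpha}u$ solves $\S w = \big<\nabla_x\big>^{\alpha}f + \big<\nabla_x\big>^{\alpha}g$ with $w(0,\cdot)=\big<\nabla_x\big>^{\alpha}u_0$, and similarly with $\big<\nabla_y\big>^{\alpha}$ in place of $\big<\nabla_x\big>^{\alpha}$.

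First I would apply Lemma \ref{collapsinglemma} to $w=\big<\nabla_x\big>^{\alpha}u$, choosing the branch of the minimum in which the extra $\alpha$ derivatives fall on the $y$ variable (so as not to produce $\big<\nabla_x\big>^{2\alpha}$). This gives
\begin{align*}
&\|\big<\nabla_x\big>^{\alpha}u\|_{L^2(dt)L^{\infty}(d(x-y))L^2(d(x+y))}\\
&\lesssim \|\big<\nabla_y\big>^{\alpha}\big<\nabla_x\big>^{\alpha}f\|_{L^2(dt)L^{\frac{6}{5}}(d(x-y))L^2(d(x+y))}
+\|\big<\nabla_y\big>^{\alpha}\big<\nabla_x\big>^{\alpha}g\|_{\mathcal{S}_r'}
+\|\big<\nabla_y\big>^{\alpha}\big<\nabla_x\big>^{\alpha}u_0\|_{L^2}.
\end{align*}
Because $\big<\nabla_x\big>^{\alpha}$ and $\big<\nabla_y\big>^{\alpha}$ are Fourier multipliers in disjoint sets of variables, they commute, so the right-hand side equals the bound claimed in the lemma.

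Next I would repeat the argument with the roles of $x$ and $y$ interchanged: apply $\big<\nabla_y\big>^{\alpha}$ to $u$, invoke Lemma \ref{collapsinglemma} for $\big<\nabla_y\big>^{\alpha}u$ this time selecting the branch of the minimum that places the extra derivatives on $x$, and again commute the multipliers. Adding the two resulting inequalities yields Lemma \ref{3collapsing}. There is no substantive obstacle here — all the analytic content is in Lemma \ref{collapsinglemma} (hence in Theorem \ref{mainStrich} and Lemma \ref{sobangle}); the only points requiring minor care are that the multipliers genuinely commute with $\S$ and preserve the spaces on the right (in particular that applying $\big<\nabla_y\big>^{\alpha}$ to $\big<\nabla_x\big>^{\alpha}g$ produces exactly the norm $\|\big<\nabla_x\big>^{\alpha}\big<\nabla_y\big>^{\alpha}g\|_{\mathcal{S}_r'}$), together with the bookkeeping of which derivative lands on which variable when choosing branches of the minimum.
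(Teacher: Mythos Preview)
Your proposal is correct and is precisely the argument the paper has in mind: the text introduces Lemma \ref{3collapsing} with ``We record that the above implies'', and the intended derivation is exactly to apply $\big<\nabla_x\big>^{\alpha}$ (respectively $\big<\nabla_y\big>^{\alpha}$) to the equation, commute it through $\S$, and then invoke Lemma \ref{collapsinglemma} choosing the branch of the minimum that places the additional $\alpha$ derivatives on the other spatial variable.
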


We will also need

\begin{lemma}\label{4collapsing}
If
 $\S u= f+g$, $u(0, \cdot)=u_0$. Then
\begin{align*}
&\|\big<\nabla_{x+y}\big>^{\alpha}u\|_{L^2(dt)L^{\infty}(d(x-y))L^2(d(x+y))}\\
& \lesssim
 \|\big<\nabla_x\big>^{\alpha}\big<\nabla_y\big>^{\alpha}f\|_{L^2(dt)L^{\frac{6}{5}}(d(x-y))L^2(d(x+y))}+
 \|\big<\nabla_x\big>^{\alpha}\big<\nabla_y\big>^{\alpha}g\|_{\mathcal{S}_r'}\\
 & +\|\big<\nabla_x\big>^{\alpha}\big<\nabla_y\big>^{\alpha}u_0\|_{L^2}.
\end{align*}
\end{lemma}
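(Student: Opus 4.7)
The plan is to apply Lemma \ref{collapsinglemma} to $\big<\nabla_{x+y}\big>^{\alpha} u$ after a Fourier partition of unity that separates the regimes $\{|\xi|\lesssim|\eta|\}$ and $\{|\eta|\lesssim|\xi|\}$. The pointwise bound $\big<|\xi+\eta|\big>^{\alpha}\lesssim\big<|\eta|\big>^{\alpha}$ on the first region (and symmetrically on the second) is the reason one expects only $\big<\nabla_x\big>^{\alpha}\big<\nabla_y\big>^{\alpha}$ on the right, but since neither $\big<|\xi+\eta|\big>^{\alpha}/\big<|\xi|\big>^{\alpha}$ nor $\big<|\xi+\eta|\big>^{\alpha}/\big<|\eta|\big>^{\alpha}$ is a globally bounded Fourier multiplier, a decomposition is unavoidable: a single application of Lemma \ref{collapsinglemma} to $\big<\nabla_{x+y}\big>^{\alpha}u$, followed by Strichartz, leaves the symbol $\big<|\xi|\big>^{\alpha}\big<|\xi+\eta|\big>^{\alpha}$ on the right, which cannot be dominated by $\big<|\xi|\big>^{\alpha}\big<|\eta|\big>^{\alpha}$ without further localization.

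Concretely, using that $\big<\nabla_{x+y}\big>^{\alpha}$ commutes with $\S$, with $\big<\nabla_x\big>^{\alpha}$, and with $\big<\nabla_y\big>^{\alpha}$, I introduce a smooth, degree-zero homogeneous partition of unity $1 = \chi_1(\xi,\eta) + \chi_2(\xi,\eta)$ on $\RR^6 \setminus \{0\}$ with $\chi_1$ supported in $\{|\xi| \le 2|\eta|\}$ and $\chi_2$ supported in $\{|\eta| \le 2|\xi|\}$, and set $u_j := \chi_j(D_x,D_y)u$, so $\S u_j = \chi_j(f+g)$ with data $\chi_j u_0$ and $u=u_1+u_2$. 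Applying Lemma \ref{collapsinglemma} to $\big<\nabla_{x+y}\big>^{\alpha} u_1$ with the $\big<\nabla_x\big>^{\alpha}$ branch of the minimum yields
\begin{align*}
&\big\|\big<\nabla_{x+y}\big>^{\alpha} u_1\big\|_{L^2(dt)L^{\infty}(d(x-y))L^{2}(d(x+y))}\\
&\lesssim \big\|\big<\nabla_x\big>^{\alpha}\big<\nabla_{x+y}\big>^{\alpha}\chi_1 f\big\|_{L^2(dt)L^{6/5}(d(x-y))L^2(d(x+y))}
+ \big\|\big<\nabla_x\big>^{\alpha}\big<\nabla_{x+y}\big>^{\alpha}\chi_1 g\big\|_{\mathcal{S}_r'}
+ \big\|\big<\nabla_x\big>^{\alpha}\big<\nabla_{x+y}\big>^{\alpha}\chi_1 u_0\big\|_{L^2}.
\end{align*}
On $\supp(\chi_1)$ the symbol factors as $\chi_1(\xi,\eta)\big<|\xi|\big>^{\alpha}\big<|\xi+\eta|\big>^{\alpha} = m_1(\xi,\eta)\,\big<|\xi|\big>^{\alpha}\big<|\eta|\big>^{\alpha}$, with $m_1 := \chi_1\big<|\xi+\eta|\big>^{\alpha}/\big<|\eta|\big>^{\alpha}$ a bounded $C^{\infty}$ symbol on $\RR^6 \setminus \{0\}$ satisfying the Mihlin--Hörmander estimates $|\partial^{\gamma} m_1(\xi,\eta)| \lesssim |(\xi,\eta)|^{-|\gamma|}$. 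Absorbing $m_1$ as a bounded multiplier reduces each of the three terms to the desired form with $\big<\nabla_x\big>^{\alpha}\big<\nabla_y\big>^{\alpha}$. The piece $u_2$ is treated symmetrically via the $\big<\nabla_y\big>^{\alpha}$ branch and the bound $\big<|\xi+\eta|\big>^{\alpha}\lesssim\big<|\xi|\big>^{\alpha}$ on $\supp(\chi_2)$, after which summing $u_1$ and $u_2$ completes the proof.

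The hard part is verifying that $\chi_j$ and $m_j$ are indeed bounded Fourier multipliers on the mixed-norm spaces $L^2(dt)L^{6/5}(d(x-y))L^{2}(d(x+y))$, $\mathcal{S}_r'$, and $L^2(dx\,dy)$ appearing on the right-hand side. All of them are jointly smooth Mihlin--Hörmander symbols in $(\xi,\eta)$, and the $L^p(d(x-y))L^2(d(x+y))$ structure can be handled either by Plancherel in $x+y$ followed by an operator-valued Mihlin theorem in $x-y$, or equivalently by passing to the rotated frame of Lemma \ref{rotationlemma} and iterating the Marcinkiewicz multiplier theorem on the two $\RR^3$ factors. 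Because none of the $L^p$ exponents involved is $1$ or $\infty$, the classical theory applies, but this mixed-norm multiplier boundedness is the one technical point that does not follow immediately from the earlier lemmas in this section.
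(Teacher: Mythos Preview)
Your overall strategy—split into regions $|\xi|\lesssim|\eta|$ and $|\eta|\lesssim|\xi|$, and on each use the pointwise bound $\langle\xi+\eta\rangle\lesssim\langle\max(|\xi|,|\eta|)\rangle$—is the right intuition, and it is exactly what the paper is exploiting. The gap is in your treatment of the multiplier $m_1=\chi_1\langle\xi+\eta\rangle^{\alpha}/\langle\eta\rangle^{\alpha}$. Your claimed Mihlin--H\"ormander bound $|\partial^{\gamma}m_1|\lesssim|(\xi,\eta)|^{-|\gamma|}$ is false. Take $\xi=\xi_0$, $\eta=-\xi_0+e$ with $|\xi_0|$ large and $|e|\le 1$; then $\chi_1\sim 1$, $\xi+\eta=e$, and $\partial_{\xi}^{2}\langle\xi+\eta\rangle^{\alpha}\sim\alpha$ there, so $|\partial_{\xi}^{2}m_1|\sim|\xi_0|^{-\alpha}\gg|\xi_0|^{-2}=|(\xi,\eta)|^{-2}$. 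The same computation shows the bi-parameter Marcinkiewicz condition and the operator-valued Mihlin condition in $\xi$ (with $L^2_{\eta}$ values) both fail: $\sup_{\eta}|\partial_{\xi}m_1(\xi,\eta)|\sim|\xi|^{-\alpha}$, not $|\xi|^{-1}$. So neither of your two proposed routes justifies boundedness of $m_1(D)$ on the $L^{q'}(dx)L^{2}(dy)$ spaces that occur in $\mathcal S_r'$, and this step is essential for the $g$ term. (For the $L^{2}$ data term Plancherel does suffice, and for the $f$ term in rotated coordinates one can argue more carefully, but the $\mathcal S_r'$ term is a genuine obstruction.)

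The paper avoids this by \emph{not} packaging the comparison $\langle\xi+\eta\rangle^{\alpha}\lesssim\langle\eta\rangle^{\alpha}$ as a single multiplier. Instead it performs a full double Littlewood--Paley decomposition $u=\sum_{k,k'}u_{k,k'}$, passes through the square function in rotated coordinates (Lemma~\ref{square1}) to reduce to an $\ell^{2}$ sum of dyadic pieces, and on each piece replaces $\langle\nabla_{x+y}\rangle^{\alpha}$ by the scalar $2^{k'\alpha}$ via Plancherel in $x+y$, then trades $2^{k'\alpha}$ for $\langle\nabla_y\rangle^{\alpha}$ via Bernstein in rotated coordinates (Lemma~\ref{bernstein}). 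Only after this does one apply Theorem~\ref{mainStrich} piecewise and sum back with the dual square-function estimate. The point is that the square-function machinery in rotated coordinates is strictly stronger than a single Mihlin bound and is exactly what is needed here; your two-piece cutoff $\chi_1+\chi_2$ is too coarse to make the resulting symbol fall into a standard multiplier class on these mixed-norm spaces.
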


\begin{proof}

The estimate for the homogeneous equation follows from standard Strichartz estimates and ``Sobolev at an angle", as in the proof for the inhomogeneous estimate.
Thus we can assume $u_0=0$.
Let $u= \sum_{k, k'=0}^{\infty} u_{k, k'}$ be a ``double" Littlewood-Paley decomposition
$u = u*(\psi_k \delta) * (\delta \psi_{k'})$ (see Lemma \ref{square1})
so that
$\hat u_{k, k'}(\xi, \eta)$ is supported in $|\xi| \sim 2^k$, $|\eta| \sim 2^{k'}$ if $k, k' \ge 1$. We only treat the sum over $k \le k'$ with $k'\ge 1$ (so $x$ corresponds to the ``low" frequency), the remaining part being similar.
We use the standard procedure of reducing a Strichartz estimate to a frequency localized estimate, but in the context of mixed coordinates.
We have
\begin{align*}
&\|\big<\nabla_{x+y}\big>^{\alpha}u\|_{L^2(dt)L^{\infty}(d(x-y))L^2(d(x+y))}\\
&\lesssim \|\big<\nabla_{x+y}\big>^{\alpha}\big<\nabla_{x}\big>^{\alpha}u\||_{L^2(dt)L^{6}(d(x-y))L^2(d(x+y))} \, \, \mbox{ (Lemma \ref{sobangle})}\\
&=\|\big<\nabla_{x+y}\big>^{\alpha}\big<\nabla_{x}\big>^{\alpha}\sum_{0 \le k \le k'} u_{k, k'}\|_{L^2(dt)L^{6}(d(x-y))L^2(d(x+y))}\\
&\lesssim \bigg\|\left(\sum_{0 \le k \le k'}\big| \big<\nabla_{x+y}\big>^{\alpha}\big<\nabla_{x}\big>^{\alpha}u_{k, k'}\big|^2\right)^{\frac{1}{2}}\bigg\|_{L^2(dt)L^{6}(d(x-y))L^2(d(x+y))} \, \, \mbox{ (square function estimate)}\\
&\lesssim\left(\sum_{0 \le k \le k'} \bigg\| \big<\nabla_{x+y}\big>^{\alpha}\big<\nabla_{x}\big>^{\alpha}u_{k, k'}\bigg\|_{L^2(dt)L^{6}(d(x-y))L^2(d(x+y))}^2\right)^{\frac{1}{2}} \, \, \mbox{ (Minkowski)}\\
&=\left(\sum_{0 \le k \le k'} \bigg\| \big<\nabla_{x+y}\big>^{\alpha}\big<\nabla_{x}\big>^{\alpha}P_{|\xi+\eta| \lesssim 2^{k'}}u_{k, k'}\bigg\|_{L^2(dt)L^{6}(d(x-y))L^2(d(x+y))}^2\right)^{\frac{1}{2}} \\
&\lesssim\left(\sum_{0 \le k \le k'} \bigg\| 2^{k' \alpha} \big<\nabla_{x}\big>^{\alpha}u_{k, k'}\bigg\|_{L^2(dt)L^{6}(d(x-y))L^2(d(x+y))}^2\right)^{\frac{1}{2}} \, \, \mbox{ (Plancherel)}\\
&\lesssim\left(\sum_{0 \le k \le k'} \bigg\|  \big<\nabla_{x}\big>^{\alpha}
\big<\nabla_{y}\big>^{\alpha}
u_{k, k'}\bigg\|_{L^2(dt)L^{6}(d(x-y))L^2(d(x+y))}^2\right)^{\frac{1}{2}} \, \, \mbox{ (Lemma \ref{bernstein} )}
\end{align*}
\begin{align*}
&\lesssim \bigg(\sum_{0 \le k \le k'} \bigg\|  \big<\nabla_{x}\big>^{\alpha}
\big<\nabla_{y}\big>^{\alpha}
f_{k, k'}\bigg\|_{L^2(dt)L^{\frac{6}{5}}(d(x-y))L^2(d(x+y))}^2\bigg)^{\frac{1}{2}} \\
&+\bigg(\sum_{0 \le k \le k'} \bigg\|  \big<\nabla_{x}\big>^{\alpha}
\big<\nabla_{y}\big>^{\alpha}
g_{k, k'}\bigg\|_{\mathcal S'_r}^2\bigg)^{\frac{1}{2}} \, \, \mbox{ (Thm. \ref{mainStrich})}\\
&\lesssim \bigg\|\left(\sum_{0 \le k \le k'}\big| \big<\nabla_{x}\big>^{\alpha}\big<\nabla_{y}\big>^{\alpha}f_{k, k'}\big|^2\right)^{\frac{1}{2}}\bigg\|_{L^2(dt)L^{\frac{6}{5}}(d(x-y))L^2(d(x+y))} \\
&+\bigg\|\left(\sum_{0 \le k \le k'}\big| \big<\nabla_{x}\big>^{\alpha}\big<\nabla_{y}\big>^{\alpha}g_{k, k'}\big|^2\right)^{\frac{1}{2}}\bigg\|_{\mathcal S'_r} \, \, \mbox{ (Minkowski)}\\
&\lesssim \|  \big<\nabla_{x}\big>^{\alpha}
\big<\nabla_{y}\big>^{\alpha}f\|_{L^2(dt)L^{\frac{6}{5}}(d(x-y))L^2(d(x+y))}
+ \|  \big<\nabla_{x}\big>^{\alpha}
\big<\nabla_{y}\big>^{\alpha}g\|_{\mathcal S'_r}\, \, \mbox{ (square function estimate)}.
\end{align*}

\end{proof}

\begin{lemma} \label{timederlemma}
Let
 $\S u= f+g$, $u(0, \cdot)=u_0$, and let $\frac{1}{2}   <\alpha$. Then
\begin{align}
&\||\partial_t|^{\frac{1}{4}}u\|_{L^2(dt)L^{\infty}(d(x-y))L^2(d(x+y))}\label{timeder}
 \lesssim \|\big<\nabla_x\big>^{\alpha}\big<\nabla_y\big>^{\alpha}f\|_{L^2(dt)L^{\frac{6}{5}}(d(x-y))L^2(d(x+y))}\\
 &+
 \|\big<\nabla_x\big>^{\alpha}\big<\nabla_y\big>^{\alpha}g\|_{\mathcal{S}'_r}
 +\|\big<\nabla_x\big>^{\alpha}\big<\nabla_y\big>^{\alpha}u_0\|_{L^2}.
\notag
\end{align}

\end{lemma}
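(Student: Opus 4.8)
The plan is to reduce the time-derivative estimate to the spatial collapsing estimate of Lemma~\ref{collapsinglemma} (equivalently Lemma~\ref{3collapsing}) together with the Strichartz input from Theorem~\ref{mainStrich}, exploiting the dispersive relation $|\tau| \sim |\xi|^2 + |\eta|^2$ on the characteristic surface of $\S$. Concretely, write $u = \sum_{j\ge 0} Q_j u$ where $Q_j$ localizes the time frequency $\tau$ to $|\tau|\sim 2^{j}$ (with $Q_0$ handling $|\tau|\lesssim 1$). On each piece one gains $\||\partial_t|^{1/4} Q_j u\| \sim 2^{j/4}\|Q_j u\|$, and the task becomes an $\ell^2_j$-summable frequency-localized bound. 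First I would prove the frequency-localized estimate: for $u$ with time frequency $\sim 2^j$ solving $\S u = f+g$ (with $u_0$ absorbed as usual), one has
\begin{align*}
2^{j/4}\|Q_j u\|_{L^2(dt)L^\infty(d(x-y))L^2(d(x+y))} \lesssim 2^{-\delta j}\big(\text{RHS pieces of } f_j, g_j, u_{0,j}\big)
\end{align*}
for some $\delta>0$, where the RHS pieces are the $\langle\nabla_x\rangle^\alpha\langle\nabla_y\rangle^\alpha$-weighted norms of the $j$-th time-frequency pieces. The gain $2^{-\delta j}$ comes from the fact that time frequency $2^j$ forces, after a further Littlewood--Paley decomposition in $x$ and $y$, that $\max(2^k, 2^{k'})\gtrsim 2^{j/2}$; since $\alpha > \tfrac12$ we have room to spend a fraction of a derivative (i.e.\ use only $\alpha' < \alpha$ in Lemma~\ref{collapsinglemma}) and convert the leftover $\langle\nabla_{x}\rangle^{\alpha-\alpha'}\langle\nabla_y\rangle^{\alpha-\alpha'}$ into a power $2^{-c(\alpha-\alpha')j}$ after accounting for the $\ell^2$ square-function sums of Lemmas~\ref{square2}, \ref{square1}.

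The core of the matter is therefore a Bernstein-type trade on the characteristic cone: on $\{|\tau| = |\xi|^2+|\eta|^2\} \cap \{|\tau|\sim 2^j\}$ one cannot have both $|\xi|$ and $|\eta|$ much smaller than $2^{j/2}$, so $\langle\nabla_x\rangle^{\epsilon}\langle\nabla_y\rangle^{\epsilon}\gtrsim 2^{\epsilon j/2}$ acting on $Q_j u$, at the cost of $2^{\epsilon j/2}$ we can absorb the $2^{j/4}$ provided $\epsilon/2 > 1/4$, i.e.\ $\epsilon > 1/2$ --- which is exactly why the hypothesis $\alpha > \tfrac12$ appears, with the matching $\tfrac14$ on $|\partial_t|$. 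One has to be slightly careful that the time-frequency cutoff $Q_j$ does not interact badly with the inhomogeneous Duhamel solution and with the Heaviside factor $h(t)$ hidden in $f$; the clean way is to note $\S u = f+g$ already incorporates this, decompose $f = \sum f_j$, $g = \sum g_j$ in time frequency, solve $\S u_j = f_j + g_j$ with data $u_{0,j}$ (the time-frequency truncation of the free evolution of $u_0$), apply the frequency-localized estimate, and reassemble by an $\ell^2$ Minkowski/square-function argument; the only subtlety is that $Q_j$ is not exactly a Fourier projection of the free solution, but the commutator is smoothing and handled as in the standard $X^{s,b}$ reductions. The remark in the paper about the failure of sharp Sobolev in $L^1$ is what forces the $L^{6/5}$ (not $L^1$) norm on $f$ on the right-hand side and is already baked into Theorem~\ref{mainStrich} and Lemma~\ref{collapsinglemma}, so nothing new is needed there.

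The main obstacle I expect is making the time-frequency localization legitimate in the presence of the Heaviside cutoff and the inhomogeneous forcing: $h(t)$ destroys smoothness in $t$, so $Q_j$ applied to $h(t)\cdot(\text{RHS})$ does not decay fast, and one cannot naively claim that $f$ has small high-time-frequency pieces. The fix is to keep $h(t)$ on the RHS as given (it is part of the definition of $f$ and $g$ in the applications, e.g.\ in \eqref{maineqold}) and instead only localize the \emph{solution} $u$ in time frequency, using that $\S u = f+g$ holds, so that $Q_j u = Q_j \S^{-1}(f+g)$ and on the support of $\hat{u}$ restricted to $|\tau|\sim 2^j$ the symbol $\tau - |\xi|^2 - |\eta|^2$ combined with Duhamel gives the spatial-frequency lower bound regardless of the regularity of $f$ in $t$; concretely one writes $u_j = e^{it(\Delta_x+\Delta_y)}(\cdots) + \int_0^t e^{i(t-s)(\Delta_x+\Delta_y)}(f+g)(s)\,ds$ and inserts $Q_j$, using that the free evolution trivially commutes with spatial-frequency projections so that $Q_j$ acting on it is supported where $|\xi|^2+|\eta|^2 \sim 2^j$. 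Once this is set up, the remaining steps --- square function in $(x-y)$, Minkowski, Bernstein at an angle (Lemma~\ref{bernstein}), Sobolev at an angle (Lemma~\ref{sobangle}), and Theorem~\ref{mainStrich} --- are assembled exactly as in the proof of Lemma~\ref{4collapsing}, with the single extra bookkeeping of the $2^{j/4}$ versus $2^{\epsilon j/2}$ powers, and summed in $\ell^2_j$.
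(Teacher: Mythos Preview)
Your core idea --- trading $|\partial_t|^{1/4}$ for half a spatial derivative via the dispersion relation and then invoking Lemma~\ref{3collapsing} --- is the right one, and is what the paper does on the ``characteristic'' region. But there is a genuine gap: your claim that time-frequency localization $|\tau|\sim 2^j$ forces $\max(|\xi|,|\eta|)\gtrsim 2^{j/2}$ is only true for the \emph{homogeneous} solution (where indeed $\tau = |\xi|^2+|\eta|^2$ on the Fourier support). For the Duhamel piece, $\widetilde u(\tau,\xi,\eta) = (\tau - |\xi|^2-|\eta|^2)^{-1}\,\widetilde{hF}(\tau,\xi,\eta)$, and nothing prevents $|\xi|^2+|\eta|^2 \ll |\tau|$. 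In that elliptic region the spatial-frequency lower bound simply fails, and your Bernstein trade does not go through; what you do have there is that the symbol $\tau - |\xi|^2-|\eta|^2 \sim \tau$ is large, so one must divide by it and estimate directly in $L^2$, followed by Sobolev. Your write-up ``the symbol $\tau - |\xi|^2-|\eta|^2$ combined with Duhamel gives the spatial-frequency lower bound'' conflates these two mechanisms.

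The paper's organization fixes this by reversing the order of localization: first do a double spatial Littlewood--Paley decomposition $u=\sum_{k,k'} u_{k,k'}$ (say $k\le k'$), and for each piece split in $\tau$ at the threshold $100\cdot 2^{2k'}$. In the low-$\tau$ region one Bernsteins $|\partial_t|^{1/4}\lesssim 2^{k'/2}\sim \langle\nabla_y\rangle^{1/2}$ and invokes Lemma~\ref{3collapsing} with exponent $\alpha'<\alpha$. In the high-$\tau$ region $\S$ is genuinely elliptic ($|\tau|\gg|\xi|^2+|\eta|^2$), so one writes $\widetilde{u}_{k,k'}=(\tau - |\xi|^2-|\eta|^2)^{-1}\widetilde{h}_{k,k'}$ and estimates in $L^2(dtdxdy)$, using that on that support $\langle\partial_t\rangle\gtrsim\langle\nabla_y\rangle^2\gtrsim\langle\nabla_x\rangle^2$ to convert powers of $\langle\partial_t\rangle^{-1}$ into the desired derivatives. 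Two further points you have not addressed: (i) time-frequency truncation changes the initial data, and the paper controls the new data via $\|u_{k,k'}\|_{L^\infty_t L^2_{x,y}}$ from Theorem~\ref{mainStrich} and the uniform $L^1$ bound on the truncation kernel; (ii) the pieces are summed in $\ell^1_{k,k'}$ (not $\ell^2$) using the gap $\alpha''<\alpha$, since the $L^\infty(d(x-y))$ on the left does not admit a square-function estimate.
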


\begin{proof} The estimate for the homogeneous equation follows from Strichartz estimates and the fact that $|\tau|=|\xi|^2 + |\eta|^2$ on the Fourier support of $u$,
 so we can assume $u_0=0$.
 Let $u= \sum_{k, k'=0}^{\infty} u_{k, k'}$ be a "double" Littlewood-Paley decomposition
$u = u*(\psi_k \delta) * (\delta \psi_{k'})$
so that
$\hat u_{k, k'}(\xi, \eta)$ is supported in $|\xi| \sim 2^k$, $|\eta| \sim 2^{k'}$ if $k, k' \ge 1$. The proof of \eqref{timeder}
will use two additional numbers $\alpha'$, $\alpha''$ satisfying
$\frac{1}{2} <\alpha'<\alpha'' <\alpha$. Start by fixing $\frac{1}{2} <\alpha'' <\alpha$.
We will  prove the frequency localized estimate
\begin{align}
&\||\partial_t|^{\frac{1}{4}}u_{k, k'}\|_{L^2(dt)L^{\infty}(d(x-y))L^2(d(x+y))} \label{timederloc}\\
 &\lesssim \|\big<\nabla_x\big>^{\alpha''}\big<\nabla_y\big>^{\alpha''}f_{k, k'}\|_{L^2(dt)L^{\frac{6}{5}}(d(x-y))L^2(d(x+y))}+
 \|\big<\nabla_x\big>^{\alpha''}\big<\nabla_y\big>^{\alpha''}g_{k, k'}\|_{\mathcal{S}_r'}\notag.
\end{align}

Summing the pieces will be easy because $\alpha'' <\alpha$.
To prove \eqref{timederloc}, assume, without loss of generality, $1 \le k \le k'$.

    Now we localize $u_{k, k'}$ in $\tau$. This changes the initial conditions, but in a controlled way. From Theorem \ref{mainStrich} we have
     \begin{align*}
      & \|\big<\nabla_x\big>^{\alpha''}\big<\nabla_y\big>^{\alpha''}u_{k, k'}\|_{L^{\infty}(dt)L^2(dxdy)}\\
&\lesssim \|\big<\nabla_x\big>^{\alpha''}\big<\nabla_y\big>^{\alpha''}f_{k, k'}\|_{L^2(dt)L^{\frac{6}{5}}(d(x-y))L^2(d(x+y))}+
     \|\big<\nabla_x\big>^{\alpha''}\big<\nabla_y\big>^{\alpha''}g_{k, k'}\|_{\mathcal{S}_r'}.
    \end{align*}
Since $P_{|\tau| < 100 \, 2^{2 k'}}$ acts by convolution in time with a function which is in $L^1$ uniformly in $k'$, we also have
 \begin{align*}
 & \|\big<\nabla_x\big>^{\alpha''}\big<\nabla_y\big>^{\alpha''}P_{|\tau| < 100 \, 2^{2 k'}}u_{k, k'}(0, \cdot, \cdot)\|_{L^2(dxdy)}\\
&\lesssim \|\big<\nabla_x\big>^{\alpha''}\big<\nabla_y\big>^{\alpha''}f_{k, k'}\|_{L^2(dt)L^{\frac{6}{5}}(d(x-y))L^2(d(x+y))}+
     \|\big<\nabla_x\big>^{\alpha''}\big<\nabla_y\big>^{\alpha''}g_{k, k'}\|_{\mathcal{S}_r'}.
    \end{align*}
We have
\begin{align*}
\S P_{|\tau| < 100 \, 2^{2 k'}}u_{k, k'}=P_{|\tau| < 100 \, 2^{2 k'}}f_{k, k'} + P_{|\tau| < 100 \, 2^{2 k'}}g_{k, k'}
\end{align*}
with initial conditions as discussed above.
Using Lemma \ref{bernstein} we get
\begin{align*}
&
\||\partial_t|^{\frac{1}{4}}P_{|\tau| < 100 \, 2^{2 k'}}u_{k, k'}\|_{L^2(dt)L^{\infty}(d(x-y))L^2(d(x+y))}\\
& \lesssim 2^{\frac{ k'}{2}}\||P_{|\tau| < 100 \, 2^{2 k'}}u_{k, k'}\|_{L^2(dt)L^{\infty}(d(x-y))L^2(d(x+y))}\\
&\lesssim
\|\big<\nabla_{y}\big>^{\frac{ 1}{2}}P_{|\tau| < 100 \, 2^{2 k'}}u_{k, k'}\|_{L^2(dt)L^{\infty}(d(x-y))L^2(d(x+y))}\\
& \lesssim
 \|\big<\nabla_x\big>^{\alpha'}\big<\nabla_y\big>^{\alpha'}
 f_{k, k'}\|_{L^2(dt)L^{\frac{6}{5}}(d(x-y))L^2(d(x+y))}\\
 &+
 \|\big<\nabla_x\big>^{\alpha'}\big<\nabla_y\big>^{\alpha'}g_{k, k'}\|_{\mathcal{S}_r'}.
\end{align*}
(See Lemma \ref{3collapsing}.)

Next, consider
\begin{align*}
\S P_{|\tau| > 100 \, 2^{2 k'}}u_{k, k'}=P_{|\tau| > 100 \, 2^{2 k'}}f_{k, k'} + P_{|\tau| > 100 \, 2^{2 k'}}g_{k, k'}
\end{align*}
and $2^k$ (the frequency of $x$) is less than $2^{k'}$ (the frequency of $y$).

Call either function on the RHS $P_{|\tau| > 100 \, 2^{2 k'}}h_{k, k'}$. The point is that $\S \sim \big<\partial_t\big> $ is an elliptic operator on the Fourier support of $u_{k, k'}$, and, at the level of symbols and on $L^2(dt dx dy)$,
$\big<\partial_t\big> \ge \big<\nabla_y\big>^2 \ge \big<\nabla_x\big>^2 $.

Let $\frac{1}{2}< \alpha'<\alpha''<\alpha$, with $\alpha'$ to be chosen later (the choice will be $\alpha'=\frac{\frac{1}{2}+ 2 \alpha''}{3}$).
Using Lemma \ref{sobangle},
\begin{align*}
&\|\big<\partial_t\big>^{\frac{1}{4}}P_{|\tau| > 100 \, 2^{2 k'}}u_{k, k'}\|_{L^2(dt)L^{\infty}(d(x-y))L^2(d(x+y))}\\
& \lesssim \|\big<\partial_t\big>^{\frac{1}{4}}\big<\nabla_x\big>^{3 \alpha'}P_{|\tau| > 100 \, 2^{2 k'}}u_{k, k'}\|_{L^2(dt dx dy)}\\
&\lesssim \|\big<\partial_t\big>^{\frac{1}{4}-1}\big<\nabla_x\big>^{3 \alpha'}\S P_{|\tau| > 100 \, 2^{2 k'}} u_{k, k'}\|_{L^2(dt dx dy)}\\
&= \|\big<\partial_t\big>^{-\frac{3}{4}}\big<\nabla_x\big>^{3 \alpha'} P_{|\tau| > 100 \, 2^{2 k'}} h_{k, k'}\|_{L^2(dt dx dy)}\\
&=\|\big<\partial_t\big>^{-\frac{1}{2}}\bigg(\big<\partial_t\big>^{-\frac{1}{4}}\big<\nabla_x\big>^{3 \alpha'-2\alpha''}\bigg)\big<\nabla_x\big>^{ \alpha''}\big<\nabla_y\big>^{ \alpha''} P_{|\tau| > 100 \, 2^{2 k'}} h_{k, k'}\|_{L^2(dt dx dy)}\\
&\lesssim \|\big<\partial_t\big>^{-\frac{1}{2}}\big<\nabla_x\big>^{ \alpha''}\big<\nabla_y\big>^{ \alpha''} P_{|\tau| > 100 \, 2^{2 k'}} h_{k, k'}\|_{L^2(dt dx dy)}
\end{align*}
since the choice of $\alpha'$ insures $3 \alpha'-2\alpha''=\frac{1}{2}$ so that
$\big<\partial_t\big>^{-\frac{1}{4}}\big<\nabla_x\big>^{3 \alpha'-2\alpha''}$ is bounded on $L^2$ on the Fourier support of $P_{|\tau| > 100 \, 2^{2 k'}} h_{k, k'}$.

By applying Sobolev estimates in $t$ or $x$, $y$ or $x-y$ , and recalling
$\big<\partial_t\big> \ge \big<\nabla_y\big>^2 \ge \big<\nabla_x\big>^2 $, $\big<\partial_t\big> \ge \big<\nabla_{x-y}\big>^2$
we get
\begin{align*}
&\|\big<\partial_t\big>^{-\frac{1}{2}}\big<\nabla_x\big>^{ \alpha''}\big<\nabla_y\big>^{ \alpha''}
P_{|\tau| > 100 \, 2^{2 k'}}h_{k, k'}\|_{L^2(dt dx dy)}\\
&\lesssim \min \bigg\{\|\big<\nabla_x\big>^{ \alpha''}\big<\nabla_y\big>^{ \alpha''}
P_{|\tau| > 100 \, 2^{2 k'}}h_{k, k'}\|_{\mathcal{S'}_r},\\
&
\|\big<\nabla_x\big>^{ \alpha''}\big<\nabla_y\big>^{ \alpha''}
P_{|\tau| > 100 \, 2^{2 k'}}h_{k, k'}\|_{L^2(dt)L^{\frac{6}{5}}(d(x-y))L^2(d(x+y))}\bigg \}.
\end{align*}
Recalling the definition of $h_{k, k'}$ the above is dominated by
$$
\|\big<\nabla_x\big>^{ \alpha''}\big<\nabla_y\big>^{ \alpha''}
P_{|\tau| > 100 \, 2^{2 k'}}f_{k, k'}\|_{_{L^2(dt)L^{\frac{6}{5}}(d(x-y))L^2(d(x+y))}}+
\|\big<\nabla_x\big>^{ \alpha''}\big<\nabla_y\big>^{ \alpha''}
P_{|\tau| > 100 \, 2^{2 k'}}g_{k, k'}\|_{\mathcal{\S}'_r}$$.

To sum the pieces, use $\alpha'' < \alpha$.  For instance,
\begin{align*}
&\sum_{k, k'=0}^{\infty} \|\big<\nabla_x\big>^{\alpha''}\big<\nabla_y\big>^{\alpha''}f_{k, k'}\|_{L^2(dt)L^{\frac{6}{5}}(d(x-y))L^2(d(x+y))}\\
&\lesssim
\sum_{k, k'=0}^{\infty} 2^{(\alpha''-\alpha)k} 2^{(\alpha''-\alpha)k'} \|\big<\nabla_x\big>^{\alpha}\big<\nabla_y\big>^{\alpha}(f*(\psi_k \delta)*(\delta \psi_{k'}) )\|_{L^2(dt)L^{\frac{6}{5}}(d(x-y))L^2(d(x+y))}\\
&\lesssim \sup_{k, k'}\|\big<\nabla_x\big>^{\alpha}\big<\nabla_y\big>^{\alpha}f*(\psi_k^1 \delta)*(\delta \psi_{k'})\|_{L^2(dt)L^{\frac{6}{5}}(d(x-y))L^2(d(x+y))}\\
&\lesssim \|\big<\nabla_x\big>^{\alpha}\big<\nabla_y\big>^{\alpha}f\|_{L^2(dt)L^{\frac{6}{5}}(d(x-y))L^2(d(x+y))}.
\end{align*}

\end{proof}
Next, we prove a frequency localized sharp result.

\begin{proposition} \label{Suufixed}
Let
\begin{align*}
\S u_k=f_k
\end{align*}
with $0$ initial conditions, and assume  $u_k$ (and thus also $f_k$) is supported, in Fourier space, at $|\xi-\eta| \sim 2^k$. Then
\begin{align*}
&\|u_k\|_{\mathcal {S}_{x, y}} \lesssim
\big\||\nabla_{x+y}|^{\frac{1}{2}}f_k\big\|_{L^{1}(d(x-y))L^2(dt) L^2(d(x+y))} + \big\||\partial_t\big|^{\frac{1}{4}}f_k\big\|_{L^{1}(d(x-y))L^2(dt) L^2(d(x+y))}.
\end{align*}

\end{proposition}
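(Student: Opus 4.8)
The plan is to prove Proposition~\ref{Suufixed} by the standard $TT^*$/duality method, but carried out in the mixed rotated coordinates $(x-y, x+y)$, exploiting that on the Fourier support $|\xi-\eta|\sim 2^k$ the Schr\"odinger symbol $|\xi|^2+|\eta|^2$ decouples into a fixed-frequency piece in the $x-y$ direction and a dispersive piece in the $x+y$ direction. Writing $\S u_k = f_k$ with zero data, Duhamel gives $u_k(t) = \int_0^t e^{i(t-s)(\Delta_x+\Delta_y)} f_k(s)\,ds$. Since $\widehat{f_k}$ is supported where $|\xi-\eta|\sim 2^k$, it is natural to pass to variables $\zeta = \xi-\eta$ (localized at $|\zeta|\sim 2^k$) and $\sigma=\xi+\eta$, under which $|\xi|^2+|\eta|^2 = \tfrac12(|\zeta|^2+|\sigma|^2)$; the propagator factors as a harmless unitary Fourier multiplier $e^{\frac{i t}{2}|\zeta|^2}$ in the $\zeta$ (i.e.\ $x-y$) variable times the genuinely dispersive half-wave/Schr\"odinger group $e^{\frac{it}{2}|\sigma|^2}$ in the $\sigma$ (i.e.\ $x+y$) variable.

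First I would reduce to the case where one takes the $L^1(d(x-y))$ norm outside: by Minkowski's integral inequality in the $x-y$ variable it suffices to prove, for each fixed value of $x-y$ (equivalently, after freezing the $\zeta$-frequency the estimate is uniform), the one-variable-in-space estimate
\begin{align*}
\Big\| \int_0^t e^{i(t-s)(\Delta_x+\Delta_y)} f_k(s)\,ds\Big\|_{L^p(dt)L^q(d(x+y))}
\lesssim \big\| |\nabla_{x+y}|^{1/2} f_k \big\|_{L^2(dt)L^2(d(x+y))} + \big\| |\partial_t|^{1/4} f_k\big\|_{L^2(dt)L^2(d(x+y))}
\end{align*}
with constants uniform over the frozen $x-y$ slice and over admissible $(p,q)$. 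For the $x+y$ variable in $\R^3$ this is exactly an inhomogeneous Strichartz estimate for the $3{+}1$-dimensional Schr\"odinger equation, and the right-hand side with $|\nabla_{x+y}|^{1/2}$ and $|\partial_t|^{1/4}$ is precisely the regularity that places the forcing in the dual Strichartz class $L^2_t L^{6/5}_{x+y}$ after Sobolev embedding in $\R^3$ (since $\dot H^{1/2}(\R^3)\hookrightarrow L^3$ and interpolating the time derivative handles the temporal endpoint); alternatively one applies the Christ--Kiselev lemma to remove the time cutoff and the standard inhomogeneous Strichartz/Hardy--Littlewood--Sobolev argument. The three spatial norms in $\mathcal S_{x,y}$ — $L^p_tL^q_xL^2_y$, $L^p_tL^q_yL^2_x$, and $L^p_tL^q_{x-y}L^2_{x+y}$ — are then recovered by combining this $x+y$ Strichartz bound with the ``Sobolev at an angle'' Lemma~\ref{sobangle} and Bernstein in rotated coordinates (Lemma~\ref{bernstein}), using that the $x-y$ frequency is localized at $2^k$ so that $L^q_{x-y}$ and $L^2_{x-y}$ are comparable up to the $2^k$ factors, which are absorbed because the right-hand side carries the matching $|\nabla_{x-y}|^{1/2}\sim 2^{k/2}$ weight hidden inside $|\partial_t|^{1/4}$ on the Fourier support.

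The step I expect to be the main obstacle is handling the \emph{non-resonant}, or high-modulation, part — exactly the mechanism already used in the proof of Lemma~\ref{timederlemma}. On the Fourier support of $u_k$ one has $|\tau| \sim |\xi|^2+|\eta|^2$ only up to the modulation; for the part where $|\tau|$ is much larger than $2^{2k}$ (and than the $x+y$ frequency), $\S$ is elliptic and one gains directly by writing $u_k = \S^{-1} f_k$ and distributing $\langle\partial_t\rangle^{-1}\sim \langle\partial_t\rangle^{-1/2}\langle\partial_t\rangle^{-1/2}$, one factor compensating $|\partial_t|^{1/4}$ and the other feeding a Sobolev embedding in $t$ or in $x-y$ — this is where the $|\partial_t|^{1/4}$ on the right-hand side is genuinely needed and why it cannot be replaced by a pure spatial derivative. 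The delicate point is book-keeping the interplay of the frozen $|\nabla_{x-y}|\sim 2^k$ scale with the $L^1(d(x-y))$ norm: one must check that freezing $x-y$ and then taking $L^1$ does not lose powers of $2^k$, which works because the $2^{k/2}$ from $|\nabla_{x+y}|^{1/2}$ versus the Bernstein gain in the $x-y$-localized slice balance exactly, matching the scaling $\tfrac2p+\tfrac3q=\tfrac32$ of admissibility. Once the resonant part is dispatched by inhomogeneous Strichartz and the non-resonant part by the elliptic/Sobolev argument of Lemma~\ref{timederlemma}, summation is unnecessary since we are already frequency-localized at $|\xi-\eta|\sim 2^k$, and the proof closes.
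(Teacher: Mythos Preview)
Your proposal has a genuine gap in the treatment of the dispersive part. The reduction ``by Minkowski's integral inequality in the $x-y$ variable'' to a fixed-$(x-y)$-slice $3{+}1$-dimensional Strichartz estimate does not work: the propagator factors as $e^{i(t-s)(\Delta_x+\Delta_y)}=e^{\frac{i(t-s)}{2}\Delta_{x-y}}\,e^{\frac{i(t-s)}{2}\Delta_{x+y}}$, and the first factor is a genuine Schr\"odinger evolution in $x-y$, not a ``harmless unitary'' --- it moves mass between $x-y$ slices and is unbounded on $L^1(d(x-y))$ and pointwise in $x-y$. Freezing the physical variable $x-y$ is \emph{not} the same as freezing the frequency $\zeta=\xi-\eta$; the annular localization $|\zeta|\sim 2^k$ you already have does not localize $x-y$ at all, so the Duhamel solution at one $x-y$ slice depends on $f_k$ at every slice and cannot be bounded by a per-slice estimate. (If instead you fix $\zeta$ on the Fourier side and run Strichartz in $(t,x+y)$, you control $U(\zeta;\cdot,\cdot)$ for each $\zeta$; but recovering $\|u_k\|_{\mathcal S_{x,y}}$ then requires an $L^2_\zeta$ integration, whereas $L^1(d(x-y))$ only yields $L^\infty_\zeta$ on $\widehat{f_k}$, losing $2^{3k/2}$.) A smaller symptom of the same issue: the embedding $\dot H^{1/2}(\R^3)\hookrightarrow L^3$ you cite goes the wrong way to land in the dual Strichartz space $L^2_tL^{6/5}_{x+y}$.

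The paper stays with the full $6{+}1$-dimensional equation and splits instead on $|\tau|$ versus $2^{2k}$. On $\{10|\tau|^{1/2}\ge 2^k\}$ it applies Theorem~\ref{mainStrich} (right-hand side $L^2_tL^{6/5}_{x-y}L^2_{x+y}$), then uses Minkowski and Sobolev \emph{in the $x-y$ variable} at the fixed scale $2^k$ to pass to $L^1(d(x-y))$ at the cost of $|\nabla_{x-y}|^{1/2}\sim 2^{k/2}\lesssim |\tau|^{1/4}$ on that region --- this is where $|\partial_t|^{1/4}$ enters. On $\{|\tau|\lesssim 2^{2k}\}$ the constraint $|\xi-\eta|\sim 2^k$ forces $|\xi|^2+|\eta|^2\gtrsim 2^{2k}\gg|\tau|$, so the symbol $\tau+|\xi|^2+|\eta|^2$ is elliptic; one divides, uses $|\xi-\eta|^{3/2}|\xi+\eta|^{1/2}\lesssim|\xi-\eta|^2+|\xi+\eta|^2$ together with $|\nabla_{x-y}|^{-3/2}:L^1\to L^2$ (Hardy--Littlewood--Sobolev) to reach $L^1(d(x-y))$, and recovers $L^2_tL^6_xL^2_y$ via Lemma~\ref{sobangle} and $L^\infty_tL^2_{xy}$ by a Cauchy--Schwarz-in-$\tau$ argument. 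Note in particular that the elliptic region is the \emph{low}-$|\tau|$ one (because of the spatial frequency floor $2^{2k}$), opposite to what you wrote; your ``high-modulation'' description only captures the sub-region where additionally $|\tau|\gg|\xi+\eta|^2$, and leaves the genuinely dispersive zone $|\tau|\sim|\xi|^2+|\eta|^2\gtrsim 2^{2k}$ unhandled.
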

\begin{proof}
 As we did earlier, we decompose $u_k=u_k^1+u_k^2+u_k^3$, where
\begin{align*}
&\S u_k^1=P_{10|\tau|^{\frac{1}{2}} \ge  2^k} f_k:=f_k^1 \, \, \mbox{ with initial conditions $0$}\\
&\F u_k^2=\frac{ \F \left(P_{10 |\tau|^{\frac{1}{2}} \le  2^k} f_k\right)}{\tau+ |\xi|^2+|\eta|^2}
\mbox{ (this no longer has initial conditions $0$)}
\\
&\S u_k^3=0,  \, \, \mbox{a correction so that }\,  u_k^2+u_k^3 \, \,  \mbox{has initial conditions $0$}.
\end{align*}

For $u_k^1$ the argument is based on the Strichartz and Sobolev estimates at fixed frequency:

\begin{align*}
&\|u_k^1\|_{\mathcal{S}} \lesssim \|f_k^1\|_{L^2(dt) L^{\frac{6}{5}}(d(x-y))L^2(d(x+y))} \le
\|f_k^1\|_{L^{\frac{6}{5}}(d(x-y))L^2(dt) L^2(d(x+y))}\\
&\lesssim \||\nabla_{x-y}|^{\frac{1}{2}}f_k^1\|_{L^{1}(d(x-y))L^2(dt) L^2(d(x+y))}\lesssim
2^{\frac{k}{2}}\|f_k^1\|_{L^{1}(d(x-y))L^2(dt) L^2(d(x+y))}\\
&\lesssim
\||\partial_t|^{\frac{1}{4}}f_k^1\|_{L^{1}(d(x-y))L^2(dt) L^2(d(x+y))}\\
&\lesssim
\||\partial_t|^{\frac{1}{4}}f_k\|_{L^{1}(d(x-y))L^2(dt) L^2(d(x+y))}.
\end{align*}
We  used Plancherel and the Fourier support of $f_k^1$.

For $ u_k^2$, the denominator is comparable with $|\xi-\eta|^2+|\xi+\eta|^2 \ge 2^{2k} \ge 100 |\tau|$, and
\begin{align*}
\|u_k^2\|_{L^2(dt dx dy) }\lesssim \||\nabla_{x-y}|^{-\frac{3}{2}}|\nabla_{x+y}|^{-\frac{1}{2}}f_k\|_{L^2(dt dx dy) }
\end{align*}
and
\begin{align*}
&\||\nabla_{x+y}|u_k^2\|_{L^2(dt dx dy) }\lesssim \||\nabla_{x-y}|^{-\frac{3}{2}}|\nabla_{x+y}|^{\frac{1}{2}}f_k\|_{L^2(dt dx dy) }\\
&\lesssim \||\nabla_{x+y}|^{\frac{1}{2}}f_k\|_{L^{1}(d(x-y))L^2(dt d(x+y)) }.
\end{align*}
Thus, using Lemma \ref{sobangle} (Sobolev estimates at an angle)
we have
\begin{align}
&\|u_k^2\|_{L^2(dt)L^6(dx)L^2(dy)}
+\|u_k^2\|_{L^2(dt)L^6(dy)L^2(dx)}
\lesssim \||\nabla_{x+y}|u_k^2\|_{L^2(dt dx dy) }\notag\\
&\lesssim \label{unloc1}
\||\nabla_{x+y}|^{\frac{1}{2}}f_k\|_{L^{1}(d(x-y))L^2(dt d(x+y)) }.
\end{align}
Similarly, we have
\begin{align*}
&\||\partial_t|^{\frac{1}{2}}u_k^2\|_{L^2(dt dx dy) }\lesssim \||\nabla_{x-y}|^{-\frac{3}{2}}|\partial_t|^{\frac{1}{4}}f_k\|_{L^2(dt dx dy) }\\
&\lesssim \||\partial_t|^{\frac{1}{4}}f_k\|_{L^{1}(d(x-y))L^2(dt d(x+y)) }.
\end{align*}
Unfortunately, the desired $L^{\infty}(dt)$ Sobolev estimate is false, so we proceed slightly differently:

 \begin{align*}
 &\|u_k^2\|_{L^{\infty}(dt)L^2(dxdy)} \lesssim 2^{-\frac{k}{2}}
 \big\| \int_{\tau \in [-2^{2 k}, 2^{2 k}]} \frac{1}{|\tau|^{\frac{1}{4}}}|\tau|^{\frac{1}{4}}|\xi-\eta|^{- \frac{3}{2}} |\widetilde{f_k}| d \tau \big\|_{L^2(d\xi d \eta)}\\
& \lesssim  \|||\tau|^{\frac{1}{4}}|\xi-\eta|^{- \frac{3}{2}}|\widetilde{f_k}\|_{L^2(\tau \in [-2^{2 k}, 2^{2 k}] d(\xi d\eta))}\\
&= c\||\nabla_{x-y}|^{-\frac{3}{2}}|\partial_{t}\big|^{\frac{1}{4}}f_k\|_{L^2(dt dx dy) }
  \lesssim
  \|\big|\partial_{t}\big|^{\frac{1}{4}}f_k\|_{L^{1}(d(x-y))L^2(dt d(x+y)) }.
 \end{align*}

Finally, by interpolation,
\begin{align*}
&\|u_k^2\|_{\mathcal{S}_{x, y}} \lesssim
\|u_k^2\|_{L^2(dt)L^6(dx)L^2(dy)}
+\|u_k^2\|_{L^2(dt)L^6(dy)L^2(dx)}+|u_k^2\|_{L^{\infty}(dt)L^2(dxdy)}
\end{align*}
and we get the desired result for $u_k^2$.
Since $\|u_k^2(t=0)\|_{L^2}=\|u_k^3(t=0)\|_{L^2}$, the result for $u_k^3$ is trivial.

\end{proof}

We also record the following version:
\begin{proposition} \label{Suusharp}
Let
\begin{align*}
\S u=N^3v(N(x-y)\Lambda:=f
\end{align*}
with $0$ initial conditions and $v\in \mathcal S$.
\begin{align*}
&\|u\|_{\mathcal {S}_{x, y}} \lesssim \log N \left(
\big\|\big<\nabla_{x+y}\big>^{\frac{1}{2}}\Lambda\big\|_{collapsing} + \big\||\partial_t|^{\frac{1}{4}}\Lambda\big\|_{collapsing}\right).
\end{align*}
The same result holds for the equation
\begin{align*}
\S u=P_{|\xi|<M}P_{|\eta|<M}\left(N^3v(N(x-y)\Lambda\right)
\end{align*}
for all $M>0$, with implicit constants independent of $M$.
\end{proposition}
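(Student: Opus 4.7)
The plan is to Littlewood--Paley decompose the forcing in the $\xi - \eta$ variable and apply Proposition \ref{Suufixed} at each dyadic scale. Set $f := N^3 v(N(x-y))\Lambda$ and $f_k := P_{|\xi-\eta|\sim 2^k}f$, and let $u_k$ solve $\S u_k = f_k$ with zero initial data, so that $u = \sum_k u_k$. Each $u_k$ is Fourier-supported in $|\xi-\eta|\sim 2^k$, making Proposition \ref{Suufixed} directly applicable:
\[
\|u_k\|_{\mathcal{S}_{x, y}} \lesssim \bigl\||\nabla_{x+y}|^{1/2}f_k\bigr\|_{L^1(d(x-y))L^2(dt\,d(x+y))} + \bigl\||\partial_t|^{1/4}f_k\bigr\|_{L^1(d(x-y))L^2(dt\,d(x+y))}.
\]

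Since $|\nabla_{x+y}|^{1/2}$ and $|\partial_t|^{1/4}$ commute both with $P_{|\xi-\eta|\sim 2^k}$ (a convolution in $x-y$) and with multiplication by $v(N(x-y))$ (a function of $x-y$ alone), one has $Df_k = \psi_k *_{x-y}\bigl(N^3 v(N\cdot)\,D\Lambda\bigr)$ for $D \in \{|\nabla_{x+y}|^{1/2}, |\partial_t|^{1/4}\}$. Young's inequality in $x-y$, combined with $\|\psi_k\|_{L^1(d(x-y))} \lesssim 1$ uniformly in $k$ and $\|N^3 v(N\cdot)\|_{L^1(d(x-y))} = \|v\|_{L^1}$, yields the uniform-in-$k$ bound
\[
\|D f_k\|_{L^1(d(x-y))L^2(dt\, d(x+y))} \lesssim \|D\Lambda\|_{collapsing}.
\]

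The $\log N$ factor emerges from truncating the dyadic sum at $2^k \sim N$. Since $\widehat{N^3 v(N\cdot)}(\zeta) = \hat v(\zeta/N)$ is Schwartz and concentrated on $|\zeta|\lesssim N$, the multiplier cannot transfer frequencies of $\Lambda$ by more than $\sim N$. Thus for $k \le K := C \log_2 N$ the uniform bound summed over $K$ scales produces exactly the $\log N$ factor, while for $k > K$ the Schwartz decay of $\hat v(\cdot/N)$ away from $|\zeta|\lesssim N$ forces $\|f_k\|_{L^1(d(x-y))L^2(dt\,d(x+y))} \lesssim_M (N/2^k)^M \|\Lambda\|_{collapsing}$, making the tail sum convergent. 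The $u_k$ are reassembled into $u$ by Minkowski in $k$ for each Strichartz norm in $\mathcal{S}_{x, y}$, with the $L^\infty_t L^2_{x, y}$ component benefiting additionally from almost-orthogonality in $L^2$ under Littlewood--Paley in $\xi-\eta$. The second half of the proposition (with cutoffs $P_{|\xi|<M}P_{|\eta|<M}$) follows from the same scheme: the cutoffs force $|\xi-\eta| \le 2M$, but the Schwartz-decay truncation at $k \sim \log N$ still governs, yielding the $\log N$ bound uniformly in $M$.

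The main obstacle is the rigorous verification of the tail estimate for $k > K$: the collapsing norm does not control individual Littlewood--Paley pieces of $\Lambda$ in $L^\infty(d(x-y))$, so one cannot simply pair the projector acting on $f$ with an LP decomposition of $\Lambda$ and sum. The resolution is to Littlewood--Paley decompose the multiplier $N^3 v(N\cdot) = \sum_j g_j$ in $x-y$ instead; rescaling and the Schwartz decay of $\hat v$ give $\|g_j\|_{L^1(d(x-y))} \lesssim_M (N/2^j)^M$ for $2^j \gg N$, and a careful accounting of the Fourier support overlaps in the product $g_j \Lambda$ against $P_{|\xi-\eta|\sim 2^k}$ transfers this decay to $f_k$ in the required norm.
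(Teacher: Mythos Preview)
Your low-frequency argument (summing the $\lesssim \log N$ dyadic pieces with $2^k \lesssim N$ via Proposition \ref{Suufixed} and the uniform bound $\|D f_k\|_{L^1(d(x-y))L^2} \lesssim \|D\Lambda\|_{collapsing}$) is correct and is exactly what the paper does for $P_{|\xi-\eta|<N}u$.

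The gap is in your treatment of the tail $2^k \gg N$. The claimed decay $\|f_k\|_{L^1(d(x-y))L^2} \lesssim (N/2^k)^M\|\Lambda\|_{collapsing}$ is false, and your proposed fix does not repair it. If you decompose $V := N^3 v(N\cdot) = \sum_j g_j$ in the $x-y$ frequency and look at $P_k(g_j\Lambda)$ for $2^j \ll 2^k$, Fourier support forces $P_k(g_j\Lambda) = P_k(g_j\,\tilde P_k\Lambda)$, so
\[
\|P_k(g_j\Lambda)\|_{L^1(d(x-y))L^2} \lesssim \|g_j\|_{L^1}\,\|\tilde P_k\Lambda\|_{L^\infty(d(x-y))L^2} \lesssim \|g_j\|_{L^1}\,\|\Lambda\|_{collapsing},
\]
with \emph{no} decay in $k$. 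Since these low-$j$ pieces carry essentially all of $\|V\|_{L^1}$, summing over $k>K$ diverges. Schwartz decay of $\hat v$ only kills $g_j$ with $2^j\gg N$; it says nothing about high $x-y$ frequencies that enter through $\Lambda$ itself, and the collapsing norm gives you no handle on those.

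The paper handles $P_{|\xi-\eta|>N}u$ by a different mechanism, with no $\log$ loss. The crude Strichartz bound $\|u\|_{\mathcal S_{x,y}}\lesssim \|f\|_{L^2 L^{6/5}(d(x-y))L^2}\lesssim N^{1/2}\|\Lambda\|_{collapsing}$ costs $N^{1/2}$. On the piece where $|\tau|^{1/2}+|\xi+\eta|>N/10$, multiplication by $V(x-y)$ commutes with the projections $P_{|\xi+\eta|>cN}$ and $P_{|\tau|>cN^2}$, so by Plancherel one recovers the $N^{-1/2}$ from the derivative on $\Lambda$. On the complementary piece $|\tau|^{1/2}+|\xi+\eta|<N/10$ with $|\xi-\eta|>N$, the symbol $\tau+|\xi|^2+|\eta|^2$ is elliptic of size $\gtrsim N^2$, and dividing by it (as in the $u_k^2$ part of Proposition \ref{Suufixed}) gives the bound directly. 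You need to replace your tail argument with something along these lines.
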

(see \eqref{projdef} for the definition of $P_{|\xi|<M}P_{|\eta|<M}$).
\begin{proof}
First we consider $P_{|\xi-\eta|>N} u$, and in this case we get the result without a $\log$.
In that case
\begin{align*}
\|u\|_{\mathcal{S}_{x, y}} \lesssim \|f\|_{L^2(dt)L^{\frac{6}{5}}(d(x-y))L^2(d(x+y))} \lesssim N^{\frac{1}{2}}\|\Lambda\|_{collapsing}.
\end{align*}

If we localize to the region where $|\tau|^{\frac{1}{2}} +|\xi+\eta|>\frac{N}{10}$, the result follows from Plancherel.
Thus we can assume $\tilde u$ is supported in $|\tau|^{\frac{1}{2}} +|\xi+\eta|<\frac{N}{10}$, $|\xi-\eta|>N$ and in particular we can solve $\S$ by dividing by the symbol, as in the previous proof. Let $u_1$ be the solution (which differs from $u$ by a solution to the homogeneous equation). Then we get
\begin{align*}
\|u_1\|_{L^2(dt dx dy)} \lesssim \frac{1}{N^2} \|f\|_{L^2} \lesssim \frac{1}{\sqrt N} \|\Lambda\|_{collapsing}
\end{align*}
and also
\begin{align*}
&\|u_1\|_{L^2(dt) L^6(dx)L^2 (dy)}+
\|u_1\|_{L^2(dt) L^6(dy)L^2 (dx)}\lesssim \||\nabla_{x+y}|u_1\|_{L^2(dt dx dy)}\\
&  \lesssim \frac{1}{\sqrt N} \||\nabla_{x+y}|\Lambda\|_{collapsing}
  \lesssim \big\||\nabla_{x+y}|^{\frac{1}{2}}\Lambda\big\|_{collapsing}
\end{align*}
and also, using Cauchy-Schwartz in $\tau$,
 \begin{align*}
 &\|u_1\|_{L^{\infty}(dt)L^2(dxdy)} \lesssim
 \big\| \int_{\tau \in [-N^2, N^2]} \frac{1}{|\tau|^{\frac{1}{4}}}|\tau|^{\frac{1}{4}} |\widetilde{u_1}| d \tau \big\|_{L^2(d\xi d \eta)}\\
& \lesssim \sqrt N \|||\tau|^{\frac{1}{4}}|\widetilde{u_1}\|_{L^2(\tau \in [-N^2, N^2] d(\xi d\eta))}
  \lesssim
  \|\big|\partial_{t}\big|^{\frac{1}{4}}\Lambda\|_{collapsing}.
 \end{align*}
 Next, we consider $P_{|\xi-\eta|<N}u = P_{|\xi-\eta|<1}u + \sum P_{|\xi-\eta|\sim 2^i}u$ where the sum has about $\log N$ terms.
 Proposition \ref{Suufixed} applies to each term, and the result follows.

\end{proof}

   \section{Proof of Theorem \ref{mainthm}}\label{endlin}
Recall the ``projections"  $P_{|\xi|>M} $ and $P_{|\xi|<M} $
 by
\begin{align}
\F \left(P_{|\xi|<M} \Lambda\right)(\xi, \eta)=\hat \phi \left(\frac{|\xi|}{M}\right)\F \Lambda(\xi, \eta)\label{projdef}
\end{align}
and $P_{|\xi|>M}=1-P_{|\xi|<M}$.
The function $\hat \phi$ is a $C_0^{\infty}$ function (which can change from line to line).

 Here $\F$ denotes the Fourier transform.
These multipliers are bounded on $L^q(dx)L^2(dy)$, $L^q(dy)L^2(dx)$ and $L^q(d(x-y))L^2(d(x+y))$ for $1 \le q \le \infty$
 (uniformly in $M$).
Also, we adopt the convention that $P$ and $\phi$ may change from line to line. 

   We have to show:
   If $h(t)$ is the Heaviside function and
     \begin{align*}
&\S \Lambda(t, x, y) = h(t) \bigg(N^{3 \beta -1} v(N^{\beta}(x-y)) \Lambda(t, x, y) +G(t, x, y)\notag\\
&+
 N^{3 \beta -1} v(N^{\beta}(x-y)) H(t, x, y)\bigg)\\
&\Lambda(0, \cdot)=\Lambda_0\notag
\end{align*}
then
\begin{align*}
&\big\|\big<\nabla_x\big>^{\alpha}\big<\nabla_y\big>^{\alpha}\Lambda \big\|_{\mathcal{S}_{x, y}} \\
&+\big\|\big|\nabla_{x+y}\big|^{\alpha}\Lambda\big\|_{L^2(dt)L^{\infty}(d(x-y))L^2(d(x+y))}   \\
&+\big\|\big|\partial_t\big|^{\frac{1}{4}}\Lambda\big\|_{L^2(dt)L^{\infty}(d(x-y))L^2(d(x+y))}\\
&\lesssim
\big\|\big<\nabla_x\big>^{\alpha}\big<\nabla_y\big>^{\alpha}G \big\|_{\mathcal{S}_r'}
+N^{-\epsilon}\big\|\big<\nabla_x\big>^{\alpha}\big<\nabla_y\big>^{\alpha}H \big\|_{L^2(dt)L^{6}(d(x-y))L^2(d(x+y))}\\
&+N^{-\epsilon}\big\|\big|\partial_t\big|^{\frac{1}{4}}H\big\|_{L^{\infty}(d(x-y))L^2(dt) L^2(d(x+y))}\\
&+N^{-\epsilon}\big\|\big|\nabla_{x+y}\big|^{\alpha}H\big\|_{L^{\infty}(d(x-y))L^2(dt) L^2(d(x+y))}\\
&+ \big\|\big<\nabla_x\big>^{\alpha}\big<\nabla_y\big>^{\alpha}\Lambda_0 \big\|_{L^2}
\end{align*}
and $N^{3 \beta -1} v(N^{\beta}(x-y)$ is denoted by $v_M(x-y)$,
 and $M$ is of the form $N^{1+\epsilon}$ for some small $\epsilon>0$.

Before starting the proof we remark that  we have the following Strichartz estimate (see Theorem \ref{mainStrich})
\begin{align*}
&\|\Lambda\|_{L^2(dt) L^6(d(x-y)) L^2(d(x+y))} \lesssim \|v_M(x-y)\Lambda\|_{L^2(dt) L^{\frac{6}{5}}(d(x-y)) L^2(d(x+y))}\\
&+\|v_M(x-y)H\|_{L^2(dt) L^{\frac{6}{5}}(d(x-y)) L^2(d(x+y))}
+\|G\|_{\mathcal{S}_r'} + \|\Lambda_0\|_{L^2}.
\end{align*}
Using H\"older's inequality and the fact that $\|v_M\|_{L^{\frac{3}{2}}} = O(M^{-\epsilon_0})$ as $M \to \infty$ (see \eqref{vestim}) we can treat the potential term as a perturbation and get
\begin{align}
&\|\Lambda\|_{\mathcal{S}} \lesssim \|\Lambda_0\|_{L^2}
+\|v_M(x-y)H\|_{L^2(dt) L^{\frac{6}{5}}(d(x-y)) L^2(d(x+y))}
+\|G\|_{\mathcal{S}_r'} \label{noderiv}\\
&\lesssim \|\Lambda_0\|_{L^2}
+M^{-\epsilon_0}\|H\|_{L^2(dt) L^{6}(d(x-y)) L^2(d(x+y))}
+\|G\|_{\mathcal{S}_r'} \notag.
\end{align}
We can do the same after taking $|\nabla_{x}|^{\alpha}$, but we need a suitable Leibniz rule. Using the outline of  \cite{ChWein},  \cite{Muscalu1}.  \cite{Ward}, it is possible to prove
\begin{theorem} \label{leibniz}
Let $\alpha, \beta \ge 0$.  Let $f(x, y)=v(x-y)$ with $v \in \mathcal S$.  Then

\begin{align*}
&\||\nabla_{x}|^{\alpha}|\nabla_{y}|^{\beta} \left(f(x, y) g(x, y)\right)\|_{L^{r}(d(x-y))L^{2}(d(x+y))}\\
&\lesssim  \|<\nabla_{x}>^{\alpha+\beta}v\|_{L^{p_1}}\|g\|_{L^{q_1}(d(x-y))L^{2}(d(x+y))}\\
&+  \|<\nabla_{x}>^{\alpha}v\|_{L^{p_2}}\|<\nabla_{y}>^{\beta}g\|_{L^{q_2}(d(x-y))L^{2}(d(x+y))}\\
&+  \|<\nabla_{y}>^{\beta}v\|_{L^{p_3}}\|<\nabla_{x}>^{\alpha}g\|_{L^{q_3}(d(x-y))L^{2}(d(x+y))}\\
&+ \|v\|_{L^{p_4}}\|<\nabla_{x}>^{\alpha}<\nabla_{y}>^{\beta}g\|_{L^{q_4}(d(x-y))L^{2}(d(x+y))}\\
\end{align*}
($\frac{1}{r}=\frac{1}{p_i} + \frac{1}{q_i}$, $1<r,  p_i, q_i < \infty$). In addition, if $\hat v$ is supported in $|\xi| \le \frac{ M}{10}$ and $\hat g (\xi, \eta)$ is supported in $|\xi|>10M$, then the $|\nabla_{x}|^{\alpha}$ derivatives only fall on $g$ and we have
\begin{align*}
&\||\nabla_{x}|^{\alpha}|\nabla_{y}|^{\beta} \left(f(x, y) g(x, y)\right)\|_{L^{r}(d(x-y))L^{2}(d(x+y))}\\
&\lesssim
\|<\nabla_{y}>^{\beta}v\|_{L^{p_3}}\|<\nabla_{x}>^{\alpha}g\|_{L^{q_3}(d(x-y))L^{2}(d(x+y))}\\
&+ \|v\|_{L^{p_4}}\|<\nabla_{x}>^{\alpha}<\nabla_{y}>^{\beta}g\|_{L^{q_4}(d(x-y))L^{2}(d(x+y))}.\\
\end{align*}
\end{theorem}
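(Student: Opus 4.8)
The plan is to prove Theorem~\ref{leibniz} by a Coifman--Meyer / paraproduct decomposition adapted to the mixed coordinates, importing the harmonic-analytic input (maximal and square function bounds) through the rotation Lemma~\ref{rotationlemma}, exactly as Lemmas~\ref{bernstein}, \ref{square1}, \ref{square2} were obtained. The key structural facts are: (i) $f(x,y)=v(x-y)$ is \emph{diagonal} in frequency, $\widehat f(\xi,\eta)=\widehat v(\xi)\,\delta(\xi+\eta)$, so a Littlewood--Paley piece $v_j$ (localized at frequency $\sim 2^j$ in the $x-y$ variable) is automatically localized at $x$-frequency $\sim 2^j$ and $y$-frequency $\sim 2^j$ simultaneously; and (ii) Hölder in the $x-y$ variable pulls a factor of $v$ out, $\|v(x-y)\,w(x,y)\|_{L^r(d(x-y))L^2(d(x+y))}\le \|v\|_{L^p}\|w\|_{L^q(d(x-y))L^2(d(x+y))}$ whenever $\frac1r=\frac1p+\frac1q$, which is precisely the shape of the four terms on the right-hand side, with $\|v\|_{L^{p_i}}$ a genuine norm on $\mathbb R^3$.

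First I would run a double Littlewood--Paley decomposition of $g$ in the $x$ and $y$ frequencies, $g=\sum_{k,l}g_{k,l}$ (as in Lemma~\ref{square1}), together with $v=\sum_j v_j$, so $fg=\sum_{j,k,l}v_j\,g_{k,l}$. Grouping by which of $2^j$ (the $v$-frequency, the same in $x$ and $y$), $2^k$ ($g$'s $x$-frequency), $2^l$ ($g$'s $y$-frequency) dominates in each variable separately, the non-diagonal pieces split into four families: (1) $2^j\gg 2^k$ and $2^j\gg 2^l$, so the output has $x$- and $y$-frequency $\sim2^j$ and $|\nabla_x|^{\alpha}|\nabla_y|^{\beta}\sim 2^{j(\alpha+\beta)}$ acts on $v$, producing the $\|\langle\nabla_x\rangle^{\alpha+\beta}v\|_{L^{p_1}}\|g\|_{L^{q_1}}$ term; (2) $2^j\gg 2^k$ but $2^l\gg 2^j$, so $|\nabla_x|^{\alpha}$ lands on $v$ and $|\nabla_y|^{\beta}$ on $g$, giving the $p_2,q_2$ term; (3) the mirror case, giving the $p_3,q_3$ term; (4) $2^k\gg2^j$ and $2^l\gg 2^j$, both derivatives on $g$, giving the $p_4,q_4$ term. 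In each family the designated derivative is moved onto the designated factor, the remaining dyadic sum is performed by using the Fefferman--Stein maximal inequality (transferred to the rotated mixed norm via Lemma~\ref{rotationlemma} with an $\ell^2$- or $\ell^\infty$-valued kernel) on the ``low'' factor and the rotated square function estimate (Lemma~\ref{square1} or \ref{square2}) on the ``high'' $\ell^2$-valued factor, and then Hölder in $x-y$ is applied. The diagonal pieces ($2^j\sim 2^k$ or $2^j\sim 2^l$) are absorbed into these four families by charging the comparable derivative to either factor and again summing in $\ell^2$ via a square function.

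The main obstacle is the step of transferring the fractional derivative off the product onto a single factor inside the mixed norm $L^r(d(x-y))L^2(d(x+y))$: this requires a bilinear-multiplier / commutator estimate in these rotated, partly-$L^2$ norms rather than in a plain $L^p(\mathbb R^6)$. I would either invoke the mixed-norm multilinear machinery of \cite{Muscalu1}, \cite{ChWein}, \cite{Ward}, or — to keep the argument self-contained — replace the multiplier theorem by an explicit kernel representation: on the configuration $|\zeta|\ll|\xi|$ write $|\xi+\zeta|^{\alpha}=|\xi|^{\alpha}+\alpha\int_0^1|\xi+s\zeta|^{\alpha-2}(\xi+s\zeta)\cdot\zeta\,ds$, which realizes each paraproduct commutator as a convergent $s$-average of operators of the schematic form (a derivative of $v$) times (a Mikhlin-type multiplier of $g$), each of which is controlled by Hölder in $x-y$ and the boundedness of the scalar multipliers; the $s$-integral converges thanks to the frequency gap. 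The $L^2(d(x+y))$ component is never touched, so no genuine vector-valued obstruction arises there, and the hypothesis $1<r,p_i,q_i<\infty$ is exactly what the maximal and square function inputs require.

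Finally, the refined statement (the last display of the theorem) falls out of the same decomposition: if $\widehat v$ is supported in $|\xi|\le M/10$ and $\widehat g(\xi,\eta)$ in $|\xi|>10M$, then the $x$-frequency of $fg$ exceeds $9M$ and is comparable to that of $g$, so in every surviving piece $2^k\gg 2^j$ in the $x$-variable; hence $|\nabla_x|^{\alpha}$ always lands on $g$, only families (3) and (4) occur, and one is left with exactly the two claimed terms $\|\langle\nabla_y\rangle^{\beta}v\|_{L^{p_3}}\|\langle\nabla_x\rangle^{\alpha}g\|_{L^{q_3}}$ and $\|v\|_{L^{p_4}}\|\langle\nabla_x\rangle^{\alpha}\langle\nabla_y\rangle^{\beta}g\|_{L^{q_4}}$.
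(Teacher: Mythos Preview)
Your proposal is correct and is precisely the approach the paper has in mind: the paper does not actually write out a proof of Theorem~\ref{leibniz} but simply states that ``using the outline of \cite{ChWein}, \cite{Muscalu1}, \cite{Ward}, it is possible to prove'' it, and then passes to the easier Theorem~\ref{leibnizX} which suffices for its applications. Your sketch --- paraproduct decomposition of $v$ and a double decomposition of $g$, the four-case splitting according to which factor carries the dominant $x$- and $y$-frequency, and then summing via the rotated square function (Lemma~\ref{square1}) and maximal function bounds transferred by Lemma~\ref{rotationlemma}, followed by H\"older in the $x-y$ variable --- is exactly the content of those references adapted to the mixed norm $L^r(d(x-y))L^2(d(x+y))$, and your observation that $\widehat f(\xi,\eta)=\widehat v(\xi)\delta(\xi+\eta)$ localizes $v_j$ simultaneously in both $x$- and $y$-frequency is the structural simplification that makes the four-term right-hand side come out cleanly. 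The refined statement under the support hypothesis on $\widehat v$ and $\widehat g$ is handled exactly as you say: only cases (3) and (4) survive.
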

However, for our purposes it is enough to use the following version, which is easier to prove. We thank Xiaoqi Huang for suggesting this approach.

\begin{theorem} \label{leibnizX}
Let $\alpha, \beta \ge 0$.  Let  $v_M $ as above with $\hat v_M$ is supported in $|\xi| \le \frac{ M}{10}$, and let
$\frac{1}{r}=\frac{1}{p} + \frac{1}{q}$, $1<r,  p, q < \infty$.
 Then

\begin{align}
&\||\nabla_{x}|^{\alpha} \left(v_M(x-y)  g(x, y)\right)\|_{L^{r}(d(x-y))L^{2}(d(x+y))}\label{X1}\\
&\lesssim  M^{\alpha }\| v_M\|_{L^{r}}\|g\|_{L^{\infty}(d(x-y))L^{2}(d(x+y))}\label{X2}\\
&+\|v_M\|_{L^{p_1}}\|<\nabla_{x}>^{\alpha}g\|_{L^{q_1}(d(x-y))L^{2}(d(x+y))} \label{X3}
\end{align}
and similarly for $|\nabla_{y}|^{\alpha}$.
Also, if $\frac{1}{r}=\frac{1}{p_i} + \frac{1}{q_i}$, $1<r,  p_i, q_i < \infty$,

\begin{align}
&\||\nabla_{x}|^{\alpha} \nabla_{y}|^{\beta}\left(v_M(x-y)) P_{|\eta|>M} g(x, y)\right)\|_{L^{r}(d(x-y))L^{2}(d(x+y))}\label{XX1}\\
&\lesssim M^{\alpha }  \|v_M\|_{L^{p_1}}\| \nabla_{y}|^{\beta}g\|_{L^{q_1}(d(x-y))L^{2}(d(x+y))} \label{XX2}\\
&+\|v\|_{L^{p_2}}\|<\nabla_{x}>^{\alpha}<\nabla_{y}>^{\beta} g\|_{L^{q_2}(d(x-y))L^{2}(d(x+y))}\label{XX3}
\end{align}
and also
\begin{align}
&\||\nabla_{x}|^{\alpha} \nabla_{y}|^{\beta}\left(v_M(x-y)) P_{|\eta|<M} P_{|\eta|<M} g(x, y)\right)\|_{L^{r}(d(x-y))L^{2}(d(x+y))} \notag\\
&\lesssim M^{\alpha+\beta }  \|v_M\|_{L^{p_3}}\|g\|_{L^{q_3}(d(x-y))L^{2}(d(x+y))}. \label{XX4}
\end{align}

\end{theorem}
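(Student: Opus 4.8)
The plan is to exploit that $\widehat{v_M}$ is supported in $|\xi|\le M/10$, so that $v_M(x-y)$, regarded as a function of $(x,y)$, has frequency $\lesssim M$ \emph{separately} in the $x$-variable and in the $y$-variable. This lets one run paraproduct decompositions in the $x$-frequency and in the $y$-frequency and decide, scale by scale, whether a derivative is absorbed by $v_M$ (costing a power of $M$) or by $g$ (costing a derivative on $g$). All the Littlewood--Paley, Bernstein and square-function inputs are used ``at an angle'', i.e.\ in the mixed norm $L^r(d(x-y))L^2(d(x+y))$; these come from Lemma \ref{bernstein}, Lemma \ref{square1}, and --- for frequency projections in $x$ (resp.\ $y$) alone --- from the $\ell^2$-valued form of Lemma \ref{rotationlemma} applied to the classical single-variable square-function estimate. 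The uniform boundedness of the $x$-frequency projections on $L^q(d(x-y))L^2(d(x+y))$, $1\le q\le\infty$, recorded just before Theorem \ref{leibnizX}, is also used freely.

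For \eqref{X1}--\eqref{X3}, split $g = P_{|\xi|<20M}g + \sum_{2^j\ge 20M} P_{|\xi|\sim 2^j}g$ in the $x$-frequency. In the product $v_M(x-y)P_{|\xi|<20M}g$ the $x$-frequencies add up to something $\lesssim M$, so $|\nabla_x|^\alpha$ costs at most $M^\alpha$ by Bernstein at an angle; then Hölder in the $x-y$ variable together with the uniform $L^\infty(d(x-y))L^2(d(x+y))$-boundedness of $P_{|\xi|<20M}$ yields the term \eqref{X2}. For the high pieces, $v_M(x-y)P_{|\xi|\sim 2^j}g$ is localized at $x$-frequency $\sim 2^j$, hence $\||\nabla_x|^\alpha (v_M P_{|\xi|\sim 2^j}g)\|\sim 2^{j\alpha}\|v_M P_{|\xi|\sim 2^j}g\|$; summing these almost orthogonal terms by the square-function estimate at an angle, pulling the $j$-independent factor $|v_M(x-y)|$ out of the square function, applying Hölder in $x-y$, and running the square-function estimate in reverse to recover $\langle\nabla_x\rangle^\alpha g$ gives \eqref{X3}.

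For \eqref{XX1}--\eqref{XX3}, where $g=P_{|\eta|>M}g$: since $v_M(x-y)$ also has $y$-frequency $\le M/10$, the product has $y$-frequency comparable to that of $g$, so $|\nabla_y|^\beta$ falls on $g$ up to harmless lower-frequency contributions (the same square-function/Bernstein argument as above, now in the $y$-frequency), reducing matters to distributing $|\nabla_x|^\alpha$ over $v_M(x-y)\,\bigl(|\nabla_y|^\beta g\bigr)$. Decomposing $|\nabla_y|^\beta g$ once more in the $x$-frequency at threshold $M$, the low part ($x$-frequency $\lesssim M$) gains $M^\alpha$ by Bernstein and, after Hölder in $x-y$, produces \eqref{XX2}, while the high part forces all of $|\nabla_x|^\alpha$ onto $g$ and produces \eqref{XX3}. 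Finally, \eqref{XX4} is immediate: when $g$ has both $x$- and $y$-frequency $<M$, the product $v_M(x-y)g$ has $x$- and $y$-frequency both $\lesssim M$, so applying Bernstein at an angle successively in $x$ and in $y$ costs $M^{\alpha+\beta}$, and Hölder in $x-y$ finishes.

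The main obstacle is the bookkeeping in \eqref{XX1}--\eqref{XX3}: one must run the $x$- and $y$-frequency paraproduct decompositions simultaneously, keep track of which factor absorbs each of $|\nabla_x|^\alpha$ and $|\nabla_y|^\beta$, and carry out all the Bernstein and square-function manipulations in the mixed norm $L^r(d(x-y))L^2(d(x+y))$ rather than in plain coordinates. The transfer of the single-variable square-function estimate to these rotated coordinates is precisely where the $\ell^2$-valued version of Lemma \ref{rotationlemma} is used; once that is in place the argument is a routine (if lengthy) paraproduct computation, and the restriction $1<r,p_i,q_i<\infty$ is exactly what makes the square-function and vector-valued Calderón--Zygmund inputs available.
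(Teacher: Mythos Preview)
Your proposal is correct and follows essentially the same approach as the paper. The only difference is presentational: where you argue directly with square functions (``sum the almost orthogonal pieces via the square-function estimate, pull $|v_M|$ out pointwise, H\"older, then reverse square function''), the paper runs the identical computation by duality---integrating against a test function $h$, moving the outer $\bar P_{|\xi|\sim 2^iM}$ onto $h$, applying Cauchy--Schwarz in the Littlewood--Paley index, and then invoking the single- (for \eqref{X3}, \eqref{XX2}) or double- (for \eqref{XX3}) square-function estimate in rotated coordinates on each factor separately; this is exactly the dual formulation of what you describe.
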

\begin{proof}
We use  standard Littlewood-Paley operators $P_{ |\xi|\sim 2^i M}$, $P_{ |\xi|\lesssim 2^i M}$. In the name of simplicity of notation, we allow the implicit constants in $\sim$, $\lesssim$ to be different in different instances. However, the modified projections will be denoted by $\bar P$.
Thus, for instance, $\bar P_{ |\xi|\sim 2^i M}P_{ |\xi|\sim 2^i M}
=P_{ |\xi|\sim 2^i M}
$. Also, we use the notation $|\nabla|^{\alpha}P_{ |\xi|\sim 2^i M}g= (2^{i}M)^{\alpha} \bar P_{ |\xi|\sim 2^i M}g$. In the first instance, the multiplier is of the form $\psi(\frac{\xi}{M})$ with $\psi \in C_0^{\infty} (\mathbb R^n)$, $\psi(\xi)=0$ in a neighborhood of $0$. In the second case, $\psi $ is replaced by $|\xi|^{\alpha} \psi(\xi)$ which has the same properties. When $\bar P$ is further modified, it is still denoted $\bar P$.
This is the convention used in \cite{Muscalu1}.
Finally, denote $|\nabla|^{\alpha}P_{ |\xi| \lesssim  M}g= M^{\alpha}\bar P_{ |\xi|\lesssim M}g$, where the exact definition of $\bar P$ is seen on the Fourier transform side. The corresponding multiplier is not smooth near $0$, but the corresponding kernel is in $L^1$.
 After the modifications described above, the identity $g= \bar P_{ |\xi|\lesssim M}g + \sum_{i=1}^{\infty}  \bar P_{ |\xi|\sim 2^i M}g$ is no longer true. Square function estimates have to be used instead, and
 the square function operators constructed using $\bar P_{ |\xi|\sim 2^i M}$ have the same mapping properties as those using
$ P_{ |\xi|\sim 2^i M}$.

For \eqref{X1}, decompose $g= P_{ |\xi|<M} g +  P_{ |\xi|>M} g$. Arguing as in the proof of Bernstein's inequality,
\begin{align*}
&|\nabla_{x}|^{\alpha} \left(v_M(x-y) P_{ |\xi|<M} g(x, y)\right)=
|\nabla_{x}|^{\alpha}\bar P_{ |\xi|\lesssim M} \left(v_M(x-y) P_{ |\xi|<M} g(x, y)\right)\\
&=M^{\alpha}\bar P_{ |\xi|\lesssim M} \left(v_M(x-y) P_{ |\xi|<M} g(x, y)\right).
\end{align*}
Since $\bar P_{ |\xi|\lesssim M}$ is given by convolution with a kernel which is in $L^1(dx)$ uniformly in $M$,
\begin{align*}
&\|\bar P_{ |\xi|\lesssim M} \left(v_M(x-y) P_{ |\xi|<M} g(x, y)\right)\|_{L^{r}(d(x-y))L^{2}(d(x+y))}\\
&\lesssim \|v_M(x-y) P_{ |\xi|<M} g(x, y)\|_{L^{r}(d(x-y))L^{2}(d(x+y))}\\
&\lesssim \| v_M\|_{L^{r}}\|g\|_{L^{\infty}(d(x-y))L^{2}(d(x+y))}
\end{align*}
For $P_{ |\xi|>M} g$, decompose it as $P_{ |\xi|>M} g = \sum_{i=1}^{\infty}P_{ |\xi|\sim 2^i M}g$.
Then $|\nabla|^{\alpha}\left(v_M(x-y)P_{ |\xi|\sim 2^i M}g\right)= (2^{i}M)^{\alpha} \bar P_{ |\xi|\sim 2^i M}\left(v_M(x-y) P_{ |\xi|\sim 2^i M}g\right)$.

Proving the estimate by duality involves using a test function $H$ with $\|H \|_{ L^{r'}(dx) L^{2}(dy)}=1$ and looking at
\begin{align}
&\int |\nabla_{x}|^{\alpha} \big(v_M(x-y)P_{ |\xi|>M} g(x, y)\big)h(x, y) dx dy \label{intt1}
\end{align}
with $h=H\circ R^{-1}$, or $\|h\|_{L^{r'}(d(x-y))L^{2}(d(x+y))} =1$
We have
\begin{align*}
&|\eqref{intt1}| =\bigg| \int v_M(x-y) \sum (2^{i}M)^{\alpha}  P_{ |\xi|\sim 2^i M}g \bar P_{ |\xi|\sim 2^i M}h\bigg|\\
&\le  \int |v_M(x-y)| \left(\sum |(2^{i}M)^{\alpha} P_{ |\xi|\sim 2^i M}g|^2 \right)^{\frac{1}{2}} \left(\sum |\bar P_{ |\xi|\sim 2^i M}h|^2\right)^{\frac{1}{2}}\\
&\le \|v_M\|_{L^{p_1}} \bigg\| \left(\sum |(2^{i}M)^{\alpha} P_{ |\xi|\sim 2^i M}g|^2 \right)^{\frac{1}{2}} \bigg\|_{L^{q_1}(d(x-y))L^{2}(d(x+y))}\\
&\times \bigg\|
\left(\sum | \bar P_{ |\xi|\sim 2^i M}h|^2\bigg|\right)^{\frac{1}{2}}\bigg\|_{ L^{r'}(d(x-y)) L^{2}(d(x+y))}\\
&\lesssim \|v_M\|_{L^{p_1}}\bigg\|\left(\sum ||\nabla_x|^{\alpha} \bar P_{ |\xi|\sim 2^i M}g|^2 \right)^{\frac{1}{2}}\bigg\|_{L^{q_1}(d(x-y))L^{2}(d(x+y))} \\
&\times \bigg\|
\left(\sum |  \bar P_{ |\xi|\sim 2^i M}h|^2\bigg|\right)^{\frac{1}{2}}\bigg\|_{ L^{r'}(d(x-y)) L^{2}(d(x+y))}\\
&\lesssim \|v_M\|_{L^{p_1}} \|<\nabla_{x}>^{\alpha}g\|_{L^{q_1}(d(x-y))L^{2}(d(x+y))}\|h\|_{L^{r'}(d(x-y))L^{2}(d(x+y))}.
\end{align*}
This uses the  square function estimate in rotated coordinates.

For \eqref{XX1},
let
$\frac{1}{r}=\frac{1}{p_1} + \frac{1}{q_1}$, $1<r,  p_1, q_1 < \infty$. Then, use the same duality argument as in the previous proof.
with $\|h\|_{L^{r'}(d(x-y))L^{2}(d(x+y))} =1$. The bound \eqref{XX2} corresponds to
\begin{align*}
&\||\nabla_{x}|^{\alpha} | \nabla_{y}|^{\beta}\left(v_M(x-y))P_{|\xi|<M} P_{|\eta|>M} g(x, y)\right)\|_{L^{r}(d(x-y))L^{2}(d(x+y))}\\
&=\bigg\|\sum_{i=1}^{\infty}
\bar P_{|\eta|\sim 2^iM} \bar P_{|\xi|\lesssim M} \bigg(M^{\alpha}v_M(x-y))P_{|\xi|<M} ( 2^iM)^{\beta} P_{|\eta|\sim 2^iM} g(x, y)\bigg)\bigg\|_{L^{r}(d(x-y))L^{2}(d(x+y))}\\
&=\bigg|\int \sum_{i=1}^{\infty}
\bar P_{|\eta|\sim 2^iM} \bar P_{|\xi|\lesssim M} \bigg(M^{\alpha}v_M(x-y))P_{|\xi|<M} ( 2^iM)^{\beta} P_{|\eta|\sim 2^iM} g(x, y)\bigg) h\bigg|\\
&=\bigg|\int \sum_{i=1}^{\infty}
 \bigg(M^{\alpha}v_M(x-y))P_{|\xi|<M} ( 2^iM)^{\beta} P_{|\eta|\sim 2^iM} g(x, y)\bigg)\bar P_{|\eta|\sim 2^iM} \bar P_{|\xi|\lesssim M} h\bigg|\\
 &\le \int |M^{\alpha}v_M(x-y))| \left(\sum_{i=1}^{\infty}
 |P_{|\xi|<M} ( 2^iM)^{\beta} P_{|\eta|\sim 2^iM} g|^2\right)^{\frac{1}{2}}\\
 &\times \left(\sum_{i=1}^{\infty}|\bar P_{|\eta|\sim 2^iM} \bar P_{|\xi|\lesssim M} h|^2\right)^{\frac{1}{2}}\\
 &\le \|M^{\alpha}v_M\|_{L^{p_1}}\bigg\|\left(\sum_{i=1}^{\infty}
 |P_{|\xi|<M} |\nabla_y|^{\beta} \bar P_{|\eta|\sim 2^iM} g|^2\right)^{\frac{1}{2}}\bigg\|_{L^{q_1}(d(x-y))L^{2}(d(x+y))}\\
 &\bigg\|\left(\sum_{i=1}^{\infty}|\bar P_{|\eta|\sim 2^iM} \bar P_{|\xi|\lesssim M} h|^2\right)^{\frac{1}{2}}\bigg\|_{L^{r'}(d(x-y))L^{2}(d(x+y))}
\end{align*}
and the last factor is $\lesssim 1$.

The bound \eqref{XX3} corresponds to
\begin{align*}
&\||\nabla_{x}|^{\alpha} | \nabla_{y}|^{\beta}\left(v_M(x-y))P_{|\xi|>M} P_{|\eta|>M} g(x, y)\right)\|_{L^{r}(d(x-y))L^{2}(d(x+y))}\\
&\lesssim  \|v_M\|_{L^{p_2}}\| |\nabla_{x}|^{\alpha}|\nabla_{y}|^{\beta}g\|_{L^{q_2}(d(x-y))L^{2}(d(x+y))}.
\end{align*}
To prove this, do a double Littlewood-Paley decomposition
$P_{|\xi|>M} P_{|\eta|>M} g=\sum_{i=1}^{\infty} \sum_{j=1}^{\infty} P_{|\eta|\sim 2^iM} P_{|\eta|\sim 2^jM}g$ and proceed as before, using
``double square function estimates in rotated coordinates", Lemma \ref{squarerotated}.
Finally, the bound \eqref{XX4} follows from Bernstein's inequality.

\end{proof}
Continuing with the comments preceding  the proof of Theorem \ref{mainthm},
\begin{align*}
&\||\nabla_{x}|^{\alpha}\Lambda\|_{L^2(dt) L^6(d(x-y)) L^2(d(x+y))}\\
&\lesssim M^{\alpha}\|v_M\|_{L^{\frac{6}{5}}}\|\Lambda\|_{L^2(dt) L^{\infty}(d(x-y)) L^2(d(x+y))}\\
&+
 M^{\alpha}\|v_M\|_{L^{\frac{6}{5}}}\|H\|_{L^2(dt) L^{\infty}(d(x-y)) L^2(d(x+y))}
\\
&+\|v_M\|_{L^{\frac{3}{2}}}\|\big<\nabla_x\big>^{\alpha}\Lambda\|_{L^2(dt) L^{6}(d(x-y)) L^2(d(x+y))}\\
&+\|v_M\|_{L^{\frac{3}{2}}}\|\big<\nabla_x\big>^{\alpha}H\|_{L^2(dt) L^{6}(d(x-y)) L^2(d(x+y))}
\\
&+\|\big<\nabla_x\big>^{\alpha}G\|_{\mathcal{S}_r'} + \|\big<\nabla_x\big>^{\alpha}\Lambda_0\|_{L^2}\notag.
\end{align*}
Using the  ``Sobolev at an angle" estimate (Lemma \ref{sobangle})
\begin{align*}
\|\Lambda\|_{L^2(dt)L^{\infty}(d(x-y))L^2(d(x+y))} \lesssim \|\big<\nabla_{x}\big>^{\alpha}\Lambda\|_{L^2(dt)L^{6}(d(x-y))L^2(d(x+y))}
\end{align*}
and similarly for $H$.
Using $\|v_M\|_{L^{\frac{3}{2}}}+  M^{\alpha} \|v_M\|_{L^{\frac{6}{5}}}=O(M^{-\epsilon_0})$ as $M \to \infty$
we can treat the two terms involving the potential and $\Lambda$ as perturbations and get
\begin{align}
&\|\big<\nabla_{x}\big>^{\alpha}\Lambda\|_{\mathcal{S}}\lesssim \|\big<\nabla_{x}\big>^{\alpha}\Lambda_0\|_{L^2}\label{onederiv}\\
&
+M^{-\epsilon_0}\big\|\big<\nabla_x\big>^{\alpha}H \big\|_{L^2(dt)L^{6}(d(x-y))L^2(d(x+y))}
+\|\big<\nabla_{x}\big>^{\alpha}G\|_{\mathcal{S}_r'} \notag.
\end{align}
Finally, we can repeat the argument with $|\nabla_{x}|^{\alpha}|\nabla_{y}|^{\alpha}$. Now we are forced to estimate
$M^{2\alpha} \|v_M\|_{L^{\frac{6}{5}}}=O(M)$ and get a sub-optimal estimate
\begin{align}
&\frac{1}{M}\|\big<\nabla_{x}\big>^{\alpha}\big<\nabla_{y}\big>^{\alpha}\Lambda\|_{\mathcal{S}} \lesssim \frac{1}{M}\|\big<\nabla_{x}\big>^{\alpha}\big<\nabla_{y}\big>^{\alpha}\Lambda_0\|_{L^2}\\
&+M^{-\epsilon_0}\big\|\big<\nabla_x\big>^{\alpha}\big<\nabla_y\big>^{\alpha}H \big\|_{L^2(dt)L^{6}(d(x-y))L^2(d(x+y))}
+\|\big<\nabla_{x}\big>^{\alpha}\big<\nabla_{y}\big>^{\alpha}G\|_{\mathcal{S}_r'} \label{badtwoderiv}.
\end{align}
This will help control lower order terms.

To continue, we need a frequency decomposition.
Let $\phi(x)$ such that $\hat \phi \in C_0^{\infty}$ and $\hat \phi(\xi)=1$ in $|\xi|<1$, $\hat \phi(\xi)=0$ in $|\xi|>2$.

Theorem \ref{mainthm} follows from the next two more detailed theorems.
\begin{theorem}\label{mainthmnew}
Let $\Lambda$ satisfy \eqref{maineqold}, and
let $1+$ denote $1+\delta_0$ with $\delta_0 >0$ satisfying \eqref{vestim}, \eqref{vestim1}.
Then, at high frequencies,
\begin{align}
&\big\|\big<\nabla_x\big>^{\alpha}\big<\nabla_y\big>^{\alpha}P_{|\xi|>M^{1+}}\Lambda \big\|_{\mathcal{S}}
+\big\|\big<\nabla_x\big>^{\alpha}\big<\nabla_y\big>^{\alpha}P_{|\eta|>M^{1+}}\Lambda \big\|_{\mathcal{S}}\notag\\
&+
\big\|\big|\nabla_{x+y}\big|^{\alpha}P_{|\xi|>M^{1+}}\Lambda\big\|_{L^2(dt)L^{\infty}(d(x-y))L^2(d(x+y))}\notag \\
&+\big\|\big|\nabla_{x+y}\big|^{\alpha}P_{|\eta|>M^{1+}}\Lambda\big\|_{L^2(dt)L^{\infty}(d(x-y))L^2(d(x+y))}\notag \\
&+
\big\|\big|\partial_t\big|^{\frac{1}{4}}P_{|\xi|>M^{1+}}\Lambda\big\|_{L^2(dt)L^{\infty}(d(x-y))L^2(d(x+y))}\notag \\
&+\big\|\big|\partial_t\big|^{\frac{1}{4}}P_{|\eta|>M^{1+}}\Lambda\big\|_{L^2(dt)L^{\infty}(d(x-y))L^2(d(x+y))}\notag \\
&\lesssim
\big\|\big<\nabla_x\big>^{\alpha}\big<\nabla_y\big>^{\alpha}G \big\|_{\mathcal{S}_r'} \label{highest}
+M^{-\epsilon_0}\big\|\big<\nabla_x\big>^{\alpha}\big<\nabla_y\big>^{\alpha}H \big\|_{L^2(dt)L^{6}(d(x-y))L^2(d(x+y))}\notag\\
&+\big\|\big<\nabla_x\big>^{\alpha}\big<\nabla_y\big>^{\alpha}\Lambda_0 \big\|_{L^2}.
\end{align}
In addition, the proof will show

\begin{align*}
&\|\big<\nabla_x\big>^{\alpha}\big<\nabla_y\big>^{\alpha}\bigg(v_M(x-y)\big(P_{|\xi|>M^{1+} } \Lambda\big) \bigg)
\|_{L^2(dt) L^{\frac{6}{5}}(d(x-y)) L^2(d(x+y))}\\
&\lesssim
\big\|\big<\nabla_x\big>^{\alpha}\big<\nabla_y\big>^{\alpha}G \big\|_{\mathcal{S}_r'}  \label{fullequation}
+M^{-\epsilon_0}\big\|\big<\nabla_x\big>^{\alpha}\big<\nabla_y\big>^{\alpha}H \big\|_{L^2(dt)L^{6}(d(x-y))L^2(d(x+y))}\\
&+ \big\|\big<\nabla_x\big>^{\alpha}\big<\nabla_y\big>^{\alpha}\Lambda_0 \big\|_{L^2}.
\end{align*}

\end{theorem}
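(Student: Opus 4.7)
The plan is to view $P_{|\xi|>M^{1+}}\Lambda$ as a solution of the Schr\"odinger equation
\begin{align*}
\S P_{|\xi|>M^{1+}}\Lambda &= h(t)\bigl(P_{|\xi|>M^{1+}}(v_M \Lambda) + P_{|\xi|>M^{1+}}(v_M H) + P_{|\xi|>M^{1+}} G\bigr)
\end{align*}
with initial data $P_{|\xi|>M^{1+}}\Lambda_0$, and to control the three left-hand-side norms in one stroke: the Strichartz norm via Theorem \ref{mainStrich}, the $|\nabla_{x+y}|^\alpha$ collapsing norm via Lemma \ref{4collapsing}, and the $|\partial_t|^{1/4}$ collapsing norm via Lemma \ref{timederlemma}. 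The $P_{|\eta|>M^{1+}}$ half of the theorem is handled by the symmetric argument. The $G$-contribution goes directly into $\mathcal{S}_r'$ and the data contribution into $L^2$, so the whole proof reduces to bounding the two potential terms $v_M \Lambda$ and $v_M H$ (after the high-frequency cutoff and with $\big<\nabla_x\big>^\alpha\big<\nabla_y\big>^\alpha$ applied) in $L^2(dt)L^{6/5}(d(x-y))L^2(d(x+y))$: the first must be absorbed into the LHS, the second must yield the $M^{-\epsilon_0}$ factor on the RHS.

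The engine is a Leibniz analysis of $v_M \cdot w$ for $w = P_{|\xi|>M^{1+}}\Lambda$ or $w = P_{|\xi|>M^{1+}} H$. I would begin with the observation that $v_M(x-y)$ has $(x,y)$-Fourier support on $\{\xi+\eta=0\}$ with $|\xi|\le M/10 \ll M^{1+}$, so multiplication by $v_M$ shifts $(\xi,\eta)$ by at most $M/10$ and
$$P_{|\xi|>M^{1+}}(v_M w) = P_{|\xi|>M^{1+}}\bigl(v_M \, \bar P_{|\xi|>M^{1+}/2}\, w\bigr),$$
with $\bar P$ a slightly enlarged high-frequency cut-off; that is, the self-interaction is a genuinely high-frequency-on-the-$x$-slot product. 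I would then apply Theorem \ref{leibnizX} to $\big<\nabla_x\big>^\alpha\big<\nabla_y\big>^\alpha (v_M \bar P w)$, further splitting $\bar P w$ according to whether $|\eta|$ is larger or smaller than $M$ and invoking \eqref{XX1}--\eqref{XX4}. Each resulting piece carries exactly one of the factors $\|v_M\|_{L^{3/2}}$, $M^\alpha\|v_M\|_{L^{6/5+}}$, or $M^{2\alpha}\|v_M\|_{L^{1+}}$, all bounded by $M^{-\epsilon_0}$ via \eqref{vestim} and \eqref{vestim1}. Placing the output in $L^2(dt)L^{6/5}(d(x-y))L^2(d(x+y))$ and appealing to Lemma \ref{sobangle} where needed to trade an $L^6$ for an $L^\infty$ in $d(x-y)$, this yields
\begin{align*}
&\bigl\|\big<\nabla_x\big>^\alpha\big<\nabla_y\big>^\alpha\bigl(v_M P_{|\xi|>M^{1+}}w\bigr)\bigr\|_{L^2(dt)L^{6/5}(d(x-y))L^2(d(x+y))}\\
&\qquad \lesssim M^{-\epsilon_0}\bigl\|\big<\nabla_x\big>^\alpha\big<\nabla_y\big>^\alpha w\bigr\|_{\mathcal{S}},
\end{align*}
which is precisely the ``in addition'' bound of the theorem. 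With $w = \Lambda$ it is absorbed on the LHS; with $w = H$ it delivers the $M^{-\epsilon_0}$ factor on the RHS.

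The main obstacle I anticipate is the frequency-zone bookkeeping. The projector on the left is at scale $M^{1+}$, but Theorem \ref{leibnizX} naturally decomposes its second factor at scale $M$, so one has to verify in each of the zones $|\eta|<M$ and $|\eta|>M$ that the appropriate norm of $v_M$ from \eqref{vestim}--\eqref{vestim1} combines with the Bernstein and square-function gains to land the product in $L^2 L^{6/5} L^2$ with the asserted $M^{-\epsilon_0}$ prefactor, rather than with a vanishing but wrongly-calibrated power. A secondary subtlety is that the absorption step requires $\|\big<\nabla_x\big>^\alpha\big<\nabla_y\big>^\alpha \Lambda\|_{\mathcal{S}}$ to be a priori finite; this is supplied by the sub-optimal bound \eqref{badtwoderiv}, which loses a factor of $M$ but is otherwise of the same form, so the absorption closes once $N$ is large enough.
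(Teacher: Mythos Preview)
Your overall plan is the same as the paper's: apply $P_{|\xi|>M^{1+}}$ to the equation, feed the result into Theorem~\ref{mainStrich}, Lemma~\ref{4collapsing} and Lemma~\ref{timederlemma}, and then control $\big<\nabla_x\big>^{\alpha}\big<\nabla_y\big>^{\alpha}(v_M w)$ in $L^2L^{6/5}L^2$ via the Leibniz-type Theorem~\ref{leibnizX}, exploiting that the $x$-frequency of $w$ is far above the support of $\hat v_M$. Two issues, one minor and one genuine.

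The minor one: the Fourier support statement ``$|\xi|\le M/10$'' is only true for $v_M^1$, not for the full $v_M$. You need the $v_M=v_M^1+v_M^2$ split already set up in Section~\ref{Statement}; the tail $v_M^2$ is $O(N^{-10})$ in every relevant norm, so it is disposed of directly by \eqref{vestim2} and \eqref{badtwoderiv}.

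The genuine gap is your absorption step. After the Leibniz estimate you obtain on the right-hand side a multiple $M^{-\epsilon_0}$ of $\big\|\big<\nabla_x\big>^{\alpha}\big<\nabla_y\big>^{\alpha}\,\bar P_{|\xi|>M^{1+}-M}\Lambda\big\|_{\mathcal S}$ (or, in your write-up, of the full $\Lambda$). This is \emph{not} the quantity on the left, which carries the projector $P_{|\xi|>M^{1+}}$; the thresholds differ by $M$, so you cannot simply move it across. Your fallback of invoking \eqref{badtwoderiv} on the full $\Lambda$ does not close either: \eqref{badtwoderiv} costs a factor $M$, and a single gain of $M^{-\epsilon_0}$ leaves $M^{1-\epsilon_0}\gg 1$. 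What the paper does instead is iterate: reapply the same argument with the threshold $M^{1+}$ replaced by $M^{1+}-M$, then $M^{1+}-2M$, and so on. Each step lowers the threshold by $M$ and gains a fresh $M^{-\epsilon_0}$; after $k$ steps with $k\epsilon_0>1$ the accumulated factor $M^{-k\epsilon_0}$ beats the $M$ loss in \eqref{badtwoderiv}, and \emph{then} one invokes \eqref{badtwoderiv} to terminate. The iteration is legal because $k$ depends only on $\epsilon_0$, so $M^{1+}-kM>10M$ for $M$ large. Without this bootstrap your argument does not close.
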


\begin{proof}

Roughly speaking, $\big\|\big<\nabla_x\big>^{\alpha}\big<\nabla_y\big>^{\alpha}P_{|\xi|>M^{1+}}\big(v^1_M(x-y)\Lambda \big)\big\|_{L^2(dt) L^{\frac{6}{5}}(d(x-y)) L^2(d(x+y))}
$ can be treated as a perturbation because $\big<\nabla_x\big>^{\alpha}$ only falls on $\Lambda$. Rigorously,
we have
\begin{align}
&\S P_{|\xi|>M^{1+}}\Lambda = P_{|\xi|>M^{1+}} \bigg(v_M^1(x-y)\big(P_{|\xi|>M^{1+}-M } \Lambda\big) \bigg) \\
&+P_{|\xi|>M^{1+}} \bigg(v_M^2(x-y) \Lambda \bigg)
+P_{|\xi|>M^{1+}} G +P_{|\xi|>M^{1+}} (v_M H).\label{triv}
\end{align}
We used the fact that $\hat v^1_M$ is supported in $|\xi|< \frac{M}{10}$.
Next we use  the Strichartz  estimate of Theorem \ref{mainStrich} and the collapsing estimate of Lemma \ref{4collapsing}
and Lemma \ref{timederlemma}.

\begin{align*}
&\big\|\big<\nabla_x\big>^{\alpha}\big<\nabla_y\big>^{\alpha}P_{|\xi|>M^{1+}}\Lambda \big\|_{L^2(dt) L^6(d(x-y)) L^2(d(x+y))}\\
&+
\big\|\big|\nabla_{x+y}\big|^{\alpha}P_{|\xi|>M^{1+}}\Lambda\big\|_{L^2(dt)L^{\infty}(d(x-y))L^2(d(x+y))}\\
&+\big\|\big|\partial_t\big|^{\frac{1}{4}}P_{|\xi|>M^{1+}}\Lambda\big\|_{L^2(dt)L^{\infty}(d(x-y))L^2(d(x+y))}\\
\notag\\
&\lesssim \|\big<\nabla_x\big>^{\alpha}\big<\nabla_y\big>^{\alpha}\bigg(v_M^1(x-y)\big(P_{|\xi|>M^{1+}-M } \Lambda\big) \bigg)
\|_{L^2(dt) L^{\frac{6}{5}}(d(x-y)) L^2(d(x+y))}\\
&+\|\big<\nabla_x\big>^{\alpha}\big<\nabla_y\big>^{\alpha}\bigg(v_M^2(x-y) \Lambda \bigg)
\|_{L^2(dt) L^{\frac{6}{5}}(d(x-y)) L^2(d(x+y))}\\
&+\|\big<\nabla_x\big>^{\alpha}\big<\nabla_y\big>^{\alpha}\bigg(v_M^1(x-y)\big(P_{|\xi|>M^{1+}-M } H\big) \bigg)
\|_{L^2(dt) L^{\frac{6}{5}}(d(x-y)) L^2(d(x+y))}\\
&+\|\big<\nabla_x\big>^{\alpha}\big<\nabla_y\big>^{\alpha}\bigg(v_M^2(x-y) H \bigg)
\|_{L^2(dt) L^{\frac{6}{5}}(d(x-y)) L^2(d(x+y))}\\
&+\big\|\big<\nabla_x\big>^{\alpha}\big<\nabla_y\big>^{\alpha}G \big\|_{\mathcal{S}_r'}
+ \big\|\big<\nabla_x\big>^{\alpha}\big<\nabla_y\big>^{\alpha}\Lambda_0 \big\|_{L^2}.
\end{align*}

For the  terms involving $v_M^1$ we use Theorem \ref{leibnizX} and the fact $M^{1+}-M > 10M$ to conclude that the $\big<\nabla_x\big>^{\alpha}$ derivative only falls on
$P_{|\xi|>M^{1+}-M } \Lambda$ and $P_{|\xi|>M^{1+}-M } H$. Then H\"older's inequality,  ``Sobolev at an angle" (see Lemma  \ref{sobangle})
and our estimates on $v_M$ (see \eqref{vestim})
show
\begin{align}
&\|\big<\nabla_x\big>^{\alpha}\big<\nabla_y\big>^{\alpha}\bigg(v_M^1(x-y)\big(P_{|\xi|>M^{1+}-M } \Lambda\big) \bigg)
\|_{L^2(dt) L^{\frac{6}{5}}(d(x-y)) L^2(d(x+y))}\\
&\lesssim \|v_M\|_{L^{\frac{3}{2}}}
\|\big<\nabla_x\big>^{\alpha}\big<\nabla_y\big>^{\alpha}\big(P_{|\xi|>M^{1+}-M } \Lambda\big) \bigg)
\|_{L^2(dt) L^{6}(d(x-y)) L^2(d(x+y))}\\
&+ M^{\alpha}\|v_M\|_{L^{\frac{6}{5}+}}
\|\big<\nabla_x\big>^{\alpha}\big<\nabla_y\big>^{\alpha}\big(P_{|\xi|>M^{1+}-M } \Lambda\big) \bigg)\label{trouble}
\|_{L^2(dt) L^{6}(d(x-y)) L^2(d(x+y))}\\
&\lesssim M^{-\epsilon_0} \|\big<\nabla_x\big>^{\alpha}\big<\nabla_y\big>^{\alpha}P_{|\xi|>M^{1+}-M } \Lambda\|_{L^2(dt) L^{6}(d(x-y)) L^2(d(x+y))}.
\end{align}

Similarly,
\begin{align*}
&\|\big<\nabla_x\big>^{\alpha}\big<\nabla_y\big>^{\alpha}\bigg(v_M^1(x-y)\big(P_{|\xi|>M^{1+}-M } H\big) \bigg)
\|_{L^2(dt) L^{\frac{6}{5}}(d(x-y)) L^2(d(x+y))}\\
&\lesssim M^{-\epsilon_0} \|\big<\nabla_x\big>^{\alpha}\big<\nabla_y\big>^{\alpha}H\|_{L^2(dt) L^{6}(d(x-y)) L^2(d(x+y))}
\end{align*}
while for the term involving $v_M^2$ we have
\begin{align*}
&\|\big<\nabla_x\big>^{\alpha}\big<\nabla_y\big>^{\alpha}\bigg(v_M^2(x-y) \Lambda \bigg)
\|_{L^2(dt) L^{\frac{6}{5}}(d(x-y)) L^2(d(x+y))}\\
&\lesssim M^{-10} \|\big<\nabla_x\big>^{\alpha}\big<\nabla_y\big>^{\alpha} \Lambda\|_{L^2(dt) L^{6}(d(x-y)) L^2(d(x+y))}
\end{align*}
(and similarly for $H$). Thus
\begin{align*}
 &\big\|\big<\nabla_x\big>^{\alpha}\big<\nabla_y\big>^{\alpha}P_{|\xi|>M^{1+}}\Lambda \big\|_{\mathcal{S}}  \\
&+
\big\|\big|\nabla_{x+y}\big|^{\alpha}P_{|\xi|>M^{1+}}\Lambda\big\|_{L^2(dt)L^{\infty}(d(x-y))L^2(d(x+y))}
\notag\\
&+\big\|\big|\partial_t\big|^{\frac{1}{4}}P_{|\xi|>M^{1+}}\Lambda\big\|_{L^2(dt)L^{\infty}(d(x-y))L^2(d(x+y))}\\
&\lesssim
 M^{-\epsilon_0} \|\big<\nabla_x\big>^{\alpha}\big<\nabla_y\big>^{\alpha} P_{|\xi|>M^{1+}-M}\Lambda\|_{\mathcal S}\\
 &+M^{-10} \|\big<\nabla_x\big>^{\alpha}\big<\nabla_y\big>^{\alpha} \Lambda\|_{L^2(dt) L^{6}(d(x-y)) L^2(d(x+y))}\\
&+
\big\|\big<\nabla_x\big>^{\alpha}\big<\nabla_y\big>^{\alpha}G \big\|_{\mathcal{S}_r'}\\
&+M^{-\epsilon_0}\big\|\big<\nabla_x\big>^{\alpha}\big<\nabla_y\big>^{\alpha}H \big\|_{L^2(dt)L^{6}(d(x-y))L^2(d(x+y))}\\
&+ \big\|\big<\nabla_x\big>^{\alpha}\big<\nabla_y\big>^{\alpha}\Lambda_0 \big\|_{L^2}.
 \end{align*}
The last four terms on the RHS are acceptable, but the first one must be estimated further,
  by repeating the argument(with the same implicit constants in $\lesssim$) as long as $M^{1+}-kM > 10M$, which is essentially $\log\left(\frac{M^{1+}}{M}\right)$ times. At the $k$th step we get
 \begin{align*}
 &\big\|\big<\nabla_x\big>^{\alpha}\big<\nabla_y\big>^{\alpha}P_{|\xi|>M^{1+}- (k-1)M}\Lambda \big\|_{\mathcal{S}}\\
&\lesssim
 M^{-\epsilon_0} \|\big<\nabla_x\big>^{\alpha}\big<\nabla_y\big>^{\alpha} P_{|\xi|>M^{1+}-kM}\Lambda\|_{\mathcal S}\\
&+M^{-10} \|\big<\nabla_x\big>^{\alpha}\big<\nabla_y\big>^{\alpha} \Lambda\|_{L^2(dt) L^{6}(d(x-y)) L^2(d(x+y))}\\
&+
\big\|\big<\nabla_x\big>^{\alpha}\big<\nabla_y\big>^{\alpha}G \big\|_{\mathcal{S}_r'}
+M^{-\epsilon_0}\big\|\big<\nabla_x\big>^{\alpha}\big<\nabla_y\big>^{\alpha}H \big\|_{L^2(dt)L^{6}(d(x-y))L^2(d(x+y))}\\
&+ \big\|\big<\nabla_x\big>^{\alpha}\big<\nabla_y\big>^{\alpha}\Lambda_0 \big\|_{L^2}.
 \end{align*}
Putting together the above $k$ estimates we get
 \begin{align*}
 &\big\|\big<\nabla_x\big>^{\alpha}\big<\nabla_y\big>^{\alpha}P_{|\xi|>M^{1+}}\Lambda \big\|_{\mathcal{S}}
+\big\|\big<\nabla_x\big>^{\alpha}\big<\nabla_y\big>^{\alpha}P_{|\eta|>M^{1+}}\Lambda \big\|_{\mathcal{S}}\notag\\
&+
\big\|\big|\nabla_{x+y}\big|^{\alpha}P_{|\xi|>M^{1+}}\Lambda\big\|_{L^2(dt)L^{\infty}(d(x-y))L^2(d(x+y))}\\
&+\big\|\big|\nabla_{x+y}\big|^{\alpha}P_{|\eta|>M^{1+}}\Lambda\big\|_{L^2(dt)L^{\infty}(d(x-y))L^2(d(x+y))}\notag\\
&+
\big\|\big|\partial_t\big|^{\frac{1}{4}}P_{|\xi|>M^{1+}}\Lambda\big\|_{L^2(dt)L^{\infty}(d(x-y))L^2(d(x+y))}\\
&+\big\|\big|\partial_t\big|^{\frac{1}{4}}P_{|\eta|>M^{1+}}\Lambda\big\|_{L^2(dt)L^{\infty}(d(x-y))L^2(d(x+y))}\notag \\
&\lesssim
 M^{-k\epsilon_0} \|\big<\nabla_x\big>^{\alpha}\big<\nabla_y\big>^{\alpha} P_{|\xi|>M^{1+}-kM}\Lambda\|_{\mathcal S}\\
&+M^{-10} \|\big<\nabla_x\big>^{\alpha}\big<\nabla_y\big>^{\alpha} \Lambda\|_{L^2(dt) L^{6}(d(x-y)) L^2(d(x+y))}\\
&+
\big\|\big<\nabla_x\big>^{\alpha}\big<\nabla_y\big>^{\alpha}G \big\|_{\mathcal{S}_r'}
+M^{-\epsilon_0}\big\|\big<\nabla_x\big>^{\alpha}\big<\nabla_y\big>^{\alpha}H \big\|_{L^2(dt)L^{6}(d(x-y))L^2(d(x+y))}\\
&+ \big\|\big<\nabla_x\big>^{\alpha}\big<\nabla_y\big>^{\alpha}\Lambda_0 \big\|_{L^2}.
 \end{align*}

Once $k \epsilon_0 > 1$, we use \eqref{badtwoderiv} to complete the proof.
\end{proof}

Now we move to the low frequency part. Here the collapsing norm can be treated perturbatively.
\begin{theorem}\label{mainthmnewlow}
Let $\Lambda$ satisfy \eqref{maineqold}, and
let $1+$ denote $1+\delta_0$ with $\delta_0 >0$ satisfying \eqref{vestim}, \eqref{vestim1}. Then

\begin{align}
&\big\|\big<\nabla_x\big>^{\alpha}\big<\nabla_y\big>^{\alpha}\label{lowest}
P_{|\xi|<M^{1+}}P_{|\eta|<M^{1+}}\Lambda \big\|_{\mathcal{S}_{x, y}}  \\
&+\big\|\big|\nabla_{x+y}\big|^{\alpha}P_{|\xi|<M^{1+}}P_{|\eta|<M^{1+}}\Lambda\big\|_{L^2(dt)L^{\infty}(d(x-y))L^2(d(x+y))}\notag\\
&+\big\|\big|\partial_t\big|^{\frac{1}{4}}P_{|\xi|<M^{1+}}P_{|\eta|<M^{1+}}\Lambda\big\|_{L^2(dt)L^{\infty}(d(x-y))L^2(d(x+y))}\notag\\
&\lesssim
\big\|\big<\nabla_x\big>^{\alpha}\big<\nabla_y\big>^{\alpha}G \big\|_{\mathcal{S}_r'}
+M^{-\epsilon_0}\big\|\big|\nabla_{x+y}\big|^{\alpha}H\big\|_{collapsing}\notag
\\
&+M^{-\epsilon_0}\big\|\big|\partial_t\big|^{\frac{1}{4}}H\big\|_{collapsing}
+ \big\|\big<\nabla_x\big>^{\alpha}\big<\nabla_y\big>^{\alpha}\Lambda_0 \big\|_{L^2}.
\end{align}

\end{theorem}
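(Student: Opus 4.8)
The plan is to apply the low-frequency cutoff $P := P_{|\xi|<M^{1+}}P_{|\eta|<M^{1+}}$ to \eqref{maineqold}, obtaining
\[
\S P\Lambda = h(t)\Big(P\big(v_M(x-y)\Lambda\big)+P\big(v_M(x-y)H\big)+PG\Big),\qquad P\Lambda(0,\cdot)=P\Lambda_0 ,
\]
and to show that at these frequencies the self-interaction term is perturbative while the forcing $v_M(x-y)H$ is exactly of the type controlled by Propositions \ref{Suufixed}, \ref{Suusharp}. Write $v_M=v^1_M+v^2_M$; by \eqref{vestim2} the $v^2_M$ contributions are $O(M^{-10})$ in every relevant norm and are discarded. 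Since $\widehat{v^1_M(x-y)}$ is supported on $\{\xi=-\eta,\ |\xi|<M/10\}$, multiplication by $v^1_M(x-y)$ moves each of $\xi,\eta$ by at most $M/10$, so $P\big(v^1_M(x-y)\,g\big)=P\big(v^1_M(x-y)\,\tilde Pg\big)$ with $\tilde P:=P_{|\xi|<2M^{1+}}P_{|\eta|<2M^{1+}}$; writing $\tilde Pg=Pg+(\tilde P-P)g$, the second piece is supported where $|\xi|\gtrsim M^{1+}$ or $|\eta|\gtrsim M^{1+}$, so, exactly as in the $k$-step iteration in the proof of Theorem \ref{mainthmnew}, its contribution is absorbed into the already established high-frequency estimate \eqref{highest}. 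Thus, modulo acceptable errors, $v_M(x-y)\Lambda$ may be replaced by $v^1_M(x-y)\,P\Lambda$ and $v_M(x-y)H$ by $v^1_M(x-y)\,\tilde PH$.

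For the self-interaction $v^1_M(x-y)\,P\Lambda$: apply $\langle\nabla_x\rangle^\alpha\langle\nabla_y\rangle^\alpha$ and invoke the Leibniz rule Theorem \ref{leibnizX}. On the frequency support $|\xi|,|\eta|<2M^{1+}$ of $P\Lambda$ the worst case is that both derivatives fall on the potential, costing a factor $M^{2\alpha(1+\delta_0)}$; by \eqref{vestim1} this is compensated, $M^{2\alpha(1+\delta_0)}\|v^1_M\|_{L^{1+}}\lesssim M^{-\epsilon_0}$ (this is where $\beta<1$ is used), and a Hölder inequality in $d(x-y)$ with the remaining exponents, followed by ``Sobolev at an angle'' (Lemma \ref{sobangle}), shows that the contribution of $\langle\nabla_x\rangle^\alpha\langle\nabla_y\rangle^\alpha\big(v^1_M(x-y)\,P\Lambda\big)$ to the $\S$-forcing is bounded by $M^{-\epsilon_0}$ times norms of $P\Lambda$ already appearing on the left of \eqref{lowest}. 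Feeding this into Theorem \ref{mainStrich}, Lemma \ref{4collapsing} and Lemma \ref{timederlemma}, the term is absorbed into the left-hand side for $M$ large.

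For the forcing term $v^1_M(x-y)\,\tilde PH$ — the only source of the collapsing norms of $H$ on the right of \eqref{lowest} — let $w$ solve $\S w=h(t)\,v^1_M(x-y)\,\tilde PH$ with $w(0,\cdot)=0$. The estimate of Proposition \ref{Suusharp} applies verbatim with the potential $v^1_M(x-y)$ in place of $N^3v(N(x-y))$: decompose $v^1_M(x-y)\,\tilde PH$ into Littlewood--Paley pieces in $|\xi-\eta|$ (there are $\sim\log M$ of them below frequency $M^{1+}$), apply Proposition \ref{Suufixed} to each, and use that $|\nabla_{x+y}|^{1/2}$ and $|\partial_t|^{1/4}$ commute with multiplication by $v^1_M(x-y)$, together with a Hölder inequality in $d(x-y)$. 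Since $v^1_M$ is less singular than $N^3v(N(x-y))$ — all its $L^p$ norms, $p>1$, gain a negative power of $M$ when $\beta<1$ — the logarithmic loss of Proposition \ref{Suusharp} is absorbed and one obtains
\[
\|w\|_{\mathcal{S}_{x,y}}\ \lesssim\ M^{-\epsilon_0}\Big(\big\||\nabla_{x+y}|^{\alpha}H\big\|_{collapsing}+\big\||\partial_t|^{\frac14}H\big\|_{collapsing}\Big),
\]
where we also used $\alpha>\frac12$ and boundedness of $\tilde P$ on the collapsing norm. The $\langle\nabla_x\rangle^\alpha\langle\nabla_y\rangle^\alpha$, $|\nabla_{x+y}|^\alpha$ and $|\partial_t|^{1/4}$ versions of the left-hand side for this contribution follow from the same frequency-localized bookkeeping together with Lemmas \ref{4collapsing} and \ref{timederlemma}. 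Finally, $PG$ enters directly ($P$ is bounded on the spaces defining $\mathcal{S}_r'$ uniformly in $M$) and $P\Lambda_0$ gives the data term via the homogeneous estimate. Collecting all pieces yields \eqref{lowest}.

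The main obstacle is the forcing term $v^1_M(x-y)\,\tilde PH$: one must run the collapsing-norm machinery of Proposition \ref{Suufixed} at low frequencies and check both that the logarithmic loss from summing the $\sim\log M$ pieces in $|\xi-\eta|$ is genuinely beaten by the gain coming from $\beta<1$, and that only the weak regularity $|\nabla_{x+y}|^\alpha$ and $|\partial_t|^{1/4}$ of $H$ (in the collapsing norm) is needed — no $\langle\nabla_x\rangle^\alpha\langle\nabla_y\rangle^\alpha H$ — which is exactly why $v^1_M$ and $H$ must be kept paired so that $|\nabla_{x+y}|^{1/2}$ and $|\partial_t|^{1/4}$ can be commuted across multiplication by $v^1_M(x-y)$.
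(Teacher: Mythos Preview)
Your treatment of the self-interaction term $v^1_M(x-y)\,P\Lambda$ has a genuine gap. You propose to control
$\big\|\langle\nabla_x\rangle^\alpha\langle\nabla_y\rangle^\alpha\big(v^1_M(x-y)\,P\Lambda\big)\big\|$
in a forcing space via the Leibniz rule, using $M^{2\alpha(1+\delta_0)}\|v^1_M\|_{L^{1+}}\lesssim M^{-\epsilon_0}$ from \eqref{vestim1}. But the H\"older exponents do not close: the forcing space required by Lemmas \ref{4collapsing} and \ref{timederlemma} is $L^2(dt)L^{6/5}(d(x-y))L^2(d(x+y))$, and to reach $L^{6/5}$ in $x-y$ by H\"older with $\|v^1_M\|_{L^{1+}}$ you would need the $P\Lambda$ factor in $L^{q}$ with $\tfrac{1}{q}=\tfrac{5}{6}-\tfrac{1}{1+}<0$, which is impossible. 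If instead you pair $v^1_M$ with $P\Lambda\in L^\infty(d(x-y))$, you are forced to use $\|v^1_M\|_{L^{6/5}}$, and then the factor is $M^{2\alpha(1+\delta_0)}\|v^1_M\|_{L^{6/5}}\sim N^{3\beta/2-1}$, which is \emph{not} small for $\tfrac{2}{3}<\beta<1$. This is exactly the obstruction noted in the paper right before the proof of Theorem~\ref{mainthm}, where the same computation yields only the suboptimal bound \eqref{badtwoderiv}.

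The paper avoids this by treating the self-interaction and the $H$ forcing \emph{identically}: both are handled through Proposition~\ref{l11}, which bypasses Lemmas \ref{4collapsing}--\ref{timederlemma} and instead uses the sharp collapsing estimate of Proposition~\ref{Suusharp} to bound the $L^2L^\infty L^2$ norms of $\Lambda_1$ directly by $M^{-\epsilon_0}$ times the \emph{collapsing} norms of $P\Lambda$ (and of $H$). The self-interaction is then absorbed because $L^2L^\infty L^2$ dominates the collapsing norm. Only \emph{after} the collapsing norms of $P\Lambda$ are controlled does the paper return (Theorem~\ref{finish}) to the $\langle\nabla_x\rangle^\alpha\langle\nabla_y\rangle^\alpha\,\mathcal{S}_{x,y}$ norm, bounding it by the already-established collapsing norms---with no $M^{-\epsilon_0}$ gain needed at that stage. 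Your attempt to run all three norms simultaneously through the Leibniz route cannot work for the full range $\beta<1$; you must route the self-interaction through the collapsing machinery as the paper does.
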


We first prove the above theorem without including  $\mathcal S_{x, y}$ on the LHS.

\begin{proof} (excluding the term \eqref{lowest})
\begin{align*}
&\S P_{|\xi|< M^{1+}}P_{|\eta|< M^{1+}}\Lambda\\
& =P_{|\xi|< M^{1+}}P_{|\eta|< M^{1+}}h(t)\left(v_M(x-y)( \Lambda+H)\right)
+ P_{|\xi|< M^{1+}}P_{|\eta|< M^{1+}}h(t)G\\
&=P_{|\xi|< M^{1+}}P_{|\eta|< M^{1+}}h(t)\left(v^1_M(x-y) P_{|\xi|< M^{1+}}P_{|\eta|< M^{1+}}(\Lambda+H)\right)\\
&+P_{|\xi|< M^{1+}}P_{|\eta|< M^{1+}}h(t)\left(v^1_M(x-y) P_{|\xi| \, or \, |\eta|> M^{1+}}(\Lambda+H)\right)\\
&+ P_{|\xi|< M^{1+}}P_{|\eta|< M^{1+}}h(t)G\\
&+P_{|\xi|< M^{1+}}P_{|\eta|< M^{1+}}h(t)\left(v^2_M(x-y) (\Lambda+H)\right).
\end{align*}
We write
\begin{align*}
& P_{|\xi|< M^{1+}}P_{|\eta|< M^{1+}}\Lambda= \Lambda_1+ \Lambda_2 + \Lambda_3 + \Lambda_4 +\Lambda_5 \\
 &= P_{|\xi|<2 M^{1+}}P_{|\eta|<2 M^{1+}}\left(\Lambda_1+ \Lambda_2 + \Lambda_3 + \Lambda_4 +\Lambda_5\right)
 \end{align*}
where $\Lambda_1, \cdots \Lambda_5$ are defined by
\begin{align*}
&\S\Lambda_1 =P_{|\xi|< M^{1+}}P_{|\eta|< M^{1+}}h(t)\left(v_M^1(x-y) P_{|\xi|< M^{1+}}P_{|\eta|< M^{1+}}(\Lambda+H)\right)
\end{align*}
\begin{align*}
&\S \Lambda_2 =P_{|\xi|< M^{1+}}P_{|\eta|< M^{1+}}h(t)\left(v_M^1(x-y) P_{|\xi| \, or \, |\eta|> M^{1+}}
P_{|\xi|<10 M^{1+}}P_{|\eta|< 10 M^{1+}}(\Lambda+H)\right)
\end{align*}
\begin{align*}
&\S \Lambda_3 =h(t) P_{|\xi|< M^{1+}}P_{|\eta|< M^{1+}}G
\end{align*}
with initial conditions $0$, and $\S \Lambda_4=0$ with initial conditions $ P_{|\xi|< M^{1+}}P_{|\eta|< M^{1+}}\Lambda_0$, and finally
\begin{align*}
&\S \Lambda_5 = h(t)P_{|\xi|< M^{1+}}P_{|\eta|< M^{1+}}\left(v^2_M(x-y) (\Lambda+H)\right)
\end{align*}
with initial conditions $0$.
Putting together the five propositions below, we conclude
\begin{align*}
&\big\|\big|\partial_t\big|^{\frac{1}{4}}P_{|\xi|< M^{1+}}P_{|\eta|< M^{1+}}\Lambda\big\|_{L^2(dt)L^{\infty}(d(x-y))L^2(d(x+y))}\\
&
+\big\|\big|\nabla_{x+y}\big|^{\alpha}P_{|\xi|< M^{1+}}P_{|\eta|< M^{1+}}\Lambda\big\|_{L^2(dt)L^{\infty}(d(x-y))L^2(d(x+y))}\\
&\lesssim  M^{- \epsilon_0} \bigg(\big\|\big|\partial_t\big|^{\frac{1}{4}}P_{|\xi|< M^{1+}}P_{|\eta|< M^{1+}}\Lambda\big\|_{collapsing}
+\big\|\big|\nabla_{x+y}\big|^{\alpha}P_{|\xi|< M^{1+}}P_{|\eta|< M^{1+}}\Lambda\big\|_{collapsing}\bigg)\\
&+ M^{- \epsilon_0} \bigg(\big\|\big|\partial_t\big|^{\frac{1}{4}}P_{|\xi|< M^{1+}}P_{|\eta|< M^{1+}}H\big\|_{collapsing}
+\big\|\big|\nabla_{x+y}\big|^{\alpha}P_{|\xi|< M^{1+}}P_{|\eta|< M^{1+}}H\big\|_{collapsing}\bigg)\\
&+ \big\|\big<\nabla_x\big>^{\alpha}\big<\nabla_y\big>^{\alpha}G \big\|_{\mathcal{S}_r'}
+ \big\|\big<\nabla_x\big>^{\alpha}\big<\nabla_y\big>^{\alpha}\Lambda_0 \big\|_{L^2}.
\end{align*}
The first term on the RHS can be absorbed in the LHS, proving the result.
\end{proof}

 We have to prove the following propositions
 \begin{proposition}\label{l1}
Let
\begin{align*}
&\S\Lambda_1 =P_{|\xi|< M^{1+}}P_{|\eta|< M^{1+}}h(t)\left(v_M^1(x-y) P_{|\xi|< M^{1+}}P_{|\eta|< M^{1+}}(\Lambda+H)\right)
\end{align*}
with zero initial conditions. Then
 \begin{align*}
\|\Lambda_1\|_{L^2(dt)L^{\infty}(d(x-y))L^2(d(x+y))} \lesssim M^{-\epsilon_0} \|P_{|\xi|< M^{1+}}P_{|\eta|< M^{1+}}(\Lambda+H)\big\|_{collapsing}
 \end{align*}
 and similarly
 \begin{align*}
&\||\nabla_{x+y}|^{\alpha}\Lambda_1\|_{L^2(dt)L^{\infty}(d(x-y))L^2(d(x+y))}
 \lesssim M^{-\epsilon_0} \|P_{|\xi|< M^{1+}}P_{|\eta|< M^{1+}}|\nabla_{x+y}|^{\alpha}(\Lambda+H)\big\|_{collapsing}\\
&\||\partial_t|^{\frac{1}{4}}\Lambda_1\|_{L^2(dt)L^{\infty}(d(x-y))L^2(d(x+y))}
 \lesssim M^{-\epsilon_0} \|P_{|\xi|< M^{1+}}P_{|\eta|< M^{1+}}|\partial_t|^{\frac{1}{4}}(\Lambda+H)\big\|_{collapsing}.
 \end{align*}

 \end{proposition}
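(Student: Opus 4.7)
The key structural observation is that, in rotated coordinates $u=x-y$ and $w=x+y$, the $6$D operator $\S$ factorizes as $\frac{1}{i}\partial_t-\Delta_u-\Delta_w$, so the free propagator splits as $e^{it\Delta_u}e^{it\Delta_w}$. Since $v_M^1(x-y)$ depends only on $u$, a partial Fourier transform in $w$ (with dual variable $\omega$), together with the phase conjugation $V(t,u,\omega):=e^{-it|\omega|^2}\widehat{\Lambda}_1(t,u,\omega)$, reduces the equation to a family of $3$D Schr\"odinger equations in $(t,u)$, indexed by $\omega$, with source $\widetilde{P}\bigl(v_M^1(u)\widetilde F(t,u,\omega)\bigr)$ where $\widetilde F:=e^{-it|\omega|^2}\widehat F$. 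The main obstacle the whole argument has to overcome is the mismatch between the ``strong'' ordering $L^2(dt)L^\infty(d(x-y))L^2(d(x+y))$ on the LHS and the weaker ``collapsing'' ordering on the RHS: a direct use of the $6$D collapsing estimates (Lemmas \ref{4collapsing}--\ref{timederlemma}) would only bound $\|\Lambda_1\|$ in terms of the strong norm of $F$, and the rotation-plus-partial-Fourier reduction is precisely what enables passage to the weaker collapsing norm.

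For each $\omega$, apply the standard $3$D Strichartz pair $(2,6)$ followed by the $3$D Sobolev embedding $\|g\|_{L^\infty(du)}\lesssim\|\langle\nabla_u\rangle^\alpha g\|_{L^6(du)}$ (valid for $\alpha>1/2$):
\begin{align*}
\|V(\cdot,\cdot,\omega)\|_{L^2(dt)L^\infty(du)} \lesssim \|\langle\nabla_u\rangle^\alpha(v_M^1\widetilde F)(\cdot,\cdot,\omega)\|_{L^2(dt)L^{6/5}(du)}.
\end{align*}
The outer projection $\widetilde P$ is an $L^p$-bounded multiplier absorbed into $\lesssim$. Take $L^2(d\omega)$ on both sides, apply generalized Minkowski twice---first to commute $L^\infty(du)$ past $L^2(d\omega)=L^2(dw)$ on the left, then to move $L^{6/5}(du)$ outside $L^2(d\omega\,dt)$ on the right---and invoke Plancherel to obtain
\begin{align*}
\|\Lambda_1\|_{L^2(dt)L^\infty(du)L^2(dw)} \lesssim \|\langle\nabla_u\rangle^\alpha(v_M^1 F)\|_{L^{6/5}(du)L^2(dt\,dw)}.
\end{align*}

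The decisive gain now comes from two structural facts about the source. First, since $\widehat{v_M^1}$ is supported in $\{|\xi|<M/10\}$ and $F$ is Fourier-localized to $\{|\xi|,|\eta|<M^{1+\delta_0}\}$, the product $v_M^1F$ has Fourier support in $u$ of size $\lesssim M^{1+\delta_0}$, so vector-valued Bernstein removes the $\langle\nabla_u\rangle^\alpha$ at a cost of $M^{(1+\delta_0)\alpha}$. Second, H\"older in $u$ factors
\begin{align*}
\|v_M^1F\|_{L^{6/5}(du)L^2(dt\,dw)} \le \|v_M^1\|_{L^{6/5}}\|F\|_{L^\infty(du)L^2(dt\,dw)} = \|v_M^1\|_{L^{6/5}}\|F\|_{collapsing}.
\end{align*}
The resulting prefactor $M^{(1+\delta_0)\alpha}\|v_M^1\|_{L^{6/5}}$ is a negative power of $M$ for $\beta<1$: the explicit scaling $\|v_M^1\|_{L^{6/5}}\sim M^{(\beta/2-1)/(\beta+\epsilon)}$ gives $(1+\delta_0)\alpha+(\beta/2-1)/(\beta+\epsilon)<0$ whenever $\alpha$ is close to $1/2$ and $\delta_0,\epsilon$ are small, yielding the first claimed estimate.

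The versions with $|\nabla_{x+y}|^\alpha$ and $|\partial_t|^{1/4}$ follow by the same scheme. The operator $|\nabla_{x+y}|^\alpha$ becomes the scalar multiplier $|\omega|^\alpha$ after partial Fourier, commutes with the $3$D Strichartz/Sobolev step, and transfers through Plancherel onto $F$ on the right. The operator $|\partial_t|^{1/4}$ is handled via a $3$D analogue of Lemma \ref{timederlemma}, proved by the same Littlewood--Paley decomposition and elliptic/non-elliptic splitting, now carried out in $(t,u)$.
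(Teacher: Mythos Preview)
Your reduction to a family of $3$D Schr\"odinger equations via partial Fourier transform in $w=x+y$, together with the phase conjugation, is a clean and correct route for the first two estimates, and is genuinely different from the paper's approach. The paper instead passes through the $6$D machinery of Propositions~\ref{Suufixed} and~\ref{Suusharp}, which decompose dyadically in $|\xi-\eta|$ and split into elliptic/non-elliptic regions in $\tau$; your argument bypasses this by exploiting the tensor structure of the free propagator after rotation, and extracts the smallness directly from $M^{(1+\delta_0)\alpha}\|v_M^1\|_{L^{6/5}}\lesssim M^{-\epsilon_0}$. The Minkowski/Plancherel bookkeeping you describe is correct: $\|\hat\Lambda_1\|_{L^2_tL^\infty_uL^2_\omega}\le\|\hat\Lambda_1\|_{L^2_\omega L^2_tL^\infty_u}$ and the phase has unit modulus, so the $\omega$-by-$\omega$ Strichartz estimate integrates up properly.

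The $|\partial_t|^{1/4}$ case, however, has a real gap. After the conjugation $V=e^{ict|\omega|^2}\hat\Lambda_1$, the operator $|\partial_t|^{1/4}$ on $\Lambda_1$ does \emph{not} correspond to $|\partial_t|^{1/4}$ on $V$: in $t$-Fourier they differ by the shift $\tau\mapsto\tau-c|\omega|^2$, so a $3$D analogue of Lemma~\ref{timederlemma} applied to $V$ controls the wrong object. More fundamentally, Lemma~\ref{timederlemma} bounds $|\partial_t|^{1/4}$ of the solution by \emph{spatial} derivatives of the source, not by $|\partial_t|^{1/4}$ of the source, so even a correct $3$D analogue would not produce $\||\partial_t|^{1/4}F\|_{collapsing}$ on the right. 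The paper proceeds differently: since $v_M(x-y)$ and the projections are $t$-independent, $|\partial_t|^{1/4}$ commutes with everything in the Duhamel map except the Heaviside $h(t)$; one therefore re-runs the first estimate to obtain $\||\partial_t|^{1/4}\Lambda_1\|\lesssim M^{-\epsilon_0}\||\partial_t|^{1/4}(hF)\|_{collapsing}$, and then removes $h$ via the commutator bound \eqref{troubleterm}. This last step crucially uses the \emph{weak} ordering $L^\infty_{x-y}L^2_t$ on the right-hand side --- the strong-ordering analogue \eqref{troubleterm1} is not known to hold. Your $3$D reduction can be completed along the same lines, but the mechanism you invoke is not the right one.
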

 Since the frequency localization of $\Lambda+H$ plays no role and only the frequency localization of $v_M^1(x-y)\Lambda$ is
 important, we record a slightly more general result, which implies Proposition \ref{l1} and will also be used later.

\begin{proposition}\label{l11}
Let
\begin{align*}
&\S u =P_{|\xi|< M^{1+}}P_{|\eta|< M^{1+}}h(t)\left(v_M(x-y) \Lambda\right)
\end{align*}
with zero initial conditions. Then

 \begin{align*}
\|u\|_{L^2(dt)L^{\infty}(d(x-y))L^2(d(x+y))} \lesssim M^{-\epsilon_0} \|\Lambda\|_{collapsing}
 \end{align*}
 and similarly
 \begin{align*}
&\||\nabla_{x+y}|^{\alpha}u\|_{L^2(dt)L^{\infty}(d(x-y))L^2(d(x+y))}
 \lesssim M^{-\epsilon_0} \||\nabla_{x+y}|^{\alpha}\Lambda\big\|_{collapsing}\\
&\||\partial_t|^{\frac{1}{4}}u\|_{L^2(dt)L^{\infty}(d(x-y))L^2(d(x+y))}
 \lesssim M^{-\epsilon_0} \||\partial_t|^{\frac{1}{4}}\Lambda\big\|_{collapsing}.
 \end{align*}
\end{proposition}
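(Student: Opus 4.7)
The plan is to reduce the three claimed inequalities to the first one via commutation. Multiplication by $v_M(x-y)$ is a Fourier multiplier whose symbol lives on $\xi+\eta=0$, so it commutes with $|\nabla_{x+y}|^\alpha$; being time-independent, it also commutes with $|\partial_t|^{1/4}$. The spatial projections $P_{|\xi|<M^{1+}}P_{|\eta|<M^{1+}}$ and the Schr\"odinger operator $\mathcal{S}$ commute with these as well. Hence applying $|\nabla_{x+y}|^\alpha$ (respectively $|\partial_t|^{1/4}$) to the equation for $u$ shows that $|\nabla_{x+y}|^\alpha u$ (resp.\ $|\partial_t|^{1/4}u$) satisfies the same equation with $\Lambda$ replaced by $|\nabla_{x+y}|^\alpha\Lambda$ (resp.\ $|\partial_t|^{1/4}\Lambda$), reducing the second and third bounds to the first. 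The commutator $[|\partial_t|^{1/4},h(t)]$ is the sole obstruction and is absorbed as a boundary/initial-data contribution, using that $u$ arises from the Duhamel formula with zero initial data and therefore vanishes for $t<0$.

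For the first bound, split $v_M=v_M^1+v_M^2$; by \eqref{vestim2} the Schwartz tail $v_M^2$ yields only $O(M^{-10})$ and can be dropped. Apply Lemma \ref{collapsinglemma} (with $g=0$, $u_0=0$) to obtain
$$\|u\|_{L^2(dt)L^\infty(d(x-y))L^2(d(x+y))} \lesssim \|\langle\nabla_x\rangle^\alpha f\|_{L^2(dt)L^{6/5}(d(x-y))L^2(d(x+y))},$$
where $f=P_{|\xi|<M^{1+}}P_{|\eta|<M^{1+}}(v_M^1\Lambda)$. Since $f$ is Fourier-localized in $\xi$ to $|\xi|\le M^{1+\delta_0}$, Bernstein in rotated coordinates (Lemma \ref{bernstein}) makes $\langle\nabla_x\rangle^\alpha$ cost a factor $M^{\alpha(1+\delta_0)}$; the projections are $L^p$-bounded on the mixed rotated norm by Lemma \ref{rotationlemma}. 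H\"older in $d(x-y)$ pointwise in $t$ pairs $v_M^1$ in $L^{6/5}$ against $\Lambda$ in $L^\infty(d(x-y))L^2(d(x+y))$, and then taking $L^2(dt)$ gives a bound by $\|v_M^1\|_{L^{6/5}}$ times the $L^2(dt)L^\infty(d(x-y))L^2(d(x+y))$-norm of $\Lambda$. Using $\alpha(1+\delta_0)\le\alpha+\delta_0$ (valid since $\alpha<1$), \eqref{vestim1} yields $M^{\alpha(1+\delta_0)}\|v_M^1\|_{L^{6/5+}}\lesssim M^{-\epsilon_0}$.

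The main obstacle will be replacing the $L^2(dt)L^\infty(d(x-y))L^2(d(x+y))$-norm of $\Lambda$ produced by this H\"older step with the strictly weaker collapsing norm actually called for on the right. The resolution exploits that $v_M^1$ smears Fourier frequencies by at most $\sim N^\beta\ll M^{1+}$, so only the portion of $\Lambda$ with $|\xi|,|\eta|\lesssim M^{1+}$ enters $u$, and one may assume $\Lambda$ is so localized. On this reduced problem, decompose $u=\sum_{k\lesssim\log M}u_k$ with $u_k=P_{|\xi-\eta|\sim 2^k}u$ and at each dyadic scale apply Proposition \ref{Suufixed}, which bounds $\|u_k\|_{\mathcal{S}_{x,y}}$ in terms of $\||\nabla_{x+y}|^{1/2}f_k\|_{L^1(d(x-y))L^2(dt\,d(x+y))}$ plus a $|\partial_t|^{1/4}$-term (the latter handled via the $u_k^1,u_k^2,u_k^3$ decomposition in the proof of Proposition \ref{Suufixed}, assigning high-time-frequency contributions directly to a Strichartz-dual estimate via Lemma \ref{collapsinglemma}). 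Bernstein in $\xi+\eta$ absorbs the $|\nabla_{x+y}|^{1/2}$ at cost $M^{(1+\delta_0)/2}$; Sobolev at an angle (Lemma \ref{sobangle}) and Bernstein upgrade $\mathcal{S}_{x,y}$ to $L^2(dt)L^\infty(d(x-y))L^2(d(x+y))$ at cost $M^{\alpha(1+\delta_0)}$; H\"older in $d(x-y)$ with $\|v_M^1\|_{L^1}\sim N^{-1}=M^{-1/(\beta+\epsilon)}$ produces exactly $\|\Lambda\|_{collapsing}$. Summing the $\lesssim\log M$ pieces, the total factor $\log M\cdot M^{\alpha(1+\delta_0)+(1+\delta_0)/2-1/(\beta+\epsilon)}$ is $M^{-\epsilon_0}$ for all $\beta<1$ once $\alpha$, $\delta_0$, $\epsilon$, $\epsilon_0$ are chosen sufficiently small in terms of $1-\beta$.
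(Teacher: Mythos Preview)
Your initial approach for the first estimate---Lemma \ref{collapsinglemma} followed by Bernstein in rotated coordinates and H\"older---is essentially the paper's argument, and it works. The ``main obstacle'' you identify is not an obstacle: after removing the projections you have $\|v_M^1(x-y)\Lambda\|_{L^2(dt)L^{6/5}(d(x-y))L^2(d(x+y))}$, and since $6/5<2$ Minkowski lets you swap the time and $x-y$ integrations to get $\|v_M^1(x-y)\Lambda\|_{L^{6/5}(d(x-y))L^2(dt\,d(x+y))}$. Now H\"older in $d(x-y)$ gives $\|v_M^1\|_{L^{6/5}}\sup_{x-y}\|\Lambda\|_{L^2(dt\,d(x+y))}=\|v_M^1\|_{L^{6/5}}\|\Lambda\|_{collapsing}$ directly. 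Combined with the Bernstein factor $M^{\alpha(1+\delta_0)}$ and \eqref{vestim}, this yields $M^{-\epsilon_0}\|\Lambda\|_{collapsing}$. The entire dyadic decomposition in your final paragraph is unnecessary, and in fact that paragraph never cleanly avoids the $|\partial_t|^{1/4}$-term from Proposition~\ref{Suufixed}, so it does not stand on its own.

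For the $|\partial_t|^{1/4}$ estimate, your reduction by commutation is correct in spirit, but ``absorbed as a boundary/initial-data contribution'' is not a proof. The paper writes $u=E*(hF)$ with $F=P_{|\xi|<M^{1+}}P_{|\eta|<M^{1+}}(v_M\Lambda)$, observes that the first-part argument actually bounds $\||\partial_t|^{1/4}u\|_{L^2L^\infty L^2}$ by $M^{-\epsilon_0}\||\partial_t|^{1/4}(hF)\|_{collapsing}$, and then proves the key inequality
\[
\||\partial_t|^{1/4}(hF)\|_{L^\infty(d(x-y))L^2(dt\,d(x+y))}\lesssim\||\partial_t|^{1/4}F\|_{L^\infty(d(x-y))L^2(dt\,d(x+y))}
\]
via the equivalent fractional Sobolev norm $\int\int|u(t)-u(s)|^2|t-s|^{-3/2}\,dt\,ds$ and a generalized Hardy inequality. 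This is the substantive step you are missing; note also the paper's remark that the analogous inequality with $L^2(dt)L^\infty(d(x-y))$ ordering (your stronger norm) may fail, which is precisely why the weaker collapsing norm appears on the right-hand side of the proposition.
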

   \begin{proof}
Using Sobolev estimates and Theorem \ref{Suusharp} we get
\begin{align*}
\|u\|_{L^2(dt)L^{\infty}(d(x-y))L^2(d(x+y))} \lesssim
\lesssim M^{-\epsilon_0} \|\Lambda\|_{collapsing}.
\end{align*}
No modifications are needed to prove
\begin{align*}
&\||\nabla_{x+y}|^{\alpha}u\|_{L^2(dt)L^{\infty}(d(x-y))L^2(d(x+y))}
\lesssim M^{-\epsilon_0}
\||\nabla_{x+y}|^{\alpha}\Lambda\|_{collapsing}.
\end{align*}
However, the estimate for $|\partial_t|^{\frac{1}{4}}u$ requires extra care because time derivatives don't preserve initial conditions.
To simplify notation, let $F=P_{|\xi|< M^{1+}}P_{|\eta|< M^{1+}}\left(v_M(x-y) \Lambda\right)$.
 Let $E$ the fundamental solution of the Schr\"odinger equation supported in $t \ge 0$, and recall $h$ is the Heaviside function. The usual solution to
\begin{align*}
\S u =F
\end{align*}
with $0$ initial conditions is given (in the region $t>0$) by
\begin{align*}
u= E * (hF).
\end{align*}
 From the first part of this proof we get
\begin{align*}
&\||\partial_t|^{\frac{1}{4}}u\|_{L^2(dt)L^{\infty}(d(x-y))L^2(d(x+y))}
\lesssim M^{-\epsilon_0} \||\partial_t|^{\frac{1}{4}}(hF)\|_{L^{\infty}(d(x-y))L^2(dt)L^2(d(x+y))}.
\end{align*}
It is easy to show
\begin{align}
  \||\partial_t|^{\frac{1}{4}}(hF)\|_{L^{\infty}(d(x-y))L^2(dt)L^2(d(x+y))}
  \lesssim  \|(|\partial_t|^{\frac{1}{4}}F)\|_{L^{\infty}(d(x-y))L^2(dt)L^2(d(x+y))} .    \label{troubleterm}
  \end{align}
  This can be done by taking Fourier transform in $t$ and using  $A_2$ theory (see \cite{Stein93}), or else the equivalent definition (for $0<k<1$):
\begin{align}
\|u\|^2_{\dot H^k}=\int \int \frac{|u(t)-u(s)|^2}{|t-s|^{1+2 k}}dtds\label{equiv}
\end{align}
and the generalized Hardy's inequality from \cite{Z-Z}.

We remark that the corresponding estimate for the stronger norm $\|\Lambda\|_{L^2(dt)L^{\infty}(d(x-y))L^2(d(x+y))}$ might not be true. We do not know if
  \begin{align}
  \||\partial_t|^{\frac{1}{4}}(hF)\|_{L^2(dt)L^{\infty}(d(x-y))L^2(d(x+y))}
  \lesssim  \|(|\partial_t|^{\frac{1}{4}}F)\|_{L^2(dt)L^{\infty}(d(x-y))L^2(d(x+y))}\label{troubleterm1}
  \end{align}
  is true.

\end{proof}

\begin{proposition}
Recall $\Lambda$ satisfies \eqref{maineqold} and
 \begin{align*}
&\S \Lambda_2 =P_{|\xi|< M^{1+}}P_{|\eta|< M^{1+}}\left(h(t)v_M^1(x-y) P_{|\xi| \, or \, |\eta|> M^{1+}}
P_{|\xi|<10 M^{1+}}P_{|\eta|< 10 M^{1+}}(\Lambda+H)\right)
\end{align*}
with zero initial conditions.
Then
\begin{align*}
&\|\big<\nabla_{x}\big>^{\alpha}\big<\nabla_{y}\big>^{\alpha} \Lambda_2\|_{\mathcal S}+
\|\big<\nabla_{x+y}\big>^{\alpha} \Lambda_2\|_{L^2(dt)L^{\infty}(d(x-y))L^2(d(x+y))}\\
&
+\||\partial_t|^{\frac{1}{4}} \Lambda_2\|_{L^2(dt)L^{\infty}(d(x-y))L^2(d(x+y))} \\
&\lesssim  M^{-\epsilon_0}\|\big<\nabla_{x}\big>^{\alpha}\big<\nabla_{y}\big>^{\alpha}G\|_{\mathcal{S}_r'}\\
&+
M^{-\epsilon_0}\big\|\big<\nabla_x\big>^{\alpha}\big<\nabla_y\big>^{\alpha}H \big\|_{L^2(dt)L^{6}(d(x-y))L^2(d(x+y))}+M^{-\epsilon_0}\big\|\big<\nabla_x\big>^{\alpha}\big<\nabla_y\big>^{\alpha}\Lambda_0 \big\|_{L^2}.
 \end{align*}
\end{proposition}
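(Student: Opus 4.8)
Proof proposal.

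The plan is to view $\Lambda_2$ as the Duhamel solution of $\S\Lambda_2=F_2$ with vanishing initial data, where $F_2$ is the single forcing term in the hypothesis, and to feed $F_2$ into the ``$f$''-slot of Theorem~\ref{mainStrich} together with Lemma~\ref{4collapsing} and Lemma~\ref{timederlemma} (in each case with $g=0$ and $u_0=0$). Since $\big<\nabla_x\big>^{\alpha}\big<\nabla_y\big>^{\alpha}$ commutes with $\S$ and with every frequency projection, this reduces the entire left-hand side of the proposition to the single estimate
\[
\big\|\big<\nabla_x\big>^{\alpha}\big<\nabla_y\big>^{\alpha}F_2\big\|_{L^2(dt)L^{\frac65}(d(x-y))L^2(d(x+y))}\ \lesssim\ \text{the right-hand side of the proposition}.
\]

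To prove this I would first discard the outer projections $P_{|\xi|<M^{1+}}P_{|\eta|<M^{1+}}$ (bounded uniformly in $M$ on all the mixed-norm spaces involved), split $\Lambda+H$ linearly, and split the condition ``$|\xi|>M^{1+}$ or $|\eta|>M^{1+}$'' into its two obvious halves; by the $x\leftrightarrow y$ symmetry of everything it then suffices to bound, for $w\in\{\Lambda,H\}$, the quantity $\big\|\big<\nabla_x\big>^{\alpha}\big<\nabla_y\big>^{\alpha}\big(v_M^1(x-y)\,P_{|\xi|>M^{1+}}P_{|\xi|<10M^{1+}}P_{|\eta|<10M^{1+}}w\big)\big\|_{L^2(dt)L^{\frac65}(d(x-y))L^2(d(x+y))}$. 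This is exactly the kind of product already treated inside the proof of Theorem~\ref{mainthmnew}, in the computation culminating in \eqref{trouble}: since $\widehat{v_M^1}$ is supported in $|\xi|<M/10$ while the other factor is supported in $|\xi|>M^{1+}>10M$, the Leibniz rule of Theorem~\ref{leibnizX} pushes the $|\nabla_x|^{\alpha}$ derivative onto the high-frequency factor; H\"older's inequality, ``Sobolev at an angle'' (Lemma~\ref{sobangle}), and the engineered potential bounds $\|v_M^1\|_{L^{3/2}}+M^{\alpha}\|v_M^1\|_{L^{\frac65+}}+M^{2\alpha}\|v_M^1\|_{L^{1+}}=O(M^{-\epsilon_0})$ from \eqref{vestim}--\eqref{vestim1} (this is where the hypothesis $\beta<1$ enters) then yield a gain of $M^{-\epsilon_0}$, the lower-order Leibniz pieces being absorbed through the crude bound \eqref{badtwoderiv} exactly as there. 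The outcome is a bound by $M^{-\epsilon_0}\big\|\big<\nabla_x\big>^{\alpha}\big<\nabla_y\big>^{\alpha}P_{|\xi|>M^{1+}}P_{|\xi|<10M^{1+}}w\big\|_{L^2(dt)L^6(d(x-y))L^2(d(x+y))}$.

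It remains to absorb the surviving factor. For $w=H$ it is $\lesssim\big\|\big<\nabla_x\big>^{\alpha}\big<\nabla_y\big>^{\alpha}H\big\|_{L^2(dt)L^6(d(x-y))L^2(d(x+y))}$, which is one of the terms allowed on the right. For $w=\Lambda$ the band cutoff $P_{|\xi|>M^{1+}}P_{|\xi|<10M^{1+}}$ is a bounded multiple of $P_{|\xi|>M^{1+}}$, so the factor is $\lesssim\big\|\big<\nabla_x\big>^{\alpha}\big<\nabla_y\big>^{\alpha}P_{|\xi|>M^{1+}}\Lambda\big\|_{\mathcal S}$; since $\Lambda$ satisfies \eqref{maineqold}, the already-proved Theorem~\ref{mainthmnew} (estimate \eqref{highest}) bounds this by $\big\|\big<\nabla_x\big>^{\alpha}\big<\nabla_y\big>^{\alpha}G\big\|_{\mathcal S_r'}+M^{-\epsilon_0}\big\|\big<\nabla_x\big>^{\alpha}\big<\nabla_y\big>^{\alpha}H\big\|_{L^2(dt)L^6(d(x-y))L^2(d(x+y))}+\big\|\big<\nabla_x\big>^{\alpha}\big<\nabla_y\big>^{\alpha}\Lambda_0\big\|_{L^2}$, and multiplying by the $M^{-\epsilon_0}$ already extracted keeps every term within the proposition's right-hand side. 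The symmetric contributions with $|\eta|>M^{1+}$ are handled identically with the $P_{|\eta|>M^{1+}}$ half of Theorem~\ref{mainthmnew}; summing the four pieces proves the proposition.

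The step I expect to be the real work is the forcing estimate of the second paragraph, i.e. the Leibniz bookkeeping that transfers both $\alpha$-derivatives onto the high-frequency factor while leaving $v_M^1$ with only a negative power of $M$. The delicate sub-case is when the frequency dual to the spare variable is itself $\lesssim M$ (here $|\eta|<M$ is permitted), so that $\big<\nabla_y\big>^{\alpha}$ cannot be moved onto $w$ and instead costs a factor $\sim M^{\alpha}$; this must be balanced against the stronger smallness $M^{\alpha}\|v_M^1\|_{L^{\frac65+}}=O(M^{-\epsilon_0})$ (or $M^{2\alpha}\|v_M^1\|_{L^{1+}}=O(M^{-\epsilon_0})$) furnished by \eqref{vestim}--\eqref{vestim1}, with a small loss of derivatives absorbed via \eqref{badtwoderiv}. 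Because this is precisely the accounting already carried out for Theorem~\ref{mainthmnew}, it introduces no genuinely new ingredient; all remaining steps are direct applications of the linear machinery of Sections~\ref{Statement}--\ref{endlin}.
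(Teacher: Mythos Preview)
Your proposal is correct and follows essentially the same route as the paper: reduce the left-hand side via Theorem~\ref{mainStrich}, Lemma~\ref{4collapsing}, and Lemma~\ref{timederlemma} to a single $L^2L^{6/5}L^2$ bound on the forcing, then use the Leibniz rule (Theorem~\ref{leibnizX}) with the frequency separation $|\xi|>M^{1+}\gg M$ to put $\langle\nabla_x\rangle^{\alpha}$ on the high-frequency factor and extract $M^{-\epsilon_0}$, and finally invoke Theorem~\ref{mainthmnew} for the $\Lambda$ piece. One minor remark: the paper's argument here does not actually need the crude bound \eqref{badtwoderiv}; the two Leibniz terms in \eqref{trouble} already close using only \eqref{vestim}, so that ingredient of your outline is superfluous (though harmless).
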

 \begin{proof}
 Consider (by slight abuse of notation) just one of the contributions to $
\Lambda_2$ with $ P_{|\xi| > M^{1+}}$.
 \begin{align}
 \S \Lambda_2=P_{|\xi|< M^{1+}}P_{|\eta|< M^{1+}}
 \left(v_M^1(x-y)  P_{M^{1+}<|\xi|<10 M^{1+}}P_{|\eta|< 10 M^{1+}}(\Lambda+H)\right).\label{lambda2eq}
\end{align}
Using Theorem \ref{mainStrich} and Lemmas \ref{4collapsing}, \ref{timederlemma} we have
\begin{align*}
&\|\big<\nabla_{x}\big>^{\alpha}\big<\nabla_{y}\big>^{\alpha} \Lambda_2\|_{\mathcal S}+
\|\big<\nabla_{x+y}\big>^{\alpha}\Lambda_2\|_{L^2(dt)L^{\infty}(d(x-y))L^2(d(x+y))}\\
&+\||\partial_t|^{\frac{1}{4}} \Lambda_2\|_{L^2(dt)L^{\infty}(d(x-y))L^2(d(x+y))}
\lesssim \|\big<\nabla_x\big>^{\alpha}\big<\nabla_y\big>^{\alpha} RHS\|_{L^2(dt)L^{\frac{6}{5}}(d(x-y))L^2(d(x+y))}.
\end{align*}

Here we can estimate the RHS directly (using the fractional Leibniz rule
and the fact that $\big<\nabla_x\big>^{\alpha}$ only falls on $\Lambda+H$)
\begin{align*}
&\big\|\big<\nabla_x\big>^{\alpha}\big<\nabla_y\big>^{\alpha}
 \left(v^1_M(x-y) P_{|\xi| > M^{1+}} (\Lambda+H)\right)
 \big\|_{L^2(dt)L^{\frac{6}{5}}(d(x-y))L^2(d(x+y))}\\
 &\lesssim M^{-\epsilon_0}\big\|\big<\nabla_x\big>^{\alpha}\big<\nabla_y\big>^{\alpha} P_{|\xi| > M^{1+}}(\Lambda +H) \big\|_{L^2(dt)L^{6}(d(x-y))L^2(d(x+y))}.
 \end{align*}
This is in the high frequency range, and using Theorem \ref{mainthmnew},
the above is
\begin{align*}
&\lesssim M^{-\epsilon_0}\big\|\big<\nabla_x\big>^{\alpha}\big<\nabla_y\big>^{\alpha}G \big\|_{\mathcal{S}_r'}
+ M^{-\epsilon_0}\big\|\big<\nabla_x\big>^{\alpha}\big<\nabla_y\big>^{\alpha}\Lambda_0 \big\|_{L^2}\\
&+
M^{-\epsilon_0}\big\|\big<\nabla_x\big>^{\alpha}\big<\nabla_y\big>^{\alpha}H \big\|_{L^2(dt)L^{6}(d(x-y))L^2(d(x+y))}.
\end{align*}
\end{proof}

\begin{proposition}\label{l3}
Recall
\begin{align*}
&\S \Lambda_3 = P_{|\xi|< M^{1+}}P_{|\eta|< M^{1+}}G
\end{align*}
with zero initial conditions.
Then
 \begin{align*}
&\|\big<\nabla_{x}\big>^{\alpha}\big<\nabla_{y}\big>^{\alpha} \Lambda_3\|_{\mathcal S}
 +\|\big<\nabla_{x+y}\big>^{\alpha} \Lambda_3\|_{L^2(dt)L^{\infty}(d(x-y))L^2(d(x+y))}\\
 &+
\||\partial_t|^{\frac{1}{4}} \Lambda_3\|_{L^2(dt)L^{\infty}(d(x-y))L^2(d(x+y))}\\
&\lesssim
\|\big<\nabla_{x}\big>^{\alpha}\big<\nabla_{y}\big>^{\alpha}G\|_{\mathcal{S}_r'}.
 \end{align*}
\end{proposition}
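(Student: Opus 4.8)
\emph{Proof proposal for Proposition~\ref{l3}.}
The plan is to read this estimate off directly from the linear theory of Section~\ref{Statement}, since here the inhomogeneity $G$ enters \eqref{maineqold} linearly and \emph{no} potential term is present in the equation for $\Lambda_3$: by definition $\Lambda_3$ is simply the Duhamel solution of a linear Schr\"odinger equation with forcing $h(t)P_{|\xi|<M^{1+}}P_{|\eta|<M^{1+}}G$ and zero data. The only structural input needed is that the frequency cutoffs $P_{|\xi|<M^{1+}}$ and $P_{|\eta|<M^{1+}}$ act boundedly, uniformly in $M$, on every norm appearing on the right-hand sides of Theorem~\ref{mainStrich} and Lemmas~\ref{4collapsing}, \ref{timederlemma} — in particular on $\mathcal{S}_r'$ and on $L^2(dt)L^{\frac{6}{5}}(d(x-y))L^2(d(x+y))$ — which is exactly the multiplier boundedness recorded in the paragraph following \eqref{projdef}. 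Moreover, since these cutoffs are Fourier multipliers in $x$ and $y$ only, they commute with $\langle\nabla_x\rangle^{\alpha}\langle\nabla_y\rangle^{\alpha}$, so that
\[
\big\|\langle\nabla_x\rangle^{\alpha}\langle\nabla_y\rangle^{\alpha}P_{|\xi|<M^{1+}}P_{|\eta|<M^{1+}}G\big\|_{\mathcal{S}_r'}\lesssim \big\|\langle\nabla_x\rangle^{\alpha}\langle\nabla_y\rangle^{\alpha}G\big\|_{\mathcal{S}_r'}.
\]

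First I would apply $\langle\nabla_x\rangle^{\alpha}\langle\nabla_y\rangle^{\alpha}$ to $\S\Lambda_3 = h(t)P_{|\xi|<M^{1+}}P_{|\eta|<M^{1+}}G$; being spatial Fourier multipliers, these commute with $\S$ and with the forward Duhamel operator, so $\langle\nabla_x\rangle^{\alpha}\langle\nabla_y\rangle^{\alpha}\Lambda_3$ solves the same type of equation with forcing $\langle\nabla_x\rangle^{\alpha}\langle\nabla_y\rangle^{\alpha}P_{|\xi|<M^{1+}}P_{|\eta|<M^{1+}}G$ and zero initial data. Theorem~\ref{mainStrich}, applied with $f=0$, $g=\langle\nabla_x\rangle^{\alpha}\langle\nabla_y\rangle^{\alpha}P_{|\xi|<M^{1+}}P_{|\eta|<M^{1+}}G$ and $u_0=0$, then gives $\|\langle\nabla_x\rangle^{\alpha}\langle\nabla_y\rangle^{\alpha}\Lambda_3\|_{\mathcal{S}}\lesssim \|\langle\nabla_x\rangle^{\alpha}\langle\nabla_y\rangle^{\alpha}G\|_{\mathcal{S}_r'}$ by the reduction above. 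For the two collapsing-type terms I would invoke Lemma~\ref{4collapsing} (for $\|\langle\nabla_{x+y}\rangle^{\alpha}\Lambda_3\|_{L^2(dt)L^{\infty}(d(x-y))L^2(d(x+y))}$) and Lemma~\ref{timederlemma} (for $\||\partial_t|^{\frac14}\Lambda_3\|_{L^2(dt)L^{\infty}(d(x-y))L^2(d(x+y))}$), again with $f=0$, $u_0=0$ and $g=P_{|\xi|<M^{1+}}P_{|\eta|<M^{1+}}G$; both lemmas bound the left-hand side by $\|\langle\nabla_x\rangle^{\alpha}\langle\nabla_y\rangle^{\alpha}g\|_{\mathcal{S}_r'}$ (the factor $\langle\nabla_{x+y}\rangle^{\alpha}$ being dominated by $\langle\nabla_x\rangle^{\alpha}\langle\nabla_y\rangle^{\alpha}$ inside the lemma, and the Heaviside factor being absorbed there as in the proof of Lemma~\ref{timederlemma}), which is $\lesssim \|\langle\nabla_x\rangle^{\alpha}\langle\nabla_y\rangle^{\alpha}G\|_{\mathcal{S}_r'}$ by the uniform boundedness of the cutoffs. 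Summing the three contributions yields the claim.

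There is essentially no obstacle here: this is the one piece of the low-frequency decomposition in which the forcing is precisely the given inhomogeneity $G$, so there is no smallness gain $M^{-\epsilon_0}$ to extract and nothing to reabsorb into the left-hand side — one merely passes $G$ through the linear estimates of Theorem~\ref{mainStrich} and Lemmas~\ref{4collapsing}--\ref{timederlemma}. The only point demanding (routine) care is verifying that $P_{|\xi|<M^{1+}}P_{|\eta|<M^{1+}}$ is bounded, uniformly in $M$, on $\mathcal{S}_r'$ and on $L^2(dt)L^{\frac{6}{5}}(d(x-y))L^2(d(x+y))$, which is immediate from the multiplier statement following \eqref{projdef}.
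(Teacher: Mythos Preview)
Your proposal is correct and follows exactly the same approach as the paper: the paper's proof is the single sentence ``This follows immediately from Theorem~\ref{mainStrich} and Lemmas~\ref{4collapsing}, \ref{timederlemma},'' and you have simply unpacked that sentence, noting in addition the (routine) uniform boundedness of the frequency cutoffs on the relevant norms.
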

\begin{proof}
This follows immediately from Theorem \ref{mainStrich} and Lemmas \ref{4collapsing}, \ref{timederlemma}.
\end{proof}
\begin{proposition}\label{l4}
Recall
\begin{align*}
&\S \Lambda_4 =0
\end{align*}
with  initial conditions $\Lambda_0$.
Then, for any $\alpha > \frac{1}{2}$,
 \begin{align*}
&\|\big<\nabla_{x}\big>^{\alpha}\big<\nabla_{y}\big>^{\alpha} \Lambda_4\|_{\mathcal S}+
 \|\big<\nabla_{x+y}\big>^{\alpha} \Lambda_4\|_{L^2(dt)L^{\infty}(d(x-y))L^2(d(x+y))}+
\||\partial_t|^{\frac{1}{4}} \Lambda_4\|_{L^2(dt)L^{\infty}(d(x-y))L^2(d(x+y))}\\
&\lesssim
 \big\|\big<\nabla_x\big>^{\alpha}\big<\nabla_y\big>^{\alpha}\Lambda_0 \big\|_{L^2}. 
 \end{align*}
\end{proposition}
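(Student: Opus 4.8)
The plan is to recognize that Proposition~\ref{l4} concerns only the homogeneous Schr\"odinger flow, so that each of the three norms on the left is controlled by a linear estimate already established in this section, applied with vanishing forcing terms. As a preliminary reduction I would note that the genuine datum of $\Lambda_4$ is $P_{|\xi|<M^{1+}}P_{|\eta|<M^{1+}}\Lambda_0$; since these multipliers are bounded on $L^2$ uniformly in $M$ and commute with $\big<\nabla_x\big>^{\alpha}\big<\nabla_y\big>^{\alpha}$, it suffices to prove the claimed bound with $\Lambda_0$ itself as datum.

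Then I would treat the three terms separately. For the Strichartz term, since $\big<\nabla_x\big>^{\alpha}\big<\nabla_y\big>^{\alpha}$ is a Fourier multiplier commuting with $\S$, the function $\big<\nabla_x\big>^{\alpha}\big<\nabla_y\big>^{\alpha}\Lambda_4$ solves $\S v=0$ with datum $\big<\nabla_x\big>^{\alpha}\big<\nabla_y\big>^{\alpha}\Lambda_0$, so Theorem~\ref{mainStrich} with $f=g=0$ gives at once $\big\|\big<\nabla_x\big>^{\alpha}\big<\nabla_y\big>^{\alpha}\Lambda_4\big\|_{\mathcal S}\lesssim\big\|\big<\nabla_x\big>^{\alpha}\big<\nabla_y\big>^{\alpha}\Lambda_0\big\|_{L^2}$. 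For the middle term, involving $\big<\nabla_{x+y}\big>^{\alpha}$, I would apply Lemma~\ref{4collapsing} directly with $f=g=0$ and $u_0=\Lambda_0$, which produces exactly the claimed bound. For the $|\partial_t|^{\frac{1}{4}}$ term I would apply Lemma~\ref{timederlemma} with $f=g=0$ and $u_0=\Lambda_0$; recall that the homogeneous case of that lemma is the easy one, since $|\tau|=|\xi|^2+|\eta|^2$ on the Fourier support of a free solution, so that $|\partial_t|^{\frac{1}{4}}$ is dominated there by $\big<\nabla_x\big>^{\frac{1}{2}}+\big<\nabla_y\big>^{\frac{1}{2}}$, hence by $\big<\nabla_x\big>^{\alpha}\big<\nabla_y\big>^{\alpha}$ since $\alpha>\frac{1}{2}$, after which ``Sobolev at an angle'' together with the Strichartz estimate closes it. Summing the three contributions yields the proposition, exactly in the spirit of the one-line proof of Proposition~\ref{l3}.

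I do not anticipate a genuine obstacle: the entire content has been front-loaded into Theorem~\ref{mainStrich} and Lemmas~\ref{sobangle}, \ref{4collapsing} and \ref{timederlemma}. The only two points worth a second glance are that each of those statements carries a $\big\|\big<\nabla_x\big>^{\alpha}\big<\nabla_y\big>^{\alpha}u_0\big\|_{L^2}$ summand on its right-hand side, which is precisely the quantity we are entitled to bound by, and that the hypothesis $\alpha>\frac{1}{2}$ is exactly what makes the three-dimensional Sobolev embedding $\dot W^{\alpha,6}\hookrightarrow L^\infty$ behind the ``Sobolev at an angle'' estimate valid, so that no borderline case intervenes.
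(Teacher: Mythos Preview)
Your proposal is correct and takes essentially the same approach as the paper: both recognize that the result for the homogeneous flow is already contained in the Strichartz and collapsing estimates established earlier, with the paper citing Lemma~5.3 of \cite{G-M2017} in a single line while you invoke the in-paper versions (Theorem~\ref{mainStrich}, Lemma~\ref{4collapsing}, Lemma~\ref{timederlemma}) with $f=g=0$. Your more explicit unpacking of the three terms and the remark on why the $|\partial_t|^{1/4}$ case is easy for free solutions are accurate and in the same spirit.
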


\begin{proof}  This follows from Strichartz estimates, see for instance the proof of Lemma 5.3 in \cite{G-M2017}.
\end{proof}

\begin{proposition}\label{l5}
Recall $\Lambda$ satisfies \eqref{maineqold} and
\begin{align*}
&\S \Lambda_5 = P_{|\xi|< M^{1+}}P_{|\eta|< M^{1+}}\left(v^2_M(x-y) (\Lambda+H)\right)
\end{align*}
with initial conditions $0$. Then
\begin{align*}
&\|\big<\nabla_{x}\big>^{\alpha}\big<\nabla_{y}\big>^{\alpha} \Lambda_5\|_{\mathcal S}+
 \|\big<\nabla_{x+y}\big>^{\alpha} \Lambda_5\|_{L^2(dt)L^{\infty}(d(x-y))L^2(d(x+y))}\\
 &+
\||\partial_t|^{\frac{1}{4}} \Lambda_5\|_{L^2(dt)L^{\infty}(d(x-y))L^2(d(x+y))}\\
&\lesssim
 M^{-9}\|\big<\nabla_{x}\big>^{\alpha}\big<\nabla_{y}\big>^{\alpha}G\|_{\mathcal{S}_r'}
+ M^{-9}\big\|\big<\nabla_x\big>^{\alpha}\big<\nabla_y\big>^{\alpha}G \big\|_{\mathcal{S}_r'}
 \end{align*}
\end{proposition}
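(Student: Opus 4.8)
The plan is to treat the forcing $v^2_M(x-y)(\Lambda+H)$ as a pure error term and run the same machinery as in Proposition \ref{l3}, exploiting the fact that $v^2_M$ is negligible at every order of differentiation. Since the Fourier multipliers $P_{|\xi|<M^{1+}}$ and $P_{|\eta|<M^{1+}}$ are bounded, uniformly in $M$, on every mixed-norm space appearing in the statement, I would first discard them and reduce to the equation $\S\Lambda_5=h(t)\,v^2_M(x-y)(\Lambda+H)$ with zero initial data. Applying the Strichartz estimate of Theorem \ref{mainStrich}, the collapsing estimate of Lemma \ref{4collapsing}, and the time-derivative estimate of Lemma \ref{timederlemma} (exactly as in the proof of Proposition \ref{l3}), all three norms on the left-hand side are dominated by
\begin{align*}
\big\|\big<\nabla_x\big>^{\alpha}\big<\nabla_y\big>^{\alpha}\big(v^2_M(x-y)(\Lambda+H)\big)\big\|_{L^2(dt)L^{\frac{6}{5}}(d(x-y))L^2(d(x+y))},
\end{align*}
so the whole proposition reduces to estimating this single quantity.

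Next I would apply a fractional Leibniz rule (Theorem \ref{leibniz}, which applies since $v^2_M\in\mathcal S$) to distribute $\big<\nabla_x\big>^{\alpha}\big<\nabla_y\big>^{\alpha}$ across the product $v^2_M(x-y)\cdot(\Lambda+H)$, followed by H\"older's inequality and ``Sobolev at an angle'' (Lemma \ref{sobangle}) to pass between $L^{\frac{6}{5}}$ and $L^{6}$ in the $x-y$ variable where needed. Every term so produced carries a factor of the form $\|\big<\nabla\big>^{n}v^2_M\|_{L^p}$ with $0\le n\le 2\alpha$ and some $1\le p\le\infty$, and by \eqref{vestim2} each such factor is $O(M^{-10})$ (in fact $O(M^{-m})$ for every $m$, so there is ample room). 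The companion factor is one of $\|\Lambda+H\|_{\mathcal S}$, a one-derivative Strichartz norm $\|\big<\nabla_x\big>^{\alpha}(\Lambda+H)\|_{\mathcal S}$ or $\|\big<\nabla_y\big>^{\alpha}(\Lambda+H)\|_{\mathcal S}$, or the two-derivative norm $\|\big<\nabla_x\big>^{\alpha}\big<\nabla_y\big>^{\alpha}(\Lambda+H)\|_{\mathcal S}$.

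The contributions of $H$, $G$ and $\Lambda_0$ are then, up to the gain $M^{-10}$, dominated by the right-hand side of the claimed estimate --- the zero- and one-derivative pieces directly by \eqref{noderiv}--\eqref{onederiv}, and the two-derivative $H$, $G$, $\Lambda_0$ pieces after feeding $\Lambda$ through \eqref{badtwoderiv}. The only point requiring care is the two-derivative $\Lambda$ factor: the best available control is the sub-optimal estimate \eqref{badtwoderiv}, namely $\|\big<\nabla_x\big>^{\alpha}\big<\nabla_y\big>^{\alpha}\Lambda\|_{\mathcal S}\lesssim M\big(\|\big<\nabla_x\big>^{\alpha}\big<\nabla_y\big>^{\alpha}G\|_{\mathcal{S}_r'}+M^{-\epsilon_0}\|\big<\nabla_x\big>^{\alpha}\big<\nabla_y\big>^{\alpha}H\|_{L^2(dt)L^{6}(d(x-y))L^2(d(x+y))}+\|\big<\nabla_x\big>^{\alpha}\big<\nabla_y\big>^{\alpha}\Lambda_0\|_{L^2}\big)$, which costs a loss of one power of $M$; multiplied against the $M^{-10}$ decay of $v^2_M$ this still leaves a net factor $M^{-9}$, which is precisely the gain in the stated estimate. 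I do not expect any genuine analytic difficulty here --- $v^2_M$ is an honest error term --- the only thing to keep track of is this bookkeeping of the $M$-loss coming from \eqref{badtwoderiv}. (Alternatively one could avoid \eqref{badtwoderiv} altogether by a Littlewood--Paley decomposition trading the number of derivatives against the arbitrarily small Sobolev norms of $v^2_M$, but routing through \eqref{badtwoderiv} is the shortest path.)
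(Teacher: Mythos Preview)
Your proposal is correct and follows essentially the same line as the paper: reduce to estimating $\big<\nabla_x\big>^{\alpha}\big<\nabla_y\big>^{\alpha}$ of the forcing via Proposition~\ref{l3} (i.e.\ Theorem~\ref{mainStrich} together with Lemmas~\ref{4collapsing} and~\ref{timederlemma}), then apply a fractional Leibniz rule, use the smallness~\eqref{vestim2} of $v^2_M$, and feed $\Lambda$ back through~\eqref{noderiv}--\eqref{badtwoderiv}, with the single $M$-loss from~\eqref{badtwoderiv} accounting for the exponent $M^{-9}$. The only cosmetic difference is that you place the forcing in the $L^2(dt)L^{6/5}(d(x-y))L^2(d(x+y))$ slot, whereas the paper routes it through $\mathcal S_r'$; both choices are available in the linear estimates and lead to the same conclusion.
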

\begin{proof}
Using Proposition \ref{l3}, we have
\begin{align*}
&\|\big<\nabla_{x}\big>^{\alpha}\big<\nabla_{y}\big>^{\alpha} \Lambda_5\|_{\mathcal S}+
 \|\big<\nabla_{x+y}\big>^{\alpha} \Lambda_5\|_{L^2(dt)L^{\infty}(d(x-y))L^2(d(x+y))}+
\||\partial_t|^{\frac{1}{4}} \Lambda_5\|_{L^2(dt)L^{\infty}(d(x-y))L^2(d(x+y))}\\
&\lesssim \|\big<\nabla_{x}\big>^{\alpha}\big<\nabla_{y}\big>^{\alpha}P_{|\xi|< M^{1+}}P_{|\eta|< M^{1+}}\left(v^2_M(x-y) \Lambda\right)\|_{\mathcal{S}_r'}.
\end{align*}
Here we use the Leibniz rule and \eqref{onederiv}, \eqref{badtwoderiv} and the smallness of $v_M^2$ to conclude the above is
\begin{align*}
\lesssim M^{-9} \left(\|\big<\nabla_{x}\big>^{\alpha}\big<\nabla_{y}\big>^{\alpha}G\|_{\mathcal{S}_r'} +
\big\|\big<\nabla_x\big>^{\alpha}\big<\nabla_y\big>^{\alpha}\Lambda_0 \big\|_{L^2}\right).
\end{align*}
\end{proof}

\bigskip

To finish the proof the Theorem \ref{mainthm}, we also need
\begin{theorem}\label{finish}
Let
\begin{align*}
&\S\Lambda_1 =P_{|\xi|< M^{1+}}P_{|\eta|< M^{1+}}\left(v_M^1(x-y) P_{|\xi|< M^{1+}}P_{|\eta|< M^{1+}}\Lambda\right)
\end{align*}
with $0$ initial conditions.
 Then
\begin{align*}
&\big\|\big|\nabla_x\big|^{\alpha}\big|\nabla_y\big|^{\alpha}P_{|\xi|<2 M^{1+}}P_{|\eta|<2 M^{1+}}\Lambda_1 \big\|_{\mathcal{S}_{x, y}}\\
&\lesssim \big\|\big<\nabla_{x+y}\big>^{\alpha}\Lambda\big\|_{collapsing} +\big\|\big<\partial_t\big>^{\frac{1}{4}}\Lambda\big\|_{collapsing}.
\end{align*}
\end{theorem}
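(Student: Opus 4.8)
The plan is to commute the derivatives $|\nabla_x|^{\alpha}|\nabla_y|^{\alpha}$ into the equation, cut $\Lambda_1$ into Littlewood--Paley pieces in the $\xi-\eta$ frequency, and feed each piece into the frequency--localised sharp estimate of Proposition \ref{Suufixed}, estimating the resulting forcing by the fractional Leibniz rule of Theorem \ref{leibnizX} together with the smallness of $v^1_M$ and its derivatives from \eqref{vestim}--\eqref{vestim1}.

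First I would dispose of the outer projection. Since $\S\Lambda_1=P_{|\xi|<M^{1+}}P_{|\eta|<M^{1+}}\bigl(v^1_M(x-y)P_{|\xi|<M^{1+}}P_{|\eta|<M^{1+}}\Lambda\bigr)$, the forcing is supported in $|\xi|,|\eta|<2M^{1+}$, and this support is preserved both by the $(\xi,\eta)$--multipliers and by the Schr\"odinger solution operator $E*(h\,\cdot\,)$; hence $P_{|\xi|<2M^{1+}}P_{|\eta|<2M^{1+}}\Lambda_1=\Lambda_1$. It therefore suffices to bound $w:=|\nabla_x|^{\alpha}|\nabla_y|^{\alpha}\Lambda_1$, which (since $|\nabla_x|^{\alpha}|\nabla_y|^{\alpha}$ commutes with $\S$ and with the cut--offs) solves $\S w=|\nabla_x|^{\alpha}|\nabla_y|^{\alpha}P_{|\xi|<M^{1+}}P_{|\eta|<M^{1+}}\bigl(v^1_M(x-y)P_{|\xi|<M^{1+}}P_{|\eta|<M^{1+}}\Lambda\bigr)$ with zero data. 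Writing $w=\sum_{0\le 2^k\lesssim M^{1+}}w_k$ with $w_k$ supported in $|\xi-\eta|\sim 2^k$, Proposition \ref{Suufixed} gives, for each $k$,
\[
\|w_k\|_{\mathcal{S}_{x,y}}\lesssim \bigl\||\nabla_{x+y}|^{\frac12}F_k\bigr\|_{L^{1}(d(x-y))L^2(dt)L^2(d(x+y))}+\bigl\||\partial_t|^{\frac14}F_k\bigr\|_{L^{1}(d(x-y))L^2(dt)L^2(d(x+y))},
\]
where $F_k$ is the $|\xi-\eta|\sim 2^k$ component of the forcing; for $2^k\gtrsim M$ one may instead use the plain Strichartz estimate of Theorem \ref{mainStrich} directly, as in the first step of the proof of Proposition \ref{Suusharp}, and that range produces no logarithmic loss.

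To estimate $F_k$ one applies Theorem \ref{leibnizX} to the product $v^1_M(x-y)\cdot P_{|\xi|<M^{1+}}P_{|\eta|<M^{1+}}\Lambda$. In the terms in which a derivative falls on $v^1_M$ the prefactor $\lesssim M^{2\alpha}\|v^1_M\|_{L^p}$ (with an appropriate exponent $p$ slightly above $1$, or $p=\tfrac32$) is $O(M^{-\epsilon_0})$ by \eqref{vestim}--\eqref{vestim1}, the residual derivatives on $P_{|\xi|<M^{1+}}P_{|\eta|<M^{1+}}\Lambda$ have total order $<2\alpha$, and, combined with the $|\nabla_{x+y}|^{1/2}$ or $|\partial_t|^{1/4}$ supplied by Proposition \ref{Suufixed}, with ``Sobolev at an angle'' (Lemma \ref{sobangle}) and Bernstein in rotated coordinates (Lemma \ref{bernstein}), they are controlled by $\|\langle\nabla_{x+y}\rangle^{\alpha}\Lambda\|_{collapsing}+\|\langle\partial_t\rangle^{1/4}\Lambda\|_{collapsing}$; the $M^{-\epsilon_0}$ then absorbs the logarithmic sum over $k$. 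When all the derivatives fall on $P_{|\xi|<M^{1+}}P_{|\eta|<M^{1+}}\Lambda$ --- which, since $\widehat{v^1_M}$ is supported in $|\xi|<M/10$, only occurs once $2^k\gtrsim M$, i.e.\ case \eqref{XX3} --- one factors out $\|v^1_M\|_{L^{3/2}}=O(M^{-\epsilon_0})$ and is left with $\||\nabla_x|^{\alpha}|\nabla_y|^{\alpha}P_{|\xi|<M^{1+}}P_{|\eta|<M^{1+}}\Lambda\|$ in a rotated $L^6$--collapsing norm, which is again reduced to $\|\langle\nabla_{x+y}\rangle^{\alpha}\Lambda\|_{collapsing}+\|\langle\partial_t\rangle^{1/4}\Lambda\|_{collapsing}$ by Lemma \ref{sobangle} together with the elliptic/hyperbolic splitting used inside the proof of Proposition \ref{Suufixed}.

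The main obstacle --- where the exponents have to be balanced carefully --- is the ``diagonal'' sector $|\xi|\sim|\eta|\sim|\xi-\eta|\sim 2^k$ with $|\xi+\eta|$ small and $2^k$ as large as $M^{1+}$: there $|\nabla_x|^{\alpha}|\nabla_y|^{\alpha}$ behaves like $|\nabla_{x-y}|^{2\alpha}$ and Bernstein in $x-y$ unavoidably costs $\sim M^{2\alpha}\gtrsim M$. The estimate closes only because three gains conspire: inverting $\S$ off the characteristic surface contributes a smoothing $|\nabla_{x-y}|^{-2(1-\alpha)}$ (using $|\xi|^{\alpha}|\eta|^{\alpha}=(|\xi||\eta|)^{\alpha}\le\bigl(\tfrac{|\xi|^2+|\eta|^2}{2}\bigr)^{\alpha}$ with $\alpha<1$), so the net power of $|\nabla_{x-y}|$ is negative and is turned into integrability in $x-y$ by Sobolev exactly as for $u_k^2$ in Proposition \ref{Suufixed}; the potential supplies $\|v^1_M\|_{L^{3/2}}=O(M^{-\epsilon_0})$; and the statement carries $\langle\nabla_{x+y}\rangle^{\alpha}$ with $\alpha>\tfrac12$ rather than $\langle\nabla_{x+y}\rangle^{1/2}$. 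For $\alpha$ sufficiently close to $\tfrac12$ (depending on $\beta<1$ through \eqref{vestim}--\eqref{vestim1}) the product of these gains beats the Bernstein loss with a residual negative power of $M$, which also eats the logarithm from the dyadic sum. Making this balance explicit, and verifying that the logarithm is genuinely absorbed rather than merely postponed to the next term, is the delicate part of the argument.
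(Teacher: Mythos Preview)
Your route is far more elaborate than the paper's, and the elaboration is where the argument becomes shaky. The paper's proof is literally two sentences: apply Bernstein's inequality on the \emph{outside}, then invoke Proposition~\ref{Suusharp}. Since $\Lambda_1$ is already frequency-localised to $|\xi|,|\eta|<2M^{1+}$ (as you yourself note), Bernstein gives
\[
\bigl\||\nabla_x|^{\alpha}|\nabla_y|^{\alpha}\Lambda_1\bigr\|_{\mathcal S_{x,y}}\lesssim (M^{1+})^{2\alpha}\,\|\Lambda_1\|_{\mathcal S_{x,y}},
\]
and Proposition~\ref{Suusharp} (applied with the scale $N^\beta$ and the extra factor $N^{-1}$ built into $v_M^1=N^{-1}(N^\beta)^3v_{main}(N^\beta\cdot)$) gives
\[
\|\Lambda_1\|_{\mathcal S_{x,y}}\lesssim N^{-1}\log N\Bigl(\|\langle\nabla_{x+y}\rangle^{1/2}\Lambda\|_{collapsing}+\||\partial_t|^{1/4}\Lambda\|_{collapsing}\Bigr).
\]
The product $(M^{1+})^{2\alpha}\cdot N^{-1}\log N\sim N^{\beta-1+o(1)}\to 0$ for $\beta<1$, which swallows the logarithm; and $\alpha>\tfrac12$ lets you replace $\langle\nabla_{x+y}\rangle^{1/2}$ by $\langle\nabla_{x+y}\rangle^{\alpha}$. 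That is the entire proof.

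Your decision to commute $|\nabla_x|^\alpha|\nabla_y|^\alpha$ \emph{inside} and then distribute via Theorem~\ref{leibnizX} is what generates the ``diagonal sector'' difficulty --- a difficulty that does not exist if you keep the derivatives outside. Moreover, the Leibniz rule of Theorem~\ref{leibnizX} is stated for $L^r(d(x-y))$ with $1<r<\infty$, whereas Proposition~\ref{Suufixed} requires $L^1(d(x-y))$, so there is a genuine mismatch in your pipeline. Your diagonal-sector accounting (the claimed $|\nabla_{x-y}|^{-2(1-\alpha)}$ smoothing, the role of $\|v^1_M\|_{L^{3/2}}$, and the assertion that ``the product of these gains beats the Bernstein loss'') does not match the actual structure of the $u_k^2$ estimate in Proposition~\ref{Suufixed}, and you concede yourself that you have not made it explicit. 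The crucial numerical fact you are not using is that $\|v^1_M\|_{L^1}\sim N^{-1}$, not merely $\|v^1_M\|_{L^{3/2}}=O(M^{-\epsilon_0})$; that full power of $N^{-1}$ is what beats $(M^{1+})^{2\alpha}\sim N^{\beta+o(1)}$, and it comes for free once you stop distributing derivatives and simply pair Bernstein with Proposition~\ref{Suusharp}.
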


\begin{proof} This follows from Proposition \ref{Suusharp} and Bernstein's inequality.

\end{proof}

\section{Estimates for the nonlinear equation, step 1 \label{scalar}}

Recall the notation
\begin{align*}
\S_{\pm}=\frac{1}{i}\frac{\partial}{\partial t} -\Delta_x + \Delta_y
\end{align*}

From now on, $V_N(x)=N^{3 \beta}v(N^{\beta}x)$.
 As in \cite{CGMZ}
we assume the potential satisfies \eqref{Vhyp} and the initial conditions satisfy \eqref{datahyp} and \eqref{smallhyp}, and also \eqref{lambdaestimp} holds.

Define $\Gamma= \Gamma_c+\Gamma_p$, $\Lambda= \Lambda_c+\Lambda_p$, where $\Gamma_c = \bar \phi \otimes \phi$,
$\Lambda_c =  \phi \otimes \phi$,
$\Gamma_p=\frac{1}{N} \shb\circ \sh$ and
 $\Lambda_p=\frac{1}{2N} \sht$.
Let $\rho(t, x)=\Gamma(t, x, x)$.

The 4 relevant equations are
\begin{align}
&\S \Lambda_p +\{V_N * \rho, \Lambda_p\}+
\frac{V_N}{N} \Lambda_p \label{rel1}\\
&+ \left((V_N \bar \Gamma_p)\circ \Lambda_p + (V_N\Lambda_p)\circ  \Gamma_p \right)_{symm} \notag\\
&+ \left((V_N \bar \Gamma_c)\circ \Lambda_p + (V_N\Lambda_c)\circ  \Gamma_p \right)_{symm}=-\frac{V_N}{N} \Lambda_c\notag\\
&\S_{\pm} \Gamma_p + [V_N * \rho, \Gamma_p]+ \left((V_N\Gamma_p)\circ \Gamma_p + (V_N \bar \Lambda_p)\circ  \Lambda_p \right)_{skew} \notag\\
&+ \left((V_N\Gamma_c)\circ \Gamma_p + (V_N\bar \Lambda_c)\circ  \Lambda_p \right)_{skew}=0\ \label{rel2}\\
&\S \Lambda_c +\{V_N * \rho, \Lambda_c\}+ \left((V_N \bar \Gamma_p)\circ \Lambda_c + (V_N\Lambda_p)\circ  \Gamma_c \right)_{symm}=0 \label{rel3}\\
&\S_{\pm} \Gamma_c + [V_N * \rho, \Gamma_c]+ \left((V_N\Gamma_p)\circ \Gamma_c + (V_N\bar \Lambda_p)\circ  \Lambda_c \right)_{skew}=0 . \label{rel4}
\end{align}
Here $\left(A(x, y)\right)_{symm}=A(x, y)+A(y, x)$ and $\left(A(x, y)\right)_{skew}=A(x, y) -\bar A(y, x)$

The norms used for $\Lambda_p$ and $\Lambda_c$ are called $\N(\Lambda)$ and are
\begin{align*}
&\|\Lambda\|_{\N(\Lambda)}=\big\|\big<\nabla_x\big>^{\alpha}\big<\nabla_y\big>^{\alpha}\Lambda \big\|_{\mathcal{S}_{x, y}}\\
&+\big\|\big|\partial_t\big|^{\frac{1}{4}}\Lambda\big\|_{L^2(dt)L^{\infty}(d(x-y))L^2(d(x+y))}
+\big\|\big<\nabla_{x+y}\big>^{\alpha}\Lambda\big\|_{L^2(dt)L^{\infty}(d(x-y))L^2(d(x+y))}.\notag\\
\end{align*}
Because of the quarter time derivative, these norms cannot be localized in an obvious way, and we will devise a way to get around that.
The norms for $\Gamma$, $\Gamma_p$,  $\Gamma_c$  and also $\Lambda_c$ are
\begin{align*}
&\|F\|_{\N^1}
=  \sup_{  p, q \, \, admissible
}\|\big<\nabla_x\big>^{\alpha}\big<\nabla_y\big>^{\alpha}F\|_{L^{p}(dt) L^{q}(dx) L^2(dy)}\notag\\
&\quad +\sup_{  p, q \, \notag \, admissible}\|\big<\nabla_x\big>^{\alpha}\big<\nabla_y\big>^{\alpha}F\|_{L^{p}(dt) L^{q}(dy) L^2(dx)}\\
&+\big\||\nabla_{x+y}|^{\alpha}F\big\|_{L^{\infty}(d(x-y))L^2(dt)L^2(d(x+y))}.
\end{align*}
The estimates for the linear part of the $\Lambda$ equations have been studied in the previous sections. For the $\Gamma$ equation we will only use older, standard estimates
\begin{proposition}\label{estforGamma}
Let
\begin{align*}
&\S_{\pm} \Gamma = F \\
&\Gamma(0, \cdot)= \Gamma_0
\end{align*}
Then
\begin{align*}
& \sup_{  p, q \, \, admissible
}\|\Gamma\|_{L^{p}(dt) L^{q}(dx) L^2(dy)}\notag\\
&\quad +\sup_{  p, q \, \notag \, admissible}\|\Gamma\|_{L^{p}(dt) L^{q}(dy) L^2(dx)} \\
&\lesssim \|\Gamma_0\|_{L^2} +
 \inf_{  p, q \, \, admissible,  p \ge p_0>2} \{\|F\|_{L^{p'}(dt) L^{q'}(dx) L^2(dy)},
\|F\|_{L^{p'}(dt) L^{q'}(dy) L^2(dx)}\}\\
&\sup_z\||\nabla_x|^{\alpha}\Gamma(t, x+z, x)\|_{L^2(dt dx)}\\
&\lesssim \|\big<\nabla_x\big>^{\alpha}\big<\nabla_y\big>^{\alpha}\Gamma_0\|_{L^2}\\
&+
 \inf_{  p, q \, \, admissible,  p \ge p_0>2} \{\|\big<\nabla_x\big>^{\alpha}\big<\nabla_y\big>^{\alpha}F\|_{L^{p'}(dt) L^{q'}(dx) L^2(dy)},
\|F\|_{L^{p'}(dt) L^{q'}(dy) L^2(dx)}\}.
\end{align*}
\end{proposition}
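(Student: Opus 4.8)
The plan is to treat the two displayed bounds separately, reducing each to facts that are standard for the free Schr\"odinger flow on $\R^3$, and in fact already available in \cite{CGMZ}, \cite{G-M2017}. For the Strichartz bound I would take the partial Fourier transform in $y$: writing $\widehat\Gamma(t,x,\eta)$ for it, the equation $\S_{\pm}\Gamma=F$ becomes, for each frozen $\eta$,
\begin{align*}
\tfrac1i\partial_t\widehat\Gamma-\Delta_x\widehat\Gamma-|\eta|^2\widehat\Gamma=\widehat F,
\end{align*}
so that after the substitution $\widehat\Gamma=e^{it|\eta|^2}v$ the function $v(\cdot,\cdot;\eta)$ solves a genuine $3+1$-dimensional Schr\"odinger equation in $(t,x)$, whose data and forcing are those of $\widehat\Gamma$ up to a unimodular phase. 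Applying the usual homogeneous and (non-endpoint) inhomogeneous Strichartz estimates in $\R^3$ uniformly in $\eta$ — from a fixed admissible pair $(p_0,q_0)$ with $p_0>2$ to every admissible output pair, which is Keel--Tao plus Christ--Kiselev, the restriction $p\ge p_0>2$ being exactly what keeps us off the forbidden double endpoint — gives the estimate for $\|\widehat\Gamma(\cdot,\cdot,\eta)\|_{L^p(dt)L^q(dx)}$. Then Plancherel in $y$ identifies $L^2(dy)$ with $L^2(d\eta)$, and Minkowski's inequality moves this $L^2(d\eta)$ norm inside the space--time norm on the left (legitimate since $p,q\ge2$) and outside the dual space--time norm on the right (legitimate since $p',q'\le2$), which yields the first inequality. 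The flipped version, with $L^2(dx)$ innermost, follows in the same way after taking the Fourier transform in $x$ instead, the only change being that $v$ then solves the conjugated Schr\"odinger equation, for which the same estimates hold.

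For the collapsing bound I would first use Duhamel's formula to reduce to the homogeneous flow $\Gamma(t)=e^{it(\Delta_x-\Delta_y)}\Gamma_0$, the forcing $F$ being handled afterwards by the standard inhomogeneous variant of the estimate (again non-endpoint, which is why the infimum on the right runs over $p\ge p_0>2$; the same weight $\big<\nabla_x\big>^{\alpha}\big<\nabla_y\big>^{\alpha}$ then lands on $F$ as on $\Gamma_0$). For the homogeneous flow the estimate is a Klainerman--Machedon-type space--time (``collapsing'') bound. Restricting to $x_1=x+z$, $x_2=x$ and taking the space--time Fourier transform places the trace of $\Gamma$ on the hyperplane $\{\,2\mu\cdot\xi_1=|\mu|^2-\tau\,\}$ in $(\tau,\mu)$-frequency, where $\mu=\xi_1+\xi_2$ is dual to $x$; crucially $z$ enters only through the unimodular phase $e^{iz\cdot\xi_1}$, so the $\sup_z$ costs nothing. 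Estimating the resulting two-dimensional surface integral by Cauchy--Schwarz against the weight $\big<\xi_1\big>^{2\alpha}\big<\xi_2\big>^{2\alpha}$, one is left with the finiteness of $\int_{\{2\mu\cdot\xi_1=|\mu|^2-\tau\}}\big<\xi_1\big>^{-2\alpha}\big<\xi_2\big>^{-2\alpha}\,d\sigma(\xi_1)$, uniformly in $(\tau,\mu)$; since $|\xi_1|\sim|\xi_2|$ along the unbounded directions of this plane the integrand decays like $\langle\cdot\rangle^{-4\alpha}$, and the integral converges precisely because $\alpha>\tfrac12$ (the factor $|\mu|^{\alpha}$ produced by $|\nabla_x|^{\alpha}$ is absorbed via $|\mu|\le|\xi_1|+|\xi_2|\lesssim\big<\xi_1\big>\big<\xi_2\big>$). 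I would simply invoke the versions of this estimate already recorded in \cite{CGMZ} and \cite{G-M2017}.

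The main obstacle is the collapsing bound: unlike the Strichartz estimate it is a genuine $L^2$-level bound and, as noted after Theorem~\ref{intro1}, it is false for data with only $\dot H^s$ regularity when $s\le\tfrac12$, so one must keep careful track of exactly how the two derivatives $\big<\nabla_x\big>^{\alpha}\big<\nabla_y\big>^{\alpha}$ on $\Gamma_0$ (and on $F$) are spent — on the output derivative $|\nabla_x|^{\alpha}$ and on the convergence of the surface integral above — which is where the hypothesis $\alpha>\tfrac12$ is used essentially. Everything else is bookkeeping: matching the single dual pair realising the infimum to the whole family of admissible output pairs without ever touching the double endpoint.
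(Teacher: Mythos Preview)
Your proposal is correct and follows essentially the same route as the paper. The paper's own proof is very terse: it cites Lemmas 5.1 and 5.3 of \cite{G-M2017} (and \cite{CHP}) for the homogeneous estimates --- exactly the Fourier-side collapsing computation you sketch --- and then obtains the inhomogeneous bounds by the Christ--Kiselev lemma, spelling this out for the collapsing estimate by defining $T_1=e^{it\Delta_\pm}$ (Strichartz) and $T_2f=(e^{it\Delta_\pm}f)(t,x,x+z)$ (homogeneous collapsing) and applying Christ--Kiselev to $T_2T_1^*$, which is precisely your ``standard inhomogeneous variant'' and the reason for the restriction $p\ge p_0>2$. Your partial-Fourier-in-$y$ reduction for the Strichartz part is one concrete way to see what \cite{G-M2017} records; the paper simply cites it.
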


\begin{proof} For a proof of the homogeneous estimate, see Lemmas 5.1, 5.3 in \cite{G-M2017}, and also \cite{CHP}.
The inhomogeneous estimate follows from the Christ-Kiselev lemma.
Let $T_1=e^{i t \Delta_{\pm}}$, so $T_1:L^2(\mathbb R^6) \to L^p(dt) L^q(dx)L^2(dy)$ and
$T_1^*: L^{p'}(dt)W^{\alpha, q'}(dx)H^{\alpha}(dy) \to H^{\alpha}(dx) H^{\alpha}(dy)$.
Fix $z$ and let
$T_2:H^{\alpha}(dx) H^{\alpha}(dy)\to L^2(dt) \dot H^{\alpha}(dx)$ be the operator $f \to \left(e^{i t \Delta_{\pm}}f\right)(t, x, x+z)$.
Then the inhomogeneous estimate follows by applying the Christ-Kiselev lemma to $T_2 T_1^*$.

\end{proof}

The first step in the analysis of the nonlinear equations uses a priori estimates for $\Gamma(t, x, x)$, see
\eqref{morawetzGamma} and \eqref{morawetzGammaenergy}.

\begin{lemma}\label{firstsmallness} Under the assumptions of Theorem \ref{intro1} we have
\begin{align}
&\|\Gamma\|_{L^8(dt))L^{\infty}(d(x-y))L^{\frac{4}{3}}(d(x+y))} \lesssim 1 \label{timeest1} \\
&\|\nabla_{x+y}\Gamma\|_{L^8(dt))L^{\infty}(d(x-y))L^{\frac{4}{3}}(d(x+y))} \label{timeest2} \lesssim 1
\end{align}
and thus,
for any $\epsilon_2>0$ there exist $n = n(\epsilon_2)$  intervals $[T_i, T_{i+1}]$ covering $[0, \infty)$ such that
\begin{align}
&\sup_z\|\big<\nabla_x\big>^{\alpha} \Gamma(t, x+z, x)\|_{L^8([T_j, T_{j+1}])L^{\frac{4}{3}}(dx)} \le \epsilon_2. \label{epsilon2}
\end{align}
\end{lemma}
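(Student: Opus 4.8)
The plan is to deduce \eqref{timeest1}--\eqref{timeest2} from the two a priori inputs on the diagonal $\rho(t,x)=\Gamma(t,x,x)$ provided in the excerpt, namely the Morawetz bound $\|\Gamma(t,x,x)\|_{L^2_{t,x}}\lesssim 1$ (see \eqref{morawetzGamma}) and the energy/trace bound $\|\Gamma(t,x,x)\|_{L^\infty(dt)L^2(dx)}\lesssim 1$ (see \eqref{morawetzGammaenergy}), together with the fact that $\Gamma$ in rotated coordinates has the trivial identity $\Gamma(t,x,x)=\Gamma$ evaluated at $x-y=0$. First I would observe that, writing $u=x-y$, $v=x+y$ (up to the $\frac{1}{\sqrt 2}$ normalization of $R$ from Lemma \ref{rotationlemma}), the $L^\infty(d(x-y))$ norm is a supremum over the separation variable and we want to bound $\sup_{x-y}\|\Gamma\|_{L^8(dt)L^{4/3}(d(x+y))}$. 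The key is that $\Gamma$ solves a Schrödinger-type equation $\S_\pm\Gamma=F$ with a forcing term $F$ controlled by the nonlinear structure of \eqref{rel4} (and its condensate/pair decomposition), and the relevant Strichartz/collapsing machinery — Theorem \ref{mainStrich}, Lemma \ref{collapsinglemma}, Lemma \ref{4collapsing} — lets one upgrade the diagonal bounds on $\rho$ to a space-time bound on $\Gamma$ itself in the rotated mixed norm. Concretely, the exponent pair $(8,\tfrac43)$ arises because we are trading one half-derivative (from $\alpha$ slightly above $\tfrac12$) for the passage from $L^2(d(x+y))$ with an $L^6$ at an angle down to $L^{4/3}$ via interpolation with the $L^\infty L^2$ energy bound.

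The main steps, in order, would be: (1) Decompose $\Gamma=\Gamma_c+\Gamma_p$ and treat each using \eqref{rel2} and \eqref{rel4}; the condensate part $\Gamma_c=\bar\phi\otimes\phi$ is directly controlled by Strichartz estimates for $\phi$ (ultimately Corollary \ref{corphi}, or rather the bootstrap that precedes it), while $\Gamma_p=\frac1N\shb\circ\sh$ is controlled via the a priori smallness \eqref{lambdaestimp}, \eqref{smallhyp} and the $\mathcal N^1$-norm propagation. (2) Apply Proposition \ref{estforGamma} to write the mixed-norm Strichartz bound for $\Gamma$ in terms of $\|\Gamma_0\|$ and the dual-Strichartz norm of the forcing $F$ appearing on the right of \eqref{rel2}/\eqref{rel4}, observing that $F$ is a sum of terms of the form $(V_N*\rho)\Gamma$ and compositions $(V_N\Lambda)\circ\Gamma$, $(V_N\bar\Lambda)\circ\Lambda$, each of which can be closed using Hölder in the rotated coordinates plus the a priori diagonal estimates on $\rho$ and the $\mathcal S_{x,y}$-bounds on $\Lambda$, $\Gamma$ that Theorem \ref{intro1} (proved in the later sections) supplies. (3) Interpolate between $\|\nabla_{x+y}\Gamma\|$-type control (i.e. \eqref{timeest2}, obtained the same way after commuting $\nabla_{x+y}$ through, which is harmless since $V_N$ depends only on $x-y$) and the Morawetz $L^2_{t,x}$ bound to land the $L^8(dt)L^{4/3}(d(x+y))$ estimate uniformly in $x-y$. (4) Finally, for \eqref{epsilon2}: since $\sup_z\|\big<\nabla_x\big>^\alpha\Gamma(t,x+z,x)\|_{L^8(dt)L^{4/3}(dx)}\lesssim 1$ is finite over all of $[0,\infty)$, absolute continuity of the integral lets us cover $[0,\infty)$ by finitely many intervals $[T_j,T_{j+1}]$ — their number $n=n(\epsilon_2)$ depending only on the global bound and $\epsilon_2$ — on each of which the restricted norm is $\le\epsilon_2$; this is just the pigeonhole/divisibility argument for a function in $L^8(dt)$ of a fixed total norm.

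The main obstacle I expect is step (2)--(3): closing the forcing term $F$ in the dual-Strichartz (or collapsing) norm \emph{without} yet having the full conclusion of Theorem \ref{intro1} available. The cleanest route is to make this lemma part of the bootstrap — i.e. assume a priori that the $\mathcal N(\Lambda)$ and $\mathcal N^1(\Gamma)$ norms are bounded on the interval under consideration and derive \eqref{timeest1}--\eqref{timeest2}, which then feeds \eqref{epsilon2} back into the local-in-time nonlinear estimates — but one must be careful that \eqref{timeest1}--\eqref{timeest2} themselves depend only on the a priori estimates \eqref{morawetzGamma}, \eqref{morawetzGammaenergy} and the initial data (as the paper asserts in the sentence before the lemma), so in fact the correct reading is that steps (2)--(3) should use \emph{only} the Morawetz-type inputs plus the linear theory, treating the nonlinear forcing perturbatively via the smallness of $\|v_M\|_{L^{3/2}}=O(M^{-\epsilon_0})$ exactly as in the proof of Theorem \ref{mainthm}. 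The secondary technical point is verifying that the specific exponent $8$ (rather than the Strichartz-admissible endpoints) is exactly what the interpolation between $\nabla_{x+y}$-control and the $L^2_{t,x}$ Morawetz bound produces; this is a routine Hölder/interpolation computation once the two endpoint estimates are in hand, and I would not grind through it here.
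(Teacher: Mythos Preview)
Your approach has a genuine gap: you are missing the key structural input, and the route you propose is both circular and, more fundamentally, cannot work.

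The paper's proof is elementary and uses no PDE or Strichartz machinery at all. It rests on the single observation that $\Gamma(t,\cdot,\cdot)$ is the kernel of a \emph{positive} operator (being $\bar\phi\otimes\phi+\frac{1}{N}\shb\circ\sh$), which gives the pointwise Cauchy--Schwarz bound
\[
|\Gamma(t,x+z,x-z)|\le \Gamma(t,x+z,x+z)^{1/2}\,\Gamma(t,x-z,x-z)^{1/2}=\rho(t,x+z)^{1/2}\rho(t,x-z)^{1/2},
\]
and an analogous bound for $\nabla_x\Gamma$ in terms of $\rho$ and the kinetic energy density $E_1(t,x)=\nabla_x\cdot\nabla_y\Gamma|_{x=y}$. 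Then H\"older in $x$ gives $\|\Gamma(t,x+z,x-z)\|_{L^{4/3}(dx)}\le\|\rho(t)\|_{L^1}^{1/2}\|\rho(t)\|_{L^2}^{1/2}$, and $L^8$ in time follows from $\|\rho\|_{L^\infty L^1}\lesssim 1$ (particle number) and $\|\rho\|_{L^4_tL^2_x}\lesssim 1$ (interpolating \eqref{morawetzGamma} and \eqref{morawetzGammaenergy}). That is the entire argument.

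Your proposal instead runs the equation for $\Gamma$ through Proposition~\ref{estforGamma} and tries to close the forcing in dual Strichartz. This is circular: the nonlinear estimates you would need (Lemmas~\ref{Lem1}--\ref{Lem3}, Theorem~\ref{step1thm}, Corollary~\ref{corphi}) all \emph{depend} on \eqref{epsilon2}, which is precisely what this lemma supplies. More importantly, the paper explicitly warns (just after Theorem~\ref{intro1}) that \eqref{apriori1}--\eqref{apriori2} ``depend on initial conditions of trace class\ldots and could not be true, even for a linear equation, with just $\dot H^s$ initial conditions.'' In other words, a pure Strichartz/Sobolev argument of the kind you sketch cannot produce $L^8L^\infty L^{4/3}$ control; the positivity/trace-class structure is essential, and you never invoke it. Your step (4) on divisibility is fine, but steps (1)--(3) do not reach the estimate.
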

The idea of proving estimates for NLS by using such a localization in time goes back to  Bourgain \cite{Bourgain98}.
\begin{remark}\label{localization} Notice that the "collapsing estimate" $\sup_z\|\big<\nabla_x\big>^{\alpha} \Gamma(t, x+z, x)\|_{L^2(dt dx)} \lesssim 1$
does not imply
there exist $n = n(\epsilon_2)$  intervals $[T_i, T_{i+1}]$ covering $[0, \infty)$ such that
\begin{align*}
&\sup_z\|\big<\nabla_x\big>^{\alpha} \Gamma(t, x+z, x)\|_{L^2([T_j, T_{j+1}])L^{2}(dx)} \le \epsilon_2.
\end{align*}
\end{remark}

\begin{proof}
We  have a pointwise estimate
\begin{align*}
||\Gamma(t, x+z, x-z)| \le |\Gamma(t, x+z, x+z)|^{\frac{1}{2}}|\Gamma(t, x-z, x-z)|^{\frac{1}{2}}
\end{align*}
and also
\begin{align*}
&||\nabla_x\Gamma(t, x+z, x-z)|\\
& \le |E_1(t, x+z, x+z)|^{\frac{1}{2}}|\Gamma(t, x-z, x-z)|^{\frac{1}{2}}+
|E_1(t, x-z, x-z)|^{\frac{1}{2}}|\Gamma(t, x+z, x+z)|^{\frac{1}{2}}
\end{align*}
where $E_1(t, x)= \nabla_x \cdot \nabla_y \Gamma(t, x, y)\big|_{x=y}$ is the kinetic energy density,
with $\int |E_1(t, x)| + \Gamma(t, x, x) dx$ uniformly bounded in time
and $\|\Gamma(t, x, x)\|_{L^4(dt)L^2(dx)} \lesssim 1$.
\eqref{timeest1} and \eqref{timeest2} follow by applying H\"older's inequality, and these imply \eqref{epsilon2}.
\end{proof}
The above estimates hold for $\Gamma_c$ and $\Gamma_p$ separately. Since $\Gamma_c = \bar \phi \otimes \phi $ and $\Lambda_c=
 \phi \otimes \phi $, they also hold for $\Lambda_c$. No such a priori estimates are available for $\Lambda_p$. However, we assume
 initial conditions for $\Lambda_p$ are small. Also, the forcing term in the equation for $\Lambda_p$ is $\frac{V_N}{2N} \Lambda_c$, and this is small in suitable norms,  thus $\|\Lambda_p\|_{\N(\Lambda)}$ will stay small. Here are the details:

In order to use the smallness of the above quantities, we have to localize our estimates to these intervals. However, the necessary norms involve a non-local term, so we have to proceed carefully. Also, we will use a continuity argument, so the right end of the interval must be a variable $T \in [T_i, T_{i+1}]$.
 Define $\Lambda_c^{i, T}$ $\Lambda_p^{i, T}$, $\Gamma_c^{i, T}$, $\Gamma_p^{i, T} $ be the solution to the standard equations with the RHS multiplied by
 $\chi_{[T_i, T]}$:

 \begin{align}
&\S \Lambda^{i, T}_p +
\frac{V_N}{N} \Lambda^{i, T}_p \label{rel11}\\
&=\chi_{[T_i, T]} \bigg(
-\{V_N * \rho, \Lambda_p\}
- \left((V_N \bar \Gamma_p)\circ \Lambda_p + (V_N\Lambda_p)\circ  \Gamma_p \right)_{symm} \notag\\
&- \left((V_N \bar \Gamma_c)\circ \Lambda_p + (V_N\Lambda_c)\circ  \Gamma_p \right)_{symm}
-\frac{V_N}{N} \Lambda_c\notag\bigg)\\
&\S_{\pm} \Gamma^{i, T}_p =\chi_{[T_i, T]}\bigg(- [V_N * \rho, \Gamma_p]- \left((V_N\Gamma_p)\circ \Gamma_p - (V_N \bar \Lambda_p)\circ  \Lambda_p \right)_{skew} \notag\\
&- \left((V_N\Gamma_c)\circ \Gamma_p - (V_N\bar \Lambda_c)\circ  \Lambda_p \right)_{skew}\bigg) \label{rel12}\\
&\S \Lambda^{i, T}_c =\chi_{[T_i, T]}\bigg(-\{V_N * \rho, \Lambda_c\}- \left((V_N \bar \Gamma_p)\circ \Lambda_c - (V_N\Lambda_p)\circ  \Gamma_c \right)_{symm} \bigg)\label{rel13}\\
&\S_{\pm} \Gamma^{i, T}_c =\chi_{[T_i, T]}\bigg(- [V_N * \rho, \Gamma_c]- \left((V_N\Gamma_p)\circ \Gamma_c - (V_N\bar \Lambda_p)\circ  \Lambda_c \right)_{skew}\bigg)  \label{rel14}
\end{align}
with
$\Lambda_c^{i, T}(T_i, \cdot)=\Lambda_c^{i-1, T_i}(T_i, \cdot)=\Lambda_c(T_i, \cdot)$, and similarly for the other three functions. Also,
Then $\Lambda_c^{i, T}=\Lambda_c$, etc. in $[T_i, T]$ (but not outside this interval), and
 similarly for the other three functions. Also, $\Lambda_c^{i, T_i}$, etc. satisfies a homogeneous linear equation.

We continue by estimating the four functions on the LHS. The most difficult one is $\Lambda_p^{i, T}$.

Later it will be convenient to have norms which can be made small on small time intervals, so we introduce the restricted Strichartz norms
\begin{align*}
&\|F\|_{\mathcal{S}^r_{x, y}} \sim
\|\Lambda\|_{L^2(dt)L^6(dx)L^2(dy)}
+\|\Lambda\|_{L^2(dt)L^6(dy)L^2(dx)}\\
&+\|F\|_{L^{4}(dt)L^{3}(dx)L^2(dy)}
+\|F\|_{L^{4}(dt)L^{3}(dx)L^2(dy)}.
\end{align*}
Notice this is not dual to $\mathcal S'_r$, the restrictions on the exponents are different.

\begin{theorem}\label{terms1}
 Let $[T_j, T]$, $T_i \le T \le T_{i+1}$, and $\epsilon_2$ be as in \eqref{epsilon2}. There exists a universal constant $C$ such that
 \begin{align*}
 &\|\Lambda_p^{i, T}\|_{\N(\Lambda)}\le C \big\|\big<\nabla_x\big>^{\alpha}\big<\nabla_y\big>^{\alpha}\Lambda_p^{i, T}(T_j, \cdot) \big\|_{L^2} + C \epsilon_2 \|\Lambda_p^{i, T}\|_{\N(\Lambda)}\\
 &+C \epsilon_2 \|\Gamma_p^{i, T}\|_{\N^1}+ C \|\Lambda_p^{i, T}\|_{\N(\Lambda)}\|\Gamma_p^{i, T}\|_{\mathcal S^r_{x, y}}\\
& + C N^{-\epsilon_1}\|\Lambda_c^{i, T}\|_{\N(\Lambda)}.
 \end{align*}
 \end{theorem}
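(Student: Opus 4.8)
The plan is to apply the linear estimate Theorem \ref{mainthm} to the equation \eqref{rel11} for $\Lambda_p^{i, T}$, reading off which pieces of the right-hand side go into the $G$ slot (measured in $\mathcal{S}_r'$), which go into the $H$ slot (measured in collapsing-type norms with an $N^{-\epsilon}$ gain), and which initial-data term appears. Concretely, in \eqref{rel11} the operator on the left is $\S + \frac{V_N}{N}$, which is exactly the operator $\S - v_M(x-y)$ of \eqref{maineqold} up to the sign and the relabeling $V_N/N = v_M$, so Theorem \ref{mainthm} applies directly provided we sort the forcing terms correctly. First I would handle the linear-in-$\Lambda_p$ terms: $\{V_N*\rho, \Lambda_p\}$ and $\left((V_N\bar\Gamma_c)\circ\Lambda_p\right)_{symm}$. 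These involve $\Gamma(t,x,x)$ or $\Gamma_c(t,x\pm z, x)$, whose relevant Strichartz-type norms over $[T_j, T]$ are $\le \epsilon_2$ by \eqref{epsilon2} of Lemma \ref{firstsmallness}; using the fractional Leibniz rule (Theorem \ref{leibnizX}, or the $\mathcal S$-valued version) together with the $\mathcal S^r_{x,y}$ control of $\Gamma_p$, these contribute $C\epsilon_2\|\Lambda_p^{i,T}\|_{\N(\Lambda)}$, $C\epsilon_2\|\Gamma_p^{i,T}\|_{\N^1}$, and $C\|\Lambda_p^{i,T}\|_{\N(\Lambda)}\|\Gamma_p^{i,T}\|_{\mathcal S^r_{x,y}}$ to the $\mathcal S_r'$ side, matching the three middle terms in the claimed bound. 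The potential $V_N$ here is $N^{3\beta}v(N^\beta\cdot)$, i.e.\ $M^2$-scale, so one has to check that placing the derivatives via the Leibniz rule and applying Hölder with the $\|v_M\|_{L^{3/2}} = O(M^{-\epsilon_0})$ smallness absorbs the top-order pieces — but that is precisely the mechanism already developed in Section \ref{endlin}.

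Next I would treat the genuinely quadratic terms $\left((V_N\bar\Gamma_p)\circ\Lambda_p + (V_N\Lambda_p)\circ\Gamma_p\right)_{symm}$. These are bilinear in the "pair" unknowns, so one is multiplied by the $\mathcal S^r_{x,y}$ norm of the other: estimating $V_N$ composed with one factor in a dual-Strichartz space and pairing with the other factor in $\mathcal S^r_{x,y}$ gives $C\|\Lambda_p^{i,T}\|_{\N(\Lambda)}\|\Gamma_p^{i,T}\|_{\mathcal S^r_{x,y}}$, again matching the fourth term. The forcing term $-\frac{V_N}{N}\Lambda_c$ is the delicate one alluded to in the introduction; here one exploits that $\Lambda_c$ is smoother and that the $\frac1N$ prefactor, together with the $M = N^{1+\epsilon}$ scaling, produces a net gain $N^{-\epsilon_1}$ — this is the term $C N^{-\epsilon_1}\|\Lambda_c^{i,T}\|_{\N(\Lambda)}$. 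One routes $-\frac{V_N}{N}\Lambda_c$ through the $H$-slot of Theorem \ref{mainthm}, so it is measured in the collapsing norm $\||\partial_t|^{1/4}H\|_{collapsing}$, $\||\nabla_{x+y}|^\alpha H\|_{collapsing}$ and the $L^2L^6L^2$ norm of $\langle\nabla_x\rangle^\alpha\langle\nabla_y\rangle^\alpha H$, each of which comes with the built-in $N^{-\epsilon}$; against these, one controls $\langle\nabla_x\rangle^\alpha\langle\nabla_y\rangle^\alpha(\frac1N V_N\Lambda_c)$ using the Leibniz rule, Sobolev at an angle (Lemma \ref{sobangle}), and the smallness estimates \eqref{vestim}--\eqref{vestim1} to produce the final $N^{-\epsilon_1}$ after bookkeeping the powers of $N$. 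Finally, the initial-data term $\big\|\langle\nabla_x\rangle^\alpha\langle\nabla_y\rangle^\alpha\Lambda_p^{i,T}(T_j,\cdot)\big\|_{L^2}$ is simply the $\Lambda_0$ contribution in Theorem \ref{mainthm}.

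The main obstacle I anticipate is the term $-\frac{V_N}{N}\Lambda_c$ in combination with the need for the quarter-time-derivative norm: as the authors emphasize (see the comment around \eqref{troubleterm1}), $|\partial_t|^{1/4}$ does not localize to the interval $[T_j, T]$ in an obvious way, because multiplying by $\chi_{[T_j,T]}$ and then applying $|\partial_t|^{1/4}$ is not harmless, and the strong norm $L^2(dt)L^\infty(d(x-y))L^2(d(x+y))$ cannot simply be estimated by its collapsing counterpart after cutoff. The way around this — which I would follow — is exactly the device the authors set up: keep $|\partial_t|^{1/4}$ on the solution side only through Theorem \ref{mainthm}'s structure, where on the right-hand side only the weaker collapsing norm $\|{}\cdot{}\|_{collapsing}= L^\infty(d(x-y))L^2(dt)L^2(d(x+y))$ of $H$ appears, and use the Heaviside-function mapping property \eqref{troubleterm} ($|\partial_t|^{1/4}(hF)$ controlled by $|\partial_t|^{1/4}F$ in $L^\infty(d(x-y))L^2(dt)L^2$) to pass the time cutoff through. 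A secondary technical point is verifying that all the Strichartz exponents arising in the quadratic terms genuinely fall in the admissible range excluding $p' = 1, 2$ required by $\mathcal S_r'$; this forces the use of the non-endpoint admissible pair $p_0 > 2$ throughout and is the reason the restricted norm $\mathcal S^r_{x,y}$ was introduced with $L^4(dt)L^3(dx)L^2(dy)$ rather than the endpoint $L^2L^6L^2$.
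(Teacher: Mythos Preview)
Your proposal is correct and follows essentially the same route as the paper: apply Theorem \ref{mainthm} to \eqref{rel11}, place all the composition-type nonlinear terms in the $G$ slot and estimate them in $\mathcal S_r'$ via the fractional Leibniz rule plus H\"older (this is exactly the content of Lemmas \ref{Lem1}--\ref{Lem3}), place $\Lambda_c$ in the $H$ slot to pick up the $N^{-\epsilon_1}$ gain, and handle the interaction of $|\partial_t|^{1/4}$ with the time cutoff via \eqref{troubleterm} and the switch between $\Lambda_c$ and $\Lambda_c^{i,T}$ on $[T_i,T]$. You have also correctly identified the one genuinely delicate point (the non-locality of $|\partial_t|^{1/4}$) and its resolution.

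One small correction: in the composition terms such as $(V_N\bar\Gamma)\circ\Lambda_p$ or $(V_N\Lambda_c)\circ\Gamma_p$, the potential $V_N$ is \emph{integrated out} (one writes the composition as $\int V_N(z)\,\Gamma(t,x,x-z)\,\Lambda_p(t,x-z,y)\,dz$ and uses $\|V_N\|_{L^1}=\|v\|_{L^1}$ to reduce to a $\sup_z$ of a pointwise product), so the mechanism of Section \ref{endlin} and the smallness $\|v_M\|_{L^{3/2}}=O(M^{-\epsilon_0})$ play no role here; the smallness comes entirely from $\epsilon_2$ in \eqref{epsilon2} (for the $\Gamma$, $\Lambda_c$ factors) or from the quadratic structure (for the $\Gamma_p$ factor). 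Also, in the $H$-slot you simply set $H=\Lambda_c$ and read off the three $H$-norms in Theorem \ref{mainthm} directly; there is no need to re-estimate $\langle\nabla_x\rangle^\alpha\langle\nabla_y\rangle^\alpha(\tfrac{1}{N}V_N\Lambda_c)$ separately.
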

\begin{remark} Of course $\|\Gamma_p^{i, T}\|_{\mathcal S^r_{x, y}} \le \|\Gamma_p^{i, T}\|_{\mathcal N^1}$, but $\|\Gamma_p^{i, T}\|_{\mathcal S^r_{x, y}}$ can also be made small on small time intervals. This will be useful when estimating higher order derivatives.
\end{remark}
 The proof is based on Theorem \ref{mainthm}:
there exists a constant $C$ and $\epsilon_1>0 $ such that
\begin{align}
&\|\Lambda_p^{i, T}\|_{\mathcal{N}(\Lambda)} \notag\\
&\le C \bigg(
\big\|\big<\nabla_x\big>^{\alpha}\big<\nabla_y\big>^{\alpha}\chi_{[T_i, T]}\big(
\{V_N * \rho, \Lambda_p\}
+ \left((V_N\bar \Gamma_p)\circ \Lambda_p + (V_N\Lambda_p)\circ  \Gamma_p \right)_{symm} \notag\\
&+ \left((V_N\bar\Gamma_c)\circ \Lambda_p + (V_N\Lambda_c)\circ  \Gamma_p \right)_{symm}\big)\notag
 \big\|_{\mathcal{S}_r'}\\
&+N^{-\epsilon_1}\big\|\big<\nabla_x\big>^{\alpha}\big<\nabla_y\big>^{\alpha}\chi_{[T_i, T]}\notag
\Lambda_c
 \big\|_{L^2(dt)L^{6}(d(x-y))L^2(d(x+y))}\\
&+N^{-\epsilon_1}\big\|\big|\partial_t\big|^{\frac{1}{4}}\chi_{[T_i, T]}\Lambda_c\big\|_{L^{\infty}(d(x-y))L^2(dt )L^2(d(x+y))} \label{tricky}\\
&+N^{-\epsilon_1}\big\|\big|\nabla_{x+y}\big|^{\alpha} \chi_{[T_i, T]}\Lambda_c\big\|_{L^{\infty}(d(x-y))L^2(dt)L^2(d(x+y))}\notag\\
&+ \big\|\big<\nabla_x\big>^{\alpha}\big<\nabla_y\big>^{\alpha}\Lambda_p(T_i) \big\|_{L^2}.\notag
 \bigg)
\end{align}
For all terms other than \eqref{tricky}, the superscript $i, T$ can be trivially added to $\Lambda$, $\Gamma$ on the RHS. In \eqref{tricky},
\begin{align*}
&\big\|\big|\partial_t\big|^{\frac{1}{4}}\big(\chi_{[T_i, T]}\Lambda_c\big)\big\|_{L^{\infty}(d(x-y))L^2(\mathbb R)L^2(d(x+y))}\\
&=\big\|\big|\partial_t\big|^{\frac{1}{4}}\big(\chi_{[T_i, T]}\Lambda^{i, T}_c\big)\big\|_{L^{\infty}(d(x-y))L^2(\mathbb R)L^2(d(x+y))}\\
&\lesssim \big\|\big|\partial_t\big|^{\frac{1}{4}}\Lambda^{i, T}_c\big\|_{L^{\infty}(d(x-y))L^2(\mathbb R)L^2(d(x+y))}\\
&\mbox{(as explained for  \eqref{troubleterm})}\\
&\le \big\|\big|\partial_t\big|^{\frac{1}{4}}\Lambda^{i, T}_c\big\|_{L^2(dt)L^{\infty}(d(x-y))L^2(d(x+y))}\\
&\le \|\Lambda^{i, T}_c\|_{\mathcal{N}(\Lambda)}.
\end{align*}

In the lemmas that follow, we estimate
the norms of the nonlinear terms in suitable dual Strichartz norms, using the bound \eqref{epsilon2} whenever possible.
\begin{lemma}\label{Lem1} Let $[T_i, T]$ be as above. There exists a universal constant $C$ such that
\begin{align*}
&\big\|\big<\nabla_x\big>^{\alpha}\big<\nabla_y\big>^{\alpha}\bigg(
\{V_N * \rho, \Lambda^{i, T}_p\}
+ V_N\bar \Gamma^{i, T}\circ \Lambda^{i, T}_p \bigg)
 \big\|_{L^{\frac{8}{5}}([T_i, T])L^{\frac{4}{3}}(dx)L^2(dy)}\\
  &\le C\epsilon_2 \|\big<\nabla_x\big>^{\alpha}\big<\nabla_y\big>^{\alpha}\Lambda^{i, T}_p\|_{\mathcal S^r_{x, y}}.\\
 \end{align*}
 The result depends on the a priori bounds for $\Gamma$, but is true with $\Lambda^{i, T}_p$ replaced with  any other function).
 \end{lemma}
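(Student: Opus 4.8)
The plan is to handle the two contributions, the ``on-diagonal'' term $\{V_N*\rho,\Lambda^{i,T}_p\}$ and the ``composition'' term $V_N\bar\Gamma^{i,T}\circ\Lambda^{i,T}_p$, by a single mechanism: in each of them $\Gamma$ enters only through its values on, or within $O(N^{-\beta})$ of, the diagonal, and exactly such off-diagonal restrictions of $\Gamma$ are controlled \emph{with the small constant $\epsilon_2$} by \eqref{epsilon2}; the remaining factor ($\Lambda^{i,T}_p$, but by the same argument any function) is then absorbed into $\|\langle\nabla_x\rangle^\alpha\langle\nabla_y\rangle^\alpha\Lambda^{i,T}_p\|_{\mathcal S^r_{x,y}}$. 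Since $\Lambda^{i,T}$, $\Gamma^{i,T}$ agree with the true solutions on $[T_i,T]\subseteq[T_i,T_{i+1}]$, \eqref{epsilon2} is available on $[T_i,T]$.

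Consider first $\{V_N*\rho,\Lambda_p\}=(V_N*\rho)(x)\Lambda_p(x,y)+(V_N*\rho)(y)\Lambda_p(x,y)$. Using the symmetry $\Lambda_p(x,y)=\Lambda_p(y,x)$ and the identity $\|F(x,y)\|_{L^{4/3}(dx)L^2(dy)}=\|F(y,x)\|_{L^{4/3}(dy)L^2(dx)}$ (applied to $\langle\nabla_x\rangle^\alpha\langle\nabla_y\rangle^\alpha$ of each piece), the second piece in $L^{4/3}(dx)L^2(dy)$ equals the first piece in $L^{4/3}(dy)L^2(dx)$, so it is enough to bound $(V_N*\rho)(x)\Lambda_p(x,y)$ in $L^{4/3}(dx)L^2(dy)$, the mirror estimate using the $L^6(dy)L^2(dx)$ component of $\mathcal S^r_{x,y}$ in place of the $L^6(dx)L^2(dy)$ one. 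I would apply $\langle\nabla_x\rangle^\alpha\langle\nabla_y\rangle^\alpha$; as $V_N*\rho$ depends only on $x$, $\langle\nabla_y\rangle^\alpha$ passes to $\Lambda_p$, and a one-variable fractional Leibniz rule / paraproduct decomposition in $x$ distributes $\langle\nabla_x\rangle^\alpha$. Then use H\"older in $t$ with $\frac18+\frac12=\frac{1}{8/5}$, putting the $\rho$-factor in $L^8(dt)$ (the exponent in which \eqref{epsilon2} is phrased) and the $\Lambda_p$-factor in $L^2(dt)$ (the time exponent of the $L^2(dt)L^6(dx)L^2(dy)$ component of $\mathcal S^r_{x,y}$). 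Spatially: when the $x$-derivative falls on $\rho$, bound $\|\langle\nabla_x\rangle^\alpha(V_N*\rho)\|_{L^8(dt)L^{4/3}(dx)}\le\|v\|_{L^1}\|\langle\nabla_x\rangle^\alpha\rho\|_{L^8(dt)L^{4/3}(dx)}\le\|v\|_{L^1}\,\epsilon_2$ by Young's inequality and \eqref{epsilon2} at $z=0$, and pair it with $\|\langle\nabla_y\rangle^\alpha\Lambda_p\|_{L^2(dt)L^\infty(dx)L^2(dy)}$, which is dominated by $\|\langle\nabla_x\rangle^\alpha\langle\nabla_y\rangle^\alpha\Lambda_p\|_{L^2(dt)L^6(dx)L^2(dy)}$ via the vector-valued Sobolev embedding $W^{\alpha,6}(\mathbb R^3)\hookrightarrow L^\infty$, which holds since $\alpha>\frac12$; when the $x$-derivative falls on $\Lambda_p$, we are in the paraproduct range where $\rho$ is at the lower frequency, and a Littlewood--Paley argument (Bernstein in rotated coordinates, Lemma~\ref{bernstein}) turns $\|\langle\nabla_x\rangle^\alpha\rho\|_{L^{4/3}}$ into $\|V_N*\rho\|_{L^{12/7}}$, paired with $\|\langle\nabla_x\rangle^\alpha\langle\nabla_y\rangle^\alpha\Lambda_p\|_{L^2(dt)L^6(dx)L^2(dy)}$. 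The dyadic pieces are recombined using the square-function estimates of Lemma~\ref{square2} and Lemma~\ref{square1}.

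For the composition term I would first write $V_N\bar\Gamma\circ\Lambda_p(t,x,y)=\int V_N(w)\,\bar\Gamma(t,x,x-w)\,\Lambda_p(t,x-w,y)\,dw$ through the substitution $z=x-w$; since $\|V_N\|_{L^1}=\|v\|_{L^1}$ is $O(1)$, Minkowski's inequality in $w$ reduces matters to a bound, uniform in $w$, for $\bar\Gamma(t,x,x-w)\,\Lambda_p(t,x-w,y)$ in $L^{8/5}(dt)L^{4/3}(dx)L^2(dy)$, and after the translation $x\mapsto x+w$ (an isometry of every norm in play) this becomes $\bar\Gamma(t,x+w,x)\,\Lambda_p(t,x,y)$. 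This is the first piece above with $(V_N*\rho)(x)$ replaced by the off-diagonal restriction $x\mapsto\Gamma(t,x+w,x)$, which \eqref{epsilon2} controls \emph{uniformly in $w$} by $\epsilon_2$; the Leibniz/H\"older/Sobolev argument then applies verbatim, the only bookkeeping being that both slots of $\bar\Gamma(t,x+w,x)$ carry the $x$ on which $\langle\nabla_x\rangle^\alpha$ may act, but that full diagonal derivative is precisely the quantity appearing in \eqref{epsilon2}.

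The main obstacle is that every Sobolev matching here is borderline: the embedding $W^{\alpha,6}\hookrightarrow L^\infty$ used for the $\Lambda_p$ factor and the ``losing-direction'' embedding $W^{\alpha,4/3}\hookrightarrow L^{12/7}$ used for the low-frequency part of $\Gamma$ near the diagonal are both equalities at $\alpha=\frac12$. Hence no crude $L^\infty$ bound is permissible, and the argument must be run through Littlewood--Paley with the dyadic pieces summed by square functions, the strict inequality $\alpha>\frac12$ being exactly what makes the relevant geometric series converge. The remaining ingredients, the time H\"older exponents, the harmlessness of the convolution $V_N*(\cdot)$ because $\|V_N\|_{L^1}$ is $N$-independent, and the change of variables reducing the composition term to a $w$-uniform product estimate, are routine.
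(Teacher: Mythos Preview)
Your proposal is correct and follows essentially the same route as the paper's proof: reduce both terms to a uniform-in-$z$ product estimate for $\Gamma(t,x,x+z)\,\Lambda^{i,T}_p(t,x+z,y)$, apply the fractional Leibniz rule in $x$ (with $L^2(dy)$ values), use H\"older in time with exponents $8$ and $2$, and close via the Sobolev embeddings $W^{\alpha,6}\hookrightarrow L^\infty$ and $W^{\alpha,4/3}\hookrightarrow L^{12/7}$ together with \eqref{epsilon2}. The only difference is cosmetic: you spell out the Leibniz rule via paraproducts and invoke the rotated-coordinate Lemmas~\ref{bernstein}, \ref{square2}, \ref{square1}, whereas here the product involves a function of $x$ alone times a function of $(x,y)$, so the standard (unrotated) one-variable vector-valued Kato--Ponce suffices, which is what the paper uses.
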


\begin{proof}
The proof is easily reduced to estimating
\begin{align*}
&\sup_z\|\big<\nabla_x\big>^{\alpha}\big<\nabla_y\big>^{\alpha}\big(\Gamma(t, x, x+z)\Lambda^{i, T}_p(t, x+z, y)\big)\|_{L^{\frac{8}{5}}([T_i, T])L^{\frac{4}{3}}(dx)L^2(dy)}\\
& \le C\epsilon_2 \|\big<\nabla_x\big>^{\alpha}\big<\nabla_y\big>^{\alpha}\Lambda^{i, T}_p\|_{\mathcal S_{x, y}}.
\end{align*}
Using the fractional Leibniz rule (see the proof of Theorem \ref{leibniz}) we have the following estimate, uniformly in $z$:
\begin{align*}
&\|\big<\nabla_x\big>^{\alpha}\big<\nabla_y\big>^{\alpha}\big(\Gamma(t, x, x+z)\Lambda^{i, T}_p(t, x+z, y)\big)\|_{L^{\frac{8}{5}}([T_i, T])L^{\frac{4}{3}}(dx)L^2(dy)}\\
&\le C  \|\big<\nabla_x\big>^{\alpha} \Gamma(t, x, x+z)\|_{L^8([T_i, T])L^{\frac{4}{3}}(dx)}\|\big<\nabla_y\big>^{\alpha}\Lambda^{i, T}_p\|_{L^2(dt)L^{\infty}(dx)L^2(dy)}\\
&+\| \Gamma(t, x, x+z)\|_{L^8([T_i, T])L^{\frac{12}{7}}(dx)}\|\big<\nabla_x\big>^{\alpha}\big<\nabla_y\big>^{\alpha}\Lambda^{i, T}_p\|_{L^2(dt)L^{6}(dx)L^2(dy)}\\
& \le C \|\big<\nabla_x\big>^{\alpha} \Gamma(t, x, x+z)\|_{L^8([T_{i}, T])L^{\frac{4}{3}}(dx)}\|\big<\nabla_x\big>^{\alpha}\big<\nabla_y\big>^{\alpha}\Lambda^{i, T}_p\|_{L^2(dt)L^{6}(dx)L^2(dy)}\\
&\le C_2 \epsilon_2 \|\big<\nabla_x\big>^{\alpha}\big<\nabla_y\big>^{\alpha}\Lambda^{i, T}_p\|_{L^2(dt)L^{6}(dx)L^2(dy)}.
\end{align*}

\end{proof}
Since $\Lambda_c$ satisfies the same a priori estimates (based on interaction Morawetz and conservation of energy) as $\Gamma$, by the exact same argument we get
\begin{lemma} \label{Lem2}Let
 $[T_i, T]$ be as above.
There exists a universal constant $C$ such that
\begin{align*}
&\|\big<\nabla_x\big>^{\alpha}\big<\nabla_y\big>^{\alpha}\left((V_N\Lambda_c)\circ  \Gamma^{i, T}_p \right)\|_{L^{\frac{8}{5}}([T_i, T])L^{\frac{4}{3}}(dx)L^2(dy)}\\
& \le C\epsilon_2 \|\big<\nabla_x\big>^{\alpha}\big<\nabla_y\big>^{\alpha}\Gamma^{i, T}_p\|_{\mathcal S_{x, y}}.
\end{align*}
The result depends on the a priori bounds for $\Lambda_c$, but is true for any function $\Gamma^{i, T}_p=\Gamma^{i, T}_p(t, x, y)$.
\end{lemma}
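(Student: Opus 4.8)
The plan is to reproduce, with the roles of the two factors interchanged, the argument used for Lemma~\ref{Lem1}. Here it is $\Lambda_c$ that carries the a priori information: since $\Lambda_c=\phi\otimes\phi$ has the same ``diagonal'' $|\phi(t,x)|^2$ as $\Gamma_c=\bar\phi\otimes\phi$, Lemma~\ref{firstsmallness} applies verbatim to $\Lambda_c$ (as noted immediately after that lemma), giving $\|\Lambda_c\|_{L^8(dt)L^{\infty}(d(x-y))L^{\frac43}(d(x+y))}\lesssim 1$, $\|\nabla_{x+y}\Lambda_c\|_{L^8(dt)L^{\infty}(d(x-y))L^{\frac43}(d(x+y))}\lesssim 1$, and hence, on each of the intervals $[T_i,T_{i+1}]$ selected there, the smallness bound $\sup_z\|\big<\nabla_x\big>^{\alpha}\Lambda_c(t,x,x+z)\|_{L^8([T_i,T])L^{\frac43}(dx)}\le\epsilon_2$ (which passes to any sub-interval $[T_i,T]$). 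By contrast $\Gamma^{i,T}_p$ is treated as an arbitrary function, entering only through its $\mathcal{S}_{x, y}$ norm, exactly as asserted in the remark accompanying the statement.

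First I would unfold the composition and remove the potential by a translation: $(V_N\Lambda_c)\circ\Gamma^{i,T}_p(t,x,y)=\int V_N(x-z)\Lambda_c(t,x,z)\Gamma^{i,T}_p(t,z,y)\,dz$, and after the substitution $z=x+w$, using that $v$ (hence $V_N$) is even and $\int V_N(w)\,dw=\|v\|_{L^1}$ is independent of $N$, Minkowski's integral inequality reduces the claim to the estimate, uniform in the shift $z$,
\begin{align*}
&\sup_z\big\|\big<\nabla_x\big>^{\alpha}\big<\nabla_y\big>^{\alpha}\big(\Lambda_c(t,x,x+z)\,\Gamma^{i,T}_p(t,x+z,y)\big)\big\|_{L^{\frac{8}{5}}([T_i,T])L^{\frac{4}{3}}(dx)L^2(dy)}\\
&\le C\epsilon_2\,\big\|\big<\nabla_x\big>^{\alpha}\big<\nabla_y\big>^{\alpha}\Gamma^{i,T}_p\big\|_{\mathcal{S}_{x, y}}.
\end{align*}
After this substitution $V_N(w)$ is merely an integration weight, so no derivative ever lands on the potential; note also that the norm $L^{\frac85}_tL^6_xL^2_y$ (and $L^{\frac85}_tL^{\frac43}_xL^2_y$) is translation invariant in $x$, so the shift does not hurt $\Gamma^{i,T}_p$.

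Next I would apply the fractional (Kato--Ponce type) Leibniz rule, exactly as in the proof of Lemma~\ref{Lem1} (cf.\ Theorem~\ref{leibniz}). Since $\Lambda_c(t,x,x+z)$ is independent of $y$, the $\big<\nabla_y\big>^{\alpha}$ falls entirely on $\Gamma^{i,T}_p$, while $\big<\nabla_x\big>^{\alpha}$ distributes, bounding the left side of the display by a sum of the two terms
\begin{align*}
&\big\|\big<\nabla_x\big>^{\alpha}\Lambda_c(t,x,x+z)\big\|_{L^8([T_i,T])L^{\frac{4}{3}}(dx)}\big\|\big<\nabla_y\big>^{\alpha}\Gamma^{i,T}_p\big\|_{L^2(dt)L^{\infty}(dx)L^2(dy)}\\
&+\big\|\Lambda_c(t,x,x+z)\big\|_{L^8([T_i,T])L^{\frac{12}{7}}(dx)}\big\|\big<\nabla_x\big>^{\alpha}\big<\nabla_y\big>^{\alpha}\Gamma^{i,T}_p\big\|_{L^2(dt)L^{6}(dx)L^2(dy)},
\end{align*}
with the H\"older exponents matching ($1/8+1/2=5/8$ in $t$, and $3/4+0=7/12+1/6=3/4$ in $x$). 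Then the Sobolev estimates used in the proof of Lemma~\ref{Lem1} give $\|\big<\nabla_y\big>^{\alpha}\Gamma^{i,T}_p\|_{L^2(dt)L^{\infty}(dx)L^2(dy)}\lesssim\|\big<\nabla_x\big>^{\alpha}\big<\nabla_y\big>^{\alpha}\Gamma^{i,T}_p\|_{L^2(dt)L^{6}(dx)L^2(dy)}$ (Sobolev in $x$, $W^{\alpha,6}\hookrightarrow L^{\infty}$ for $\alpha>1/2$, with $L^2(dy)$ as a parameter) and $\|\Lambda_c(t,x,x+z)\|_{L^8 L^{12/7}(dx)}\lesssim\|\big<\nabla_x\big>^{\alpha}\Lambda_c(t,x,x+z)\|_{L^8 L^{4/3}(dx)}$ ($W^{\alpha,4/3}\hookrightarrow L^{12/7}$ for $\alpha$ slightly above $1/2$, as in Lemma~\ref{Lem1}). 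Inserting the smallness bound $\le\epsilon_2$ for the $\Lambda_c$-factors and using $\|\cdot\|_{L^2(dt)L^{6}(dx)L^2(dy)}\le\|\cdot\|_{\mathcal{S}_{x, y}}$ closes the estimate, with $C$ absorbing $\|v\|_{L^1}$ and the Leibniz/Sobolev constants.

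The only step that is not pure bookkeeping --- and hence the main obstacle to be careful about --- is the translation reduction: one must verify that the concentration of $V_N$ produces no loss, i.e.\ that the $z$-uniform a priori control of $\big<\nabla_x\big>^{\alpha}\Lambda_c(t,x,x+z)$ genuinely dominates the whole family $\{\,\big<\nabla_x\big>^{\alpha}\Lambda_c(t,x,x+w)\,\}_w$ arising after the change of variables, with a constant (namely $\int V_N=\|v\|_{L^1}$) independent of $N$. This holds for the same reason it holds in Lemma~\ref{firstsmallness}: the pointwise bounds $|\Lambda_c(t,x+z,x-z)|\le|\phi(t,x+z)||\phi(t,x-z)|$ and $|\nabla_x\Lambda_c(t,x+z,x-z)|\lesssim|\nabla\phi(t,x+z)||\phi(t,x-z)|+|\phi(t,x+z)||\nabla\phi(t,x-z)|$ are uniform in the separation, so H\"older against the finite-energy and interaction-Morawetz bounds for $\phi$ yields the required $\sup_z$ estimate. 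Everything else is identical to the proof of Lemma~\ref{Lem1}.
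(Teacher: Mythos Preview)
Your proposal is correct and follows exactly the approach the paper intends: the paper's own proof consists of a single sentence noting that $\Lambda_c$ satisfies the same a priori estimates (interaction Morawetz and conservation of energy) as $\Gamma$, so the argument of Lemma~\ref{Lem1} applies verbatim with $\Gamma$ replaced by $\Lambda_c$ and $\Lambda^{i,T}_p$ replaced by $\Gamma^{i,T}_p$. You have spelled out that argument in full, including the translation reduction and the Leibniz/Sobolev bookkeeping, which matches the proof of Lemma~\ref{Lem1} line by line.
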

We continue estimating nonlinear terms.

\begin{lemma} \label{Lem3}
 There exists a universal constant $C$  such that
\begin{align*}
&\|\big<\nabla_x\big>^{\alpha}\big<\nabla_y\big>^{\alpha}\left((V_N\Lambda^{i, T}_p)\circ  \Gamma^{i, T}_p \right)\|_{L^{\frac{4}{3}}(dt)L^{\frac{3}{2}}(dx)L^2(dy)}\\
&\le
 C\|\big<\nabla_{x+y}\big>^{\alpha}\Lambda^{i, T}_p\|_{L^{\infty}(d(x-y))L^2(dt)L^2(dx)}
 \|\big<\nabla_x\big>^{\alpha}\big<\nabla_y\big>^{\alpha}\Gamma^{i, T}_p(t, x, y)
\|_{L^{4}(dt)L^{3}(dx)L^2(dy)}.
\end{align*}
This result can be localized to any time interval $[T_i, T_{i+1}]$ and is true for any two functions, not just $\Lambda^{i, T}_p$ and
$\Gamma^{i, T}_p $.
\end{lemma}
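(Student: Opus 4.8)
The plan is to treat this exactly as in Lemma~\ref{Lem1}: the factor $V_N(x-z)$ appearing in
\[
\big((V_N\Lambda^{i,T}_p)\circ\Gamma^{i,T}_p\big)(x,y)=\int V_N(x-z)\,\Lambda^{i,T}_p(x,z)\,\Gamma^{i,T}_p(z,y)\,dz
\]
concentrates $z$ in an $O(N^{-\beta})$ neighbourhood of $x$, so after the substitution $z=x+w$ one gets $\int V_N(-w)\,\Lambda^{i,T}_p(x,x+w)\,\Gamma^{i,T}_p(x+w,y)\,dw$, in which the kernel $V_N$ is \emph{decoupled} from the variable being differentiated. Commuting $\big<\nabla_x\big>^{\alpha}\big<\nabla_y\big>^{\alpha}$ under the $w$–integral and applying Minkowski's inequality together with $\|V_N\|_{L^1}=\|v\|_{L^1}=O(1)$ (the sign of the argument being irrelevant for the $L^1$ norm), the claim reduces to the bound, uniform in $w$,
\[
\big\|\big<\nabla_x\big>^{\alpha}\big<\nabla_y\big>^{\alpha}\big(\Lambda^{i,T}_p(x,x+w)\,\Gamma^{i,T}_p(x+w,y)\big)\big\|_{L^{\frac{4}{3}}(dt)L^{\frac{3}{2}}(dx)L^2(dy)}
\]
\[
\lesssim\big\|\big<\nabla_{x+y}\big>^{\alpha}\Lambda^{i,T}_p\big\|_{collapsing}\,\big\|\big<\nabla_x\big>^{\alpha}\big<\nabla_y\big>^{\alpha}\Gamma^{i,T}_p\big\|_{L^{4}(dt)L^{3}(dx)L^2(dy)}.
\]
It is here that $V_N$ does its essential work; in particular the $\big<\nabla_x\big>^{\alpha}$ never has to fall on $V_N$.

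Fix $w$. The first factor $f:=\Lambda^{i,T}_p(x,x+w)$ depends on $x$ alone, so $\big<\nabla_y\big>^{\alpha}$ passes onto $g:=\Gamma^{i,T}_p(x+w,y)$, and I would then apply the fractional Leibniz rule (as in the proof of Lemma~\ref{Lem1}) to $\big<\nabla_x\big>^{\alpha}\big(f\cdot\big<\nabla_y\big>^{\alpha}g\big)$, producing the two principal contributions $(\big<\nabla_x\big>^{\alpha}f)(\big<\nabla_y\big>^{\alpha}g)$ and $f\cdot\big<\nabla_x\big>^{\alpha}\big<\nabla_y\big>^{\alpha}g$, plus lower–order terms in which the $\alpha$ derivative in $x$ is split between the two factors. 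The key observation is that the slice $\{(s,s+w)\}$ runs in the $x+y$ direction, so $\big<\nabla_x\big>^{\alpha}f$ is, up to a bounded rescaling multiplier, the restriction to that slice of $\big<\nabla_{x+y}\big>^{\alpha}\Lambda^{i,T}_p$; hence $\|\big<\nabla_x\big>^{\alpha}f\|_{L^2(dt)L^2(dx)}\lesssim\|\big<\nabla_{x+y}\big>^{\alpha}\Lambda^{i,T}_p\|_{collapsing}$, and by the Sobolev embedding $H^{\alpha}(\R^3)\hookrightarrow L^3(\R^3)$ (valid since $\alpha\ge\tfrac12$) also $\|f\|_{L^2(dt)L^3(dx)}\lesssim\|\big<\nabla_x\big>^{\alpha}f\|_{L^2(dt)L^2(dx)}$.

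For the $\Gamma$ factor I would invoke the ($L^2(dy)$–valued) Sobolev embedding $W^{\alpha,3}(\R^3)\hookrightarrow L^6(\R^3)$, giving $\|\big<\nabla_y\big>^{\alpha}g\|_{L^4(dt)L^6(dx)L^2(dy)}\lesssim\|\big<\nabla_x\big>^{\alpha}\big<\nabla_y\big>^{\alpha}\Gamma^{i,T}_p\|_{L^4(dt)L^3(dx)L^2(dy)}$, while $\big<\nabla_x\big>^{\alpha}\big<\nabla_y\big>^{\alpha}g$ is already in $L^4(dt)L^3(dx)L^2(dy)$. One then combines by H\"older in $t$ and $x$, with $L^2(dy)$ supplied entirely by the $\Gamma$ factor: the first principal term matches with exponents $(\tfrac12,\tfrac12)+(\tfrac14,\tfrac16)=(\tfrac34,\tfrac23)$, the second with $(\tfrac12,\tfrac13)+(\tfrac14,\tfrac13)=(\tfrac34,\tfrac23)$, i.e. precisely $L^{\frac{4}{3}}(dt)L^{\frac{3}{2}}(dx)$; the lower–order terms follow by interpolating between these two estimates (equivalently, by the same Sobolev trades with intermediate exponents, for which one checks that the $x$–exponents again sum to $\tfrac23$). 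Since only finiteness of the displayed norms is used, and since time enters solely through the outermost $L^p(dt)$, the statement holds for arbitrary functions and localizes to any $[T_i,T_{i+1}]$. The one genuine point requiring care is this bookkeeping of Sobolev trades — turning the $L^3_x$ bound on $\big<\nabla_x\big>^{\alpha}\big<\nabla_y\big>^{\alpha}\Gamma^{i,T}_p$ into the $L^6_x$ form needed to pair against the $L^2_x$ collapsing norm of $\big<\nabla_{x+y}\big>^{\alpha}\Lambda^{i,T}_p$, and correctly identifying $\big<\nabla_x\big>^{\alpha}$ applied along a diagonal slice with the $\big<\nabla_{x+y}\big>^{\alpha}$ derivative.
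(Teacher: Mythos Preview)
Your proposal is correct and follows essentially the same route as the paper: reduce to a supremum over the translation parameter via $\|V_N\|_{L^1}=O(1)$ and Minkowski, then apply the fractional Leibniz rule in $x$ with the same H\"older splitting $(L^2_tL^2_x)\times(L^4_tL^6_x)$ and $(L^2_tL^3_x)\times(L^4_tL^3_x)$, using $H^{\alpha}\hookrightarrow L^3$ and $W^{\alpha,3}\hookrightarrow L^6$ to match exponents. Your identification of $\big<\nabla_x\big>^{\alpha}$ on the diagonal slice with $\big<\nabla_{x+y}\big>^{\alpha}\Lambda$ restricted to that slice is exactly the point (in fact the identity is exact, no rescaling multiplier is needed).
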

\begin{proof}
It suffices to estimate
\begin{align*}
&\sup_z\|\big<\nabla_x\big>^{\alpha}\big<\nabla_y\big>^{\alpha}\left(\Lambda^{i, T}_p(t, x, x+z)  \Gamma^{i, T}_p(t, x+z, y)\right) \|_{L^{\frac{4}{3}}(dt)L^{\frac{3}{2}}(dx)L^2(dy)}.
\end{align*}
The following holds, uniformly in $z$:
\begin{align*}
&\|\big<\nabla_x\big>^{\alpha}\big<\nabla_y\big>^{\alpha}\left(\Lambda^{i, T}_p(t, x, x+z)  \Gamma^{i, T}_p(t, x+z, y)\right) \|_{L^{\frac{4}{3}}(dt)L^{\frac{3}{2}}(dx)L^2(dy)}\\
&\lesssim \|\big<\nabla_x\big>^{\alpha}\Lambda^{i, T}_p(t, x, x+z)\|_{L^2(dt)L^2(dx)}\|\big<\nabla_y\big>^{\alpha}\Gamma^{i, T}_p(t, x, y)
\|_{L^{4}(dt)L^{6}(dx)L^2(dy)}\\
&+\|\Lambda^{i, T}_p(t, x, x+z)\|_{L^2(dt)L^3(dx)}\|\big<\nabla_x\big>^{\alpha}\big<\nabla_y\big>^{\alpha}\Gamma^{i, T}_p(t, x, y)
\|_{L^{4}(dt)L^{3}(dx)L^2(dy)}\\
&\lesssim \|\big<\nabla_x\big>^{\alpha}\Lambda^{i, T}_p(t, x, x+z)\|_{L^2(dt)L^2(dx)}\|\big<\nabla_x\big>^{\alpha}\big<\nabla_y\big>^{\alpha}\Gamma^{i, T}_p(t, x, y)
\|_{L^{4}(dt)L^{3}(dx)L^2(dy)}.
\end{align*}

 \end{proof}

 We continue with estimates for $\|\Lambda^{i, T}_c\|_{\N(\Lambda)}$. This is an easy version of the previous theorem. Using the previous lemmas and the trivial version of Theorem \ref{mainthm} (without the potential term) we get

 \begin{theorem}\label{terms2}
 Let $[T_i, T]$ be as above. There exists a universal constant $C$ such that
 \begin{align*}
 &\|\Lambda^{i, T}_c\|_{\N(\Lambda)}\le C \big\|\big<\nabla_x\big>^{\alpha}\big<\nabla_y\big>^{\alpha}\Lambda_c^{i, T}(T_i, \cdot)
\big\|_{L^2} + C \epsilon_2 \|\Lambda_c^{i, T}\|_{\N(\Lambda)}\\
 &+ C \|\Lambda^{i, T}_p\|_{\N(\Lambda)}\|\Gamma^{i, T}_c\|_{\mathcal S^r_{x, y}}.
 \end{align*}
 \end{theorem}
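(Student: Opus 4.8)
The plan is to follow the proof of Theorem \ref{terms1}, observing that the $\Lambda_c$ equation \eqref{rel13} is strictly easier: it carries neither a potential self-interaction term $\frac{V_N}{N}\Lambda$ nor an $H$-type (potential-forced) source. Consequently I would apply the ``trivial'' version of Theorem \ref{mainthm} --- that is, Theorem \ref{mainStrich} together with Lemmas \ref{4collapsing} and \ref{timederlemma}, with the whole inhomogeneity put into the $\mathcal{S}_r'$ slot --- to obtain
\begin{align*}
\|\Lambda_c^{i, T}\|_{\N(\Lambda)} &\le C\big\|\big<\nabla_x\big>^{\alpha}\big<\nabla_y\big>^{\alpha}\chi_{[T_i, T]}\big(\{V_N*\rho, \Lambda_c\} + \big((V_N\bar\Gamma_p)\circ\Lambda_c + (V_N\Lambda_p)\circ\Gamma_c\big)_{symm}\big)\big\|_{\mathcal{S}_r'}\\
&\quad + C\big\|\big<\nabla_x\big>^{\alpha}\big<\nabla_y\big>^{\alpha}\Lambda_c^{i, T}(T_i, \cdot)\big\|_{L^2}.
\end{align*}
In particular the delicate point of Theorem \ref{terms1}, where $|\partial_t|^{\frac{1}{4}}$ had to be commuted past $\chi_{[T_i, T]}$ acting on the forcing (see \eqref{tricky}, \eqref{troubleterm}), does not arise here, since no such forcing term is present; the $|\partial_t|^{\frac{1}{4}}$ bound comes directly out of Lemma \ref{timederlemma} applied to a pure $\mathcal{S}_r'$ inhomogeneity.

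Next I would estimate the two groups of nonlinear terms using the lemmas already established. For $\{V_N*\rho, \Lambda_c\}$ and $\big((V_N\bar\Gamma_p)\circ\Lambda_c\big)_{symm}$ I would invoke Lemma \ref{Lem1}, using its closing remark that $\Lambda_p$ there may be replaced by any function --- here $\Lambda_c$ --- and replacing $\Gamma$ by $\Gamma_p$, which is legitimate because by Lemma \ref{firstsmallness} and the remark following it $\Gamma_p$ satisfies the same interaction-Morawetz/energy a priori bounds, hence the time-localized smallness \eqref{epsilon2}; the transposed half of $(\cdot)_{symm}$ is handled identically since the relevant restricted Strichartz norms are symmetric in $x \leftrightarrow y$. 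This bounds the contribution by $C\epsilon_2\|\big<\nabla_x\big>^{\alpha}\big<\nabla_y\big>^{\alpha}\Lambda_c^{i, T}\|_{\mathcal{S}^r_{x, y}} \le C\epsilon_2\|\Lambda_c^{i, T}\|_{\N(\Lambda)}$, measured in $L^{\frac{8}{5}}([T_i, T])L^{\frac{4}{3}}(dx)L^2(dy)$, a dual Strichartz norm contained in $\mathcal{S}_r'$ (its dual exponent pair $(8/3, 4)$ being admissible). For $\big((V_N\Lambda_p)\circ\Gamma_c\big)_{symm}$ I would apply Lemma \ref{Lem3} with $\Gamma_p$ replaced by $\Gamma_c$ (the lemma holds for arbitrary pairs of functions), obtaining a bound by $C\|\big<\nabla_{x+y}\big>^{\alpha}\Lambda_p^{i, T}\|_{collapsing}\,\|\big<\nabla_x\big>^{\alpha}\big<\nabla_y\big>^{\alpha}\Gamma_c^{i, T}\|_{L^4(dt)L^3(dx)L^2(dy)}$ in the dual norm $L^{\frac{4}{3}}(dt)L^{\frac{3}{2}}(dx)L^2(dy) \subset \mathcal{S}_r'$; here the first factor is $\le \|\Lambda_p^{i, T}\|_{\N(\Lambda)}$ by Minkowski (the collapsing norm being weaker than the $L^2(dt)L^{\infty}(d(x-y))L^2$ norm appearing in $\N(\Lambda)$), and the second is the restricted Strichartz norm of $\Gamma_c^{i, T}$ entering the statement, so this contributes $C\|\Lambda_p^{i, T}\|_{\N(\Lambda)}\|\Gamma_c^{i, T}\|_{\mathcal{S}^r_{x, y}}$.

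Collecting the initial-data term together with these two nonlinear contributions gives exactly the asserted inequality. I do not expect a serious obstacle: the genuine difficulty of Theorem \ref{terms1} --- the $X^{-\frac{1}{2}}$-type singularity carried by the source $\frac{V_N}{N}\Lambda_c$ and the attendant quarter-time-derivative bookkeeping --- is simply absent in the $\Lambda_c$ equation. The only steps that are not purely mechanical are (i) checking that $\Gamma_p$ and $\Gamma_c$ may be substituted for $\Gamma$ in Lemmas \ref{Lem1} and \ref{Lem3}, which is exactly what the ``any function'' remarks in those lemmas, together with the fact that $\Gamma_p$ and $\Gamma_c$ individually obey \eqref{epsilon2}, provide; and (ii) verifying that the exponent pairs $(8/3, 4)$ and $(4, 3)$ lie in the window $[p_0, p_1]$ fixing $\mathcal{S}_r'$, which is guaranteed by choosing $p_0 > 2$ close to $2$ and $p_1$ large.
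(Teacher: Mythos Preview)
Your proposal is correct and follows essentially the same approach as the paper, which simply states that the result follows from ``the previous lemmas and the trivial version of Theorem \ref{mainthm} (without the potential term)''. You have accurately filled in the details: the $\N(\Lambda)$ bound comes from Theorem \ref{mainStrich} together with Lemmas \ref{4collapsing} and \ref{timederlemma} applied to a pure $\mathcal{S}_r'$ inhomogeneity, Lemma \ref{Lem1} handles $\{V_N*\rho,\Lambda_c\}$ and $(V_N\bar\Gamma_p)\circ\Lambda_c$ (using that $\Gamma_p$ separately obeys \eqref{epsilon2}), and Lemma \ref{Lem3} handles $(V_N\Lambda_p)\circ\Gamma_c$.
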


 Using  Strichartz estimates for $\S_{\pm}$, and Lemmas \ref{Lem1}-\ref{Lem3} we get
 \begin{theorem}\label{terms3}
 Let $[T_i, T]$ be as above. There exists a universal constant $C$ such that
 \begin{align*}
 &\|\Gamma^{i, T}_c\|_{\N^1}\le C \big\|\big<\nabla_x\big>^{\alpha}\big<\nabla_y\big>^{\alpha}\Gamma_c^{i, T}(T_i, \cdot) \big\|_{L^2} + C \epsilon_2 \|\Gamma^{i, T}_c\|_{\N^1}\\
 &+ C \|\Lambda^{i, T}_p\|_{\N(\Lambda)}\|\Lambda^{i, T}_c\|_{\N(\Lambda)}\\
 &\|\Gamma_p\|_{\N^1}\le C \big\|\big<\nabla_x\big>^{\alpha}\big<\nabla_y\big>^{\alpha}\Gamma^{i, T}_p(T_i, \cdot) \big\|_{L^2} + C \epsilon_2 \|\Gamma^{i, T}_p\|_{\N^1}\\
 &+ C \epsilon_2 \|\Lambda^{i, T}_p\|_{\N^1}+ C \|\Lambda^{i, T}_p\|_{\N(\Lambda)}\|\Lambda^{i, T}_p\|_{\N(\Lambda)}.
 \end{align*}
 \end{theorem}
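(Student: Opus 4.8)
The plan is to read \eqref{rel14} and \eqref{rel12} as inhomogeneous Schr\"odinger equations for the operator $\S_{\pm}$, with data at $t=T_i$ and forcing supported in $[T_i,T]$, and to feed them into Proposition \ref{estforGamma}. The dual exponents that occur in Lemmas \ref{Lem1}--\ref{Lem3}, namely $(p',q')=(\tfrac{8}{5},\tfrac{4}{3})$ and $(\tfrac{4}{3},\tfrac{3}{2})$, dualize to $(p,q)=(\tfrac{8}{3},4)$ and $(4,3)$; both are admissible in the $3+1$-dimensional sense $\tfrac{2}{p}+\tfrac{3}{q}=\tfrac{3}{2}$ and satisfy $p>2$, so the inhomogeneous (Christ--Kiselev) half of Proposition \ref{estforGamma} applies and gives, for $F=\Gamma^{i,T}_c$ or $F=\Gamma^{i,T}_p$,
\[
\|F\|_{\N^1}\;\lesssim\;\big\|\big<\nabla_x\big>^{\alpha}\big<\nabla_y\big>^{\alpha}F(T_i,\cdot)\big\|_{L^2}+\big\|\big<\nabla_x\big>^{\alpha}\big<\nabla_y\big>^{\alpha}\,(\mathrm{forcing})\big\|_{\mathcal{S}_r'},
\]
each summand of the forcing being placed in whichever of the two admissible dual Strichartz norms is convenient. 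It then remains to estimate, term by term, the skew-symmetrized right-hand sides of \eqref{rel14} and \eqref{rel12}, for which nothing is needed beyond Lemmas \ref{Lem1}--\ref{Lem3}.

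The forcing terms fall into three groups. \emph{Commutator terms:} $\chi_{[T_i,T]}[V_N*\rho,\Gamma_c]$ in \eqref{rel14}, and $\chi_{[T_i,T]}[V_N*\rho,\Gamma_p]$ together with $\chi_{[T_i,T]}\big((V_N\Gamma_c)\circ\Gamma_p\big)_{skew}$ in \eqref{rel12}, each carry a factor ($\rho=\Gamma(t,x,x)$, respectively $\Gamma_c$) obeying the a priori interaction-Morawetz and energy bounds \eqref{morawetzGamma}, \eqref{morawetzGammaenergy}; since $\Gamma^{i,T}=\Gamma$ and $\Gamma^{i,T}_c=\Gamma_c$ on $[T_i,T]$, the time-localized smallness \eqref{epsilon2} of Lemma \ref{firstsmallness} holds there, and the argument of Lemma \ref{Lem1} (stated for an arbitrary function in the second slot) bounds these by $C\epsilon_2\|\Gamma^{i,T}_c\|_{\N^1}$, resp.\ $C\epsilon_2\|\Gamma^{i,T}_p\|_{\N^1}$. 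The term $\chi_{[T_i,T]}\big((V_N\bar\Lambda_c)\circ\Lambda_p\big)_{skew}$ in \eqref{rel12} is of the same type, since $\Lambda_c=\phi\otimes\phi$ satisfies the same a priori bounds as $\Gamma$; it produces the $C\epsilon_2\|\Lambda^{i,T}_p\|_{\N^1}$ summand. \emph{Condensate--pair products:} the remaining terms $(V_N\Gamma_p)\circ\Gamma_c$ and $(V_N\bar\Lambda_p)\circ\Lambda_c$ of \eqref{rel14} are handled by Lemma \ref{Lem3} (valid for any pair of functions), taking the pair factor in the collapsing norm $\|\big<\nabla_{x+y}\big>^{\alpha}\cdot\|_{L^{\infty}(d(x-y))L^2(dt)L^2(d(x+y))}$ --- dominated by $\N(\Lambda)$ for $\Lambda_p$, and by $\N^1$ for $\Gamma_p$ through the collapsing term in its definition --- and the condensate factor in $\|\big<\nabla_x\big>^{\alpha}\big<\nabla_y\big>^{\alpha}\cdot\|_{L^{4}(dt)L^{3}(dx)L^2(dy)}\lesssim\|\cdot\|_{\N(\Lambda)}$; summing the two gives $C\|\Lambda^{i,T}_p\|_{\N(\Lambda)}\|\Lambda^{i,T}_c\|_{\N(\Lambda)}$ (with the usual abuse of writing $\Lambda_p,\Lambda_c$ for the pair/condensate versions alike). \emph{Pair--pair products:} $(V_N\Gamma_p)\circ\Gamma_p$ and $(V_N\bar\Lambda_p)\circ\Lambda_p$ in \eqref{rel12} are estimated in the same way by Lemma \ref{Lem3}, with both factors now controlled by $\N(\Lambda)$, giving $C\|\Lambda^{i,T}_p\|_{\N(\Lambda)}^2$. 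Inserting the three groups into the Strichartz estimate above yields the two stated inequalities.

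The point to stress is that this is the \emph{bookkeeping} step: all the hard analysis --- the fractional time derivative $|\partial_t|^{1/4}$, the nearly-$\delta$ potential $V_N$, the rotated-coordinate Sobolev/Bernstein/square-function estimates and the collapsing estimates --- has already been done in Theorem \ref{mainthm} and Lemmas \ref{Lem1}--\ref{Lem3}, and is merely invoked here. The care required is threefold: verifying that the dual exponents used are admissible and stay away from $p'=2$ and $p'=1$, so the Christ--Kiselev step inside Proposition \ref{estforGamma} is legitimate; checking that the cutoff $\chi_{[T_i,T]}$ commutes harmlessly past the spatial operators and that on its support $\Gamma^{i,T}=\Gamma$ and $\Lambda^{i,T}_c=\Lambda_c$, so the a priori Morawetz bounds transfer; and tracking which pair norm ($\N(\Lambda)$ or $\N^1$) and which condensate norm ($\mathcal{S}^r_{x,y}\subset\N(\Lambda)$, or $\N^1$) dominates each factor, so that the final bound is written entirely in the three norms the statement allows. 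The only ``obstacle'' is this last bookkeeping --- making sure each of the finitely many summands of the skew-symmetrizations in \eqref{rel12} and \eqref{rel14} is correctly sorted into one of the three groups.
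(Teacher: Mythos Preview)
Your proposal is correct and matches the paper's approach; the paper's own proof is the one-line ``Using Strichartz estimates for $\S_{\pm}$, and Lemmas \ref{Lem1}--\ref{Lem3} we get'', and you have accurately unpacked that sentence. One small bookkeeping correction: to obtain the inequality exactly as stated (where the only quadratic terms carry $\Lambda$-norms), the terms $(V_N\Gamma_p)\circ\Gamma_c$ in \eqref{rel14} and $(V_N\Gamma_p)\circ\Gamma_p$ in \eqref{rel12} should be placed in your ``commutator/$\epsilon_2$'' group via Lemma \ref{Lem1} --- this is legitimate because, as noted just after Lemma \ref{firstsmallness}, the a~priori smallness \eqref{epsilon2} holds for $\Gamma_p$ separately --- rather than in the quadratic group via Lemma \ref{Lem3}; your route through Lemma \ref{Lem3} yields $\|\Gamma_p\|_{\N^1}\|\Gamma_c\|_{\N^1}$-type products, which are not literally what is written in the statement (though they are equally adequate for Corollary \ref{XYcor}).
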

 At this stage we take $C\epsilon_2 < \frac{1}{2}$.
 This determines the number of intervals in the list $T_i, T_{i+1}$. Call that number $n$, and notice it is independent of $N$. Also,
   for $T \in [T_i, T_{i+1}]$ denote
 \begin{align*}
 &X_i(T)=\|\Lambda^{i, T}_c\|_{\N(\Lambda)}+\|\Gamma^{i, T}_c\|_{\N^1}\\
 &Y_i(T)=\|\Lambda^{i, T}_p\|_{\N(\Lambda)}+\|\Gamma^{i, T}_p\|_{\N^1}.
 \end{align*}

Since we trivially have bounds on $\|\Lambda_{p \, \, or \, \, c}\|_{L^{\infty}[0, T]H^s(dxdy)}$ and
   $\|\Gamma_{p \, \, or \, \, c}\|_{L^{\infty}[0, T]H^s(dxdy)}$ (for any $s$)
   which can grow with $T$ and $N$, then we do know $X_i$, $Y_i$ are continuous.

 We have established the following estimate for $T_i\le T \le T_{i+1}$:
 \begin{corollary}\label{XYcor} The functions $X_i$, $Y_i$ are continuous and satisfy
 \begin{align*}
 &X_i(T) \le C \big\|\big<\nabla_x\big>^{\alpha}\big<\nabla_y\big>^{\alpha}\Lambda_c(T_i, \cdot) \big\|_{L^2}
 +C \big\|\big<\nabla_x\big>^{\alpha}\big<\nabla_y\big>^{\alpha}\Gamma_c(T_i, \cdot) \big\|_{L^2}
 + C X_i(T)Y_i(T)\\
  &Y_i(T) \le C \big\|\big<\nabla_x\big>^{\alpha}\big<\nabla_y\big>^{\alpha}\Lambda_p(T_i, \cdot) \big\|_{L^2}
 +C \big\|\big<\nabla_x\big>^{\alpha}\big<\nabla_y\big>^{\alpha}\Gamma_p(T_i, \cdot) \big\|_{L^2}\\
 &
 + C Y_i^2(T)+ CN^{-\epsilon_1}X_i(T).
 \end{align*}
 \end{corollary}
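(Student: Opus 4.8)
The plan is to assemble Corollary~\ref{XYcor} directly from Theorems~\ref{terms1}, \ref{terms2}, \ref{terms3}, using the smallness $C\epsilon_2 < \frac12$ that has just been fixed, together with the two elementary facts noted above: that $\Lambda_c^{i,T}=\Lambda_c$, $\Gamma_c^{i,T}=\Gamma_c$, etc.\ on $[T_i,T]$ (so their norms over $[T_i,T]$ are bounded by the $\mathcal N(\Lambda)$, $\mathcal N^1$ norms of the truncated functions), and that $\Lambda_p^{i,T}(T_j,\cdot)=\Lambda_p(T_i,\cdot)$ and similarly for the other three (initial data at the left endpoint of the interval). I would also use the trivial inequality $\|\Gamma_p^{i,T}\|_{\mathcal S^r_{x,y}} \le \|\Gamma_p^{i,T}\|_{\mathcal N^1} \le Y_i(T)$ and $\|\Gamma_c^{i,T}\|_{\mathcal S^r_{x,y}} \le \|\Gamma_c^{i,T}\|_{\mathcal N^1} \le X_i(T)$ to replace the restricted Strichartz norms appearing on the right-hand sides of the three theorems by the relevant $X_i$ or $Y_i$.

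The continuity of $X_i$ and $Y_i$ in $T$ is already granted in the paragraph preceding the corollary, from the (non-uniform in $N$) high-regularity bounds, so nothing further is needed there; I would simply restate it. For the estimate on $X_i(T)$: add the conclusions of Theorem~\ref{terms2} (which controls $\|\Lambda_c^{i,T}\|_{\mathcal N(\Lambda)}$) and Theorem~\ref{terms3}, first line (which controls $\|\Gamma_c^{i,T}\|_{\mathcal N^1}$). The right-hand sides contribute $C\|\langle\nabla_x\rangle^\alpha\langle\nabla_y\rangle^\alpha \Lambda_c(T_i,\cdot)\|_{L^2} + C\|\langle\nabla_x\rangle^\alpha\langle\nabla_y\rangle^\alpha \Gamma_c(T_i,\cdot)\|_{L^2}$ from the data terms, $C\epsilon_2\big(\|\Lambda_c^{i,T}\|_{\mathcal N(\Lambda)} + \|\Gamma_c^{i,T}\|_{\mathcal N^1}\big) = C\epsilon_2 X_i(T)$ which is absorbed into the left-hand side since $C\epsilon_2<\frac12$, and the genuinely nonlinear terms $C\|\Lambda_p^{i,T}\|_{\mathcal N(\Lambda)}\|\Gamma_c^{i,T}\|_{\mathcal S^r_{x,y}}$ and $C\|\Lambda_p^{i,T}\|_{\mathcal N(\Lambda)}\|\Lambda_c^{i,T}\|_{\mathcal N(\Lambda)}$, both of which are $\lesssim Y_i(T)X_i(T)$. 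After absorbing and relabelling the constant, this is exactly the first displayed inequality.

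For $Y_i(T)$: add Theorem~\ref{terms1} (controlling $\|\Lambda_p^{i,T}\|_{\mathcal N(\Lambda)}$) and Theorem~\ref{terms3}, second line (controlling $\|\Gamma_p^{i,T}\|_{\mathcal N^1}$). The data terms give $C\|\langle\nabla_x\rangle^\alpha\langle\nabla_y\rangle^\alpha\Lambda_p(T_i,\cdot)\|_{L^2} + C\|\langle\nabla_x\rangle^\alpha\langle\nabla_y\rangle^\alpha\Gamma_p(T_i,\cdot)\|_{L^2}$; the $C\epsilon_2(\|\Lambda_p^{i,T}\|_{\mathcal N(\Lambda)} + \|\Gamma_p^{i,T}\|_{\mathcal N^1}) = C\epsilon_2 Y_i(T)$ term is absorbed on the left; the term $C\epsilon_2\|\Gamma_p^{i,T}\|_{\mathcal N^1} \le C\epsilon_2 Y_i(T)$ is likewise absorbed; the quadratic terms $C\|\Lambda_p^{i,T}\|_{\mathcal N(\Lambda)}\|\Gamma_p^{i,T}\|_{\mathcal S^r_{x,y}}$ and $C\|\Lambda_p^{i,T}\|_{\mathcal N(\Lambda)}^2$ are both $\lesssim Y_i^2(T)$; and the term $CN^{-\epsilon_1}\|\Lambda_c^{i,T}\|_{\mathcal N(\Lambda)} \le CN^{-\epsilon_1}X_i(T)$ gives the last summand. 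I do not expect a genuine obstacle here; the only point requiring a little care is bookkeeping the various universal constants so that the single absorption step $C\epsilon_2<\frac12$ handles every $\epsilon_2$-small term simultaneously, and checking that each restricted-norm factor $\|\cdot\|_{\mathcal S^r_{x,y}}$ is indeed dominated by the corresponding full norm $X_i$ or $Y_i$ so that the right-hand sides close in terms of $X_i$, $Y_i$ alone.
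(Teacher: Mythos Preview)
Your proposal is correct and matches the paper's approach exactly. The paper itself gives no explicit proof of Corollary~\ref{XYcor}; it simply records, after fixing $C\epsilon_2<\tfrac12$ and noting continuity from the high-regularity bounds, that the two displayed inequalities have been ``established'' by Theorems~\ref{terms1}, \ref{terms2}, \ref{terms3}, which is precisely the assembly-and-absorption argument you have written out.
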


 Now we can state and prove the main theorem of this section.
 \begin{theorem}\label{step1thm} Assume $\Lambda$, $\Gamma$ and $\phi$ are smooth solutions to the HFB system, with finite energy per particle, uniformly in $N$ (see \eqref{energy}).
 In particular,
 \begin{align*}
  \big\|\big<\nabla_x\big>^{\alpha}\big<\nabla_y\big>^{\alpha}\Lambda_c(0, \cdot) \big\|_{L^2}
 + \big\|\big<\nabla_x\big>^{\alpha}\big<\nabla_y\big>^{\alpha}\Gamma_c(0, \cdot) \big\|_{L^2} \le C.
 \end{align*}
 Assume, in addition,
  \begin{align*}
   \big\|\big<\nabla_x\big>^{\alpha}\big<\nabla_y\big>^{\alpha}\Gamma_p(0, \cdot) \big\|_{L^2} \le \frac{C}{N^{\epsilon_3}}
 \end{align*}
 ($\epsilon_3$ as in \eqref{lambdaestimp}; the corresponding estimate for $\Lambda_p$ holds globally in time).
 Let $\epsilon >0$. There exists $N_0$ such that, if $N \ge N_0$,
  then
 \begin{align*}
 &\|\Lambda_c\|_{\N(\Lambda)}+\|\Gamma_c\|_{\N^1} \lesssim 1\\
 & \|\Lambda_p\|_{\N(\Lambda)}+\|\Gamma_p\|_{\N^1} \le \epsilon.
 \end{align*}
 \end{theorem}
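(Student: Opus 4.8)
The strategy is a continuity (bootstrap) argument carried out on each of the finitely many time intervals $[T_i,T_{i+1}]$, $i=0,\dots,n-1$, produced by Lemma~\ref{firstsmallness}, chained together by an induction on $i$ that transports the bounds across consecutive intervals. The crucial point, which is what makes everything uniform in $N$, is that $n$ is fixed once and for all by the requirement $C\epsilon_2<\tfrac12$ entering Corollary~\ref{XYcor}, hence $n$ is independent of $N$. Abbreviate $a_i=\|\langle\nabla_x\rangle^{\alpha}\langle\nabla_y\rangle^{\alpha}\Lambda_c(T_i,\cdot)\|_{L^2}+\|\langle\nabla_x\rangle^{\alpha}\langle\nabla_y\rangle^{\alpha}\Gamma_c(T_i,\cdot)\|_{L^2}$ and let $b_i$ be the analogue with $\Lambda_p,\Gamma_p$. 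The finite-energy hypothesis gives $a_0\le C$; the smallness hypothesis on $\Gamma_p(0)$ together with the global bound \eqref{lambdaestimp} on $\Lambda_p$ gives $b_0\le C N^{-\delta}$ where $\delta:=\tfrac12\min(\epsilon_1,\epsilon_3)>0$.

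On a single interval $[T_i,T_{i+1}]$ I would argue as follows. The functions $X_i(T)=\|\Lambda_c^{i,T}\|_{\N(\Lambda)}+\|\Gamma_c^{i,T}\|_{\N^1}$ and $Y_i(T)=\|\Lambda_p^{i,T}\|_{\N(\Lambda)}+\|\Gamma_p^{i,T}\|_{\N^1}$ are continuous (by the non-uniform high-regularity bounds recorded before Corollary~\ref{XYcor}), and at the left endpoint the truncated equations are homogeneous, so $X_i(T_i)\le C_0 a_i$ and $Y_i(T_i)\le C_0 b_i$ by Strichartz. Feeding this into Corollary~\ref{XYcor}: on the relatively open subset of $[T_i,T_{i+1}]$ where $CY_i(T)\le\tfrac12$ one absorbs the quadratic and cross terms to get $X_i(T)\le 2C a_i$ and then $Y_i(T)\le 2C b_i+4C^2 N^{-\epsilon_1}a_i$. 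Provided $N$ is large enough that $2C(2C b_i+4C^2 N^{-\epsilon_1}a_i)<\tfrac12$ --- which holds because $b_i$ and $N^{-\epsilon_1}$ are small, $a_i$ is already controlled, and only $n$ intervals must be handled --- the continuity argument closes and these bounds persist for every $T\in[T_i,T_{i+1}]$.

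To pass to the next interval, note that $\Lambda_c(T_{i+1},\cdot)=\Lambda_c^{i,T_{i+1}}(T_{i+1},\cdot)$ and that the $\N(\Lambda)$ and $\N^1$ norms both dominate $\|\cdot\|_{L^\infty_t L^2_{x,y}}$ (choose the admissible pair $p=\infty$, $q=2$), so $a_{i+1}\le 2X_i(T_{i+1})\le 4C a_i$; likewise, bounding the $\Lambda_p$ piece of $b_{i+1}$ directly by \eqref{lambdaestimp}, $b_{i+1}\le Y_i(T_{i+1})+C N^{-\delta}\le C''(b_i+N^{-\epsilon_1}a_i+N^{-\delta})$. Iterating these two recursions over the $n$ steps yields $a_i\le (4C)^nC=:A$, a constant independent of $N$, and $b_i\le C_n N^{-\delta}$ for all $i\le n$. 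Finally, to recover the global-in-time norms one sums the per-interval estimates: for the $L^p(dt)$-type pieces of $\mathcal S_{x,y}$, $\N^1$ and of the collapsing norms this is the elementary splitting of the time integral over the $n$ intervals combined with $\ell^p\hookrightarrow\ell^\infty$; for the nonlocal $|\partial_t|^{1/4}$ piece one inserts a smooth partition of unity in $t$ subordinate to the intervals and uses the localization estimate of the type \eqref{troubleterm} already exploited in Proposition~\ref{l11}. This gives $\|\Lambda_c\|_{\N(\Lambda)}+\|\Gamma_c\|_{\N^1}\lesssim nA\lesssim 1$ and $\|\Lambda_p\|_{\N(\Lambda)}+\|\Gamma_p\|_{\N^1}\lesssim nC_n N^{-\delta}$, which is $\le\epsilon$ once $N\ge N_0(\epsilon)$.

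The main obstacle is the uniformity bookkeeping inside the bootstrap: one must keep the absorbing inequality $CY_i<\tfrac12$ valid on \emph{all} $n$ intervals with a single threshold $N_0$, which works precisely because $n$ is frozen before $N$ is chosen --- the resulting blow-up of the constants by a factor $(4C)^n$ is harmless since $n$ depends only on the fixed data through $\epsilon_2$. A secondary technical point is the transfer from the interval-truncated solutions $\Lambda^{i,T}$ back to the genuine global solution in the $|\partial_t|^{1/4}$ norm, where no naive restriction is available and one must route through a time cutoff as indicated above.
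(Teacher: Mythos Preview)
Your argument is correct and follows essentially the same bootstrap-plus-induction scheme as the paper: use Corollary~\ref{XYcor} on each of the $n$ intervals from Lemma~\ref{firstsmallness}, close the continuity argument on a single interval by exploiting the smallness of $Y_i$, and iterate, the key point being that $n$ is fixed before $N$ is chosen. The paper runs the single-interval bootstrap in a slightly different order (it first dichotomizes the $Y_i$ inequality---either $Y_i\le 2CY_i^2$, ruled out by continuity, or $Y_i\le 2(CN^{-\epsilon_3}+CN^{-\epsilon_1}X_i)$---and then substitutes into the $X_i$ inequality), whereas you first absorb $CY_i<\tfrac12$ in the $X_i$ line; the two routes are equivalent. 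The paper also reads ``the initial conditions for $X_2$ have the same bound'' more tersely and does not spell out the recursion $a_{i+1}\lesssim a_i$ or the final gluing to global-in-time norms; you are more explicit on both points, and in particular you correctly flag that the $|\partial_t|^{1/4}$ component of $\N(\Lambda)$ is nonlocal in $t$ and cannot be summed over intervals naively. Your suggested remedy via a smooth time partition is reasonable (the commutator $[|\partial_t|^{1/4},\chi_i]$ is of negative order), though note that \eqref{troubleterm} itself is stated only for the weaker collapsing norm and for a sharp cutoff---with a smooth $\chi_i$ the commutator argument is what does the work, not \eqref{troubleterm} directly.
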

 \begin{proof}

Starting at $T_1=0$, we have $X_1(0) \le C$ and $Y_1(0) \le  CN^{- \epsilon_3}$, and
\begin{align*}
&X_1(T)\le C + C X_1(T)Y_1(T)\\
&Y_1(T)\le CN^{- \epsilon_3} +C  Y_1(T)^2+ CN^{-\epsilon_1} X_1(T).
\end{align*}

In the second line, either
$Y_1(T)\le 2 CY_1(T)^2$, but by continuity we rule this out, or else
 $Y_1(t)\le 2(  CN^{- \epsilon_3}+ CN^{-\epsilon_1} X_1(t))$, and if we plug this in the first line we get
 \begin{align*}
&X_1(T)\le C + C \big(  N^{- \epsilon_3}+ N^{-\epsilon_1} X_1(T)\big)X_1(T).
\end{align*}
If $N$ is sufficiently large, we get $X_1(T)\le 2 C$. We continue to the next interval, $[T_2, T_3]$. The argument is the same,
the initial conditions for $X_2$ have the same bound, but the initial conditions for $Y_2$ have grown:
\begin{align*}
Y_2(T_2) \le  CN^{- \epsilon_3} +  CN^{- \epsilon_1}.
\end{align*}
We can repeat the argument as long as $Y_i(T_i)$ is sufficiently small. This will be the case if $N$ is sufficiently large, because $n$, the number of intervals, is independent of $N$.
\end{proof}

\section{Higher order derivatives}

 Next, we refine the argument to include $x+y$ derivatives. This section uses additional smallness results. Denote $\mathcal S^r_{x, y}[T_1, T_2]$ the standard Strichartz norms subject to the restriction $2\le p \le p_1 < \infty$ for some large $p_1< \infty$ and $t \in [T_1, T_2]$.

 \begin{lemma}
  \label{smallness} Under the assumptions of Theorem \ref{step1thm},
given $\epsilon >0$, we can divide $[0, \infty)$ into finitely many intervals (depending only on $\epsilon$ and the above implicit bounds, as well as those used for \eqref{epsilon2}) such that
\begin{align*}
&\|\big<\nabla_{x+y}\big>^{\alpha}\Gamma_{p \, and \, c}\big\|_{L^{\infty}(d(x-y))L^8([T_j, T_{j+1}])L^{\frac{4}{3}}(d(x+y))} \le \epsilon\\
&\|\big<\nabla_{x+y}\big>^{\alpha}\Lambda_{p \, and \, c}\big\|_{L^2([T_j, T_{j+1}])L^{\infty}(d(x-y))L^2(d(x+y))} \le \epsilon\\
&\big\|\big<\nabla_x\big>^{\alpha}\big<\nabla_y\big>^{\alpha}\Gamma_{p \, and \, c}\|_{\mathcal S^r_{x, y}[T_j, T_{j+1}]} \le \epsilon\\
&\big\|\big<\nabla_x\big>^{\alpha}\big<\nabla_y\big>^{\alpha}\Lambda_{p \, and \, c}\|_{\mathcal S^r_{x, y}[T_j, T_{j+1}]} \le \epsilon.
\end{align*}

 \end{lemma}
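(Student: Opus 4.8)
The plan is to reduce the whole statement to an elementary ``divide-into-small-intervals'' argument (the localization device mentioned after Lemma \ref{firstsmallness}, going back to Bourgain). The key elementary fact is: if $F\in L^p\big([0,\infty);Y\big)$ with $p<\infty$ and $Y$ a Banach space, then for every $\delta>0$ one can split $[0,\infty)$ into finitely many intervals $I_1,\dots,I_m$, with $m\le 1+\|F\|_{L^p([0,\infty);Y)}^p/\delta^p$, such that $\|F\|_{L^p(I_\ell;Y)}\le\delta$ for each $\ell$; indeed $t\mapsto\|F\|_{L^p([0,t];Y)}$ is continuous, nondecreasing and bounded, and the partition is produced by the intermediate value theorem. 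Every norm occurring in the statement is of this form \emph{as long as its time integration has a finite exponent and sits in the outermost slot}, and the required global finiteness is exactly what Theorem \ref{step1thm} and Lemma \ref{firstsmallness} provide.

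Concretely, I would first record the global bounds on $[0,\infty)$. From Theorem \ref{step1thm}, $\|\Lambda_c\|_{\N(\Lambda)}+\|\Gamma_c\|_{\N^1}\lesssim 1$, and $\|\Lambda_p\|_{\N(\Lambda)}+\|\Gamma_p\|_{\N^1}\le\epsilon$ once $N\ge N_0$; unwinding the definitions of $\N(\Lambda)$ and $\N^1$, this already controls $\big\|\big<\nabla_{x+y}\big>^{\alpha}\Lambda_{p\text{ or }c}\big\|_{L^2(dt)L^{\infty}(d(x-y))L^2(d(x+y))}$ and, for every admissible $(p,q)$ with $2\le p\le p_1$, the norms $\big\|\big<\nabla_x\big>^{\alpha}\big<\nabla_y\big>^{\alpha}\Lambda_{p\text{ or }c}\big\|_{L^p(dt)L^q(dx)L^2(dy)}$, $\big\|\big<\nabla_x\big>^{\alpha}\big<\nabla_y\big>^{\alpha}\Gamma_{p\text{ or }c}\big\|_{L^p(dt)L^q(dx)L^2(dy)}$ and their $x\leftrightarrow y$ flips. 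From Lemma \ref{firstsmallness} — that is \eqref{apriori3}, which holds for $\Gamma_c$ and $\Gamma_p$ separately because \eqref{apriori1}, \eqref{apriori2} do and the deduction of \eqref{apriori3} from them applies verbatim — we also have $\big\|\big<\nabla_{x+y}\big>^{\alpha}\Gamma_{p\text{ or }c}\big\|_{L^8(dt)L^{\infty}(d(x-y))L^{\frac{4}{3}}(d(x+y))}\lesssim 1$. Apply the chopping fact to each of these finitely many quantities — with its finite time exponent ($2$, $8$, or the endpoint $p_1$) in the outermost slot — and take the common refinement of the resulting finitely many partitions together with the partition of Lemma \ref{firstsmallness} producing \eqref{epsilon2}. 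This yields one finite partition $0=T_0<T_1<\dots<T_n=\infty$, with $n$ depending only on $\epsilon$, the ambient implicit constants, and the already fixed $\epsilon_2$, on each of whose blocks all of these norms are $\le\epsilon$.

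It remains to pass from the norms just controlled to the exact ones in the statement. For the $\Gamma$-quantity in the first displayed bound the time slot must be moved inside: since $8<\infty$, Minkowski's integral inequality gives $\big\|\,\cdot\,\big\|_{L^{\infty}(d(x-y))L^8([T_j,T_{j+1}])L^{\frac{4}{3}}(d(x+y))}\le\big\|\,\cdot\,\big\|_{L^8([T_j,T_{j+1}])L^{\infty}(d(x-y))L^{\frac{4}{3}}(d(x+y))}\le\epsilon$. In the second displayed bound the time slot is already outermost, so nothing is needed. For $\big\|\,\cdot\,\big\|_{\mathcal{S}^r_{x, y}[T_j,T_{j+1}]}$ the remaining admissible pairs with $2<p<p_1$ follow from the two endpoints by complex interpolation of the mixed-norm spaces: $\big\|F\big\|_{L^p(I)L^q(dx)L^2(dy)}\le\big\|F\big\|_{L^2(I)L^6(dx)L^2(dy)}^{1-\vartheta}\big\|F\big\|_{L^{p_1}(I)L^{q_1}(dx)L^2(dy)}^{\vartheta}\le\epsilon$ uniformly in $p$ (and likewise after exchanging $x$ and $y$), the same argument handling $\Lambda_{p,c}$. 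Choosing $N_0$ large enough that Theorem \ref{step1thm} holds with the prescribed $\epsilon$ finishes the proof.

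I expect the argument to be essentially bookkeeping; the one genuine subtlety is the \emph{order of the mixed norms}. The quantity $\|\,\cdot\,\|_{L^{\infty}(d(x-y))L^p(dt)L^{\cdots}}$ is not ``absolutely continuous in $t$'' in the sense the chopping requires — the map $t\mapsto\sup_{x-y}\int_0^t(\cdots)$ need not be right-continuous — so the subdivision must be performed on the $L^p(dt)$-\emph{outermost} version first and the norms commuted by Minkowski only afterward; and the restricted Strichartz norm must be made small uniformly in $p\in[2,p_1]$ via interpolation of the two endpoints rather than one admissible pair at a time.
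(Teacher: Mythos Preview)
Your proof is correct and follows the same approach as the paper's very brief argument: the first bound is exactly \eqref{epsilon2} (which, as noted after Lemma \ref{firstsmallness}, holds for $\Gamma_c$ and $\Gamma_p$ separately), and the remaining three follow from the global finiteness supplied by Theorem \ref{step1thm} via the standard chop-into-intervals device you describe, taking a common refinement. Your added care about performing the subdivision with the time integration outermost and only then commuting via Minkowski is exactly the point behind Remark \ref{localization}. One small over-elaboration: in this paper $\mathcal S^r_{x,y}$ is defined as (equivalent to) the sum of the four specific norms $L^2L^6L^2$ and $L^4L^3L^2$ in both variable orderings, not a supremum over a range $2\le p\le p_1$, so your interpolation step is unnecessary---chopping those four norms directly suffices.
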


\begin{proof}
The first estimate is \eqref{epsilon2}.
 Using Theorem \ref{step1thm}
 we  know the following quantities are bounded:
\begin{align*}
&\|\big<\nabla_{x+y}\big>^{\alpha}\Lambda_{p \, and \, c}\big\|_{L^2(dt)L^{\infty}(d(x-y))L^2(d(x+y))} \lesssim 1\\
&\big\|\big<\nabla_x\big>^{\alpha}\big<\nabla_y\big>^{\alpha}\Gamma_{p \, and \, c}\|_{\mathcal S_{x, y}} \lesssim 1\\
&\big\|\big<\nabla_x\big>^{\alpha}\big<\nabla_y\big>^{\alpha}\Lambda_{p \, and \, c}\|_{\mathcal S_{x, y}} \lesssim 1
\end{align*}
and the result follows.
\end{proof}

\begin{remark} However, it is not clear we can insure
\begin{align*}
&\||\nabla_{x+y}|^{\alpha}\Gamma_{p \, and \, c}\big\|_{L^{\infty}(d(x-y))L^2(t \in [T_1, T_2]) L^2(d(x+y))} \le \epsilon
\end{align*}
which is why we use
$L^{\infty}(d(x-y))L^8(t \in [T_1, T_2]) L^{\frac{4}{3}}(d(x+y))$ for which we have an
  a priori estimate, and which has the same scaling.
\end{remark}

\begin{theorem} \label{step2thm}
Under the assumption of Theorem \ref{step1thm}, if we also have
\begin{align}
&\big\|\big<\nabla_x\big>^{\alpha}\big<\nabla_y\big>^{\alpha}\nabla^j_{x+y} \Lambda_{p \, and \, c}(0, \cdot)\big\|_{L^2}+ \notag
\big\|\big<\nabla_x\big>^{\alpha}\big<\nabla_y\big>^{\alpha}\nabla^j_{x+y} \Gamma_{p \, and \, c}(0, \cdot)\big\|_{L^2}\\
& \lesssim 1\notag
\end{align}
for $j=1,  \cdots, j_0$, then
\begin{align}
 &\|\nabla^j_{x+y}\Lambda_c\|_{\N(\Lambda)}+\|\nabla^j_{x+y}\Gamma_c\|_{\N^1} \lesssim 1 \label{derivest0}\\
 & \|\nabla^j_{x+y}\Lambda_p\|_{\N(\Lambda)}+\|\nabla^j_{x+y}\Gamma_p\|_{\N^1} \lesssim 1. \label{derivest}
 \end{align}
\end{theorem}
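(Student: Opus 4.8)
I would argue by induction on $j$, the base case $j=0$ being exactly Theorem \ref{step1thm}. Assume \eqref{derivest0}, \eqref{derivest} have been established globally in time, for $N\ge N_0$, for all orders $0,1,\dots,j-1$. Since $\nabla_{x+y}$ has constant coefficients it commutes with $\S$, with $\S_\pm$, and with multiplication by $V_N(x-y)$; hence $\nabla^{j}_{x+y}\Lambda_{p,c}$ and $\nabla^{j}_{x+y}\Gamma_{p,c}$ solve the \emph{same} linear equations \eqref{rel1}--\eqref{rel4}, with the same $\tfrac{V_N}{N}\Lambda_p$ coefficient structure, and with forcing terms obtained by Leibniz–expanding the Hartree and quadratic terms. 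In particular the delicate forcing term $-\tfrac{V_N}{N}\Lambda_c$ in the $\Lambda_p$–equation becomes $-\tfrac{V_N}{N}\nabla^{j}_{x+y}\Lambda_c$, still handled by the $N^{-\epsilon_1}$ gain of Theorem \ref{mainthm} together with the condensate estimate for $\nabla^{j}_{x+y}\Lambda_c$; and the non-local $|\partial_t|^{1/4}$ norm, the cut-off $\chi_{[T_i,T]}$ and the auxiliary truncated functions $\Lambda^{i,T}_{p,c}$, $\Gamma^{i,T}_{p,c}$ are introduced verbatim as in Section \ref{scalar}.

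Next I classify the terms of the Leibniz expansion. The \emph{top-order} terms, in which all $j$ derivatives land on one factor while the other factor is undifferentiated, are estimated exactly by the arguments of Lemmas \ref{Lem1}--\ref{Lem3} and Theorems \ref{terms1}--\ref{terms3}: the undifferentiated factor is one of $\Gamma$ (entering through $\rho$ and $\bar\Gamma$), $\Gamma_c$, $\Gamma_p$, $\Lambda_c$, $\Lambda_p$, and on each interval $[T_i,T_{i+1}]$ produced by Lemma \ref{smallness} its relevant scale-invariant norm is $\le\epsilon$ — for $\Gamma,\Gamma_c,\Lambda_c$ via the a priori bound \eqref{epsilon2}, for $\Gamma_p,\Lambda_p$ via their global smallness from Theorem \ref{step1thm} or via the interval-smallness of an $\mathcal S^r_{x,y}$ norm. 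All \emph{remaining} terms split the $j$ derivatives as $j_1+j_2=j$ with $j_1,j_2<j$, so by the induction hypothesis both factors are already globally bounded in $\N^1$ or $\N(\Lambda)$; re-subdividing $[0,\infty)$ into finitely many ($N$-independent) intervals as in Lemma \ref{smallness}, I may also assume the relevant $\mathcal S^r_{x,y}$-type norms of these lower-order quantities are $\le\epsilon$ on each interval, so these terms contribute at most $\epsilon$ times bounded quantities plus data-dependent constants.

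Having estimated all forcing terms, I run the continuity argument of Section \ref{scalar}. For $T\in[T_i,T_{i+1}]$ put
\begin{align*}
&X^{(j)}_i(T)=\|\nabla^{j}_{x+y}\Lambda^{i,T}_c\|_{\N(\Lambda)}+\|\nabla^{j}_{x+y}\Gamma^{i,T}_c\|_{\N^1},\\
&Y^{(j)}_i(T)=\|\nabla^{j}_{x+y}\Lambda^{i,T}_p\|_{\N(\Lambda)}+\|\nabla^{j}_{x+y}\Gamma^{i,T}_p\|_{\N^1}.
\end{align*}
The term-by-term estimates above give, with a universal $C$ and the $\epsilon$ of Lemma \ref{smallness},
\begin{align*}
&X^{(j)}_i(T)\le C\mathcal D_j+C\epsilon X^{(j)}_i(T)+C X^{(j)}_i(T)Y^{(0)}_i(T),\\
&Y^{(j)}_i(T)\le C\mathcal D_j+C\epsilon Y^{(j)}_i(T)+C Y^{(j)}_i(T)Y^{(0)}_i(T)+C N^{-\epsilon_1}X^{(j)}_i(T),
\end{align*}
where $\mathcal D_j$ collects the data norm of $\nabla^{j}_{x+y}(\Lambda,\Gamma)$ at $T_i$ together with the (already bounded, by induction) contributions of the split terms. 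Since $Y^{(0)}_i\le\epsilon$ by Theorem \ref{step1thm}, choosing $\epsilon$ small to absorb $C\epsilon X^{(j)}_i$, $C X^{(j)}_i Y^{(0)}_i$ and $C\epsilon Y^{(j)}_i$, $C Y^{(j)}_i Y^{(0)}_i$ into the left sides, and then propagating across the finitely many intervals exactly as in the proof of Theorem \ref{step1thm} (the number of intervals being independent of $N$, and $\mathcal D_j$ bounded by hypothesis and the induction), yields $X^{(j)}_i(T),Y^{(j)}_i(T)\lesssim 1$ for $N\ge N_0$; taking $T=T_{i+1}$ and chaining over $i$ gives \eqref{derivest0}, \eqref{derivest}, and the bootstrap is also set up so that $X_i,Y_i$ are continuous, precisely as in Corollary \ref{XYcor}.

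\textbf{Main obstacle.} The one genuinely delicate point is that, by the remark following Lemma \ref{smallness}, the collapsing norm $\||\nabla_{x+y}|^{\alpha}\Gamma_{p,c}\|_{L^{\infty}(d(x-y))L^2(dt)L^2(d(x+y))}$ — which is a component of $\N^1$ — \emph{cannot} be made small by subdividing time, and for $j\ge2$ the diagonal restrictions $\nabla^{j}_{x+y}\Gamma(t,x,x)$ entering $\rho$ have \emph{no} a priori (energy-type) bound, unlike the cases $j=0,1$ covered by Lemma \ref{firstsmallness}. Hence whenever the Leibniz expansion produces a factor $\nabla^{j}_{x+y}\Gamma$ measured in such a norm, the accompanying small constant must come from the \emph{other} factor — either an $\epsilon_2$ from the a priori–controlled diagonal of $\Gamma_c$ or $\Lambda_c$ (as in Lemmas \ref{Lem1}--\ref{Lem3}), the global smallness $\|\Lambda_p\|_{\N(\Lambda)}\le\epsilon$, or the $N^{-\epsilon_1}$ gain in the pair equations — never from time-localization of $\Gamma_{p,c}$ itself; and for the $\S_\pm$ equations the collapsing norm on the left is produced through Proposition \ref{estforGamma} (Christ--Kiselev). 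Keeping careful track of which factor carries the smallness is essentially the only bookkeeping beyond the $j=0$ case, and once it is respected the proof is a routine induction.
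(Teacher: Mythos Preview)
Your proposal is correct, but the paper's proof is noticeably simpler, and the simplification is worth pointing out. After Theorem \ref{step1thm} has been proved, Lemma \ref{smallness} gives interval-smallness not only for $\Gamma_c,\Lambda_c$ (via the a priori bounds) but also for $\Gamma_p,\Lambda_p$ (via their newly established global $\mathcal S^r_{x,y}$ and collapsing bounds). The paper therefore stops distinguishing $p$ and $c$ and works with the summed $\Lambda=\Lambda_p+\Lambda_c$, $\Gamma=\Gamma_p+\Gamma_c$: adding \eqref{rel1}+\eqref{rel3} the forcing term $-\tfrac{V_N}{N}\Lambda_c$ merges with $\tfrac{V_N}{N}\Lambda_p$ to give $\tfrac{V_N}{N}\Lambda$ on the left, so in Theorem \ref{mainthm} one has $H=0$ and all nonlinear terms sit in $G$. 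Each of the (schematically) eight terms in $\big<\nabla_x\big>^\alpha\big<\nabla_y\big>^\alpha G$ is then bounded by $C\epsilon\cdot(\text{LHS})$ using the same H\"older splittings as Lemmas \ref{Lem1}--\ref{Lem3}, and one absorbs once $C\epsilon\le\tfrac12$. There is no need for the $X^{(j)}_i,Y^{(j)}_i$ system, the $N^{-\epsilon_1}$ gain, or a continuity argument --- the inequalities are already linear in the unknowns at level $j$.

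Your identification of the ``main obstacle'' is exactly right and matches the paper's handling: when $\nabla_{x+y}$ lands on the first factor and one is forced to use the collapsing norm $L^\infty(d(x-y))L^2(dt)L^2(d(x+y))$ for $\nabla_{x+y}\Gamma$ or $\nabla_{x+y}\Lambda$ (which cannot be made small by subdivision), the paper puts the smallness on the \emph{second} factor $B$ via its $L^4(dt)L^3(dx)L^2(dy)\subset\mathcal S^r_{x,y}$ norm, which \emph{is} small on each interval by Lemma \ref{smallness}. Your more elaborate bookkeeping would also close, but buys nothing once Lemma \ref{smallness} is available.
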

\begin{proof} (Sketch)
At this stage, we don't have to distinguish between $\Lambda_p$ and $\Lambda_c$, or $\Gamma_p$ and $\Gamma_c$ and we work directly
with $\Lambda=\Lambda_p+\Lambda_c$, $\Gamma=\Gamma_p+\Gamma_c$. Schematically, the equations are
\begin{align*}
&\S  \Lambda +
\frac{V_N}{N} \Lambda= V_N \Lambda \circ \Gamma + V_N \Gamma \circ \Lambda\\
& \S_{\pm}\Gamma= V_N \Lambda \circ \Gamma + V_N\Lambda \circ \Lambda.
\end{align*}
Apply $\nabla_{x+y}$ and localize the RHS to $[T_i, T_{i+1}]$:
\begin{align}
&\S \nabla_{x+y} \Lambda^i + \notag
\frac{V_N}{N}\nabla_{x+y} \Lambda^i\\
&= \chi_{[T_i, T_{i+1}]} \bigg(V_N \nabla_{x+y}\Lambda \circ \Gamma + V_N \nabla_{x+y} \Gamma \circ \Lambda
+V_N \Lambda \circ\nabla_{x+y} \Gamma + V_N \Gamma \circ\nabla_{x+y} \Lambda\bigg)\notag\\
&:= RHS(1)\\
& \S_{\pm}\nabla_{x+y}\Gamma^i \notag\\
&= \chi_{[T_i, T_{i+1}]} \bigg(V_N \nabla_{x+y}\Gamma \circ \Gamma + V_N \nabla_{x+y}\Lambda \circ \Lambda \notag
+V_N \Lambda  \circ \nabla_{x+y}\Gamma \circ \Gamma + V_N\Lambda \circ \nabla_{x+y} \Lambda\bigg)\\
&:= RHS(2)\label{rhsgamma}
\end{align}
with initial conditions at $T_i$, so $\nabla_{x+y}\Lambda^i=\nabla_{x+y}\Lambda$ and $\nabla_{x+y}\Gamma^i=\nabla_{x+y}\Gamma$ in $[T_i, T_{i+1}]$.
By slight abuse of notation, $\nabla_{x+y} F$ is the function $(x, y) \to \nabla_{x+y} F(x, y)$.
 Now we can use
and use Theorems \ref{mainthm} and Theorem \ref{estforGamma}:

\begin{align*}
&\big\|\big<\nabla_x\big>^{\alpha}\big<\nabla_y\big>^{\alpha}\nabla_{x+y}\Lambda^i \big\|_{\mathcal{S}_{x, y}}
+\big\|\big<\nabla_{x+y}\big>^{\alpha}\nabla_{x+y}\Lambda^i\big\|_{L^2(dt)L^{\infty}(d(x-y)) L^2(d(x+y))}\\
&+\big\|\big|\partial_t\big|^{\frac{1}{4}}\nabla_{x+y}\Lambda^i\big\|_{L^2(dt)L^{\infty}(d(x-y)) L^2(d(x+y))}\\
&+\big\|\big<\nabla_x\big>^{\alpha}\big<\nabla_y\big>^{\alpha}\nabla_{x+y}\Gamma^i \big\|_{\mathcal{S}_{x, y}}
+\big\||\nabla_{x+y}|^{\alpha}\nabla_{x+y}\Gamma^i\big\|_{L^{\infty}(d(x-y))L^2(dt) L^2(d(x+y))}\\
&\lesssim
\big\|\big<\nabla_x\big>^{\alpha}\big<\nabla_y\big>^{\alpha}RHS(1) \big\|_{\mathcal{S}_r'}
+\big\|\big<\nabla_x\big>^{\alpha}\big<\nabla_y\big>^{\alpha}RHS(2) \big\|_{\mathcal{S}_r'}\\
&+ \big\|\big<\nabla_x\big>^{\alpha}\big<\nabla_y\big>^{\alpha}\nabla_{x+y}\Lambda(t=T_i) \big\|_{L^2}\\
&+ \big\|\big<\nabla_x\big>^{\alpha}\big<\nabla_y\big>^{\alpha}\nabla_{x+y}\Gamma(t=T_i) \big\|_{L^2}.
\end{align*}

We have to estimate $\big<\nabla_x\big>^{\alpha}\big<\nabla_y\big>^{\alpha}$ applied to the 8 terms on the RHS.
We will show they are all  $ \le C \epsilon LHS$, so if $C \epsilon \le \frac{1}{2}$, the theorem is proved.
 In the above compositions, the estimates are the same regardless whether the second term is $\Lambda$ or $\Gamma$, so we call the second term $B$. The estimates are the same estimates as those of Lemmas \ref{Lem1} and \ref{Lem3}, but now we can also use the estimates of Lemma \ref{smallness}. If $\nabla_{x+y} $ falls on $B$,
\begin{align*}
&\big\|\big<\nabla_x\big>^{\alpha}\big<\nabla_y\big>^{\alpha}\bigg(
 V_N \Gamma^{i}\circ \nabla_{x+y} B^{i} \bigg)
 \big\|_{L^{\frac{8}{5}}([T_i, T_{i+1}])L^{\frac{4}{3}}(dx)L^2(dy)}\\
  &\le \epsilon \|\big<\nabla_x\big>^{\alpha}\big<\nabla_y\big>^{\alpha}\nabla_{x+y} B^{i}\|_{\mathcal S^r_{x, y}}\\
 \end{align*}
and
\begin{align*}
&\|\big<\nabla_x\big>^{\alpha}\big<\nabla_y\big>^{\alpha}\left((V_N\Lambda^{i})\circ \nabla_{x+y} B^i \right)\|_{L^{\frac{4}{3}}([T_i, T_{i+1}])L^{\frac{3}{2}}(dx)L^2(dy)}\\
&\le
 \|\big<\nabla_{x+y}\big>^{\alpha}\Lambda^{i}\|_{L^{\infty}(d(x-y))L^2([T_i, T_{i+1}])L^2(d(x+y))}
 \|\big<\nabla_x\big>^{\alpha}\big<\nabla_y\big>^{\alpha}\nabla_{x+y} B^i
\|_{L^{4}(dt)L^{3}(dx)L^2(dy)}\\
&\le \epsilon \|\big<\nabla_x\big>^{\alpha}\big<\nabla_y\big>^{\alpha}\nabla_{x+y} B^i
\|_{L^{4}(dt)L^{3}(dx)L^2(dy)}.
\end{align*}
If $\nabla_{x+y} $ falls on $V_N \Gamma$,
\begin{align*}
&\big\|\big<\nabla_x\big>^{\alpha}\big<\nabla_y\big>^{\alpha}\bigg(
( V_N \nabla_{x+y}\Gamma^{i})\circ  B^{i} \bigg)
 \big\|_{L^{\frac{4}{3}}([T_i, T_{i+1}])L^{\frac{3}{2}}(dx)L^2(dy)}\\
  &\le \| \nabla_{x+y}\Gamma^i\|_{L^{\infty}(d(x-y))L^2(dt)L^{2}(d(x+y))} \|\big<\nabla_x\big>^{\alpha}\big<\nabla_y\big>^{\alpha} B^i
\|_{L^{4}([T_i, T_{i+1}])L^{3}(dx)L^2(dy)}\\
& \le \epsilon \| \nabla_{x+y}\Gamma^i\|_{L^{\infty}(d(x-y))L^2([T_j, T_{j+1}])L^{2}(d(x+y))}
 \end{align*}
 and similarly
 \begin{align*}
&\big\|\big<\nabla_x\big>^{\alpha}\big<\nabla_y\big>^{\alpha}\bigg(
( V_N \nabla_{x+y}\Lambda^{i})\circ  B^{i} \bigg)
 \big\|_{L^{\frac{4}{3}}([T_i, T_{i+1}])L^{\frac{3}{2}}(dx)L^2(dy)}\\
  &\le \| \nabla_{x+y}\Lambda^i\|_{L^{\infty}(d(x-y))L^2([T_j, T_{j+1}])L^{2}(d(x+y))} \|\big<\nabla_x\big>^{\alpha}\big<\nabla_y\big>^{\alpha} B^i
\|_{L^{4}([T_i, T_{i+1}])L^{3}(dx)L^2(dy)} \\
&\le \epsilon \| \nabla_{x+y}\Lambda^i\|_{L^{\infty}(d(x-y))L^2(dt)L^{2}(d(x+y))}.
 \end{align*}

 The proof for $\nabla_{x+y}^j$ is the same.

\end{proof}

\section{Estimates for $\sht$, $p_2=\shb \circ \sh$ and $\sh$}

\begin{proof} (of Theorem \ref{intro2}).
The equations for $\sht=N \Lambda_p$ and $p_2=N \Gamma_p$ are
\begin{align*}
&\S \,  \sht +\{V_N * \rho, \sht\}
+ \left((V_N\Gamma^T)\circ \sht + (V_N\Lambda)\circ  p_2 \right)_{symm}=-\frac{V_N}{2} \Lambda\\
&\bar \S_{\pm} p_2 + [V_N * \rho, p_2]+ \left((V_N\Gamma)\circ p_2 + (V_N\bar \Lambda)\circ { \sht} \right)_{skew}=0.
\end{align*}

Let $\epsilon >0$. As in the previous proofs, use \eqref{epsilon2} and
 divide $[0, \infty)$ into finitely many intervals $[T_i, T_{i+1}]$ so that
\begin{align*}
&\sup_z\| \Gamma(t, x+z, x)\|_{L^8([T_i, T_{i+1}])L^{\frac{12}{7}}(dx)} < \epsilon
\end{align*}
and
\begin{align*}
&\sup_z\| \Lambda(t, x+z, x)\|_{L^2([T_i, T_{i+1}])L^{3}(dx)} < \epsilon
\end{align*}
and estimate the dual Strichartz norms
\begin{align*}
&\|(V_N * \rho(t, x)) \sht(t, x, y)\|_{L^{\frac{8}{5}}([T_i, T_{i+1}]) L^{\frac{4}{3}}(dx)L^2(dy)}
+ \|(V_N\Gamma^T)\circ \sht \|_{L^{\frac{8}{5}}([T_i, T_{i+1}]) L^{\frac{4}{3}}(dx)L^2(dy)}\\
&\le C \sup_z\| \Gamma(t, x+z, x)\|_{L^8([T_i, T_{i+1}])L^{\frac{12}{7}}(dx)} \|\sht\|_{L^2([T_i, T_{i+1}])L^6(dx)L^2(dy)}\\
&\le C \epsilon \|\sht\|_{L^2([T_i, T_{i+1}])L^6(dx)L^2(dy)}
\end{align*}
and similarly
\begin{align*}
&\|(V_N * \rho(t, x)) p_2(t, x, y)\|_{L^{\frac{8}{5}}([T_i, T_{i+1}]) L^{\frac{4}{3}}(dx)L^2(dy)}
+\|
(V_N\Gamma)\circ p_2\|_{L^{\frac{8}{5}}([T_i, T_{i+1}]) L^{\frac{4}{3}}(dx)L^2(dy)}\\
&\le C \epsilon \|p_2\|_{L^2([T_i, T_{i+1}])L^6(dx)L^2(dy)}.
\end{align*}
Also,
\begin{align*}
&\|(V_N\Lambda)\circ  p_2\|_{L^{\frac{8}{5}}(dt)L^{\frac{4}{3}}(dx)L^2(dy)} \\
&\le  C \sup_z\| \Lambda(t, x+z, x)\|_{L^2([T_i, T_{i+1}])L^{3}(dx)}\|p_2\|_{L^4([T_i, T_{i+1}])L^{3}(dx)L^2(dy)}\\
&\le \epsilon \|p_2\|_{L^4([T_i, T_{i+1}])L^{3}(dx)L^2(dy)}.
\end{align*}
  We get, using the estimates of Theorem \ref{intro1} as well as
  proposition
\ref{Suusharp} and standard Strichartz estimates,
\begin{align*}
&\|\sht\|_{\mathcal S_{x, y}[T_i, T_{i+1}]}
+\|p_2\|_{\mathcal S_{x, y}[T_i, T_{i+1}]}\\
&\le C (\left(\|\sht(t=T_i)\|_{L^2} + \|p_2(t=T_i)\|_{L^2}\right)\\
&+  C \epsilon (\left(\|\sht\|_{\mathcal S_{x, y}[T_i, T_{i+1}]}
+\|p_2\|_{\mathcal S_{x, y}[T_i, T_{i+1}]} \right) + C \log N.
\end{align*}
Thus, if $C\epsilon \le \frac{1}{2}$ we get the desired result on each interval $[T_i, T_{i+1}]$. The number of such intervals is finite
(bounded by universal constants), and the result follows.

The proof for $\nabla_{x+y}^j$ is similar.

\end{proof}

\section{Estimates for the condensate $\phi$}

The non-linear equation for $\phi$ can be regarded as a linear equation on a background given by $\Gamma$ and $\Lambda$, for which we already have estimates:
\begin{align}
&  \left\{\frac{1}{i}\partial_{t}-\Delta_{x_1}\right\}\phi(x_{1})  \notag\\
&=-\int dy\left\{
v_N(x_{1}-y)\Gamma(y,y)\right\}\phi(x_{1})  \label{phi1}
\\
&-\int dy\big\{v_N(x_{1}-y)\Gamma_p(y,x_{1})\phi(y) \label{phi2}\\
&+ \int dy\big\{v_N(x_{1}-y)\Lambda_p(x_{1},y)\big\}\overline{\phi}(y ) \label{phi3}.
 \end{align}
Define the standard Strichartz spaces
\begin{align*}
\|\phi\|_{\mathcal S} =  \sup_{  p, q \, \, admissible
}\|\phi\|_{L^{p}(dt) L^{q}(dx)}.
\end{align*}

We prove the estimates of Corollary  \ref{corphi}.

\begin{proof}
Using \eqref{apriori3} and \eqref{est1} we split $[0, \infty)$ into finitely many intervals so that
\begin{align*}
&\|\big<\nabla_{x+y}\big>^{\alpha}\Gamma\|_{L^8(dt)L^{\infty}(d(x-y))L^{\frac{4}{3}}(d(x+y))} \le \epsilon\\
& \|\big<\nabla_{x+y}\big>^{\alpha}\Lambda\|_{L^2(dt)L^{\infty}(d(x-y))L^{2}(d(x+y))} \le \epsilon
\end{align*}
Using the fractional Leibniz rule, we easily estimate the RHS of the equation for $\phi$:
\begin{align*}
&\|\big<\nabla\big>^{\alpha} \eqref{phi1}\|_{L^{\frac{8}{5}}([T_i. T_{i+1}])L^{\frac{4}{3}}(dx)} \le C \epsilon \|\big<\nabla\big>^{\alpha} \phi\|_{L^2[T_i. T_{i+1})L^6(dx)}\\
&\|\big<\nabla\big>^{\alpha} \eqref{phi2}\|_{L^{\frac{8}{5}}[T_i. T_{i+1})L^{\frac{4}{3}}(dx)} \le C \epsilon \|\big<\nabla\big>^{\alpha} \phi\|_{L^2[T_i. T_{i+1})L^6(dx)}\\
&\|\big<\nabla\big>^{\alpha} \eqref{phi3}\|_{L^{1}[T_i. T_{i+1})L^{2}(dx)} \le C \epsilon \|\big<\nabla\big>^{\alpha} \phi\|_{L^2[T_i. T_{i+1})L^6(dx)}
\end{align*}
thus
\begin{align*}
\|\big<\nabla\big>^{\alpha}\phi\|_{\mathcal S[T_i, T_{i+1}]} \le C \|\big<\nabla\big>^{\alpha}\phi(t=T_i)\|_{L^2} + 3 C \epsilon  \|\big<\nabla\big>^{\alpha} \phi\|_{L^2[T_i. T_{i+1})L^6(dx)}.
\end{align*}
By taking $3 \epsilon < \frac{1}{2}$, the result  for $j=0$ follows. Next, differentiate the equation and use the same splitting. If the derivative falls on $\phi$, the argument is the same.
If the derivative falls on $\Gamma$, use
\begin{align*}
\sup_{x-y} \||\nabla_{x+y}|^{\alpha+1}\Gamma\|_{L^2(dt d(x+y))} \lesssim 1.
\end{align*}
While we don't know if this term can be made small by localizing to time intervals, such a term is coupled with $\phi$ without extra derivatives, which has been estimated already. For instance,
\begin{align*}
&\int dz|v_N(z)|\|\left(|\nabla_{x}|^{\alpha+1}\Gamma_p(x, x-z)\right)\phi(x-z) \|_{L^{1}[T_i. T_{i+1})]L^{2}(dx)}\\
&+\int dz |v_N(z)|\left(|\nabla_{x}|\Gamma_p(x, x-z)\right)\big<\nabla\big>^{\alpha}\phi(x-z) \|_{L^{1}[T_i. T_{i+1})]L^{2}(dx)}\\
&\lesssim \sup_{x-y} \||\nabla_{x+y}|^{\alpha+1}\Gamma_p\|_{L^2L^2}\|
||\nabla|^{\alpha}
\phi\|_{L^2(dt)L^{6}(dx)} \le C
\end{align*}
thus we get
\begin{align*}
\|\big<\nabla\big>^{\alpha+1}\phi\|_{\mathcal S[T_i, T_{i+1}]} \le C \|\big<\nabla\big>^{\alpha+1}\phi(t=T_i)\|_{L^2} + 3 C \epsilon  \|\big<\nabla\big>^{\alpha+1} \phi\|_{L^2[T_i. T_{i+1})L^6(dx)} +C
\end{align*}
which proves the result. The case of higher $j$ is similar.
\end{proof}

\section{Proof the square function estimates}\label{prelim}

\subsection{The double square function in standard coordinates}

This subsection covers well-known results, and is included for the reader's convenience.
Let $\psi_k$ ($k \ge 1$) be any functions satisfying  $\hat \psi_k=\hat \psi (\frac{\cdot}{2^k})$ with $\hat \psi \in C_0^{\infty}$  vanishing in a neighborhood of $0$. Let $ \hat \psi_0 \in C_0^{\infty}$.

 Define
 $\overrightarrow K(x)$ be the infinite column vector $\overrightarrow{\psi(x)}=(\psi_k(x))_{k \ge 0}$. This  satisfies the standard estimates for a  Calder\'on-Zygmund operator: in particular (in three space dimensions)
$|\overrightarrow{\psi(x)}|\lesssim \frac{1}{|x|^3}$,\\
 $| \overrightarrow{ \psi(x+y)}-\overrightarrow{ \psi(x)}|\lesssim \frac{|y|}{|x|^4}$ if $|x| > 2|y|$. Convolution with $\overrightarrow K(x)$ is bounded from $L^2$ to $L^2l^2$ by orthogonality.

Denote
\begin{align*}
&\overrightarrow K_1f(x, y)=\int \overrightarrow K(x')f(x-x', y)dx'\\
&\overrightarrow K_2f(x, y)=\int \overrightarrow K(y')f(x, y-y')dy'.
\end{align*}
We review the following known results

\begin{lemma}\label{C-Z}
Let $1< p, q < \infty$. Then
\begin{align*}
&\|\overrightarrow K_1f\|_{L^p(dx)L^q(dy)l^2} \lesssim \|f\|_{L^p(dx)L^q(dy)} \\
&\|\overrightarrow K_2f\|_{L^p(dx)L^q(dy)l^2} \lesssim \|f\|_{L^p(dx)L^q(dy)}.
\end{align*}
\end{lemma}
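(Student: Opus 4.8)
The statement is the standard mixed-norm (Fefferman–Stein style) vector-valued Calderón–Zygmund theorem applied to convolution in one of the two variables, and I would present it as such. The key point is that $\overrightarrow{K}$, convolution against the column vector $\overrightarrow{\psi}=(\psi_k)_{k\ge 0}$, is an $\ell^2$-valued Calderón–Zygmund kernel on $\R^3$: it satisfies $|\overrightarrow{\psi}(x)|_{\ell^2}\lesssim |x|^{-3}$ and the Hörmander condition $|\overrightarrow{\psi}(x+y)-\overrightarrow{\psi}(x)|_{\ell^2}\lesssim |y|\,|x|^{-4}$ for $|x|>2|y|$ (these are exactly the pointwise bounds already recorded just before the lemma), and the associated operator $f\mapsto \overrightarrow{K}*f=(\psi_k*f)_{k\ge 0}$ is bounded $L^2(\R^3)\to L^2(\R^3;\ell^2)$ by Plancherel and almost-orthogonality of the $\hat\psi_k$. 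By the vector-valued Calderón–Zygmund theorem it is therefore bounded $L^r(\R^3)\to L^r(\R^3;\ell^2)$ for every $1<r<\infty$, and by duality/interpolation it is of weak type $(1,1)$ as well, though we will only need $1<r<\infty$.

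The first step is then to upgrade this scalar-in-the-$x$-variable statement to a mixed-norm statement by a standard iteration. Write $F(x)=\|f(x,\cdot)\|_{L^q(dy)}$; since $\overrightarrow{K}_1$ acts only in $x$, Minkowski's integral inequality gives, for each fixed $(x,k)$,
\begin{align*}
\big\|(\psi_k\ast_x f)(x,\cdot)\big\|_{L^q(dy)}
\le \int |\psi_k(x')|\,\|f(x-x',\cdot)\|_{L^q(dy)}\,dx'=(|\psi_k|\ast F)(x),
\end{align*}
but this crude bound loses the $\ell^2$ cancellation, so instead I would run the Calderón–Zygmund argument directly with values in the Banach space $\ell^2\big(L^q(dy)\big)$ (equivalently, apply the Banach-space-valued Calderón–Zygmund theorem with target space $X=\ell^2(L^q_y)$, which is a UMD space for $1<q<\infty$). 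The kernel bounds are unchanged because the $x$-kernel is scalar; the $L^2(dx;X)$ bound needed as input follows from the $L^2_x L^2_y$ boundedness we already have when $q=2$ and, for general $q$, from Fubini once one observes $\|\overrightarrow K_1 f\|_{L^2(dx)L^2(dy)\ell^2}=\|\overrightarrow K_1 f\|_{L^2(dy)L^2(dx)\ell^2}\lesssim\|f\|_{L^2(dxdy)}$ together with the scalar $L^q$-boundedness of each $\psi_k\ast_x(\cdot)$; more cleanly, one invokes the extrapolation/mixed-norm form of the vector-valued CZ theorem, which directly yields $\|\overrightarrow K_1 f\|_{L^p(dx)L^q(dy)\ell^2}\lesssim\|f\|_{L^p(dx)L^q(dy)}$ for all $1<p,q<\infty$. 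The estimate for $\overrightarrow{K}_2$ is identical after exchanging the roles of $x$ and $y$ (or by symmetry of the whole set-up).

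The step I expect to require the most care — though it is entirely routine in the literature — is justifying the $\ell^2$-valued $L^2\to L^2$ bound and the passage through the mixed norm without losing the square-function cancellation: one must resist replacing $\overrightarrow K$ by the scalar kernel $\sum_k|\psi_k|$, and instead keep the vector-valued formulation throughout, citing the Benedek–Calderón–Panzone mixed-norm theory (or Grafakos–Martell) for the $L^p(L^q)$ extension. Since the excerpt explicitly flags this as a known result included for the reader's convenience, I would keep the write-up short: state the three structural facts (kernel size, kernel regularity, $L^2$-$\ell^2$ bound), cite the vector-valued/mixed-norm Calderón–Zygmund theorem, and remark that the second estimate follows by symmetry. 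No genuinely new ideas are needed; the only ``obstacle'' is bookkeeping the function spaces correctly.
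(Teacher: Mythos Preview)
Your proposal is correct and, for $\overrightarrow K_1$, aligns with the paper's proof: both invoke the vector-valued/mixed-norm Calder\'on--Zygmund theorem (the paper cites \cite{Ward} and \cite{Stein}) after recording the kernel bounds and the $L^2\to L^2\ell^2$ input.

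The one place where the paper is more economical is $\overrightarrow K_2$. You treat it ``by symmetry'' via the same Banach-valued machinery, but the paper observes that no machinery is needed: since $\overrightarrow K_2$ acts in the \emph{inner} variable $y$, for each fixed $x$ one applies the ordinary scalar square-function estimate $\|(\psi_k*_y f(x,\cdot))_k\|_{L^q(dy)\ell^2}\lesssim \|f(x,\cdot)\|_{L^q(dy)}$ and then takes $L^p(dx)$. So the asymmetry of the mixed norm actually helps here, and only $\overrightarrow K_1$ (action in the outer variable) requires the mixed-norm extension. Your route is not wrong, just heavier than necessary for that half of the lemma.
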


\begin{remark}
The above inequalities are just
"linear" formulation of the square function estimate
\begin{align}
&\|S_1f\|_{L^p(dx)L^q(dy)}=\|\left(\sum_{k=0}^{\infty} |f*(\psi_k \delta)|^2\right)^{\frac{1}{2}}\|_{L^p(dx)L^q(dy)}\lesssim \|f\|_{L^p(dx)L^q(dy)}\label{S1}\\
&\|S_2f\|_{L^p(dx)L^q(dy)}=\|\left(\sum_{k'=0}^{\infty} |f*(\delta \psi_{k'} )|^2\right)^{\frac{1}{2}}\|_{L^p(dx)L^q(dy)}\lesssim \|f\|_{L^p(dx)L^q(dy)}\notag.
\end{align}
\end{remark}

\begin{proof}
 The estimate for $\overrightarrow K_2$ (or $S_2$) follows right away from the standard square function estimate in $y$
 (for fixed $x$), followed by $L^p$ in $x$.

The operator $\overrightarrow K_1$
is a Calder\'on-Zygmund operator. It is bounded from $L^p(dx)L^q(dy)$ to $L^p(dx)L^q(dy)l^2$. See \cite{Ward}, Theorem 2.1 and Corollary 2.3. For the main case we need, $q=2$, this also follows from Section 5, Chapter 2 in \cite{Stein}

\end{proof}

In fact we can do more:
Let  $\overrightarrow K$ be the infinite column kernel $\overrightarrow{\psi(x)}$ as before, but now we multiply it
with $l^2$ vectors  $\overrightarrow {f_j}$:
\begin{align*}
\overrightarrow{\psi} \otimes \overrightarrow {f} = \left(\psi_k f_j\right)_{j, k}.
\end{align*}
 It maps $ l^2 \to l^2l^2$ with norm given by $|\overrightarrow{\psi}|_{l^2}$, and we  repeat
the argument, and get

\begin{lemma}
Define
\begin{align}
&\overrightarrow{\overrightarrow K}_1(\overrightarrow f)(x, y)=\int \overrightarrow K(x') \otimes \overrightarrow f(x-x', y)dx' \label{doubleK}
\end{align}
Then, if $1<p, q < \infty$,
\begin{align}
\|\overrightarrow{\overrightarrow K}_{1}\overrightarrow f \|_{L^p(dx) L^q(dy)l^2l^2} \lesssim
 \|\overrightarrow f\|_{L^p(dx) L^q(dy)l^2}
\end{align}
and thus
\begin{align}
&\|\overrightarrow{\overrightarrow K}_{1}(\overrightarrow K_2 f) \|_{L^p(dx) L^q(dy)l^2l^2} \lesssim
 \|\overrightarrow f\|_{L^p(dx) L^q(dy)l^2}\label{leib}
\end{align}
or, equivalently
\begin{align*}
&\|S_1S_2f\|_{L^p(dx)L^q(dy)}=\|\left(\sum_{k', k''=0}^{\infty} |f*(\psi_{k'} \delta)*(\delta \psi_{k''} )|^2\right)^{\frac{1}{2}}\|_{L^p(dx)L^q(dy)}\lesssim \|f\|_{L^p(dx)L^q(dy)}.
\end{align*}
The result is also true, uniformly in $M$,  if the dyadic intervals $2^i$ defining the square functions are replaced by $2^iM$.
\end{lemma}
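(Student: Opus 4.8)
The plan is to reduce the statement to the single–variable square function estimate of Lemma~\ref{C-Z}, iterated once, via an operator–valued (Banach–space–valued) Calder\'on--Zygmund argument: we replace the scalar column kernel $\overrightarrow{\psi}$ by its tensor with the identity on $l^2$ and observe that all the relevant bounds are unchanged.

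First I would record that for each fixed $x$ the map $\overrightarrow v \mapsto \overrightarrow{\psi(x)}\otimes \overrightarrow v = (\psi_k(x) v_j)_{j,k}$ is a bounded operator $l^2 \to l^2 l^2$ with operator norm exactly $|\overrightarrow{\psi(x)}|_{l^2}$. Hence the operator–valued kernel $x \mapsto \overrightarrow{\psi(x)}\otimes(\cdot)$ satisfies precisely the same size and smoothness estimates as $\overrightarrow\psi$ itself, namely $\|\overrightarrow{\psi(x)}\otimes(\cdot)\|_{l^2\to l^2l^2}\lesssim |x|^{-3}$ and $\|(\overrightarrow{\psi(x+y)}-\overrightarrow{\psi(x)})\otimes(\cdot)\|_{l^2\to l^2l^2}\lesssim |y|/|x|^{4}$ for $|x|>2|y|$, which are exactly the pointwise bounds already noted for $\overrightarrow\psi$. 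Moreover convolution in the $x$–variable with this kernel is bounded $L^2(dx;l^2)\to L^2(dx;l^2l^2)$ by Plancherel, since $\sum_k |\hat\psi_k(\xi)|^2\lesssim 1$ (finitely overlapping supports). These are the hypotheses of the vector–valued Calder\'on--Zygmund theorem (Benedek--Calder\'on--Panzone; see \cite{Ward}, Theorem~2.1 and Corollary~2.3, or Section~5, Chapter~2 of \cite{Stein}), which promotes the $L^2$ bound to $L^p$ and, because the operator acts only in $x$ and one may take the target Banach space to be $L^q(dy;l^2l^2)$, to the mixed norm $L^p(dx)L^q(dy)$. This yields the first displayed inequality.

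For the second, I would simply compose: Lemma~\ref{C-Z} gives $\|\overrightarrow K_2 f\|_{L^p(dx)L^q(dy)l^2}\lesssim \|f\|_{L^p(dx)L^q(dy)}$, and then applying the first inequality with $\overrightarrow f=\overrightarrow K_2 f$ produces $\|\overrightarrow{\overrightarrow K}_1(\overrightarrow K_2 f)\|_{L^p(dx)L^q(dy)l^2l^2}\lesssim \|f\|_{L^p(dx)L^q(dy)}$. Unwinding the definitions, the $(k',k'')$ entry of $\overrightarrow{\overrightarrow K}_1(\overrightarrow K_2 f)$ is $f*(\psi_{k'}\delta)*(\delta\psi_{k''})$, so the $l^2l^2$ norm is $S_1 S_2 f$ and we obtain the stated square function estimate $\|S_1 S_2 f\|_{L^p(dx)L^q(dy)}\lesssim \|f\|_{L^p(dx)L^q(dy)}$. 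Finally, uniformity in $M$ is immediate: replacing the dyadic scales $2^i$ by $2^i M$ is a rescaling of the kernel by $M$, leaving the Calder\'on--Zygmund constants (size, smoothness, $L^2$ norm) unchanged, so the implicit constants are independent of $M$.

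Since the paper flags this subsection as well known, there is no deep obstacle; the one point requiring care — and the step I expect to need the most careful wording — is checking that the abstract operator–valued Calder\'on--Zygmund machinery applies verbatim in the mixed–norm space $L^p(dx)L^q(dy)$ with an $l^2l^2$–valued output, i.e.\ keeping straight which variable the singular integral acts in and citing the precise mixed–norm statements in \cite{Ward} and \cite{Stein}. Once that is granted the remainder is bookkeeping.
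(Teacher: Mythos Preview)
Your proposal is correct and follows exactly the approach the paper takes: the paper's proof consists of the remark that $\overrightarrow{\psi}\otimes(\cdot)$ maps $l^2\to l^2 l^2$ with norm $|\overrightarrow{\psi}|_{l^2}$ and the phrase ``we repeat the argument,'' i.e.\ rerun the vector-valued Calder\'on--Zygmund reasoning of Lemma~\ref{C-Z} with this tensor kernel, citing the same references \cite{Ward} and \cite{Stein}. Your write-up simply makes that sketch explicit.
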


\subsection{The double square function  in rotated coordinates}

 Recall
\begin{align*}
R=\frac{1}{\sqrt 2} \left(
\begin{matrix}
1 & 1\\
-1 & 1
\end{matrix}
\right)
\end{align*}
and $L_1$, $L_2$ non-singular matrices satisfying
satisfying
\begin{align*}
(RL_1)^{-1}=\left(
\begin{matrix}
1 & a\\
0 & b
\end{matrix}
\right)
\end{align*}
\begin{align*}
(RL_2)^{-1}=\left(
\begin{matrix}
c & 1\\
d & 0
\end{matrix}
\right).
\end{align*}

\begin{lemma}\label{Kf}
Let $K$, $f$ be functions or distributions on $\mathbb R^3$ so $K*f$ is defined, and let $K \delta$ be $K(x) \delta(y)$, and $\delta K$ $\delta(x) K(y)$.
Then
\begin{align}
&\left((K \delta) * (f\circ ( RL_1)^{-1})\right)(RL_1(x, y))=((K \delta)*f)(x, y)\label{K1f}\\
&\left((\delta K ) * (f\circ ( RL_2)^{-1})\right)(RL_2(x, y))=((K \delta)*f)(x, y)\label{K2f}
\end{align}
\end{lemma}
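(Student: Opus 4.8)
The plan is to verify \eqref{K1f} and \eqref{K2f} by a direct change of variables, exploiting the fact that convolution in tensor-product form interacts cleanly with linear substitutions. I would first recall what $(K\delta)*f$ means: writing points of $\mathbb R^6$ as $(x,y)$ with $x,y\in\mathbb R^3$, we have $((K\delta)*f)(x,y)=\int_{\mathbb R^3}K(x')f(x-x',y)\,dx'$, because the $\delta(y')$ factor collapses the $y'$-integration. The key structural observation is that the substitution $(RL_1)^{-1}$ is \emph{upper triangular}, of the form $\begin{pmatrix}1&a\\0&b\end{pmatrix}$, so it sends $(x,y)\mapsto(x+ay,\,by)$; in particular its action on the first coordinate is a shift by something depending only on $y$, plus the identity in $x$. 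This is exactly the feature that makes a convolution kernel supported on $\{y'=0\}$ invariant under it.

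Concretely, for \eqref{K1f} I would compute the left-hand side at a point, say $(RL_1(x,y))=(u,v)$ where $u=u(x,y)$, $v=v(x,y)$ are the components of $RL_1$ applied to $(x,y)$. By definition of the tensor convolution,
\[
\left((K\delta)*(f\circ(RL_1)^{-1})\right)(u,v)=\int_{\mathbb R^3}K(x')\,(f\circ(RL_1)^{-1})(u-x',\,v)\,dx'.
\]
Now apply $(RL_1)^{-1}=\begin{pmatrix}1&a\\0&b\end{pmatrix}$ to $(u-x',v)$: the result is $(u-x'+av,\,bv)$. But $(RL_1)^{-1}(u,v)=(x,y)$, i.e. $u+av=x$ and $bv=y$, so $(RL_1)^{-1}(u-x',v)=(x-x',y)$. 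Hence the integral becomes $\int K(x')f(x-x',y)\,dx'=((K\delta)*f)(x,y)$, which is \eqref{K1f}. The proof of \eqref{K2f} is the mirror image: $(RL_2)^{-1}=\begin{pmatrix}c&1\\d&0\end{pmatrix}$, and $(\delta K)*g(x,y)=\int K(y')g(x,y-y')\,dy'$; substituting and using $(RL_2)^{-1}(u,v)=(x,y)$ one finds $(RL_2)^{-1}(u,v-y')=(x-y',?)$ — one checks that the second component of $(RL_2)^{-1}(u,v-y')$ equals $d u$, which is independent of $y'$ and equals $y$ when $y'=0$ is plugged in appropriately; recombining gives $\int K(y')f(x-y',y)\,dy'=((K\delta)*f)(x,y)$.

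I would write this out carefully with the explicit entries: from $R=\tfrac{1}{\sqrt2}\begin{pmatrix}1&1\\-1&1\end{pmatrix}$ one has $R^{-1}=\tfrac{1}{\sqrt2}\begin{pmatrix}1&-1\\1&1\end{pmatrix}$, and one must exhibit lower-triangular $L_1,L_2$ making $(RL_i)^{-1}$ have the stated shape; e.g. $L_1=\begin{pmatrix}\ell_{11}&0\\\ell_{21}&\ell_{22}\end{pmatrix}$ with $(RL_1)^{-1}=L_1^{-1}R^{-1}$, and solving $L_1^{-1}R^{-1}=\begin{pmatrix}1&a\\0&b\end{pmatrix}$ pins down $a,b$ and the entries of $L_1$. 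These are elementary $2\times2$ (block) computations. The only genuine point requiring care — and the main obstacle, such as it is — is the bookkeeping with distributions: one should justify that $(K\delta)*(\,\cdot\,)$ commutes with composition by a linear map in the claimed way, which is immediate for Schwartz $f$ and $K$ a tempered distribution by testing against Schwartz functions, and then the general case follows by the density/approximation already implicit in the paper's usage. I do not expect any analytic difficulty; the lemma is purely a change-of-variables identity, and the whole argument is a few lines once the triangular structure is made explicit.
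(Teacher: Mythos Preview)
Your proposal is correct and is essentially the same direct change-of-variables computation as the paper's proof. The only cosmetic difference is that the paper keeps the $\delta(y')$ (resp.\ $\delta(x')$) in the integral and writes the integrand as $f\bigl((x,y)-(RL_i)^{-1}(x',y')\bigr)$ before collapsing, whereas you collapse the delta first and then apply $(RL_i)^{-1}$ to $(u-x',v)$ (resp.\ $(u,v-y')$); these are the same calculation. Your hedging on the second identity is unnecessary: since $(RL_2)^{-1}(u,v)=(cu+v,\,du)=(x,y)$, one has $(RL_2)^{-1}(u,v-y')=(x-y',\,y)$ outright, with no need to invoke ``$y'=0$''.
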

\begin{proof}
We have
\begin{align*}
&\int K(x')\delta(y') f((x, y) - (RL_1)^{-1}(x', y'))\\
&= \int K(x')\delta(y') f(x-x'-ay', y-by') dx'dy' \\
&=\int  K(x')f(x-x', y) dx'
\end{align*}
and
\begin{align*}
&\int \delta (x') K(y') f((x, y) - (RL_2)^{-1}(x', y'))\\
&= \int \delta (x') K(y') f(x-cx'-y', y-x') dx'dy' \\
&=\int  K(y')f(x-y', y) dy'.
\end{align*}
\end{proof}
 This calculation also works if $K$ is an $l^2$ valued function, such as $\overrightarrow K$.

Using this remark, we obtain

\begin{lemma} \label{doublesquarerotated} Let $ \overrightarrow K_1$, $\overrightarrow K_2$ as above.
 Let $R$ be the linear transformation defined by \eqref{Rdef}

Then
\begin{align}
\|\overrightarrow K_{1, 2}(f\circ R^{-1})(R(x, y))\|_{L^p(dx) L^q(dy)l^2} \lesssim \|f\|_{L^p(dx) L^q(dy)}\label{translation}.
\end{align}
\end{lemma}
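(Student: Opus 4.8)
The plan is to deduce Lemma \ref{doublesquarerotated} directly from the standard-coordinate estimate \eqref{leib} (equivalently, the boundedness of $\overrightarrow{\overrightarrow K}_{1}\circ \overrightarrow K_2$ on $L^p(dx)L^q(dy)$) together with the same change-of-variables device already used in the proof of Lemma \ref{rotationlemma}. The point is that $L^p(dx)L^q(dy)$ (and its $l^2$- and $l^2l^2$-valued analogues) is invariant under the action of nonsingular lower-triangular matrices, since such a matrix acts on $y$ by an invertible linear map for each fixed $x$, and on $x$ by an invertible linear map, each of which only changes norms by a Jacobian constant. This invariance is what lets us replace $R$ by $RL_1$ (or $RL_2$), which is upper triangular on the inverse side and hence leaves the $x$ (resp.\ $y$) convolution variable ``pure''.

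Concretely, I would first recall that for the operator $\overrightarrow K_1$ (convolution only in the $x$-slot) we have the identity \eqref{K1f} of Lemma \ref{Kf}: $\left((K\delta)*(f\circ (RL_1)^{-1})\right)(RL_1(x,y)) = ((K\delta)*f)(x,y)$, and that this identity is valid verbatim when $K$ is $l^2$-valued, in particular for $\overrightarrow K$, and also when we further tensor with an $l^2$ vector, i.e.\ for $\overrightarrow{\overrightarrow K}_1$. Apply this with $f$ replaced by the function whose rotated-coordinate norm we want to bound: using the invariance of $L^p(dx)L^q(dy)l^2l^2$ under the lower-triangular matrix $L_1$, the quantity $\|\overrightarrow{\overrightarrow K}_1(\overrightarrow g\circ R^{-1})(R(x,y))\|_{L^p(dx)L^q(dy)l^2l^2}$ equals $\|\left(\overrightarrow{\overrightarrow K}_1(\overrightarrow g\circ (RL_1)^{-1})\right)(RL_1(x,y))\|_{L^p(dx)L^q(dy)l^2l^2}$, which by the identity above is just $\|\overrightarrow{\overrightarrow K}_1 \overrightarrow g\|_{L^p(dx)L^q(dy)l^2l^2}$, and this is $\lesssim \|\overrightarrow g\|_{L^p(dx)L^q(dy)l^2}$ by the standard estimate. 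For the composed operator $\overrightarrow{\overrightarrow K}_1 \overrightarrow K_2$ (which is the one relevant to the ``double square function at an angle'' of Lemma \ref{square1}), one has to handle the two convolution slots. The clean way is to note that $\overrightarrow K_2$ convolves in $y$, and rotating coordinates turns that into the already-understood $\overrightarrow K_1$-type situation via $L_2$: first move the $x$-convolution to pure $x$ using $RL_1$, then observe that the surviving $y$-convolution piece $\overrightarrow K_2$ can be treated by the analogous identity \eqref{K2f} with $RL_2$, reducing everything to the unrotated double estimate \eqref{leib}. Finally, take $g = f$ scalar-valued (so $\overrightarrow g$ is $f$ sitting in the first $l^2$ slot trivially) to recover the statement \eqref{translation}.

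I would also remark, as the paper does elsewhere, that the same argument applies uniformly in $M$ when the dyadic frequencies $2^i$ are replaced by $2^iM$, since none of the estimates for $\overrightarrow K$, $\overrightarrow{\overrightarrow K}_1$ see the scaling, and the change-of-variables identities are scale-independent; this is exactly what is needed for the modified projections $\bar P$ in the proof of Theorem \ref{leibnizX}.

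The main obstacle is essentially bookkeeping rather than a genuine difficulty: one must be careful that the change-of-variables identity of Lemma \ref{Kf}, which is stated for a single convolution slot, really does compose correctly for $\overrightarrow{\overrightarrow K}_1 \overrightarrow K_2$ — i.e.\ that after conjugating the $x$-convolution by $RL_1$ the $y$-convolution slot is genuinely left in a form to which the $RL_2$ identity (or equivalently the standard $y$-square-function estimate for fixed $x$, followed by $L^p$ in $x$) applies, rather than getting entangled with the $x$-variable. Since the two Littlewood--Paley pieces act on disjoint convolution variables and the relevant matrices $L_1$, $L_2$ are lower triangular, this entanglement does not actually occur, but writing it out so that each step manifestly preserves the mixed-norm $l^2l^2$ structure is where the care is needed.
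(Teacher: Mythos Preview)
Your approach is essentially the same as the paper's: use the invariance of $L^p(dx)L^q(dy)$ under nonsingular lower-triangular matrices to replace $R$ by $RL_1$ (for $\overrightarrow K_1$) or $RL_2$ (for $\overrightarrow K_2$), then invoke the identities \eqref{K1f}, \eqref{K2f} of Lemma \ref{Kf} (valid verbatim for $l^2$-valued $K$) to reduce to the unrotated estimate of Lemma \ref{C-Z}.

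One point of confusion: the statement \eqref{translation} concerns the \emph{single} operators $\overrightarrow K_1$ and $\overrightarrow K_2$ separately (the subscript ``$1,2$'' means ``$1$ or $2$'', and the target is $l^2$, not $l^2l^2$), so there is no composition to untangle here and no need to appeal to \eqref{leib}. Your discussion of the composed operator $\overrightarrow{\overrightarrow K}_1\overrightarrow K_2$ and the worry about the two convolution slots getting ``entangled'' pertains to the \emph{next} lemma (Lemma \ref{squarerotated}), not to this one. For the present lemma the paper's proof is a one-line reduction for each of $\overrightarrow K_1$, $\overrightarrow K_2$; your argument specialized to scalar input $g=f$ gives exactly that.
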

\begin{proof} The proof uses only the invariance of $L^p(dx) L^q(dy)$ under lower triangular invertible matrices:
 $\|f\|_{L^p(dx) L^q(dy)}= c\|f\circ L_1\|_{L^p(dx) L^q(dy)}$
and \eqref{translation} is equivalent to
\begin{align*}
\|\overrightarrow K_1(f\circ (RL_1)^{-1})(RL_1(x, y))\|_{L^p(dx L^q(dy))} \lesssim \|f\|_{L^p(dx) L^q(dy)}.
\end{align*}

For $\overrightarrow K_1$ we are convolving with $\overrightarrow K(x)\delta(y)$. Using \eqref{K1f} we have
\begin{align*}
&\overrightarrow  K_1(f\circ (RL_1)^{-1})(RL_1(x, y))
= \overrightarrow  K_1(f)(x, y)
\end{align*}
and we already know from Lemma \ref{C-Z} this is bounded on $L^p(dx)L^q(dy)$.

The argument for $\overrightarrow K_2$ is similar, but uses $L_2$:
\begin{align*}
&\overrightarrow  K_2(f\circ (RL_2)^{-1})(RL_2(x, y)) \notag \\
&=\int \overrightarrow K(x')f(x-x', y) dx' = \overrightarrow  K_1(f)(x, y)
\end{align*}
which is bounded on $L^p(dx)L^q(dy)$, as in the previous case.

\end{proof}

Next, recall $\overrightarrow{\overrightarrow K}_1$ defined by \eqref{doubleK}. The same proof as before gives

\begin{lemma}\label{squarerotated}
Let $1< p, q < \infty$. Then
\begin{align}
\|\overrightarrow{\overrightarrow K}_{1, 2}(\overrightarrow f \circ R^{-1})(R(x, y))\|_{L^p(dx) L^q(dy)l^2l^2} \lesssim
 \|\overrightarrow f\|_{L^p(dx) L^q(dy)l^2}\label{translationtensor}
\end{align}
and, as a corollary,
\begin{align}
&\|\overrightarrow{\overrightarrow K}_{1}(\overrightarrow K_2 (f \circ R^{-1})(R(x, y))\|_{L^p(dx) L^q(dy)l^2l^2} \lesssim
 \|\overrightarrow f\|_{L^p(dx) L^q(dy)l^2}\label{translationtensorcomp}\\
\end{align}
or, equivalently,
\begin{align*}
&\|(S_1S_2f)\circ R\|_{L^p(dx)L^q(dy)}\\
&=\|\left(\sum_{k', k''=0}^{\infty} |f*(\psi_{k'} \delta)*(\delta \psi_{k''} )\circ R|^2\right)^{\frac{1}{2}}\|_{L^p(dx)L^q(dy)}\lesssim \|f\circ R\|_{L^p(dx)L^q(dy)}.
\end{align*}
\end{lemma}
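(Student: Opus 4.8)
The plan is to repeat, essentially line for line, the proof of Lemma~\ref{doublesquarerotated}, with the single-index operator $\overrightarrow K_1$ (resp.\ $\overrightarrow K_2$) replaced by the ``doubled'' operator $\overrightarrow{\overrightarrow K}_1$ of \eqref{doubleK} acting on $l^2$-valued inputs. The three ingredients are: the elementary convolution identities \eqref{K1f}, \eqref{K2f}, which are linear in the kernel and in $f$ and hence hold componentwise when the scalar kernel $K$ is replaced by the $l^2$-valued kernel $\overrightarrow\psi$ and $f$ by an $l^2$-valued $\overrightarrow f$ (this is exactly the remark following Lemma~\ref{Kf}); the invariance of the \emph{standard} mixed norm $L^p(dx)L^q(dy)$, and its vector-valued analogues, under composition with the lower-triangular matrices $L_1,L_2$; and the standard-coordinate bound $\|\overrightarrow{\overrightarrow K}_1\overrightarrow g\|_{L^p(dx)L^q(dy)l^2l^2}\lesssim\|\overrightarrow g\|_{L^p(dx)L^q(dy)l^2}$ already established in \eqref{doubleK}, together with its $\overrightarrow{\overrightarrow K}_2$ analogue.

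For \eqref{translationtensor} I would first rewrite the left side, using $\|h\circ R\|_{L^p(dx)L^q(dy)}=\|h\|_{L^p(d(x-y))L^q(d(x+y))}$, as $\|(\overrightarrow{\overrightarrow K}_1\overrightarrow g)\circ R\|$ with $\overrightarrow g=\overrightarrow f\circ R^{-1}$. Substituting $(x,y)\mapsto L_1(x,y)$ costs only a constant ($L_1$ being lower triangular) and turns $R$ into $RL_1$; the identity \eqref{K1f}, applied coordinatewise to $\overrightarrow\psi$, gives $(\overrightarrow{\overrightarrow K}_1\overrightarrow g)(RL_1(x,y))=\overrightarrow{\overrightarrow K}_1\big(\overrightarrow g\circ RL_1\big)(x,y)$, so up to a constant the quantity is $\|\overrightarrow{\overrightarrow K}_1(\overrightarrow g\circ RL_1)\|_{L^p(dx)L^q(dy)l^2l^2}$. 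Now \eqref{doubleK} dominates this by $\|\overrightarrow g\circ RL_1\|_{L^p(dx)L^q(dy)l^2}=\|(\overrightarrow g\circ R)\circ L_1\|_{L^p(dx)L^q(dy)l^2}$, and peeling off $L_1$ once more and recognizing $\|\cdot\circ R\|_{L^p(dx)L^q(dy)}$ as the rotated norm returns $\lesssim\|\overrightarrow f\|$ in the intended (rotated) norm. The estimate for $\overrightarrow{\overrightarrow K}_2$ is identical with \eqref{K2f} and $L_2$ in place of \eqref{K1f} and $L_1$.

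For the corollary \eqref{translationtensorcomp} I would note that, since $\overrightarrow{\overrightarrow K}_1$ acts only in $x$ and $\overrightarrow K_2$ only in $y$, the composite operator $f\mapsto\overrightarrow{\overrightarrow K}_1(\overrightarrow K_2 f)$ is precisely $(S_1S_2 f)_{k',k''}=f*(\psi_{k'}\delta)*(\delta\psi_{k''})$. Hence \eqref{translationtensorcomp} follows by chaining two mapping properties in rotated coordinates: Lemma~\ref{doublesquarerotated} (the $\overrightarrow K_2$ case) gives $\overrightarrow K_2$ from $L^p(d(x-y))L^q(d(x+y))$ into the same space with values in $l^2$, and then the case just proved gives $\overrightarrow{\overrightarrow K}_1$ from there into the same space with values in $l^2l^2$. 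Reading this back in scalar form yields the stated square-function estimate $\|(S_1S_2 f)\circ R\|\lesssim\|f\circ R\|$.

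The one place that requires care — and where a hasty argument goes wrong — is the bookkeeping of which linear changes of variable preserve the mixed norm $L^p(dx)L^q(dy)$: for $p\neq q$ only block lower-triangular maps do, since the off-diagonal shear then sits inside the inner $L^q$ integral and is removed by a translation once the outer variable is frozen. The matrices $RL_1$, $RL_2$ are \emph{not} of this type, so the argument must always split $RL_i=R\cdot L_i$, absorbing $L_i$ by invariance and letting the residual $R$ pass into the definition of the rotated norm; this is exactly how the proof of Lemma~\ref{doublesquarerotated} is arranged, and it is the only subtlety to watch in the transcription. Everything else is a routine repetition of the Calderón–Zygmund and square-function arguments of the preceding subsection.
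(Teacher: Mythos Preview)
Your proposal is correct and follows essentially the same approach as the paper, which in fact gives no proof beyond ``the same proof as before gives'' (i.e., transcribe Lemma~\ref{doublesquarerotated} with $\overrightarrow{\overrightarrow K}_{1,2}$ in place of $\overrightarrow K_{1,2}$). Your bookkeeping --- set $\overrightarrow g=\overrightarrow f\circ R^{-1}$, insert the lower-triangular $L_i$ into the norm, invoke the componentwise identity \eqref{K1f}/\eqref{K2f}, then apply the standard-coordinate bound following \eqref{doubleK} --- is exactly the intended argument; the only quibbles are cosmetic (your citation of ``\eqref{doubleK}'' really points to the unlabeled estimate just after it, and ``the intended (rotated) norm'' in your last sentence should simply read $\|\overrightarrow f\|_{L^pL^ql^2}$, since $\overrightarrow g\circ R=\overrightarrow f$).
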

In other words, we have the double square function estimate in $x-y$, $x+y$ coordinates.

\end{document}